\title{\Large\bf 
A presentation of the torus-equivariant \\[2mm]
quantum $K$-theory ring of flag manifolds of type $A$, \\[2mm]
Part I: the defining ideal%
\footnote{Key words and phrases: (quantum) $K$-theory, (quantum) Schubert calculus, 
semi-infinite flag manifold, inverse Chevalley formula. \newline
2020 Mathematics Subject Classification: Primary 14M15, 14N35;
Secondary 14N15, 05E10, 20C08.}%
}
\author{%
Toshiaki Maeno \\
 \small Department of Mathematics, Faculty of Science and Technology, Meijo University, \\
 \small 1-501 Shiogamaguchi, Tempaku-ku, Nagoya 468-8502, Japan \\
 \small (e-mail: {\tt tmaeno@meijo-u.ac.jp}) \\[5mm]
Satoshi Naito \\ 
 \small Department of Mathematics, Tokyo Institute of Technology, \\
 \small 2-12-1 Oh-okayama, Meguro-ku, Tokyo 152-8551, Japan \\
 \small (e-mail: {\tt naito@math.titech.ac.jp}) \\[5mm]
and \\[3mm]
Daisuke Sagaki \\ 
 \small Department of Mathematics, \\
 \small Faculty of Pure and Applied Sciences, University of Tsukuba, \\
 \small 1-1-1 Tennodai, Tsukuba, Ibaraki 305-8571, Japan \\
 \small (e-mail: {\tt sagaki@math.tsukuba.ac.jp})
}
\date{}
\renewcommand\section{\@startsection{section}{1}{0pt}
{-3.5ex plus -1ex minus -.2ex}{1.0ex plus .2ex}{\large\bf}}
\renewcommand\subsection{\@startsection{subsection}{1}{0pt}
{2.5ex plus 1ex minus .2ex}{-1em}{\bf}}
\numberwithin{equation}{section}
\theoremstyle{plain}
\newtheorem{thm}{Theorem}[section]
\newtheorem{lem}[thm]{Lemma}
\newtheorem{prop}[thm]{Proposition}
\newtheorem{cor}[thm]{Corollary}
\newtheorem{ithm}{Theorem}
\newtheorem{icor}[ithm]{Corollary}
\newtheorem{iprop}[ithm]{Proposition}
\theoremstyle{definition}
\newtheorem{dfn}[thm]{Definition}
\theoremstyle{remark}
\newtheorem{rem}[thm]{Remark}
\newcommand{\BZ}{\mathbb{Z}}
\newcommand{\BQ}{\mathbb{Q}}
\newcommand{\BC}{\mathbb{C}}
\newcommand{\BH}{\mathbb{H}}
\newcommand{\CO}{\mathcal{O}}
\newcommand{\CF}{\mathcal{F}}
\newcommand{\CI}{\mathcal{I}}
\newcommand{\ba}{\mathbf{a}}
\newcommand{\bb}{\mathbf{b}}
\newcommand{\be}{\mathbf{e}}
\newcommand{\bp}{\mathbf{p}}
\newcommand{\bw}{\mathbf{w}}
\newcommand{\bI}{\mathbf{I}}
\newcommand{\ff}{\bm{f}}  
\newcommand{\FFF}{\bm{F}}
\newcommand{\st}{\mathsf{t}}
\newcommand{\sT}{\mathsf{T}}
\newcommand{\sD}{\mathsf{D}}
\newcommand{\sX}{\mathsf{X}}
\newcommand{\Qq}{\mathscr{Q}}
\newcommand{\q}{\mathsf{q}}
\newcommand{\Fg}{\mathfrak{g}}
\newcommand{\Fh}{\mathfrak{h}}
\newcommand{\FG}{\mathfrak{G}}
\newcommand{\FF}{\mathfrak{F}}
\newcommand{\FE}{\mathfrak{E}}
\newcommand{\FH}{\mathfrak{H}}
\newcommand{\Fsl}{\mathfrak{sl}}
\newcommand{\vpi}{\varpi}
\newcommand{\eps}{\epsilon}
\newcommand{\lng}{w_{\circ}}
\newcommand{\af}{\mathrm{af}}
\DeclareMathOperator{\wt}{wt}
\DeclareMathOperator{\upp}{up}
\DeclareMathOperator{\dnn}{down}
\newcommand{\QG}{\mathbf{Q}_{G}}
\newcommand{\QGr}{\mathbf{Q}_{G}^{\mathrm{rat}}}
\newcommand{\Kr}{K_{H \times \BC^{\ast}}(\QGr)}
\newcommand{\walk}{\mathbf{QW}_{\vpi_{1},s_{1}s_{2} \cdots s_{k}}}
\newcommand{\walkg}[1]{\mathbf{QW}_{\vpi_{1},#1}}
\newcommand{\twalk}{\ti{\mathbf{QW}}_{\vpi_{1},s_{1}s_{2} \cdots s_{k}}}
\newcommand{\twalkg}[1]{\ti{\mathbf{QW}}_{\vpi_{1},#1}}
\newcommand{\walka}[2]{\mathbf{QW}_{#1,#2}}
\newcommand{\twalka}[2]{\ti{\mathbf{QW}}_{#1,#2}}
\newcommand{\Hom}{\mathrm{Hom}}
\newcommand{\QBG}{\mathrm{QBG}}
\newcommand{\QLS}{\mathrm{QLS}}
\newcommand{\BG}{\mathrm{BG}}
\newcommand{\LS}{\mathrm{LS}}
\newcommand{\Supp}{\mathrm{Supp}}
\newcommand{\bra}[1]{[\![#1]\!]}
\newcommand{\pra}[1]{(\!(#1)\!)}
\newcommand{\pair}[2]{\langle #1, #2 \rangle}
\newcommand{\J}{J}
\newcommand{\K}{L}
\newcommand{\WJ}{W^{\J}}
\newcommand{\WJs}{W_{\J}}
\newcommand{\DJp}{\Delta^{+} \setminus \Delta_{\J}^{+}}
\newcommand{\DJs}{\Delta_{\J}^{+}}
\newcommand{\QJ}{Q_{\J}}
\newcommand{\edge}[1]{ \xrightarrow{\hspace{2pt}#1\hspace{2pt}} }
\newcommand{\Qe}[1]{ \xrightarrow[\mathsf{Q}]{\hspace{2pt}#1\hspace{2pt}} }
\newcommand{\Be}[1]{ \xrightarrow[\mathsf{B}]{\hspace{2pt}#1\hspace{2pt}} }
\newcommand{\sDP}{{\sf (B)}}
\newcommand{\sQDP}{{\sf (QB)}}
\newcommand{\kap}[2]{\kappa(#1,#2)}
\newcommand{\io}[2]{\iota(#1,#2)}
\newcommand{\tb}[1]{\le_{#1}}
\newcommand{\dtb}[1]{\le_{#1}^{\ast}}
\newcommand{\tbmin}[3]{\min(#1W_{#2},\le_{#3}\nobreak)}
\newcommand{\tbmax}[3]{\max(#1W_{#2},\le_{#3}^{\ast}\nobreak)}
\newcommand{\up}[3]{\mathrm{up}(#1,#2W_{#3})}
\newcommand{\dn}[3]{\mathrm{dn}(#1,#2W_{#3})}
\newcommand{\mcr}[1]{\lfloor #1 \rfloor}
\newcommand{\ti}[1]{\widetilde{#1}}
\newcommand{\ha}[1]{\widehat{#1}}
\newcommand{\sprod}{\sideset{}{^\star}\prod}
\newenvironment{enu}{%
 \begin{enumerate}%
}{\end{enumerate}}
\begin{document}

\maketitle


\begin{abstract}
We give a presentation of the torus-equivariant quantum $K$-theory ring of flag manifolds of type $A$, 
as a quotient of a polynomial ring by an explicit ideal. 
This is the torus-equivariant version of our previous result, which gives a presentation of 
the non-equivariant quantum $K$-theory ring of flag manifolds of type $A$.
However, the method of proof for the torus-equivariant one is completely different from that 
for the non-equivariant one; our proof is based on the result in the $Q = 0$ limit, and 
uses Nakayama-type arguments to upgrade it to the quantum situation.
Also, in contrast to the non-equivariant case in which we used the Chevalley formula, 
we make use of the inverse Chevalley formula for the torus-equivariant $K$-group of 
semi-infinite flag manifolds to obtain a relation which yields our presentation.
\end{abstract}


\section{Introduction.} 
\label{sec:intro}
Let $Fl_{n+1}$ denote the (full) flag manifold $G/B$ of type $A_{n}$, 
where $G = SL_{n+1}(\BC)$ is the connected, simply-connected simple algebraic group of type $A_{n}$ 
over the complex numbers $\BC$, with Borel subgroup $B$ consisting of the upper triangular matrices in $G = SL_{n+1}(\BC)$ 
and maximal torus $H \subset B$ consisting of the diagonal matrices in $G = SL_{n+1}(\BC)$. 
The purpose of this paper is to give a presentation of the $H$-equivariant quantum $K$-theory ring 
$QK_{H}(Fl_{n+1}) := K_{H}(Fl_{n+1}) \otimes_{R(H)} R(H)\bra{Q}$, 
defined by Givental \cite{Giv} and Lee \cite{Lee}, 
as a quotient of a polynomial ring by an explicit ideal, 
where $K_{H}(Fl_{n+1}) = \bigoplus_{w \in W} R(H)[\CO^{w}]$ denotes the $H$-equivariant (ordinary) $K$-theory ring of $Fl_{n+1}$ 
with the (opposite) Schubert classes $[\CO^{w}]$ indexed by the elements $w$ of the finite Weyl group $W = S_{n+1}$ of $G = SL_{n+1}(\BC)$ 
as a basis over $R(H)$, and where $R(H)\bra{Q} = R(H)\bra{Q_1, \ldots, Q_{n}}$ denotes the ring of formal power series 
in the Novikov variables $Q_{i} := Q^{\alpha_i^{\vee}}$ corresponding to the simple coroots $\alpha_i^{\vee}$, $1 \leq i \leq n$, 
with coefficients in the representation ring $R(H)$ of $H$;
we will identify the representation ring $R(H)$ with the group algebra $\BZ[P] = \bigoplus_{\nu \in P} \BZ \be^{\nu}$ of 
the weight lattice $P = \sum_{i =1}^{n} \BZ \vpi_{i}$ of $G = SL_{n+1}(\BC)$, where $\vpi_{i}$, $1 \leq i \leq n$, are the fundamental weights.

In \cite[Theorem 50]{LNS}, 
we gave a presentation of the (non-equivariant) quantum $K$-theory ring 
$QK(Fl_{n+1}) := K(Fl_{n+1}) \otimes_{\BZ} \BZ\bra{Q}$ in terms of generators and 
relations, where $K(Fl_{n+1})$ denotes the (non-equivariant) ordinary $K$-theory ring of $Fl_{n+1}$; 
cf. the conjectural presentation of $QK(Fl_{n+1})$, which is cited as \cite[Theorem~3.10]{LM}. 
This presentation was obtained by comparing a Chevalley-type multiplication formula for divisor classes in $QK(Fl_{n+1})$ 
and the corresponding one for quantum Grothendieck polynomials (\cite[Theorem~6.4]{LM}).

However, the strategy of our proof of the presentation of $QK_{H}(Fl_{n+1})$ is completely different from that of $QK(Fl_{n+1})$, 
and is based on a general principle that if one has a set of quantum relations 
such that the $Q = 0$ specialization gives the ideal of the classical defining 
relations, then these quantum relations generate the ideal of the quantum 
relations; see \cite{GMSZ} for the result in the case of Grassmannians. 
The main ingredient in our proof is one quite explicit identity in $QK_{H}(Fl_{n+1})$. In order to establish this identity, 
we first prove the corresponding one in the $H$-equivariant $K$-group $K_{H}(\QG) = \prod_{x \in W_{\af}^{\geq 0}} \BZ[P][\CO_{\QG(x)}]$ (direct product) 
of the semi-infinite flag manifold $\QG$ (see \cite{KNS, Kat2}) associated to $G = SL_{n+1}(\BC)$, with the semi-infinite Schubert classes $[\CO_{\QG(x)}]$ 
labeled by the elements $x \in W_{\af}^{\geq 0}$ of the affine Weyl group $W_{\af} \cong W \ltimes Q^{\vee}$ 
of the form $x = wt_{\xi}$, $w \in W$, $\xi \in Q^{\vee,+}$, as a topological basis over $\BZ[P]$; 
here, $Q^{\vee,+} := \sum_{i = 1}^{n} \BZ_{\geq 0} \alpha_i^{\vee} 
\subset Q^{\vee} := \sum_{i = 1}^{n} \BZ \alpha_i^{\vee}$.
We then transfer it to an identity in $QK_{H}(Fl_{n+1})$ through the $R(H)$-module isomorphism $\Phi$ 
from $QK_{H}(Fl_{n+1})$ onto $K_{H}(\QG)$ respecting the quantum multiplication $\star$ and the tensor product $\otimes$ 
with respect to the line bundle classes associated to anti-dominant fundamental weights $- \vpi_i$, $1 \leq i \leq n$, 
which was obtained in \cite{Kat1, Kat3} (see Section~\ref{sec:relation} for details) on the basis of \cite{BF, IMT} (see also \cite{ACT}). 
Here we should mention that in contrast to \cite{LNS}, our identity in $K_{H}(\QG)$ is 
deduced by using the inverse Chevalley formula in \cite{KNOS} (see Section~\ref{sec:identity} for details). 

To be more precise, we need the following notation. 
For $\xi \in Q^{\vee,+}$, we define a $\BZ[P]$-linear operator 
$\st_{\xi}$ on $K_{H}(\QG)$ by $\st_{\xi}[\CO_{\QG(x)}] :=
[\CO_{\QG(x t_{\xi})}]$ for $x \in W_{\af}^{\geq 0}$; note that 
\begin{equation}
(\st_{\xi}[\CO_{\QG(x)}]) \otimes [\CO_{\QG}(\nu)] = 
\st_{\xi}([\CO_{\QG(x)}] \otimes [\CO_{\QG}(\nu)])
\end{equation}
for $x \in W_{\af}^{\geq 0}$, $\xi \in Q^{\vee,+}$, and $\nu \in P$, 
where $[\CO_{\QG}(\nu)]$ denotes the line bundle class in $K_{H}(\QG)$ 
associated to $\nu \in P$. 
Also, for $k \in \BZ_{\geq 0}$, we set $[k] := \{ 1, 2, \ldots, k \}$, 
and for a subset $J \subset [k]$, we set $\epsilon_{J} := \sum_{j \in J} \eps_{j}$, 
where $\eps_{j} = \vpi_{j} - \vpi_{j-1}$ for $1 \leq j \leq n+1$; note that $\vpi_{0} := 0$, $\vpi_{n+1} := 0$ 
by convention, and hence that $\eps_1 + \cdots + \eps_{n+1} = \vpi_{n+1} = 0$. 
Then, for $0 \leq l \leq k \leq n+1$, we define an element $\FF^{k}_{l}$ of $K_{H}(\QG)$ by:
\begin{equation}
\FF^{k}_{l}:=
 \sum_{
   \begin{subarray}{c}
   J \subset [k] \\[1mm]
   |J|=l
   \end{subarray}}
 \left( \prod_{ j \notin J,\, j+1 \in J }
 (1-\st_{j}) \right)[\CO_{\QG}(\lng \eps_{J})], 
\end{equation}
where $\lng = \begin{pmatrix} n+1 & n & \cdots & 1 \end{pmatrix}$ (in one-line notation) denotes 
the longest element of the finite Weyl group $W = S_{n+1}$; 
note that $\FF^{k}_{0} = 1$ for $0 \le k \le n+1$. 

The following proposition gives an explicit description of the (special, but important) semi-infinite Schubert classes $[\CO_{\QG(s_1 s_2 \cdots s_{k-1} s_k)}] \in K_{H}(\QG)$, $0 \leq k \leq n+1$, in terms of the elements $\FF^k_l$, $0 \leq l \leq k$ (see Proposition~\ref{prop:ckk}). 

\begin{iprop} \label{iprop1}
Let $0 \le k \le n+1$. 
The following equality holds in $K_{H}(\QG)$: 
\begin{equation}
\underbrace{[ \CO_{\QG(s_{1}s_{2} \cdots s_{k-1}s_{k})} ]}_{
\begin{subarray}{c}
\text{\rm This term is understood  } \\[1mm]
\text{\rm to be $0$ if $k=n+1$.} \end{subarray} } 
= \sum_{0 \le l \le k} (-1)^{l} \be^{l\vpi_{1}} \FF^{k}_{l}. 
\end{equation}
\end{iprop}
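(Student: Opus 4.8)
The plan is to prove the identity by induction on $k$, using the inverse Chevalley formula in $K_H(\QG)$ to expand the left-hand side and reorganize it into the claimed sum. First I would set up the base case: for $k = 0$ the left-hand side is $[\CO_{\QG(e)}] = 1$ (the identity of the ring), and the right-hand side is $(-1)^0 \be^{0} \FF^0_0 = \FF^0_0 = 1$, so the base case is immediate. For the inductive step, the key object is the semi-infinite Schubert class $[\CO_{\QG(s_1 \cdots s_{k-1} s_k)}]$, and the strategy is to relate it to $[\CO_{\QG(s_1 \cdots s_{k-1})}]$ via the inverse Chevalley formula for tensoring with the line bundle class $[\CO_{\QG}(-\vpi_k)]$ (or an appropriate fundamental-weight line bundle): such a formula expresses $[\CO_{\QG}(-\vpi_k)] \otimes [\CO_{\QG(x)}]$ as a signed sum over quantum Bruhat walks of classes $\st_{\xi}[\CO_{\QG(y)}]$, and by isolating the ``straight-up'' term $y = s_1 \cdots s_k$ one solves for $[\CO_{\QG(s_1 \cdots s_k)}]$ in terms of $[\CO_{\QG(s_1 \cdots s_{k-1})}]$ and correction terms. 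I would substitute the inductive hypothesis $[\CO_{\QG(s_1 \cdots s_{k-1})}] = \sum_{0 \le l \le k-1} (-1)^l \be^{l\vpi_1} \FF^{k-1}_l$ and then carefully track how the line-bundle tensor and the $\st_j$-operators interact with the $[\CO_{\QG}(\lng \eps_J)]$ terms defining $\FF^{k-1}_l$.

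The combinatorial heart of the argument is a recursion relating $\FF^{k}_{l}$ to $\FF^{k-1}_{l}$ and $\FF^{k-1}_{l-1}$, obtained by splitting the sum over $J \subset [k]$ with $|J| = l$ according to whether $k \in J$ or not. When $k \notin J$, we have $J \subset [k-1]$ and, since $j = k$ can never satisfy $j \notin J, j+1 \in J$ with $j+1 = k+1 \notin [k]$, the product of operators $(1 - \st_j)$ is unchanged, so these terms contribute exactly $\FF^{k-1}_l$. When $k \in J$, writing $J = J' \sqcup \{k\}$ with $J' \subset [k-1]$, $|J'| = l-1$, the only new feature is the possible factor $(1 - \st_{k-1})$ arising when $k-1 \notin J'$, together with the shift $\eps_J = \eps_{J'} + \eps_k$ in the line-bundle argument; I expect this to produce a term of the shape $(1 - \st_{k-1})$ applied to $\FF^{k-1}_{l-1}$ tensored with the extra line bundle $[\CO_{\QG}(\lng\eps_k)]$, modulo separating the $k-1 \in J'$ contribution. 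Combining this recursion for $\FF^k_l$ with the inverse-Chevalley expansion of the left-hand side, and using the compatibility $(\st_{\xi}[\CO_{\QG(x)}]) \otimes [\CO_{\QG}(\nu)] = \st_{\xi}([\CO_{\QG(x)}] \otimes [\CO_{\QG}(\nu)])$ stated in the excerpt, the two sides should match term by term after collecting the powers of $\be^{\vpi_1}$.

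The main obstacle I anticipate is matching the output of the inverse Chevalley formula — which is naturally indexed by quantum Bruhat walks and involves subtle sign and $\st_\xi$-bookkeeping — with the clean subset-sum expression for $\FF^k_l$. In particular, one must verify that all ``quantum'' (i.e.\ $\xi \neq 0$) contributions in the walk expansion either cancel or are precisely absorbed into the $(1-\st_j)$ factors, and that the anti-dominant line-bundle twist $\be^{l\vpi_1}$ appears with the correct coefficient; controlling which walks of length one from $s_1 \cdots s_{k-1}$ actually occur when tensoring by the relevant fundamental-weight line bundle, and showing the non-straight ones reassemble into $\FF^{k-1}_{l}$ and $(1-\st_{k-1})\FF^{k-1}_{l-1}$, is the delicate step. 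A secondary check is the boundary case $k = n+1$, where $\eps_1 + \cdots + \eps_{n+1} = 0$ forces degeneracies and the left-hand side must vanish; I would verify this directly from the resulting identity $\sum_{0 \le l \le n+1} (-1)^l \be^{l\vpi_1}\FF^{n+1}_l = 0$, using $\lng\eps_{[n+1]} = 0$ and telescoping of the $(1-\st_j)$ products.
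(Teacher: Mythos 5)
Your high-level plan (induction on $k$, with the base case $k=0$ trivial and an inductive step driven by a Chevalley-type relation between $[\CO_{\QG(s_{1}\cdots s_{k})}]$ and $[\CO_{\QG(s_{1}\cdots s_{k+1})}]$) does parallel the paper's proof, and your splitting of $\FF^{k}_{l}$ according to whether $k \in J$ is a correct elementary observation. But there is a genuine gap at exactly the point you yourself flag as ``the delicate step'': the explicit recursion that powers the induction is neither correctly identified nor derived. You propose to obtain it from a formula expanding $[\CO_{\QG}(-\vpi_{k})] \otimes [\CO_{\QG(x)}]$ as a signed sum over quantum Bruhat walks and to ``isolate the straight-up term'' — but that is a forward Chevalley-type expansion (with a $k$-dependent weight), not the inverse Chevalley formula, and you give no argument that the class $[\CO_{\QG(s_{1}\cdots s_{k})}]$ occurs in it with an invertible coefficient, nor any control of which terms actually appear. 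What the paper actually does is apply the inverse Chevalley formula to $\be^{\vpi_{1}}[\CO_{\QG(s_{1}\cdots s_{k})}]$, with the \emph{fixed} weight $\vpi_{1}$ for every $k$: this requires a complete classification of the decorated quantum walks starting from $s_{1}\cdots s_{k}$ for $\vpi_{1}$ (only three families of walks occur), from which one reads off an explicit identity; multiplying by $[\CO_{\QG}(\lng s_{k}\cdots s_{1}\vpi_{1})]$ then isolates $[\CO_{\QG(s_{1}\cdots s_{k}s_{k+1})}]$. Substituting the inductive hypothesis, the matching with the subset-sum expression for $\FF^{k+1}_{l}$ is not automatic bookkeeping: it hinges on a telescoping identity among the shift operators, of the form $1-\bigl(\st_{\alpha_{j_{p}+1,k}^{\vee}}+\sum_{j_{p}+1<m\le k}(1-\st_{\alpha_{m-1}^{\vee}})\st_{\alpha_{m,k}^{\vee}}\bigr)=1-\st_{k}$, which is what produces the factor $(1-\st_{k})$ in $\FF^{k+1}_{l}$. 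None of this is supplied in your proposal, and saying the two sides ``should match term by term'' does not discharge it — this is where essentially all the content of the proposition lies.

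A secondary issue is your treatment of $k=n+1$. In the paper this case needs no separate argument: in the inductive step from $k=n$ the walk producing the class $[\CO_{\QG(s_{1}\cdots s_{n}s_{n+1})}]$ simply does not exist, so the same computation yields $\sum_{0\le l\le n+1}(-1)^{l}\be^{l\vpi_{1}}\FF^{n+1}_{l}=0$. Your suggestion to ``verify directly'' that identity from $\eps_{1}+\cdots+\eps_{n+1}=0$ and a telescoping of the $(1-\st_{j})$ products is not substantiated; the vanishing is not a formal consequence of $\lng\eps_{[n+1]}=0$ alone, and as stated the check is circular, since that identity \emph{is} the $k=n+1$ case of the statement. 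If you carry out the inductive step correctly, the boundary case comes for free; otherwise it remains unproved.
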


From this proposition with $k = n+1$, we deduce that the elements $\FF^{n+1}_{l} \in K_{H}(\QG)$, $1 \leq l \leq n+1$, are in fact equivariant parameters, i.e., elements of $\BZ[P]$, given explicitly as follows (see Theorem~\ref{thm:FFn+1}). 
\begin{ithm} \label{ithm1}
For $1 \leq l \leq n+1$, the following equality holds in $K_{H}(\QG)$:
\begin{equation}
\FF^{n+1}_{l} = \sum_{
   \begin{subarray}{c}
   J \subset [n+1] \\[1mm]
   |J|=l
   \end{subarray}} \be^{-\eps_{J}}.
\end{equation}
\end{ithm}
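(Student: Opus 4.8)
The plan is to derive this as the $k = n+1$ specialization of Proposition~\ref{iprop1}. Setting $k = n+1$ in that proposition, the left-hand side is $[\CO_{\QG(s_1 s_2 \cdots s_n s_{n+1})}]$, which by the convention in the statement is understood to be $0$ (the element $s_1 s_2 \cdots s_{n+1}$ does not live in $W_{\af}^{\geq 0}$ in the relevant sense, or rather the class vanishes). Hence we obtain
\begin{equation}
0 = \sum_{0 \le l \le n+1} (-1)^{l} \be^{l \vpi_{1}} \FF^{n+1}_{l},
\end{equation}
i.e., $\sum_{l=1}^{n+1} (-1)^{l} \be^{l\vpi_1} \FF^{n+1}_l = -\FF^{n+1}_0 = -1$. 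This single relation is not yet enough; I would instead extract, for \emph{each} fixed $l$, the claimed closed form, using the structure of $\FF^{n+1}_l$ directly.

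The key computation is to show that $\FF^{n+1}_l$ is actually an equivariant scalar, and to identify it. I would argue as follows. In the definition of $\FF^{n+1}_l$ we sum over $J \subset [n+1]$ with $|J| = l$, and since $\eps_1 + \cdots + \eps_{n+1} = 0$ we have $\lng \eps_J = \lng(-\eps_{[n+1]\setminus J}) = -\lng \eps_{J^c}$ where $J^c = [n+1]\setminus J$. The point is that each line bundle class $[\CO_{\QG}(\lng \eps_J)]$ can be expanded in the topological basis $\{[\CO_{\QG(x)}]\}$, and the operators $\prod_{j \notin J,\, j+1 \in J}(1 - \st_j)$ act by shifting the $W_\af$-index by elements of $Q^{\vee,+}$. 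I expect that after summing over all $J$, massive cancellation occurs and only the "constant" term (the coefficient of $[\CO_{\QG(e)}]$, up to a scalar) survives, yielding an element of $\BZ[P]$. To pin down which element, I would compute the scalar by a localization/degree argument: evaluate at the base point, or equivalently track the leading term under the $Q \to 0$ (semi-infinite degree $0$) filtration, where $[\CO_{\QG}(\lng\eps_J)]$ reduces to the class of the line bundle $\CO(\lng\eps_J)$ on the finite flag variety, whose class in $K_H$ contributes $\be^{\lng\eps_J}$ at the identity-indexed term. Summing $\be^{\lng\eps_J}$ over $|J|=l$ and using $\lng \eps_j = \eps_{n+2-j}$ (so $\lng$ permutes the $\eps_j$'s), the sum $\sum_{|J|=l}\be^{\lng\eps_J}$ equals $\sum_{|J|=l}\be^{\eps_J}$; comparing with the target $\sum_{|J|=l}\be^{-\eps_J}$ suggests the sign comes from the $(1-\st_j)$ factors together with the identification of the constant term, so I would carefully bookkeep that the relevant surviving coefficient is $\be^{-\eps_J}$ rather than $\be^{\eps_J}$.

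Alternatively — and this is probably the cleanest route — I would prove the theorem by \emph{downward induction on $l$ combined with the relation from Proposition~\ref{iprop1} at all values $k \le n+1$}. For $k < n+1$, Proposition~\ref{iprop1} expresses $[\CO_{\QG(s_1\cdots s_k)}]$ in terms of $\FF^k_l$; for $k = n+1$ it gives the vanishing relation above. One then relates $\FF^{n+1}_l$ to $\FF^{n}_l$ and $\FF^{n}_{l-1}$ via the recursion obtained by splitting the sum over $J \subset [n+1]$ according to whether $n+1 \in J$, which reads schematically $\FF^{n+1}_l = \FF^n_l + (\text{terms involving } \st_n \text{ and } [\CO_{\QG}(\lng\eps_{J'})] \text{ with } n+1 \in J)$. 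Feeding this into the $k=n+1$ vanishing relation and using the already-established Proposition~\ref{iprop1} for $k=n$ should let me solve for the $\FF^{n+1}_l$ one at a time. The main obstacle I anticipate is the bookkeeping in this recursion: precisely controlling how the operator factors $\prod_{j\notin J,\, j+1\in J}(1-\st_j)$ interact with adding or removing the element $n+1$ from $J$ (the condition "$j \notin J$ and $j+1 \in J$" is sensitive exactly at the boundary $j = n$), and making sure the shifts $\st_j$ and the line-bundle twists by $\lng\eps_J$ combine to produce the clean answer $\sum_{|J|=l}\be^{-\eps_J}$ with the correct sign. Once the vanishing of all positive-degree ($Q$-dependent) contributions is verified, reading off the scalar is a finite check that I would confirm in the smallest cases $n = 1, 2$ and then in general by the parametrized argument above.
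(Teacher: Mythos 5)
Your starting point coincides with the paper's: the $k=n+1$ case of Proposition~\ref{iprop1} gives the single relation $\sum_{0\le l\le n+1}(-1)^{l}\be^{l\vpi_{1}}\FF^{n+1}_{l}=0$, and you correctly observe that this alone cannot determine the $n+1$ unknowns $\FF^{n+1}_{l}$. However, neither of your proposed continuations closes the gap. The first route (``massive cancellation'' plus a localization/leading-term identification) simply asserts the hard part, namely that $\FF^{n+1}_{l}$ lies in $\BZ[P]$, i.e.\ is a $\BZ[P]$-multiple of $[\CO_{\QG(e)}]$: nothing in your sketch proves the vanishing of the coefficients of the infinitely many other semi-infinite Schubert classes, and your own sign discussion shows the bookkeeping is not under control (note $\sum_{|J|=l}\be^{\eps_{J}}$ and $\sum_{|J|=l}\be^{-\eps_{J}}$ genuinely differ, since $e_{l}$ in the variables $\be^{-\eps_{i}}$ equals $e_{n+1-l}$ in the variables $\be^{\eps_{i}}$). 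The second route fares no better: for $k\le n$, Proposition~\ref{iprop1} is an identity expressing the nonzero class $[\CO_{\QG(s_{1}\cdots s_{k})}]$ in terms of the $\FF^{k}_{l}$, so it imposes no constraint on the $\FF^{n+1}_{l}$, and splitting the sum over $J$ according to whether $n+1\in J$ merely rewrites $\FF^{n+1}_{l}$ without producing new equations. You are still left with one scalar relation and $n+1$ unknown elements of a very large module.

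The missing idea, which is the heart of the paper's argument (Proposition~\ref{prop:ctk} and the proof of Theorem~\ref{thm:FFn+1}), is to generate a full system of independent relations from the single one by exploiting the nil-DAHA action on $K_{H\times\BC^{*}}(\QG)$: multiply \eqref{eq:k} with $k=n+1$ by $\be^{\vpi_{t+1}}$ and apply the Demazure operator $\sD_{t+1}$ (Lemma~\ref{lem:leib2}), iterating over $t=0,1,\ldots,n$. Since each $\FF^{n+1}_{l}$ is built only from shift operators and line-bundle classes, it is untouched, and only the exponential prefactors transform; this yields the triangular system \eqref{eq:k=n+1-t2}, which determines the $\FF^{n+1}_{l}$ recursively, proving in particular that $\FF^{n+1}_{l}\in\BZ[P]$ and that they are the unique solution in $\BZ[P]$. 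The theorem then reduces to checking that the elementary symmetric polynomials $\FE^{n+1}_{l}$ satisfy the same system, which is a one-line comparison of coefficients of $x^{n+1-m}$ in the identity $\bigl(\sum_{k\ge 0}(-1)^{k}\FH^{m+1}_{k}x^{k}\bigr)\bigl(\sum_{l=0}^{n+1}\FE^{n+1}_{l}x^{l}\bigr)=\prod_{i=m+2}^{n+1}(1+\be^{-\eps_{i}}x)$. Without this relation-generating step (or an honest direct proof of your cancellation claim), your proposal does not constitute a proof.
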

Through the $R(H)$-module isomorphism $\Phi : QK_{H}(Fl_{n+1}) \to K_{H}(\QG)$, established in \cite{Kat1, Kat3}, 
which respects the quantum multiplication $\star$ with the line bundle class $[\CO_{Fl_{n+1}}(- \vpi_{i})]$ and 
the tensor product $\otimes$ with the line bundle class $[\CO_{\QG}(\lng \vpi_{i})]$ for $1 \leq i \leq n$, 
we can transfer the identity above in $K_{H}(\QG)$ to the one below in $QK_{H}(Fl_{n+1})$; 
note that the line bundle $\CO_{Fl_{n+1}}(- \nu)$ over $Fl_{n+1}$ for $\nu \in P$ denotes the $SL_{n+1}(\BC)$-equivariant line bundle constructed as the quotient space $SL_{n+1}(\BC) \times^{B} \BC_{\nu}$ of the product space $SL_{n+1}(\BC) \times \BC_{\nu}$ by the usual (free) left action of $B$, where $B$ is the Borel subgroup of $SL_{n+1}(\BC)$ consisting of the upper triangular matrices in $G$ and $\BC_{\nu}$ is the one-dimensional $B$-module of weight $\nu$. 
For this purpose, we note that in our notation, the map $\Phi$ sends the (opposite) Schubert class 
$\be^{\mu}[\CO^{w}][Q^{\xi}]$ to the semi-infinite Schubert class $\be^{-\mu}[\CO_{\QG(wt_{\xi})}]$ 
for $\mu \in P$, $w \in W = S_{n+1}$, and $\xi \in Q^{\vee,+}$, 
where $Q^{\xi} := \prod_{i=1}^{n} Q_{i}^{k_i}$ for $\xi = \sum_{i=1}^{n} k_{i} \alpha_i^{\vee} \in Q^{\vee,+}$.
Moreover, we know from Section~\ref{sec:relation} that if $G = SL_{n+1}(\BC)$, then for each $1 \leq k \leq n+1$, 
the quantum multiplication $\star$ with the class $\frac{1}{1 - Q_{k}}[\CO_{G/B}(\varepsilon_{k})]$ 
(resp., $\frac{1}{1 - Q_{k-1}}[\CO_{G/B}(- \varepsilon_{k})]$) in $QK_{H}(G/B)$ corresponds 
to the tensor product $\otimes$ with the line bundle class $[\CO_{\QG}(-\lng \eps_{k})]$ 
(resp., $[\CO_{\QG}(\lng \eps_{k})]$) in $K_{H}(\QG)$, where $Q_{0} := 0, Q_{n+1} := 0$ by convention.
Now, for $0 \le p \le k \le n+1$, we set
\begin{equation}
\CF^{k}_{p}:=
 \sum_{
   \begin{subarray}{c}
   J \subset [k] \\[1mm]
   |J|=p
   \end{subarray}} \ 
 \prod_{ \begin{subarray}{c} 1 \le j \le k \\[1mm] j,\,j+1 \in J \end{subarray} }
 \frac{1}{1-Q_{j}} \ 
 \sprod_{j \in J} [\CO_{G/B}(-\eps_{j})] \in QK_{H}(Fl_{n+1}), 
\end{equation}
where $\prod^{\star}$ denotes the product with respect to the quantum multiplication $\star$; 
note that $\CF^{k}_{0} = 1$ for $0 \le k \le n+1$. 
Then it follows that 
$\Phi(\CF^{k}_{p}) = \FF^{k}_{p} \in K_{H}(\QG)$ for $0 \le p \le k \le n+1$. 
Hence we obtain the following (see Theorem~\ref{thm:rel}).
%
%
\begin{icor} \label{icor2}
For $0 \leq l \leq n+1$, 
the following equality holds in $QK_{H}(Fl_{n+1})$: 
\begin{equation}
\CF^{n+1}_{l}= 
\sum_{
   \begin{subarray}{c}
   J \subset [n+1] \\[1mm]
   |J|=l
   \end{subarray}}
\be^{\eps_{J}}.
\end{equation}
\end{icor}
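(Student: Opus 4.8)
The plan is to deduce the asserted equality by transporting the identity of Theorem~\ref{ithm1} from $K_{H}(\QG)$ to $QK_{H}(Fl_{n+1})$ along the isomorphism $\Phi$. The single input carrying geometric content will be the equality $\Phi(\CF^{n+1}_{l}) = \FF^{n+1}_{l}$ in $K_{H}(\QG)$ for $0 \le l \le n+1$, namely the case $k = n+1$ of the relation $\Phi(\CF^{k}_{p}) = \FF^{k}_{p}$ recorded above. To establish that relation I would expand $[\CO_{\QG}(\lng \eps_{J})] = \bigotimes_{j \in J} [\CO_{\QG}(\lng \eps_{j})]$ using multiplicativity of line bundle classes under $\otimes$, apply the dictionary of Section~\ref{sec:relation} --- tensoring by $[\CO_{\QG}(\lng \eps_{j})]$ pulls back to $\star$-multiplication by $\tfrac{1}{1 - Q_{j-1}}[\CO_{G/B}(-\eps_{j})]$, while the operator $\st_{j}$ pulls back to multiplication by $Q_{j}$ (with $Q_{0} = Q_{n+1} = 0$) --- and then exploit the telescoping cancellation: the prefactor $\prod_{j \notin J,\, j+1 \in J}(1 - Q_{j})$ produced by the operators $1 - \st_{j}$ in $\FF^{k}_{l}$ cancels against the factors $\tfrac{1}{1 - Q_{j-1}}$ attached to those $j \in J$ with $j - 1 \notin J$, leaving exactly $\prod_{j,\, j+1 \in J} \tfrac{1}{1 - Q_{j}}$, which is the prefactor appearing in $\CF^{k}_{l}$.

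With this in hand the deduction is short. By Theorem~\ref{ithm1} one has $\FF^{n+1}_{l} = \sum_{J \subset [n+1],\, |J| = l} \be^{-\eps_{J}}$ in $K_{H}(\QG)$. On the other hand, the case $\mu = 0$, $w = e$, $\xi = 0$ of the defining prescription $\Phi(\be^{\mu}[\CO^{w}][Q^{\xi}]) = \be^{-\mu}[\CO_{\QG(wt_{\xi})}]$ shows that $\Phi$ carries the unit $[\CO^{e}]$ to the unit $[\CO_{\QG(e)}]$, and taking $w = e$, $\xi = 0$ with general $\mu$ gives $\Phi(\be^{\mu}) = \be^{-\mu}$ for $\mu \in P$. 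Hence
\[
\Phi\!\Bigl( \sum_{\substack{J \subset [n+1] \\ |J| = l}} \be^{\eps_{J}} \Bigr)
= \sum_{\substack{J \subset [n+1] \\ |J| = l}} \be^{-\eps_{J}}
= \FF^{n+1}_{l} = \Phi\bigl(\CF^{n+1}_{l}\bigr),
\]
and the injectivity of $\Phi$ then yields $\CF^{n+1}_{l} = \sum_{J \subset [n+1],\, |J| = l} \be^{\eps_{J}}$, as desired.

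Consequently the result will be a purely formal consequence of Theorem~\ref{ithm1} (hence, ultimately, of Proposition~\ref{iprop1}), of the construction of $\Phi$, and of the $\star$/$\otimes$ dictionary of Section~\ref{sec:relation}; I do not expect any genuine obstacle at this stage. The only step that will demand care is the bookkeeping behind $\Phi(\CF^{k}_{p}) = \FF^{k}_{p}$: one must track the boundary conventions $Q_{0} = 0$ and $Q_{n+1} = 0$, the sign in $\eps_{j} = \vpi_{j} - \vpi_{j-1}$, and the fact that the scalars $\tfrac{1}{1 - Q_{j}}$ commute with everything, so that the only ambiguity in the $\star$-product $\sprod_{j \in J}[\CO_{G/B}(-\eps_{j})]$ is its ordering --- and that ordering is pinned down so as to match the tensor product $\bigotimes_{j \in J}[\CO_{\QG}(\lng \eps_{j})]$, which requires no correction terms since line bundle classes tensor without lower-order contributions.
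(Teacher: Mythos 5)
Your proposal is correct and follows essentially the same route as the paper: the paper likewise deduces the identity by transporting Theorem~\ref{thm:FFn+1} through $\Phi$, using the dictionary of Section~\ref{sec:relation} (quantum multiplication by $\tfrac{1}{1-Q_{j-1}}[\CO_{Fl_{n+1}}(-\eps_{j})]$ corresponds to tensoring by $[\CO_{\QG}(\lng\eps_{j})]$, and $Q^{\xi}$ corresponds to $\st_{\xi}$) to get $\Phi(\CF^{k}_{p})=\FF^{k}_{p}$, which the paper states without detail and you verify by exactly the telescoping $Q$-factor bookkeeping you describe. Your closing step via $\Phi(\be^{\mu})=\be^{-\mu}$ and injectivity of $\Phi$ is the same (implicit) argument as in the paper.
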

Once the identity above in $QK_{H}(Fl_{n+1})$ is obtained, we can deduce from it 
the following presentation of $QK_{H}(Fl_{n+1})$ by generators and relations 
by Nakayama-type arguments based on the well-known presentation of 
the ordinary $K$-theory ring $K_{H}(Fl_{n+1})$ by generators and relations; 
we follow the same line of arguments as those in \cite{GMSZ}. 
To be more precise, let $\CI$ denote the ideal of 
$R(H)[x_{1},\dots,x_{n},x_{n+1}]$ generated by 
\begin{equation*}
\sum_{
   \begin{subarray}{c}
   J \subset [n+1] \\[1mm]
   |J|=l
   \end{subarray}} \, 
\prod_{j \in J} (1-x_{j}) - 
\sum_{
   \begin{subarray}{c}
   J \subset [n+1] \\[1mm]
   |J|=l
   \end{subarray}}\be^{\eps_{J}}
\quad
\text{for $1 \le l \le n+1$}. 
\end{equation*}
It is well-known (see \cite[Introduction]{FL}; cf. \cite{PR99}) that there exists an $R(H)$-algebra isomorphism 
$\Psi$ from the quotient $R(H)[x_{1},\dots,x_{n},x_{n+1}]/\CI$ onto 
$K_{H}(Fl_{n+1})$ which maps the residue class of $1 - x_{j}$ modulo $\CI$ to 
$[\CO_{Fl_{n+1}}(-\eps_{j})]$ for $1 \le j \le n+1$; 
note that for $1 \leq j \leq n+1$, the line bundle $\CO_{Fl_{n+1}}(- \epsilon_{j})$ is just the quotient bundle $\mathcal{U}_{j}/\mathcal{U}_{j-1}$ over $Fl_{n+1}$, where $0 = \mathcal{U}_{0} \subset \mathcal{U}_{1} \subset \cdots \subset \mathcal{U}_{n} \subset \mathcal{U}_{n+1} = Fl_{n+1} \times \BC^{n+1}$ denotes the universal, or tautological, flag of subvector bundles of the trivial bundle $Fl_{n+1} \times \BC^{n+1}$. 
Now, let $\CI^{Q}$ be the ideal of 
$R(H)\bra{Q}[x_{1},\dots,x_{n},x_{n+1}]$ 
generated by 
\begin{equation*}
\sum_{
   \begin{subarray}{c}
   J \subset [n+1] \\[1mm]
   |J|=l
   \end{subarray}}
\prod_{\begin{subarray}{c} 
  1 \le j \le n+1 \\[1mm]
  j \in J,\,j+1 \notin J
  \end{subarray}} (1-Q_{j})
\prod_{j \in J}(1-x_{j}) - 
\sum_{
   \begin{subarray}{c}
   J \subset [n+1] \\[1mm]
   |J|=l
   \end{subarray}}\be^{\eps_{J}}
\quad
\text{for $1 \le l \le n+1$},
\end{equation*}
where we understand that $1 - Q_{n+1} = 1$. 
Based on the result above in the classical limit (i.e., the limit $Q_{i} = 0$, 
$1 \leq i \leq n$), Nakayama-type arguments yield the following presentation 
of $QK_{H}(Fl_{n+1})$ (see Theorem~\ref{thm:main}); 
for Nakayama-type arguments, we first need to show that the quotient ring $R(H)\bra{Q}[x_{1},\dots,x_{n},x_{n+1}]/\CI^{Q}$ is finitely generated as a module over $R(H)\bra{Q}$ (see Corollary~\ref{cor:fg} of Appendix~\ref{sec:B}). 
\begin{ithm} \label{ithm3}
There exists an $R(H)\bra{Q}$-algebra isomorphism 
\begin{equation*}
\Psi^{Q}: R(H)\bra{Q}[x_{1},\dots,x_{n},x_{n+1}]/\CI^{Q} 
\stackrel{\sim}{\rightarrow} 
QK_{H}(Fl_{n+1})
\end{equation*}
which maps the residue class of $(1-Q_{j})(1 -x_{j})$ 
modulo $\CI^{Q}$ to $[\CO_{Fl_{n+1}}(-\eps_{j})]$ for $1 \le j \le n$, 
and the residue class of $1 - x_{n+1}$ modulo $\CI^{Q}$ to $[\CO_{Fl_{n+1}}(-\eps_{n+1})]$. 
\end{ithm}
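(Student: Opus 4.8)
The plan is to prove Theorem~\ref{ithm3} via a Nakayama-type argument, using Corollary~\ref{icor2} as the key input and the classical presentation furnished by $\Psi$ as the base case. First I would construct the candidate homomorphism: define an $R(H)\bra{Q}$-algebra map $\ti\Psi^{Q}$ from the polynomial ring $R(H)\bra{Q}[x_{1},\dots,x_{n},x_{n+1}]$ to $QK_{H}(Fl_{n+1})$ by sending $1-x_{j}$ to $\frac{1}{1-Q_{j}}[\CO_{Fl_{n+1}}(-\eps_{j})]$ for $1\le j\le n$ (legitimate since $1-Q_{j}$ is invertible in $R(H)\bra{Q}$) and $1-x_{n+1}$ to $[\CO_{Fl_{n+1}}(-\eps_{n+1})]$. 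The identity in Corollary~\ref{icor2}, rewritten using the correspondence recalled just before it (namely that $\star$-multiplication by $\frac{1}{1-Q_{k}}[\CO_{G/B}(\eps_{k})]$ matches $\otimes$ by $[\CO_{\QG}(-\lng\eps_{k})]$), shows precisely that each of the $n+1$ generators of $\CI^{Q}$ lies in $\ker\ti\Psi^{Q}$; hence $\ti\Psi^{Q}$ descends to an $R(H)\bra{Q}$-algebra map $\Psi^{Q}$ on the quotient, and it sends the residue class of $(1-Q_{j})(1-x_{j})$ to $[\CO_{Fl_{n+1}}(-\eps_{j})]$, as claimed.

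The next step is surjectivity of $\Psi^{Q}$. Since the classes $[\CO_{Fl_{n+1}}(-\eps_{j})]$, $1\le j\le n+1$, generate $K_{H}(Fl_{n+1})$ as an $R(H)$-algebra (this is the content of the classical presentation via $\Psi$), their images generate a subalgebra whose reduction modulo the ideal $(Q_{1},\dots,Q_{n})$ is all of $K_{H}(Fl_{n+1})=QK_{H}(Fl_{n+1})/(Q)$. Because $QK_{H}(Fl_{n+1})$ is, by definition, finitely generated as a module over $R(H)\bra{Q}$ and $R(H)\bra{Q}$ is $(Q)$-adically complete (being a power series ring), a Nakayama/completeness argument — lifting a module generating set from the quotient — gives that $\Psi^{Q}$ is surjective.

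For injectivity I would compare the two sides as finitely generated modules over the complete local-type base $R(H)\bra{Q}$. By Corollary~\ref{cor:fg} (cited from Appendix~\ref{sec:B}) the source ring $A^{Q}:=R(H)\bra{Q}[x_{1},\dots,x_{n},x_{n+1}]/\CI^{Q}$ is a finitely generated $R(H)\bra{Q}$-module; the target is as well. Reducing modulo $(Q)$, the map $\Psi^{Q}$ becomes the isomorphism $\Psi$ of the classical presentation (one must check $\CI^{Q}+(Q)=\CI+(Q)$, which is immediate from the defining generators since setting $Q_{i}=0$ turns the quantum generators into the classical ones). Thus $\Psi^{Q}$ is a surjection of finitely generated $R(H)\bra{Q}$-modules which is an isomorphism modulo $(Q)$; since $R(H)\bra{Q}$ is $(Q)$-adically separated and complete and the modules are finitely generated, $\ker\Psi^{Q}$ satisfies $\ker\Psi^{Q}=(Q)\cdot\ker\Psi^{Q}$ by the classical-limit isomorphism together with the snake lemma (surjectivity kills the relevant connecting terms), hence $\ker\Psi^{Q}=0$ by the Krull intersection theorem / topological Nakayama lemma. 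Therefore $\Psi^{Q}$ is an $R(H)\bra{Q}$-algebra isomorphism.

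The main obstacle I anticipate is not the abstract Nakayama machinery but the two finiteness inputs that make it applicable: verifying that $A^{Q}$ is a finitely generated $R(H)\bra{Q}$-module (handled in Appendix~\ref{sec:B}, but one must confirm its hypotheses are met here, in particular that the classical quotient is finite over $R(H)$ and that this finiteness propagates $Q$-adically), and ensuring the comparison modulo $(Q)$ genuinely recovers $\Psi$ rather than merely some surjection — i.e., that the $n+1$ relations generating $\CI^{Q}$ specialize at $Q=0$ exactly to the $n+1$ relations generating $\CI$, with no loss. A secondary technical point is bookkeeping the sign and indexing conventions so that the image of $(1-Q_{n+1})(1-x_{n+1})=1-x_{n+1}$ is correctly identified (recalling $Q_{n+1}=0$), and that the compatibility of $\Phi$ with $\star$ and $\otimes$ is invoked with the correct fundamental-weight versus $\eps_{j}$ translation.
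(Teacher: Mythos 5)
Your proposal is correct and follows essentially the same route as the paper: define the algebra map on the polynomial ring, use Corollary~\ref{icor2} (Theorem~\ref{thm:rel}) to see that the generators of $\CI^{Q}$ die, invoke Corollary~\ref{cor:fg} for finite generation of the source over the complete Noetherian ring $R(H)\bra{Q}$, observe that the $Q=0$ specialization recovers the classical isomorphism $\Psi$, and conclude by a Nakayama-type argument. The only difference is cosmetic: the paper delegates the final surjectivity-plus-injectivity step to \cite[Proposition~A.3]{GMSZ}, whereas you spell out that argument (lifting generators and killing the kernel via $\ker\Psi^{Q}=(Q)\ker\Psi^{Q}$) explicitly, which is exactly the content of that cited result.
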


Also, in \cite[Theorem~50]{LNS}, we proved that quantum Grothendieck polynomials, 
introduced in \cite[Section~3]{LM}, represent the corresponding (opposite) Schubert classes 
in the non-equivariant quantum $K$-theory ring $QK(Fl_{n+1})$ under the presentation 
given in \cite[Theorem~50]{LNS}; namely, we proved \cite[Conjecture~7.1]{LM}. 
In \cite{MaNS}, we prove the $H$-equivariant version of this result. 
Namely, we prove that quantum double Grothendieck polynomials, 
introduced in \cite[Section~8]{LM}, 
represent the corresponding (opposite) Schubert classes 
in the $H$-equivariant quantum $K$-theory ring $QK_{H}(Fl_{n+1})$ 
under the presentation above. However, our proof in the $H$-equivariant case is completely different from 
that in the non-equivariant case, since we can make use of 
(quantum) left divided difference operators 
(i.e., (quantum) left Demazure operators) $\delta_{i}^{\vee}$ 
acting only on equivariant parameters, as proposed in \cite[Section~8]{MNS}.
Indeed, for our purpose, we first identify 
the (opposite) Schubert class $[\CO^{\lng}]$ in $QK_{H}(Fl_{n+1})$ 
associated to the longest element 
$\lng = \begin{pmatrix} n+1 & n & \cdots & 1 \end{pmatrix}$ 
(in one-line notation) 
of the finite Weyl group $W = S_{n+1}$ 
with the corresponding quantum double Grothendieck polynomial $\FG^{q}_{\lng}(y, x)$, 
given in \cite[Definition~8.2]{LM}. 
Next, by successively applying left Demazure operators, 
we can identify the (opposite) Schubert class $[\CO^{w^{-1}}] \in QK_{H}(Fl_{n+1})$ 
associated to an arbitrary element $w \in W = S_{n+1}$ 
with the corresponding quantum double Grothendieck polynomial $\FG^{q}_{w}(y, x)$ (cf. \cite[Corollary~8.10]{LM}); 
here we note that the quantum double Grothendieck polynomial $\FG^{q}_{w}(y, x)$ is obtained 
from $\FG^{q}_{w_{\circ}}(y, x)$ by successively applying 
(quantum) left Demazure operators $\pi_{i}^{(y)} = \delta_{i}^{\vee}$ 
acting only on the $y$-variables (under the identification $y_{i} = 1 - \be^{- \eps_{i}}$). 
\subsection*{Acknowledgments.}
The second and third authors would like to 
thank Cristian Lenart and Daniel Orr for related collaborations.
The second author would like to thank Leonardo C. Mihalcea for valuable discussions 
on the presentation of the quantum $K$-theory ring of Grassmannians, 
which inspired this work. 
The first and second authors thank Takafumi Kouno for pointing out a gap in the proof of Corollary~\ref{cor:fg} in an earlier version of this paper. 
S.N. was partly supported by JSPS Grant-in-Aid for Scientific Research (C) 21K03198. 
D.S. was partly supported by JSPS Grant-in-Aid for Scientific Research (C) 19K03145.
%
%
\section{Basics.} \label{sec:not}
In this section, we fix our basic notation, and also recall some basic notions and facts, 
which will be used in this paper; 
$G$ is a connected, simply-connected simple algebraic group over $\BC$, 
and is not necessarily assumed to be of type $A_{n}$ unless stated explicitly.

%
\subsection{Algebraic groups.} \label{subsec:alggrp}
Let $G$ be a connected, simply-connected simple algebraic group over $\BC$, 
$H$ a maximal torus of $G$. 
Set $\Fg := \mathrm{Lie}(G)$ and $\Fh := \mathrm{Lie}(H)$. 
Thus $\Fg$ is a finite-dimensional simple Lie algebra over $\BC$ and 
$\Fh$ is a Cartan subalgebra of $\Fg$. 
We denote by $\pair{\cdot\,}{\cdot} : \Fh^{\ast} \times \Fh \rightarrow \BC$ 
the canonical pairing, where $\Fh^{\ast} = \Hom_{\BC}(\Fh, \BC)$. 

It is known that $\Fg$ has a root system $\Delta \subset \Fh^{\ast}$. 
We take the set $\Delta^{+} \subset \Delta$ of positive roots, 
and the set $\{ \alpha_{i} \}_{i \in I} \subset \Delta^{+}$ of simple roots. 
We denote by $\alpha^{\vee} \in \Fh$ the coroot corresponding to $\alpha \in \Delta$. 
Also, we denote by $\theta \in \Delta^+$ the highest root of $\Delta^+$, 
and set $\rho := (1/2) \sum_{\alpha \in \Delta^{+}} \alpha$. 
The root lattice $Q$ and the coroot lattice $Q^{\vee}$ of $\Fg$ are defined by 
$Q := \sum_{i \in I} \BZ \alpha_{i}$ and $Q^{\vee} := \sum_{i \in I} \BZ \alpha_{i}^{\vee}$. 

For $i \in I$, the weight $\vpi_{i} \in \Fh^{\ast}$ 
which satisfies $\pair{\vpi_{i}}{\alpha_{j}^{\vee}} = \delta_{i,j}$ for all $j \in I$, 
where $\delta_{i,j}$ denotes the Kronecker delta, 
is called the fundamental weight. 
The weight lattice $P$ of $\Fg$ is defined by $P := \sum_{i \in I} \BZ \vpi_{i}$. 
We denote by $\BZ[P]$ the group algebra of $P$, that is, 
the associative algebra generated by formal elements $\be^{\nu}$, $\nu \in P$, 
where the product is defined by $\be^{\mu} \be^{\nu} := \be^{\mu + \nu}$ for $\mu, \nu \in P$. 

A reflection $s_{\alpha} \in GL(\Fh^{\ast})$, $\alpha \in \Delta$, 
is defined by $s_{\alpha} \mu := \mu - \pair{\mu}{\alpha^{\vee}} \alpha$ 
for $\mu \in \Fh^{\ast}$. We write $s_{i} := s_{\alpha_{i}}$ for $i \in I$. 
Then the (finite) Weyl group $W$ of $\Fg$ is defined to be the subgroup of $GL(\Fh^{\ast})$ 
generated by $\{ s_{i} \}_{i \in I}$, that is, $W := \langle s_{i} \mid i \in I \rangle$. 
For $w \in W$, there exist $i_{1}, \ldots, i_{r} \in I$ such that $w = s_{i_{1}} \cdots s_{i_{r}}$. 
If $r$ is minimal, then the product $s_{i_{1}} \cdots s_{i_{r}}$ is called a reduced expression for $w$, 
and $r$ is called the length of $w$; we denote by $\ell(w)$ the length of $w$. 
Note that a reduced expression for $w$ is not unique. However, the length is defined uniquely. 
Also, the affine Weyl group $W_{\af}$ of $\Fg$ is, by definition, the semi-direct product group 
$W \ltimes \{ t_{\xi} \mid \xi \in Q^{\vee} \}$ of $W$ and the abelian group 
$\{ t_{\xi} \mid \xi \in Q^{\vee} \} \cong Q^{\vee}$, 
where $t_{\xi}$ denotes the translation in $\Fh^{\ast}$ corresponding to $\xi \in Q^{\vee}$.

%
\subsection{The Bruhat graph and the quantum Bruhat graph.}
\label{subsec:QBG}

Let $\J$ be a subset of $I$. We set
$\QJ := \sum_{i \in \J} \BZ \alpha_i$, 
$\DJs := \Delta^{+} \cap \QJ$, 
$\rho_{\J}:=(1/2) \sum_{\alpha \in \DJs} \alpha$, and 
$\WJs := \langle s_{i} \mid i \in \J \rangle$. 
Let $\WJ$ denote the set of minimal(-length) coset representatives 
for the cosets in $W/\WJ$; we know from \cite[Sect.~2.4]{BB} that 
%
%
\begin{equation} \label{eq:mcr}
\WJ = \bigl\{ w \in W \mid 
\text{$w \alpha \in \Delta^{+}$ for all $\alpha \in \DJs$}\bigr\}.
\end{equation}
For $w \in W$, let $\mcr{w}^{\J} \in \WJ$ denote 
the minimal coset representative for the coset $w \WJs$ in $W/\WJs$.

\begin{dfn} \label{dfn:QBG} 
The (parabolic) \emph{quantum Bruhat graph} of $\WJ$, 
denoted by $\QBG(\WJ)$, is the ($\DJp$)-labeled
directed graph whose vertices are the elements of $\WJ$ and 
whose edges are of the following form: 
$x \edge{\alpha} y$, with $x, y \in \WJ$ and $\alpha \in \DJp$, 
such that $y = \mcr{x s_{\alpha}}^{\J}$ and either of the following holds: 
(B) $\ell(y) = \ell (x) + 1$; 
(Q) $\ell(y) = \ell (x) + 1 - 2 \pair{\rho-\rho_{\J}}{\alpha^{\vee}}$.
An edge satisfying (B) (resp., (Q)) is called a \emph{Bruhat edge} (resp., \emph{quantum edge}). 
When $\J=\emptyset$ (note that $W^{\emptyset}=W$, $\rho_{\emptyset}=0$, and $\mcr{x}^{\emptyset}=x$ for all $x \in W$), 
we write $\QBG(W)$ for $\QBG(W^{\emptyset})$.
\end{dfn}

For an edge $x \edge{\alpha} y$ in $\QBG(\WJ)$, 
we sometimes write $x \Be{\alpha} y$ (resp., $x \Qe{\alpha} y$) 
to indicate that the edge is a Bruhat (resp., quantum) edge. 

\begin{dfn} \label{dfn:BG} 
The (parabolic) \emph{Bruhat graph} of $\WJ$, 
denoted by $\BG(\WJ)$, is the subgraph of $\QBG(\WJ)$ 
with the same vertex set as $\QBG(\WJ)$ but having only 
the Bruhat edges. 
When $\J=\emptyset$, we write $\BG(W)$ for $\BG(W^{\emptyset})$.
\end{dfn}

For a directed path $\bp : x_{0} \edge{\gamma_{1}} x_{1} 
\edge{\gamma_{2}} \cdots \edge{\gamma_{s}} x_{s}$ in $\QBG(W)$, 
we set $\ell(\bp) := s$, and
\begin{align*}
\wt(\bp) := \sum_{\substack{1 \le i \le s \\ 
   \text{$x_{i-1} \edge{\gamma_{i}} x_{i}$ is a quantum edge}}} \gamma_{i}^{\vee}. 
\end{align*}
For $w, v \in W$, there exists a directed path in $\QBG(W)$ which starts from $w$ and ends at $v$ (see \cite[Lemma~1]{Postnikov}). 
Hence we can take a shortest(-length) directed path $\bp$ from $w$ to $v$ in $\QBG(W)$. 
We define $\ell(w \Rightarrow v):=\ell(\bp)$ and $\wt(w \Rightarrow v) := \wt(\bp)$. 
The definition of $\wt(w \Rightarrow v)$ does not 
depend on the choice of $\bp$ (see \cite[Lemma~1]{Postnikov}). 

Now, let $\lhd$ be a reflection order on $\Delta^{+}$. 
A directed path $\bp : x_{0} \edge{\gamma_{1}} x_{1} 
\edge{\gamma_{2}} \cdots \edge{\gamma_{s}} x_{s}$ in $\QBG(W)$ or in $\BG(W)$ 
is said to be label-increasing in the reflection order $\lhd$ if 
the sequence $\gamma_{1},\gamma_{2},\dots,\gamma_{s}$ of labels in $\bp$ 
is strictly increasing in $\lhd$. 
%
%
\begin{prop}[{\cite{BFP}; see also \cite[Theorem~7.3]{LNSSS1}}] \label{prop:QBG-LI}
Let $\lhd$ be an arbitrary reflection order on $\Delta^{+}$. 
For each pair of elements $v,w \in W$, there exists a unique label-increasing 
directed path from $v$ to $w$ with respect to $\lhd$ in the quantum Bruhat graph $\QBG(W)$. 
Moreover, this label-increasing directed path is a shortest(-length) directed path from $v$ to $w$. 
\end{prop}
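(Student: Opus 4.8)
The plan is to prove existence and uniqueness simultaneously, by induction on the distance $\ell(v \Rightarrow w)$ in $\QBG(W)$, the essential inputs being the ``diamond lemmas'' describing length-two paths in $\QBG(W)$ and the rigidity of reflection orders on rank-two subsystems. The single place the reflection-order hypothesis enters is the standard fact that $\lhd$ restricts, on the positive roots of any rank-two reflection subgroup, to one of the two interpolating orders; in particular those roots form a $\lhd$-consecutive block, linearly ordered ``from $\alpha$ to $\beta$'' or the reverse. Combined with the Deodhar-type diamond lemmas for $\QBG(W)$ (given a length-two path $x \edge{\beta} y \edge{\alpha} y'$ with $\alpha \lhd \beta$, there is a length-two path $x \edge{\alpha'} z \edge{\beta'} y'$ with $\alpha' \lhd \beta'$ and $\alpha',\beta'$ in the rank-two subsystem generated by $\alpha,\beta$; together with the dual statement for incoming edges), this yields a local sorting move on directed paths. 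I would either quote these diamond lemmas from \cite{LNSSS1} or re-derive the needed cases; re-deriving them is the technical heart of the proof.

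For existence, I start from any shortest directed path $\bp$ from $v$ to $w$, which exists by \cite[Lemma~1]{Postnikov}. Whenever two consecutive labels of $\bp$ are $\lhd$-out-of-order, I apply the sorting move to that length-two subpath. Since a length-two subpath of a shortest path is itself a shortest path between its endpoints, the move outputs a length-two subpath, so the total length --- and the endpoints --- are preserved; moreover the move strictly lowers the affected subpath's first label, so the label sequence strictly decreases in the $\lhd$-lexicographic order and the procedure terminates. The terminal path is a label-increasing shortest directed path from $v$ to $w$. For uniqueness I argue by induction on $\ell(v \Rightarrow w)$: given two label-increasing paths $\bp, \bp'$ from $v$ to $w$ with first labels $\gamma, \gamma'$, the case $\gamma = \gamma'$ reduces, after deleting the common first edge, to the inductive hypothesis at the common second vertex (whose distance to $w$ has strictly dropped); and in the case $\gamma \lhd \gamma'$ the label $\gamma$ never occurs in $\bp'$, which I contradict by using the dual diamond lemma to extract from $\bp'$ a label-increasing path from $v$ to $w$ whose first label is $\gamma$, forcing $\gamma = \gamma'$. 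This also gives the ``moreover'' assertion, since the existence argument already produced a label-increasing path that is shortest, and uniqueness identifies every label-increasing path with it.

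The main obstacle is exactly the diamond-lemma step: a correctly stated, uniform description of length-two paths in $\QBG(W)$ with prescribed endpoints, and of the $\lhd$-position of the rerouted labels, which in the end reduces to a finite verification in the rank-two root systems ($A_{1} \times A_{1}$, $A_{2}$, $B_{2}$, $G_{2}$), carefully distinguishing Bruhat from quantum edges. Everything after that is a mechanical induction. (The original argument in \cite{BFP} instead encodes label-increasing paths through ``mixed Bruhat operators'' and deduces both existence and uniqueness from a Yang--Baxter equation asserting that the ordered product of these operators over $\Delta^{+}$ does not depend on the reflection order; I expect the combinatorial route above to be more self-contained in the present setting.)
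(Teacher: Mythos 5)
A preliminary remark: the paper does not prove Proposition~\ref{prop:QBG-LI} at all --- it is imported from \cite{BFP} (see also \cite[Theorem~7.3]{LNSSS1}), where it is obtained from the Yang--Baxter relations for mixed Bruhat operators, i.e.\ exactly the route you set aside in your closing parenthesis. So your Dyer-style ``sort a shortest path'' argument is genuinely different from the cited proof; it is a plausible strategy, but as written it has concrete gaps, and they sit precisely at the points you defer.

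The main gap is the rank-two input itself. The rerouting step you need is: for the dihedral reflection subgroup $W'=\langle s_{\alpha},s_{\beta}\rangle$ and elements $x$, $y'$ in a common coset of $W'$, among the directed paths in $\QBG(W)$ from $x$ to $y'$ all of whose labels lie in $\Delta' := \Delta\cap(\BR\alpha+\BR\beta)$, there is a unique $\lhd$-increasing one, and it is shortest among such paths. This does not reduce to ``a finite verification in the rank-two root systems'': which $\Delta'$-labeled edges are present at a vertex of the coset is governed by the ambient length function and by $\pair{2\rho}{\gamma^{\vee}}$ for the positive roots $\gamma$ of $\Delta'$, so the restricted graph is not determined by the type of $\Delta'$ alone; taming this dependence is exactly the content of the key lemma of \cite{BFP}, and it is not supplied by the diamond lemmas of \cite{LNSSS1}, which concern left multiplication by a simple reflection rather than two-step reroutings inside an arbitrary rank-two subsystem. (Also, your ``in particular'' claim that the positive roots of a rank-two subsystem form a $\lhd$-consecutive block is false for a general reflection order --- already in type $A_{3}$ the two roots of an $A_{1}\times A_{1}$ subsystem can be the $\lhd$-first and $\lhd$-last positive roots; only the statement about the restricted order is correct.) Beyond this, two steps of the induction need repair: (i) the uniqueness argument is circular, since inducting on $\ell(v\Rightarrow w)$ after deleting the common first edge uses that the second vertex of a label-increasing path is strictly closer to $w$, which is part of the ``moreover'' assertion you have not yet established; and in the case $\gamma\lhd\gamma'$ the move ``extract from $\bp'$ a label-increasing path from $v$ to $w$ whose first label is $\gamma$'' is asserted without any mechanism --- nothing stated produces such a path; (ii) termination of the sorting procedure requires that the rerouted first label $\alpha'$ satisfy $\alpha'\lhd\beta$ (old first label), which again must be extracted from the rank-two analysis rather than assumed. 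None of this is obviously fatal --- shellability of $\QBG(W)$ can be proved along these lines --- but the deferred lemma plus these repairs constitute essentially all of the work that \cite{BFP} encapsulates in the Yang--Baxter equation, so the proposal as it stands is an outline rather than a proof.
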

%
%
\begin{prop}[{see \cite[Lemma~2.7.4]{BB}}] \label{prop:BG-LI}
Let $\lhd$ be an arbitrary reflection order on $\Delta^{+}$. 
For each pair of elements $v,w \in W$ such that $v \le w$ in the Bruhat order $\le$, 
there exists a unique label-increasing directed path from $v$ to $w$ 
with respect to $\lhd$ in the Bruhat graph $\BG(W)$. 
\end{prop}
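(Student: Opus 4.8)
The plan is to deduce this from Proposition~\ref{prop:QBG-LI}, rather than reprove it from scratch, using the elementary observation that along any directed path in $\QBG(W)$ the length function $\ell$ increases by at most $1$ at each edge: a Bruhat edge $x \Be{\alpha} y$ has $\ell(y) = \ell(x)+1$, whereas a quantum edge $x \Qe{\alpha} y$ has $\ell(y) = \ell(x) + 1 - 2\pair{\rho}{\alpha^{\vee}} < \ell(x)$ since $\pair{\rho}{\alpha^{\vee}} \ge 1$ for $\alpha \in \Delta^{+}$. I would first record that $\BG(W)$ is, as an edge-labeled graph, a subgraph of $\QBG(W)$ (Definition~\ref{dfn:BG}), so that a label-increasing directed path in $\BG(W)$ is in particular a label-increasing directed path in $\QBG(W)$.

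For existence, fix $v \le w$ in the Bruhat order and let $\bp$ be the unique label-increasing directed path from $v$ to $w$ in $\QBG(W)$ provided by Proposition~\ref{prop:QBG-LI}; recall it is a \emph{shortest} directed path from $v$ to $w$. On one hand, choosing a saturated chain $v = u_{0} \lessdot u_{1} \lessdot \cdots \lessdot u_{m} = w$ in the Bruhat interval $[v,w]$ with $m = \ell(w)-\ell(v)$, and using the standard fact that each cover $u_{i-1} \lessdot u_{i}$ is of the form $u_{i} = u_{i-1}s_{\beta_{i}}$ for a positive root $\beta_{i}$, hence is a Bruhat edge $u_{i-1} \Be{\beta_{i}} u_{i}$ of $\BG(W)$, I get a directed path from $v$ to $w$ of length $\ell(w)-\ell(v)$, so $\ell(\bp) \le \ell(w) - \ell(v)$. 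On the other hand, by the length bookkeeping above, $\ell(w) - \ell(v) \le \ell(\bp)$, and in the case of equality every edge of $\bp$ must be a Bruhat edge (a path whose total $\ell$-increment equals its length cannot contain a quantum edge, each of which contributes a strictly negative increment). Hence $\bp$ is a label-increasing directed path from $v$ to $w$ lying entirely in $\BG(W)$.

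For uniqueness, if $\bp'$ is any label-increasing directed path from $v$ to $w$ in $\BG(W)$, then by the first step it is also a label-increasing directed path from $v$ to $w$ in $\QBG(W)$, so $\bp' = \bp$ by the uniqueness part of Proposition~\ref{prop:QBG-LI}. The only point needing a word of justification is the assertion that a Bruhat cover $u \lessdot u'$ is literally a Bruhat edge in the sense of Definition~\ref{dfn:BG}, i.e.\ $u' = u s_{\beta}$ with $\beta \in \Delta^{+}$ and $\ell(u') = \ell(u)+1$; this is classical (strong exchange), and I do not anticipate any real obstacle. (Alternatively, one could give the purely order-theoretic proof of Dyer showing that a reflection order is an EL-labeling of the Bruhat order, but the deduction from Proposition~\ref{prop:QBG-LI} above is shorter and self-contained within the present setup.)
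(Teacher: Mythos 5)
Your argument is correct, but it takes a different route from the paper: the paper offers no proof of this statement at all, simply citing the classical result of Dyer on reflection orders as recorded in \cite[Lemma~2.7.4]{BB}, whereas you deduce it from the quantum Bruhat graph statement, Proposition~\ref{prop:QBG-LI}, together with the observation that a quantum edge strictly decreases length while a Bruhat edge increases it by exactly $1$. That deduction is sound: since $v \le w$, there is a directed path from $v$ to $w$ in $\BG(W)$ (in this paper that is in fact the stated characterization of the Bruhat order, and any such path automatically has length $\ell(w)-\ell(v)$, so you do not even need to invoke the cover description $u' = u s_{\beta}$), hence the unique label-increasing path $\bp$ in $\QBG(W)$, being shortest, satisfies $\ell(\bp) \le \ell(w)-\ell(v) \le \ell(\bp)$ and therefore consists entirely of Bruhat edges; uniqueness in $\BG(W)$ then follows because any label-increasing path in $\BG(W)$ is one in $\QBG(W)$. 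Note that the "all edges Bruhat" step is exactly the content of the paper's Lemma~\ref{lem:BG-QBG}, which you could cite instead of reproving. What each approach buys: the citation to \cite{BB} keeps the classical statement logically independent of the quantum Bruhat graph machinery (Proposition~\ref{prop:QBG-LI} is itself only quoted here, from \cite{BFP} and \cite[Theorem~7.3]{LNSSS1}, so your derivation trades one external input for another and is only as self-contained as that input), while your route has the merit of unifying the two uniqueness statements and exhibiting the Bruhat-graph case as the $v \le w$ specialization of the quantum one, entirely within the combinatorial setup already present in Section~2.
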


Denote by $\le$ the (ordinary) Bruhat order on $W$; 
recall that for $v,w \in W$, $v \le w$ if and only if 
there exists a directed path from $v$ to $w$ in $\BG(W)$. 
%
%
\begin{lem} \label{lem:BG-QBG}
Let $v,w \in W$, and let $\bp$ be a shortest(-length) directed path 
from $v$ to $w$ in $\QBG(W)$. Then, all the edges in $\bp$ are Bruhat edges 
{\rm(}or equivalently, $\bp$ is a directed path in $\BG(W)${\rm)} if and 
only if $v \le w$ in the Bruhat order $\le$ on $W$. 
\end{lem}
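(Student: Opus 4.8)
The plan is to prove both implications, using the two label-increasing path results (Propositions~\ref{prop:QBG-LI} and \ref{prop:BG-LI}) to pin down a canonical shortest path and then argue that shortest paths behave uniformly. First I would fix, once and for all, a reflection order $\lhd$ on $\Delta^{+}$. For the easy direction, suppose $v \le w$ in the Bruhat order. By Proposition~\ref{prop:BG-LI} there is a label-increasing directed path $\bp_{0}$ from $v$ to $w$ entirely inside $\BG(W)$; viewed as a path in $\QBG(W)$ it is still label-increasing, so by the uniqueness clause of Proposition~\ref{prop:QBG-LI} it is \emph{the} label-increasing path from $v$ to $w$ in $\QBG(W)$, and by the ``moreover'' clause it is a shortest path. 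Thus there exists a shortest path from $v$ to $w$ all of whose edges are Bruhat edges. To upgrade ``there exists'' to ``every,'' I would invoke the standard fact that all shortest directed paths between a fixed pair of vertices in $\QBG(W)$ have the same weight $\wt(v \Rightarrow w)$ (cited in the excerpt right after Definition~\ref{dfn:QBG} via \cite[Lemma~1]{Postnikov}): a path is entirely Bruhat if and only if its weight is $0$, so if one shortest path has zero weight, all of them do.

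For the converse, suppose some shortest path $\bp$ from $v$ to $w$ lies in $\BG(W)$. Then $v \le w$ is immediate from the characterization of the Bruhat order recalled just before the lemma: existence of \emph{any} directed path from $v$ to $w$ in $\BG(W)$ is equivalent to $v \le w$. This direction needs nothing beyond that recollection.

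Assembling: the lemma asserts the equivalence of ``$v \le w$'' with ``all edges in $\bp$ are Bruhat edges'' for a \emph{given} shortest path $\bp$. The converse handles the implication ``all Bruhat $\Rightarrow v \le w$.'' For the forward implication, from $v \le w$ I produce (via Propositions~\ref{prop:BG-LI} and \ref{prop:QBG-LI}) one shortest path with $\wt = 0$, hence $\wt(v \Rightarrow w) = 0$, hence \emph{every} shortest path from $v$ to $w$ — in particular the given $\bp$ — has zero weight and therefore no quantum edges.

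The main obstacle is the step that turns ``some shortest path is Bruhat'' into ``every shortest path is Bruhat,'' i.e., justifying that the weight $\wt(v \Rightarrow w)$ is well-defined independently of the chosen shortest path and that a shortest path is Bruhat precisely when its weight vanishes. Both facts are available in the excerpt — the former is exactly the assertion (from \cite[Lemma~1]{Postnikov}) recorded after the definition of $\wt(w \Rightarrow v)$, and the latter is immediate from the definition of $\wt(\bp)$ as a sum of coroots of quantum-edge labels, since each such coroot is a nonzero element of $Q^{\vee,+}$ and a sum of elements of $Q^{\vee,+}$ is zero only if the sum is empty. So the proof is essentially a matter of citing the right prior results in the right order; the only care needed is to keep straight that the forward direction genuinely requires the uniform-weight statement rather than just producing one good path.
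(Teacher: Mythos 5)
Your proof is correct, but it takes a genuinely different route from the paper's. The paper argues the forward direction by pure length counting: from $v \le w$ it takes any directed path $\bp'$ from $v$ to $w$ in $\BG(W)$, notes $\ell(w) = \ell(v) + \ell(\bp')$, and since every edge of $\QBG(W)$ raises length by at most $1$, the chain $\ell(w) \le \ell(v) + \ell(\bp) \le \ell(v) + \ell(\bp') = \ell(w)$ forces $\ell(w) = \ell(v) + \ell(\bp)$, so every edge of the given shortest path $\bp$ must be a Bruhat edge; no reflection orders or weight considerations enter. You instead use Propositions~\ref{prop:BG-LI} and \ref{prop:QBG-LI} to exhibit one shortest path from $v$ to $w$ consisting of Bruhat edges, and then upgrade ``some'' to ``every'' via the path-independence of $\wt(v \Rightarrow w)$ (Postnikov's Lemma~1, as recorded after Definition~\ref{dfn:QBG}) together with the observation that a shortest path is Bruhat exactly when its weight vanishes, since each quantum edge contributes a nonzero element of $Q^{\vee,+}$. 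Both arguments are valid and rest only on facts already quoted in the paper; the paper's is more elementary and self-contained (it also yields $\ell(v \Rightarrow w) = \ell(w) - \ell(v)$ when $v \le w$ as a byproduct), while yours isolates the cleaner conceptual criterion that, for $v \le w$, $\wt(v \Rightarrow w) = 0$ and hence no shortest path can use a quantum edge. You correctly identified that the delicate step is the quantifier upgrade from one shortest path to the given one, which the paper avoids by working with the given $\bp$ directly.
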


\begin{proof}
The ``only if'' part is obvious by the comment preceding this lemma. 
Let us show the ``if'' part. Assume that $v \le w$. Then 
there exists a directed path $\bp'$ from $v$ to $w$ in $\BG(W)$; 
notice that $\ell(w)=\ell(v)+\ell(\bp')$. It is obvious that 
$\bp'$ is also a directed path in $\QBG(W)$ 
since $\BG(W)$ is a subgraph of $\QBG(W)$. Let $\bp$ be a shortest(-length) 
directed path from $v$ to $w$ in $\QBG(W)$. Since 
$\ell(\bp) \le \ell(\bp')$, it follows that 
\begin{equation*}
\ell(w) \le \ell(v) + \ell(\bp) \le 
\ell(v) + \ell(\bp') = \ell(w). 
\end{equation*}
Hence we obtain $\ell(w) = \ell(v) + \ell(\bp)$, which implies that 
all the edges in $\bp$ are Bruhat edges. This proves the lemma. 
\end{proof}

%
\subsection{Minimal or maximal elements of cosets in the tilted Bruhat order.}
\label{subsec:tilted}
Let $\J$ be a subset of $I$. 
%
%
\begin{dfn}[tilted Bruhat order] \label{dfn:tilted}
For each $v \in W$, we define the $v$-tilted Bruhat order $\tb{v}$ on $W$ as follows:
for $w_{1},w_{2} \in W$, 
%
%
\begin{equation} \label{eq:tilted}
w_{1} \tb{v} w_{2} \iff \ell(v \Rightarrow w_{2}) = 
 \ell(v \Rightarrow w_{1}) + \ell(w_{1} \Rightarrow w_{2}).
\end{equation}
Namely, $w_{1} \tb{v} w_{2}$ if and only if 
there exists a shortest(-length) directed path 
from $v$ to $w_{2}$ in $\QBG(W)$ passing through $w_{1}$; 
or equivalently, if and only if 
the concatenation of a shortest(-length) directed path 
from $v$ to $w_{1}$ and one from $w_{1}$ to $w_{2}$ 
is one from $v$ to $w_{2}$. 
\end{dfn}
%
%
\begin{prop}[{\cite[Theorem~7.1]{LNSSS1}}] \label{prop:tbmin}
Let $\J$ be a subset of $I$, and $v \in W$. 
Then, each coset $u\WJ$ for $u \in W$ has a unique minimal element
with respect to $\tb{v}${\rm;} we denote it by $\tbmin{u}{\J}{v}$. 
\end{prop}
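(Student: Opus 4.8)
The plan is to prove the existence and uniqueness of the minimal element $\tbmin{u}{\J}{v}$ for the coset $u\WJ$ with respect to the $v$-tilted Bruhat order $\tb{v}$, following the strategy of \cite[Theorem~7.1]{LNSSS1}. The key structural fact I would exploit is that the $v$-tilted Bruhat order is governed by shortest directed paths in $\QBG(W)$ emanating from $v$, and that lengths along such paths behave additively in a controlled way. First I would fix $v$ and the coset $u\WJ$, and consider the function $w \mapsto \ell(v \Rightarrow w)$ restricted to $w \in u\WJ$; I would argue that this function attains a minimum on $u\WJ$, and that the set of minimizers is nonempty and finite (which is automatic since $W$ is finite).

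Next, the heart of the argument: I would show that among the elements of $u\WJ$, there is a \emph{unique} one, call it $z$, that lies $\tb{v}$-below every other element of the coset. The natural candidate is a minimizer of $\ell(v \Rightarrow \cdot)$ on $u\WJ$ that is, in addition, "closest" to the coset in a suitable sense; the cleanest route is to use the parabolic quantum Bruhat graph $\QBG(\WJ)$ and the projection $\mcr{\cdot}^{\J}$. I would recall (or re-derive from the cited results on $\QBG$) that a shortest path from $v$ to any $w \in u\WJ$ in $\QBG(W)$ can be chosen to first travel to a distinguished representative and then move within the coset, and conversely that the distinguished representative $z$ satisfies $\ell(v \Rightarrow w) = \ell(v \Rightarrow z) + \ell(z \Rightarrow w)$ for all $w \in u\WJ$. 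This additivity is exactly the statement $z \tb{v} w$ for all $w \in u\WJ$, giving existence of a minimum element. For uniqueness, suppose $z$ and $z'$ are both minimal elements of $u\WJ$ with respect to $\tb{v}$; then $z \tb{v} z'$ and $z' \tb{v} z$, which forces $\ell(z \Rightarrow z') = \ell(z' \Rightarrow z) = 0$ by antisymmetry-type reasoning on path lengths (using that $\QBG(W)$ has no directed cycles of length $0$ other than trivial ones), hence $z = z'$.

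The main obstacle I expect is establishing the crucial additivity/factorization property of shortest paths through the coset, i.e., that a shortest directed path from $v$ to an arbitrary $w \in u\WJ$ in $\QBG(W)$ factors through the $\tb{v}$-minimal representative. This is essentially a compatibility statement between $\QBG(W)$ and the parabolic quotient $\QBG(\WJ)$, and in \cite{LNSSS1} it rests on the "lift" and "deletion" properties of the (quantum) Bruhat graph — specifically, that projecting an edge of $\QBG(W)$ under $\mcr{\cdot}^{\J}$ either yields an edge of $\QBG(\WJ)$ or stays within a single coset, together with length-compatibility under $\lhd$-label-increasing paths (Proposition~\ref{prop:QBG-LI}). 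I would handle this by invoking the label-increasing path characterization: fix a reflection order $\lhd$, take the unique label-increasing (hence shortest) path from $v$ to $w$, and analyze where it first enters the coset $u\WJ$; the point of first entry is forced to be independent of $w \in u\WJ$ because the labels before that point use only roots in $\DJp$ and the portion after uses only roots "within" the coset, and these two segments can be separated cleanly thanks to the reflection-order structure.

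Finally, I would assemble the pieces: existence follows from the factorization property applied to the distinguished entry point $z$, and uniqueness follows from the order-theoretic antisymmetry of $\tb{v}$ combined with the acyclicity of $\QBG(W)$ with respect to the length function. The argument is purely combinatorial and does not require anything about type $A$; it works for any finite Weyl group $W$ and any $\J \subset I$, consistent with the placement of this proposition in the "Basics" section. I would then note that this $\tbmin{u}{\J}{v}$ is precisely the object used later (together with its dual notion $\tbmax{u}{\J}{v}$) in formulating the inverse Chevalley formula and the semi-infinite computations of Sections~\ref{sec:identity} and~\ref{sec:relation}.
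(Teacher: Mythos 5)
The paper itself gives no proof of this proposition; it is quoted verbatim from \cite[Theorem~7.1]{LNSSS1}, so there is no internal argument to compare yours against, and your sketch has to stand on its own. Part of it does: taking a reflection order $\lhd$ in which the roots of $\DJp$ form an initial segment and those of $\DJs$ a final segment, the unique label-increasing path from $v$ to any $w \in u\WJs$ splits as a $\DJp$-labeled segment followed by a $\DJs$-labeled segment; since an edge labeled by $\beta \in \DJs$ has $s_{\beta} \in \WJs$ and hence stays inside the coset, the splitting point $z_w$ lies in $u\WJs$, and because both segments are themselves label-increasing (hence shortest, by Proposition~\ref{prop:QBG-LI}) one gets $\ell(v \Rightarrow w) = \ell(v \Rightarrow z_w) + \ell(z_w \Rightarrow w)$, i.e.\ $z_w \tb{v} w$. (Incidentally, "point of first entry into the coset" is not the right description of $z_w$: the $\DJp$-labeled segment may pass through $u\WJs$ and leave it again, since every $\DJp$-labeled edge changes the coset; what matters is the endpoint of that first segment.)

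The genuine gap is the assertion that $z_w$ "is forced to be independent of $w \in u\WJs$." No argument is given, and this independence — equivalently, the existence of a single $z \in u\WJs$ with $z \tb{v} w$ for \emph{all} $w$ in the coset — is precisely the content of the theorem. What your construction proves is only that every $w \in u\WJs$ has some element of the coset weakly $\tb{v}$-below it; since $W$ is finite, the existence of $\tb{v}$-\emph{minimal} elements in the coset is trivial anyway, and the whole difficulty is that a priori different $w$'s could produce different, mutually incomparable $z_w$'s. Relatedly, your uniqueness step conflates "minimal" with "minimum": if $z$ and $z'$ are merely minimal, one cannot conclude $z \tb{v} z'$ and $z' \tb{v} z$ (minimal elements of a poset need not be comparable); antisymmetry of $\tb{v}$, which does hold since $\ell(z \Rightarrow z') = \ell(z' \Rightarrow z) = 0$ forces $z = z'$, only rules out two distinct \emph{minimum} elements. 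To close the gap one must show comparability, e.g.\ that any two elements of $u\WJs$ whose label-increasing path from $v$ carries only $\DJp$-labels coincide (or at least are $\tb{v}$-comparable); in \cite{LNSSS1} this is exactly where the substantive work lies (lifting/diamond lemmas for $\QBG(W)$ relative to the parabolic quotient), and it cannot be waved away by the reflection-order bookkeeping alone.
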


%
\begin{dfn}[dual tilted Bruhat order] \label{dfn:dtilted}
For each $v \in W$, we define the dual $v$-tilted Bruhat order 
$\dtb{v}$ on $W$ as follows:
for $w_{1},w_{2} \in W$, 
%
%
\begin{equation} \label{eq:dtilted}
w_{1} \dtb{v} w_{2} \iff \ell(w_{1} \Rightarrow v) = 
 \ell(w_{1} \Rightarrow w_{2}) + \ell(w_{2} \Rightarrow v).
\end{equation}
Namely, $w_{1} \dtb{v} w_{2}$ if and only if 
there exists a shortest(-length) directed path 
from $w_{1}$ to $v$ in $\QBG(W)$ passing through $w_{2}$; 
or equivalently, if and only if the concatenation of a shortest(-length) directed path 
from $w_{1}$ to $w_{2}$ and one from $w_{2}$ to $v$ 
is one from $w_{1}$ to $v$. 
\end{dfn}
%
%
\begin{prop}[{\cite[Proposition~2.25]{NOS}}] \label{prop:tbmax}
Let $v \in W$. 
Then, each coset $u\WJs$ for $u \in W$ has a unique maximal element
with respect to $\dtb{v}$\,{\rm;} 
we denote it by $\tbmax{u}{\J}{v}$.
\end{prop}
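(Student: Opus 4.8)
The plan is to deduce this from Proposition~\ref{prop:tbmin} by exhibiting an explicit order-reversing symmetry of the quantum Bruhat graph that interchanges the tilted and the dual tilted Bruhat orders. The symmetry I would use is left multiplication by the longest element, $L_{\lng} \colon W \to W$, $w \mapsto \lng w$, which is an involution of $W$; the key claim is that it reverses every arrow of $\QBG(W)$ while preserving the edge labels and the distinction between Bruhat and quantum edges.

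First I would verify this claim directly. Fix $x,y \in W$ with $y = x s_{\alpha}$ for some $\alpha \in \Delta^{+}$; then $\lng x = (\lng y) s_{\alpha}$ as well, so, recalling $\ell(\lng w) = \ell(\lng) - \ell(w)$, only a length computation is needed. If $x \Be{\alpha} y$, i.e., $\ell(y) = \ell(x) + 1$, then $\ell(\lng y) = \ell(\lng x) - 1$, so $\lng y \Be{\alpha} \lng x$ is a Bruhat edge; if $x \Qe{\alpha} y$, i.e., $\ell(y) = \ell(x) + 1 - 2\pair{\rho}{\alpha^{\vee}}$, then $\ell(\lng x) = \ell(\lng y) + 1 - 2\pair{\rho}{\alpha^{\vee}}$, so $\lng y \Qe{\alpha} \lng x$ is a quantum edge. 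Applying $L_{\lng}$ once more recovers the original edge, so $L_{\lng}$ induces a direction-reversing, label- and type-preserving involution of the edge set of $\QBG(W)$.

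Next I would transfer this to shortest paths. A directed path $\bp \colon x_{0} \edge{\gamma_{1}} x_{1} \edge{\gamma_{2}} \cdots \edge{\gamma_{s}} x_{s}$ in $\QBG(W)$ is carried by $L_{\lng}$ to the reversed path $\lng x_{s} \edge{\gamma_{s}} \lng x_{s-1} \edge{\gamma_{s-1}} \cdots \edge{\gamma_{1}} \lng x_{0}$, which has the same length and the same collection of quantum edges (with the same labels), hence the same weight. Therefore $\ell(w_{1} \Rightarrow w_{2}) = \ell(\lng w_{2} \Rightarrow \lng w_{1})$ (and $\wt(w_{1} \Rightarrow w_{2}) = \wt(\lng w_{2} \Rightarrow \lng w_{1})$) for all $w_{1}, w_{2} \in W$. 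Substituting these identities into the defining relations \eqref{eq:tilted} and \eqref{eq:dtilted}, one obtains
\[
w_{1} \dtb{v} w_{2} \iff \lng w_{2} \tb{\lng v} \lng w_{1} \qquad (v, w_{1}, w_{2} \in W),
\]
that is, $L_{\lng}$ is an order-reversing bijection from $(W, \dtb{v})$ onto $(W, \tb{\lng v})$.

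Finally, since $L_{\lng}(u\WJs) = (\lng u)\WJs$ is again a left coset of $\WJs$, the order-reversing bijection $L_{\lng}$ carries $\dtb{v}$-maximal elements of $u\WJs$ to $\tb{\lng v}$-minimal elements of $(\lng u)\WJs$, and conversely. By Proposition~\ref{prop:tbmin}, the coset $(\lng u)\WJs$ has a unique $\tb{\lng v}$-minimal element $\tbmin{\lng u}{\J}{\lng v}$; pulling it back by $L_{\lng}$ gives the unique $\dtb{v}$-maximal element of $u\WJs$, so that $\tbmax{u}{\J}{v} = \lng \cdot \tbmin{\lng u}{\J}{\lng v}$, which proves the proposition. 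I expect the only genuine obstacle to be the edge-by-edge verification in the second paragraph — specifically, checking that quantum edges map to quantum edges, so that shortest paths are sent to \emph{shortest} paths and not merely to paths; once this length bookkeeping is in place, everything else is a formal rewriting of the definitions of $\tb{v}$ and $\dtb{v}$.
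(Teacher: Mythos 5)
Your argument is correct. Note first that the paper itself gives no proof of this proposition: it is quoted from \cite[Proposition~2.25]{NOS}, so there is nothing internal to compare against line by line. What you do is give a self-contained deduction from Proposition~\ref{prop:tbmin}, and the key computation checks out: if $y=xs_{\alpha}$ then $\lng x=(\lng y)s_{\alpha}$, and since $\ell(\lng w)=\ell(\lng)-\ell(w)$, the condition $\ell(y)=\ell(x)+1$ becomes $\ell(\lng x)=\ell(\lng y)+1$ and the condition $\ell(y)=\ell(x)+1-2\pair{\rho}{\alpha^{\vee}}$ becomes $\ell(\lng x)=\ell(\lng y)+1-2\pair{\rho}{\alpha^{\vee}}$ (here $\rho_{\emptyset}=0$, as the tilted orders are defined via $\QBG(W)$, not a parabolic quotient), so $x\edge{\alpha}y$ is an edge of $\QBG(W)$ of a given type exactly when $\lng y\edge{\alpha}\lng x$ is an edge of the same type and label; this arrow-reversing symmetry is the one already implicit in \cite{BFP, Postnikov}. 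From it you correctly get $\ell(w_{1}\Rightarrow w_{2})=\ell(\lng w_{2}\Rightarrow\lng w_{1})$, hence $w_{1}\dtb{v}w_{2}\iff \lng w_{2}\tb{\lng v}\lng w_{1}$, and since left translation by $\lng$ maps the coset $u\WJs$ onto $(\lng u)\WJs$ (note that Proposition~\ref{prop:tbmin}, despite the notation in its statement, concerns cosets of the subgroup $\WJs$), uniqueness of the $\tb{\lng v}$-minimum in $(\lng u)\WJs$ transfers to uniqueness of the $\dtb{v}$-maximum in $u\WJs$; finiteness of the coset makes ``unique minimal/maximal'' equivalent to ``minimum/maximum'', so no issue arises there. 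Your route has the added benefit of producing the explicit identity $\tbmax{u}{\J}{v}=\lng\,\tbmin{\lng u}{\J}{\lng v}$, which is in the spirit of how the cited reference relates the two tilted orders.
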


Recall that $\le$ denotes the (ordinary) Bruhat order on $W$. 
For $w \in W$ and $x \in \WJ$ such that $x \ge \mcr{w}^{\J}$, 
the set $\bigl\{ y \in x\WJs \mid y \ge w \bigr \}$ has a unique 
minimal element in the Bruhat order $\le$, which is denoted by $\up{w}{x}{\J}$
(see, e.g., \cite[Proposition~3.1\,(1)]{LS}). Similarly, 
for $w \in W$ and $x \in \WJ$ such that $x \le \mcr{w}^{\J}$, 
the set $\bigl\{ y \in x\WJs \mid y \le w \bigr \}$ has a unique 
maximal element in the Bruhat order, which is denoted by $\dn{w}{x}{\J}$
(see, e.g., \cite[Proposition~3.1\,(2)]{LS}). 

%
\begin{lem} \label{lem:Deo}
Keep the notation and setting above. 
\begin{enu}
\item Let $w \in W$ and $x \in \WJ$ be such that $x \ge \mcr{w}^{\J}$ in the Bruhat order. 
Then, we have $\tbmin{x}{\J}{w} \ge w$ and $\tbmin{x}{\J}{w} = \up{w}{x}{\J}$. 

\item Let $w \in W$ and $x \in \WJ$ be such that $x \le \mcr{w}^{\J}$ in the Bruhat order.
Then, we have $\tbmax{x}{\J}{w} \le w$ and $\tbmax{x}{\J}{w} = \dn{w}{x}{\J}$. 
\end{enu}
\end{lem}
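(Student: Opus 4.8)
\textbf{Proof proposal for Lemma~\ref{lem:Deo}.}
The plan is to prove the two assertions in parallel, since (2) is the ``dual'' of (1) under the standard anti-automorphism, and to reduce everything to the characterization of $\tbmin{x}{\J}{w}$ and $\tbmax{x}{\J}{w}$ in terms of shortest directed paths in $\QBG(W)$ (Definitions~\ref{dfn:tilted} and~\ref{dfn:dtilted}). First I would recall that the hypothesis $x \ge \mcr{w}^{\J}$ in the Bruhat order guarantees, via Lemma~\ref{lem:BG-QBG}, that a shortest directed path from $w$ to $x$ in $\QBG(W)$ consists entirely of Bruhat edges; more precisely, I want to pass through the coset description and use that $\mcr{w}^\J \le x$ implies $w \le \up{w}{x}{\J} \in x\WJs$, so the relevant shortest paths in $\QBG(W)$ from $w$ lie in $\BG(W)$. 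The key point is that $\wt(w \Rightarrow y) = 0$ for any $y \ge w$, so tilted-Bruhat minimality within the coset $x\WJs$ relative to $w$ is genuinely a statement about the ordinary Bruhat order restricted to $\{ y \in x\WJs \mid y \ge w\}$.

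Concretely, for part (1) I would argue as follows. Let $z := \tbmin{x}{\J}{w}$, the unique $\le_w$-minimal element of $x\WJ = x\WJs$. By definition of $\le_w$ and of the tilted order, $z \le_w x$, i.e.\ there is a shortest directed path from $w$ to $x$ in $\QBG(W)$ through $z$; in particular $\ell(w \Rightarrow z) + \ell(z \Rightarrow x) = \ell(w \Rightarrow x)$. I first need $z \ge w$: this should follow because $\up{w}{x}{\J}$ lies in $x\WJs$ and satisfies $\up{w}{x}{\J} \ge w$, hence $\ell(w \Rightarrow \up{w}{x}{\J}) = \ell(\up{w}{x}{\J}) - \ell(w)$ with all edges Bruhat, so $\up{w}{x}{\J}$ is a candidate for the $\le_w$-minimum and by uniqueness $z \le_w \up{w}{x}{\J}$; combined with the fact (Lemma~\ref{lem:BG-QBG} again) that a shortest path $w \Rightarrow \up{w}{x}{\J}$ is Bruhat, and that $z$ lies on a shortest path $w \Rightarrow \up{w}{x}{\J}$, every vertex on that path — in particular $z$ — is $\ge w$. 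Once $z \ge w$, I show $z = \up{w}{x}{\J}$ by a two-sided inequality: $z \in x\WJs$ and $z \ge w$ makes $z$ a member of the set defining $\up{w}{x}{\J}$, so $\up{w}{x}{\J} \le z$; conversely $\up{w}{x}{\J}$ is itself $\le_w$-comparable to and above $w$, hence $z \le_w \up{w}{x}{\J}$, and since both are $\ge w$ the order $\le_w$ restricted there coincides with $\le$, giving $z \le \up{w}{x}{\J}$. Part (2) is handled by applying part (1) to the reversed situation: using the anti-automorphism $w \mapsto w^{-1}$ (or $w \mapsto \lng w$, whichever reverses $\QBG(W)$-paths and swaps $\le_v$ with $\le_v^\ast$), the statement $\tbmax{x}{\J}{w} = \dn{w}{x}{\J}$ and $\tbmax{x}{\J}{w} \le w$ translates exactly into the content of (1); alternatively one repeats the argument verbatim with ``maximal/$\dn{}{}{}$/$\le_v^\ast$'' in place of ``minimal/$\up{}{}{}$/$\le_v$'', invoking Proposition~\ref{prop:tbmax} instead of Proposition~\ref{prop:tbmin}.

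The main obstacle I anticipate is the careful bookkeeping needed to be sure that the shortest $\QBG(W)$-path realizing $z \le_w x$ really does stay in $\BG(W)$ on the initial segment $w \Rightarrow z$ — a priori the path from $w$ to $x$ could use quantum edges after passing through $z$ even though $z \ge w$ — and conversely that minimality in $\le_w$ among elements $\ge w$ of the coset is the same as minimality in $\le$. This is exactly where Lemma~\ref{lem:BG-QBG} does the work: any shortest $w \Rightarrow y$ with $y \ge w$ is length $\ell(y)-\ell(w)$ and hence Bruhat, so $z \le_w x$ with $z \ge w$ forces $\ell(w \Rightarrow z) = \ell(z) - \ell(w)$, i.e.\ $w \le z$ in the ordinary Bruhat order and the tilted order below $z$ agrees with $\le$. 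The one subtlety to check explicitly is that $x \ge \mcr{w}^\J$ indeed implies the set $\{ y \in x\WJs \mid y \ge w\}$ is nonempty (so that $\up{w}{x}{\J}$ exists), which is precisely the cited \cite[Proposition~3.1\,(1)]{LS}; granting that, the rest is the two-sided inequality above. I expect the whole argument to be short once these reductions are in place.
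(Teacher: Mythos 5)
Your proposal is correct, but it proves the lemma by a genuinely different route than the paper. The paper proves part (2) directly, and with heavier machinery: it fixes a reflection order satisfying \eqref{eq:refa}, invokes the characterization of $\tbmax{x}{\J}{w}$ via the label-increasing path with all labels in $\DJp$ (\cite[Lemma~2.15]{KoNS}), and uses the congruence $\wt(x'\Rightarrow w)\equiv\wt(x\Rightarrow\mcr{w}^{\J})$ modulo $\sum_{i\in\J}\BZ\alpha_i^{\vee}$ (\cite[Lemma~7.2]{LNSSS2}) to force $\wt(x'\Rightarrow w)=0$, hence $x'\le w$, before comparing $x'$ with $\dn{w}{x}{\J}$; part (1) is then deduced from part (2) by citing \cite[Proposition~2.25]{NOS} --- essentially the same duality you invoke, just in the opposite direction. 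Your argument for part (1) is more elementary and self-contained: it uses only the definition of the tilted order, the existence of the minimum (Proposition~\ref{prop:tbmin}), the existence of $\up{w}{x}{\J}$, and Lemma~\ref{lem:BG-QBG} applied to the shortest path from $w$ to $\up{w}{x}{\J}$ passing through $z=\tbmin{x}{\J}{w}$ (which simultaneously gives $w\le z$ and $z\le\up{w}{x}{\J}$), so no reflection orders or weight-congruence results are needed; and your observation that $\le_w$ agrees with $\le$ on elements $\ge w$ is exactly the length-counting content of Lemma~\ref{lem:BG-QBG}. Your ``verbatim dual'' version for part (2) (with $\dtb{w}$, Proposition~\ref{prop:tbmax}, and $\dn{w}{x}{\J}$) goes through word for word and is the cleanest way to finish. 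One caveat on your alternative duality route: $w\mapsto w^{-1}$ is \emph{not} an anti-automorphism of $\QBG(W)$ (it distorts labels and the quantum-edge length condition), whereas left multiplication $w\mapsto\lng w$ is, and it preserves the cosets $xW_{\J}$ and exchanges $\tb{v}$ with $\dtb{\lng v}$; so if you take that route you should use $\lng w$ and say why it reverses quantum edges. This is a minor point since your verbatim dualization already suffices.
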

\begin{proof}
We give a proof only for part (2), since part (1) follows from part (2) and 
\cite[Proposition~2.25]{NOS}. 
We take and fix a reflection order $\lhd$ such that 
%
%
\begin{equation} \label{eq:refa}
\beta \lhd \gamma \quad 
\text{for all $\beta \in \DJs$ and $\gamma \in \DJp$}.
\end{equation}
We know (see, e.g., \cite[Lemma 2.15]{KoNS}) that 
$w \in W$ and $x' \in x\WJs$ satisfies the condition that $x'=\tbmax{x}{\J}{w}$ 
if and only if all the labels in the label-increasing directed path 
from $x'$ to $w$ in the quantum Bruhat graph $\QBG(W)$ 
are contained in $\DJp$; in particular, $\wt (x' \Rightarrow w) = 0$ or 
$\wt (x' \Rightarrow w) \notin \sum_{i \in \J} \BZ \alpha_{i}^{\vee}$. 
Also, we deduce from \cite[Lemma~7.2]{LNSSS2} that 
$\wt (x' \Rightarrow w) \equiv \wt(x \Rightarrow \mcr{w}^{\J})$ modulo 
$\sum_{i \in \J} \BZ \alpha_{i}^{\vee}$. Since $x \le \mcr{w}^{\J}$ in the Bruhat order, 
we have $\wt(x \Rightarrow \mcr{w}^{\J}) = 0$, which implies that 
$\wt (x' \Rightarrow w) \in \sum_{i \in \J} \BZ \alpha_{i}^{\vee}$. 
Hence we obtain $\wt (x' \Rightarrow w) = 0$, which implies that $\tbmax{x}{\J}{w}=x' \le w$. 

Now, by the definition of $\dn{w}{x}{\J}$, we have 
$x' \le \dn{w}{x}{\J}$. Since $\dn{w}{x}{\J} \in x\WJs$, we see that 
$\dn{w}{x}{\J} \dtb{w} x'$. It follows from the definition of $\dtb{w}$ that 
\begin{equation*}
\wt (\dn{w}{x}{\J} \Rightarrow w) = \wt (\dn{w}{x}{\J} \Rightarrow x') + \wt(x' \Rightarrow w).
\end{equation*}
Here we have $\wt(x' \Rightarrow w) = 0$, as seen above. 
Also, since $\dn{w}{x}{\J} \le w$ in the Bruhat order, 
we have $\wt (\dn{w}{x}{\J} \Rightarrow w) = 0$. Hence we see that 
$\wt (\dn{w}{x}{\J} \Rightarrow x') = 0$, which implies that 
$\dn{w}{x}{\J} \le x'$. Combining these inequalities, we obtain 
$\dn{w}{x}{\J} = x' = \tbmax{x}{\J}{w}$, as desired. 
\end{proof}
%
%
\subsection{Quantum Lakshmibai-Seshadri paths.}
\label{subsec:QLS}

Let $\nu \in P^{+}$, and set 
\begin{equation} \label{eq:J}
\J=\J_{\nu}:= 
\bigl\{ i \in I \mid \pair{\nu}{\alpha_{i}^{\vee}}=0 \bigr\}.
\end{equation}
%
%
\begin{dfn} \label{dfn:QBa}
For a rational number $0 < a < 1$, 
we define $\QBG_{a\nu}(\WJ)$ (resp., $\BG_{a\nu}(\WJ)$) 
to be the subgraph of $\QBG(\WJ)$ (resp., $\BG(\WJ)$) 
with the same vertex set but having only those directed edges of 
the form $x \edge{\alpha} y$ for which 
$a\pair{\nu}{\alpha^{\vee}} \in \BZ$ holds.
\end{dfn}
%
%
\begin{dfn}[{\cite{Lit94} and \cite{Lit95}}] \label{dfn:LS}
A \emph{Lakshmibai-Seshadri path} (LS path for short) 
of shape $\nu$ is a pair 
%
%
\begin{equation} \label{eq:LS}
\eta = (\bw \,;\, \ba) = 
(w_{1},\,\dots,\,w_{s} \,;\, a_{0},\,a_{1},\,\dots,\,a_{s}), \quad s \ge 1, 
\end{equation}
of a sequence $w_{1},\,\dots,\,w_{s}$ 
of elements in $\WJ$, with $w_{l} \ne w_{l+1}$ 
for any $1 \le l \le s-1$, and an increasing sequence 
$0 = a_0 < a_1 < \cdots  < a_s =1$ of rational numbers 
satisfying the condition that there exists a directed path 
in $\BG_{a_{l}\nu}(\WJ)$ from $w_{l+1}$ to  $w_{l}$ 
for each $l = 1,\,2,\,\dots,\,s-1$. 
\end{dfn}

Let $\LS(\nu)$ denote 
the set of all Lakshmibai-Seshadri paths of shape $\nu$. 
%
%
\begin{dfn}[{\cite[Section~3.2]{LNSSS2}}] \label{dfn:QLS}
A \emph{quantum Lakshmibai-Seshadri path} (QLS path for short) 
of shape $\nu$ is a pair 
%
%
\begin{equation} \label{eq:QLS}
\eta = (\bw \,;\, \ba) = 
(w_{1},\,\dots,\,w_{s} \,;\, a_{0},\,a_{1},\,\dots,\,a_{s}), \quad s \ge 1, 
\end{equation}
of a sequence $w_{1},\,\dots,\,w_{s}$ 
of elements in $\WJ$, with $w_{l} \ne w_{l+1}$ 
for any $1 \le l \le s-1$, and an increasing sequence 
$0 = a_0 < a_1 < \cdots  < a_s =1$ of rational numbers 
satisfying the condition that there exists a directed path 
in $\QBG_{a_{l}\nu}(\WJ)$ from $w_{l+1}$ to  $w_{l}$ 
for each $l = 1,\,2,\,\dots,\,s-1$. 
\end{dfn}

Let $\QLS(\nu)$ denote 
the set of all quantum LS paths of shape $\nu$; 
note that $\LS(\nu) \subset \QLS(\nu)$. 
For $\eta \in \QLS(\nu)$ of the form \eqref{eq:QLS}, 
we set $\kappa(\eta):=w_{s} \in \WJ$, and 
\begin{equation} \label{eq:wt}
\wt (\eta) := \sum_{l=1}^{s} (a_{l}-a_{l-1}) w_{l}\nu \in P, 
\end{equation}
\begin{equation} \label{eq:deg}
\deg(\eta):= - \sum_{l=1}^{s-1} a_{l} \pair{\nu}{\wt(w_{l+1} \Rightarrow w_{l})} \in \BZ_{\le 0}; 
\end{equation}
see, e.g., \cite[Remark~3.13]{NOS}. 
Also, following \cite[(3.26) and (3.27)]{NOS}, 
for $\eta \in \QLS(\nu)$ of the form \eqref{eq:QLS} and $v \in W$, 
we define $\kap{\eta}{v} \in W$ by the following recursive formula: 
%
%
\begin{equation} \label{eq:haw}
\begin{cases}
\ha{w}_{0}:=v, & \\[2mm]
\ha{w}_{l}:=\tbmax{w_{l}}{\J_{\nu}}{\ha{w}_{l-1}} & \text{for $1 \le l \le s$}, \\[2mm]
\kap{\eta}{v}:=\ha{w}_{s}, 
\end{cases}
\end{equation}
and then we set
%
%
\begin{equation} \label{eq:zetav}
\zeta(\eta,v):= 
\sum_{l=0}^{s-1} \wt (\ha{w}_{l+1} \Rightarrow \ha{w}_{l}). 
\end{equation}
Similarly, following \cite[(C.2)--(C.4)]{NOS},
we define $\io{\eta}{v} \in W$ 
by the following recursive formula: 
%
%
\begin{equation} \label{eq:tiw}
\begin{cases}
\ti{w}_{s+1}:=v, & \\[2mm]
\ti{w}_{l}:=\tbmin{w_{l}}{\J_{\mu}}{\ti{w}_{l+1}} & \text{for $1 \le l \le s$}, \\[2mm]
\io{\eta}{v}:=\ti{w}_{1}, 
\end{cases}
\end{equation}
and then we set
%
%
\begin{equation} \label{eq:xiv}
\xi(\eta,v):= 
\sum_{l=1}^{s} \wt (\ti{w}_{l+1} \Rightarrow \ti{w}_{l}), 
\end{equation}
%
%
%
\begin{equation} \label{eq:degx}
\deg_{v}(\eta):= - \sum_{l=1}^{s} a_{l} \pair{\nu}{\wt(\ti{w}_{l+1} \Rightarrow \ti{w}_{l})}. 
\end{equation}

Recall that $\theta$ is the highest root of $\Delta^{+}$, with 
$\theta^{\vee}$ its coroot.
%
%
\begin{lem} \label{lem:QLS=LS}
For $i \in I$, 
$\QLS(\vpi_{i}) = \LS(\vpi_{i})$ if and only if $\pair{\vpi_{i}}{\theta^{\vee}}=1$. 
In particular, if $\vpi_{i}$ is minuscule, or if $\Fg$ is of type $C$, then 
$\QLS(\vpi_{i}) = \LS(\vpi_{i})$. 
\end{lem}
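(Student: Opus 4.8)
The plan is to compare the two path models at the level of labeled graphs. Put $\J := \J_{\vpi_i} = I \setminus \{i\}$; since $\pair{\vpi_i}{\alpha^\vee} \ge 0$ for every $\alpha \in \Delta^{+}$, with equality exactly on $\DJs$, each $\BG_{a\vpi_i}(\WJ)$ is a subgraph of $\QBG_{a\vpi_i}(\WJ)$, so $\LS(\vpi_i) \subseteq \QLS(\vpi_i)$ always. The key reduction I would establish is: \emph{$\QLS(\vpi_i) = \LS(\vpi_i)$ if and only if $\QBG_{a\vpi_i}(\WJ)$ has no quantum edge for every rational $0 < a < 1$.} The ``if'' part is immediate from Definitions~\ref{dfn:LS} and~\ref{dfn:QLS}, since then each $\QBG_{a_l\vpi_i}(\WJ)$ coincides with $\BG_{a_l\vpi_i}(\WJ)$. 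For the ``only if'' part I argue by contraposition: given a quantum edge $x \Qe{\alpha} y$ in some $\QBG_{a\vpi_i}(\WJ)$, condition (Q) of Definition~\ref{dfn:QBG} together with the standard fact $\pair{\rho - \rho_{\J}}{\alpha^{\vee}} \ge 1$ for $\alpha \in \DJp$ forces $\ell(y) \le \ell(x) - 1$; since directed paths in $\BG(W)$ strictly increase the length, there is no directed path from $x$ to $y$ in $\BG_{a\vpi_i}(\WJ)$, so $\eta := (y, x \,;\, 0, a, 1)$ is a quantum LS path of shape $\vpi_i$ that is not an LS path.

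Next I would reformulate the edge condition. An edge $x \Qe{\alpha} y$ of $\QBG(\WJ)$ lies in $\QBG_{a\vpi_i}(\WJ)$ for some rational $0 < a < 1$ precisely when $\pair{\vpi_i}{\alpha^{\vee}} \ge 2$ (take $a = 1/\pair{\vpi_i}{\alpha^{\vee}}$). Hence the lemma becomes the assertion $(\ast)$: \emph{$\QBG(\WJ)$ has a quantum edge whose label $\alpha$ satisfies $\pair{\vpi_i}{\alpha^{\vee}} \ge 2$ if and only if $\pair{\vpi_i}{\theta^{\vee}} \ge 2$.} One always has $\pair{\vpi_i}{\theta^{\vee}} \ge 1$: $\theta^{\vee}$ is a positive coroot, and if its $\alpha_i^{\vee}$-coefficient vanished, then $\theta$ would lie in $\DJs$, contradicting that the highest root has strictly positive simple-root coefficients. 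For the ``if'' direction of $(\ast)$, assuming $\pair{\vpi_i}{\theta^{\vee}} \ge 2$, I would exhibit a quantum edge of $\QBG(\WJ)$ labeled by $\theta$ itself: in $\QBG(W)$ one always has the quantum edge $s_{\theta} \Qe{\theta} e$ (as $\ell(s_{\theta}) = 2\pair{\rho}{\theta^{\vee}} - 1$), and I would transport this to $\QBG(\WJ)$ using the projection/lifting properties of the parabolic quantum Bruhat graph from \cite{LNSSS1} (which preserve edge labels); the resulting edge carries the label $\theta$ with $\pair{\vpi_i}{\theta^{\vee}} \ge 2$.

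The main obstacle is the remaining (``only if'') direction of $(\ast)$: assuming $\pair{\vpi_i}{\theta^{\vee}} = 1$ (equivalently, $\vpi_i$ cominuscule), one must show that \emph{every} quantum edge $x \Qe{\alpha} y$ of $\QBG(\WJ)$ has $\pair{\vpi_i}{\alpha^{\vee}} = 1$. My approach would be to lift such an edge to a quantum edge of $\QBG(W)$ via the parabolic lifting lemma of \cite{LNSSS1}, and then to use that (Q) forces $0 \le \ell(y) = \ell(x) + 1 - 2\pair{\rho - \rho_{\J}}{\alpha^{\vee}}$, hence $\ell(w_{\circ}^{\J}) = |\DJp| \ge 2\pair{\rho - \rho_{\J}}{\alpha^{\vee}} - 1$; the content is that in the cominuscule case this length inequality fails whenever $\pair{\vpi_i}{\alpha^{\vee}} \ge 2$ (the right-hand side grows too fast relative to $|\DJp|$). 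Should a uniform root-theoretic argument prove cumbersome, one can instead note that the pairs $(\Fg, i)$ with $\pair{\vpi_i}{\theta^{\vee}} = 1$ form a short explicit list of cominuscule nodes (including all nodes in types $A_{n}$ and $C_{n}$), and for each the labels of the quantum edges of $\QBG(\WJ)$ are known from the (well-understood) quantum cohomology of the cominuscule variety $G/P_{\J}$ and all satisfy $\pair{\vpi_i}{\cdot} = 1$, so $(\ast)$ can be verified case by case. Finally, the ``in particular'' clause follows at once: if $\vpi_i$ is minuscule, then $\pair{\vpi_i}{\alpha^{\vee}} \le 1$ for all $\alpha \in \Delta^{+}$, whence $\pair{\vpi_i}{\theta^{\vee}} = 1$ by the lower bound above; and if $\Fg$ is of type $C_{n}$, a direct computation gives $\theta^{\vee} = \alpha_1^{\vee} + \cdots + \alpha_n^{\vee}$, so $\pair{\vpi_i}{\theta^{\vee}} = 1$ for every $i$.
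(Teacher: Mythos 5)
Your reduction to the existence of quantum edges at some level $a$, and your treatment of the direction ``$\pair{\vpi_{i}}{\theta^{\vee}}\ge 2 \Rightarrow \QLS(\vpi_i)\supsetneq\LS(\vpi_i)$'', are sound and close to the paper's: the paper also starts from the quantum edge $s_{\theta}\edge{\theta}e$ at level $1/b$ and projects it, via \cite[Lemma~6.2]{LNSSS2}, to a directed path in $\QBG_{(1/b)\vpi_i}(\WJ)$ from $\mcr{s_{\theta}}^{\J}$ to $e$, which immediately yields the non-LS path $(e,\mcr{s_{\theta}}^{\J};0,1/b,1)$. Be aware, though, that the projection neither preserves labels nor yields a single edge in general, so your parenthetical ``which preserve edge labels'' is not justified; it is also unnecessary, since the projected path is nonempty (as $s_{\theta}\notin W_{\J}$) and ends at $e$, hence must contain a quantum edge lying in the level-$(1/b)$ subgraph, whose label automatically pairs with $\vpi_i$ to a positive multiple of $b$.

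The genuine gap is the hard direction of your statement $(\ast)$: that $\pair{\vpi_i}{\theta^{\vee}}=1$ excludes quantum edges of $\QBG(\WJ)$ with label $\alpha$ satisfying $\pair{\vpi_i}{\alpha^{\vee}}\ge 2$. Your proposed uniform mechanism — that the length constraint $|\DJp|\ge 2\pair{\rho-\rho_{\J}}{\alpha^{\vee}}-1$ fails whenever $\pair{\vpi_i}{\alpha^{\vee}}\ge 2$ — is false. Take $\Fg$ of type $C_n$, $i=n$ (so $\pair{\vpi_n}{\theta^{\vee}}=1$) and $\alpha=\eps_1+\eps_2$: then $\pair{\vpi_n}{\alpha^{\vee}}=2$, $\pair{\rho-\rho_{\J}}{\alpha^{\vee}}=n+1$, while $|\DJp|=n(n+1)/2$, so for $n\ge 4$ one has $2\pair{\rho-\rho_{\J}}{\alpha^{\vee}}-1=2n+1\le n(n+1)/2$ and length counting cannot rule out such an edge; yet the lemma asserts none exists. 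Your fallback (a case-by-case appeal to the quantum cohomology of cominuscule $G/P_{\J}$) is not carried out and would require importing, for each family, the identification of parabolic quantum-Bruhat edges with quantum Chevalley terms. The paper closes exactly this gap uniformly: by \cite[(2)$'$ in \S4.3, (2) in \S4.2, and Lemma~4.2]{LNSSS1}, any label $\beta$ of a quantum edge is a quantum root, i.e.\ $\ell(s_{\beta})=2\pair{\rho}{\beta^{\vee}}-1$, and for a quantum root one shows $\theta^{\vee}-\beta^{\vee}\in Q^{\vee,+}$ (in the non-simply-laced short-root case because $\Supp(\beta)$ is of type $A$, so $\beta^{\vee}$ is a sum of distinct simple coroots), whence $\pair{\vpi_i}{\beta^{\vee}}\le\pair{\vpi_i}{\theta^{\vee}}=1$, a contradiction. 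In the $C_n$ example above this is what does the work: $\ell(s_{\eps_1+\eps_2})=4n-5<4n-3=2\pair{\rho}{(\eps_1+\eps_2)^{\vee}}-1$, so $\eps_1+\eps_2$ is not a quantum root — information invisible to your length estimate in $\WJ$. Without an argument of this kind (or a completed case verification), the ``if'' part of the lemma is not proved.
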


\begin{proof}
Let $i \in I$; recall that $\J=\J_{\vpi_{i}}=I \setminus \{i\}$. 
First we prove the ``if'' part. 
Let $i \in I$ be such that $\pair{\vpi_{i}}{\theta^{\vee}}=1$. 
Suppose, for a contradiction, that $\QLS(\vpi_{i}) \supsetneq \LS(\vpi_{i})$. 
Then, there exist $v,w \in \WJ$, $\beta \in \Delta^{+} \setminus \Delta_{\J}^{+}$, 
and $a \in \BQ$ with $0 < a < 1$ such that $w \edge{\beta} v$ is a quantum edge 
in $\QBG(\WJ)$ and $a\pair{\vpi_{i}}{\beta^{\vee}} \in \BZ_{> 0}$; 
we necessarily have $\pair{\vpi_{i}}{\beta^{\vee}} \ge 2$. Also, 
it follows from \cite[(2)' in Section 4.3 and (2) in Section 4.2]{LNSSS1} that 
$\beta$ is a quantum root in the sense of \cite[Lemma~4.2]{LNSSS1}.
Here we claim that $\theta^{\vee}-\beta^{\vee} \in Q^{\vee,+}$. 
If $\Fg$ is simply-laced, or if $\beta$ is a long root, then 
the claim is well-known. 
Assume that $\Fg$ is not simply-laced, and $\beta$ is a short root. 
We deduce from \cite[Lemma~4.2\,(2)]{LNSSS1} that 
the Dynkin subdiagram corresponding to 
the support $\Supp(\beta) := \bigl\{ i \in I \mid 
\pair{\vpi_{i}}{\beta^{\vee}} > 0 \bigr\}$ of $\beta$ is of type $A$. 
Hence it follows that $\beta = \sum_{j \in \Supp(\beta)} \alpha_{j}$, and 
$\beta^{\vee} = \sum_{j \in \Supp(\beta)} \alpha_{j}^{\vee}$ 
(see, e.g., \cite[(5.1.1)]{Kac}). 
Therefore, also in this case, we see that $\theta^{\vee} - \beta^{\vee} \in Q^{\vee,+}$, as desired. 
From this claim, we obtain 
$\pair{\vpi_{i}}{\theta^{\vee}} \ge \pair{\vpi_{i}}{\beta^{\vee}} \ge 2$, 
which is a contradiction. 
This proves the ``if'' part. 

Next we prove the ``only if'' part. 
Assume that $b:=\pair{\vpi_{i}}{\theta^{\vee}} \ge 2$.
Since $\theta$ is a quantum root by \cite[Lemma~4.2\,(1)]{LNSSS1}, 
we see that $s_{\theta} \edge{\theta} e$ is a quantum edge in $\QBG_{(1/b)\vpi_{i}}(W)$. 
It follows from \cite[Lemma~6.2]{LNSSS2} that there exists a directed path from 
$\mcr{ s_{\theta} }^J$ to $\mcr{ e }^J = e$ in $\QBG_{(1/b)\vpi_{i}}(\WJ)$. 
Hence we deeduce that $\eta \in (e, \mcr{s_{\theta}}^J \,;\,0,1/b,1) \in \QLS(\vpi_{i})$, 
but $\eta \notin LS(\vpi_{i})$. Thus we have $\QLS(\vpi_{i}) \supsetneq \LS(\vpi_{i})$. 
This proves the ``only if'' part, and completes the proof of the lemma. 
\end{proof}

%
\subsection{The root system of type $A$.}

We recall the root system of type $A$. In the rest of this paper, if we assume that $G$ is of type $A$, 
then we use the notation introduced in this subsection. 

Assume that $G$ is of type $A_{n}$, i.e., $G = SL_{n+1}(\BC)$. 
Then $\Fg = \Fsl_{n+1}(\BC)$, 
and $\Fh := \{ h \in \Fg \mid \text{$h$ is a diagonal matrix}\}$ is 
a Cartan subalgebra of $\Fg$. 
We let $\{\eps_{k} \mid 1 \le k \le n+1\}$ be the standard basis of $\BZ^{n+1}$ 
and realize the weight lattice as $P=\BZ^{n+1}/\BZ(\eps_1+\cdots+\eps_n+\eps_{n+1})$. 
By abuse of notation, we continue to denote the image of $\eps_k$ in $P$ by the same symbol. 
Thus $\varpi_k := \eps_1+\dotsm+\eps_k$, for $k \in \{1, \ldots, n\}$, are the fundamental weights of $\Fg$, 
and $\eps_1+\cdots+\eps_n+\eps_{n+1} = 0$. 
We set $\alpha_{i} := \eps_{i} - \eps_{i+1}$ for $i \in \{ 1, \ldots, n \}$ 
and $\alpha_{i, j} := \alpha_{i} + \alpha_{i+1} + \cdots + \alpha_{j}=\eps_{i}-\eps_{j+1}$ for $i, j \in \{1, \ldots, n\}$ with $i \le j$. 
Then $\Delta = \{ \pm \alpha_{i, j} \mid 1 \le i \le j \le n \}$ forms a root system of $\Fg$, 
with the set of positive roots $\Delta^+ = \{ \alpha_{i, j} \mid 1 \le i \le j \le n \}$ 
and the set of simple roots $\{ \alpha_{1}, \ldots, \alpha_{n} \}$. 

Let us review the Weyl group of $\Fg=\Fsl_{n+1}(\BC)$. 
By the definition of the Weyl group, we have $W = \langle s_{1}, \ldots, s_{n} \rangle$, 
where $s_{1}, \ldots, s_{n}$ are simple reflections. 
For $i,j \in \{ 1, \ldots, n+1 \}$ with $i < j$, 
we denote by $(i,j)$ the transposition of $i$ and $j$. 
It is known that the correspondence 
$s_{1} \mapsto (1, 2),\, 
 s_{2} \mapsto (2, 3),\,\dots,\,
 s_{n} \mapsto (n, n+1)$ 
defines a group isomorphism $W \xrightarrow{\sim} S_{n+1}$, 
where $S_{n+1}$ denotes the symmetric group of degree $n+1$. 
With this isomorphism, we regard $x \in W$ as a permutation on $\{ 1, \ldots, n+1 \}$. 
By this identification, we see that $w \eps_{i} = \eps_{w(i)}$ for $i \in \{ 1, \ldots, n+1 \}$. 
Also, the longest element of $W$, denoted by $\lng$, is regarded as the permutation
\begin{equation*}
\begin{pmatrix}
1 & 2 & \cdots & n & n+1 \\ 
n+1 & n & \cdots & 2 & 1
\end{pmatrix},
\end{equation*}
that is, $\lng$ is considered to be the permutation defined 
by $\lng(k) = n+2-k$ for $k \in \{ 1, \ldots, n+1 \}$. 

%
\section{Formulas for the class of a line bundle in $K_{H}(\QG)$.}
\label{sec:formulalb}
%
%
In this section, $G$ is a connected, simply-connected simple algebraic group over $\BC$, 
and is not necessarily assumed to be of type $A_{n}$ unless stated explicitly.

\subsection{The $H$-equivariant $K$-group of semi-infinite flag manifolds.}
\label{subsec:equivgroup}

Let $\QGr$ denote the semi-infinite flag manifold associated to $G$, 
which is an ind-scheme of infinite type whose set of 
$\BC$-valued points is $G(\BC\pra{z})/(H(\BC) \cdot N(\BC\pra{z}))$ 
(see \cite{KNS, Kat2} for details), 
where $G$ is 
a connected, simply-connected simple algebraic group over $\BC$, $B=HN$ is a Borel subgroup, 
$H$ is a maximal torus, and $N$ is the unipotent radical of $B$; 
note that $\QGr$ is defined as an inductive limit of copies of the (reduced) closed subscheme $\QG \subset \prod_{i \in I} \mathbb{P}(L(\varpi_{i}) \otimes_{\BC} \BC\bra{z})$ of infinite type (introduced in \cite[Section~4.1]{FM}), 
where $L(\varpi_{i})$ is the irreducible highest weight $G$-module of highest weight $\varpi_{i}$. 
One has the semi-infinite Schubert (sub)variety 
$\QG(x) \subset \QGr$ associated to each element $x$ of 
the affine Weyl group $W_{\af} \cong W \ltimes Q^{\vee}$, 
with $W = \langle s_i \mid i \in I \rangle$ 
the (finite) Weyl group and $Q^{\vee} = \sum_{i \in I} \BZ \alpha_i^{\vee}$ 
the coroot lattice of $G$; note that $\QG(x)$ is, by definition, the closure of the orbit under the Iwahori subgroup $\bI \subset G(\BC\bra{z})$, 
which is the pre-image of $B$ under the evaluation map $G(\BC\bra{z}) \to G$ at $z = 0$, 
through the ($H \times \BC^{*}$)-fixed point labeled by $x \in W_{\af}$ (in the same way as in \cite[Section~4.2]{KNS} and \cite[Section~2.3]{Orr}), and that $\QG(x)$ is contained in $\QG(e) = \QG$ for all $x \in W_{\af}^{\geq 0} := \{ x = w t_{\xi} \in W_{\af} \mid w \in W, \xi \in Q^{\vee,+} \}$, where $Q^{\vee, +} := \sum_{i \in I} \BZ_{\geq 0} \alpha_{i}^{\vee} \subset Q^{\vee}$. 
Also, for each weight $\nu = \sum_{i \in I} m_{i} \varpi_{i} \in P$ with $m_{i} \in \BZ$, we have a $G(\BC\bra{z}) \rtimes \BC^{*}$-equivariant line bundle $\CO_{\QG}(\nu)$ over $\QG$, which is given by the restriction of the line bundle $\boxtimes_{i \in I} \CO(m_{i})$ on $\prod_{i \in I} \mathbb{P}(L(\varpi_{i}) \otimes_{\BC} \BC\bra{z})$. 

The $(H \times \BC^{\ast})$-equivariant $K$-group $K_{H \times \BC^{\ast}}(\QG)$ 
is a module over $\BZ[q,q^{-1}][P]$ (equivariant parameters), 
with the semi-infinite Schubert classes $[\CO_{\QG(x)}]$ associated to 
$x \in W_{\af}^{\geq 0} \simeq W \times Q^{\vee,+}$ as a topological basis (in the sense of \cite[Proposition~5.11]{KNS}) over $\BZ[q,q^{-1}][P]$, 
where $P = \sum_{i \in I} \BZ \varpi_{i}$ is the weight lattice of $G$, $\BZ[P] = \bigoplus_{\nu \in P} \BZ\be^{\nu} \cong R(H)$, 
and $q \in R(\BC^{*})$ corresponds to loop rotation. 
More precisely, $K_{H \times \BC^{\ast}}(\QG)$ is defined to be the $\BZ[q, q^{-1}][P]$-submodule 
of the Laurent series (in $q^{-1}$) extension 
$\BZ\pra{q^{-1}}[P] \otimes_{\BZ\bra{q^{-1}}[P]} K_{\bI \rtimes \BC^{*}}^{\prime}(\QG)$ 
of the equivariant, with respect to the Iwahori subgroup $\bI$ and loop rotation, 
$K$-group $K_{\bI \rtimes \BC^{*}}^{\prime}(\QG)$ (see \cite[Section~5]{KNS}) consisting of 
all convergent (in the sense of \cite[Proposition~5.11]{KNS}) infinite linear combinations of the classes $[\CO_{\QG(x)}]$, $x \in W_{\af}^{\geq 0}$, 
of the structure sheaves $\CO_{\QG(x)}$ of the semi-infinite Schubert varieties $\QG(x) \subset \QG$ 
with coefficients $a_{x} \in \BZ[q, q^{-1}][P]$; 
briefly speaking, convergence holds if the sum $\sum_{x \in W_{\af}^{\geq 0}} \vert a_{x} \vert$ 
of the absolute values $|a_{x}|$ lies in $\BZ_{\geq 0}[P]\pra{ q^{-1} }$. 
For each $x \in W_{\af}^{\geq 0}$ and $\nu \in P$, it follows from \cite[Corollary~5.12]{KNS} and \cite[Theorem~5.16]{KLN} that the twisted semi-infinite Schubert class $[\CO_{\QG(x)}(\nu)] \in K_{H \times \BC^{\ast}}(\QG)$ corresponding to the tensor product sheaf $\CO_{\QG(x)} \otimes \CO_{\QG}(\nu)$ lies in $K_{H \times \BC^{\ast}}(\QG)$; 
in particular, we have $[\CO_{\QG}] \otimes [\CO_{\QG}(\nu)] = [\CO_{\QG}(\nu)] \in K_{H \times \BC^{\ast}}(\QG)$ for all $\nu \in P$. 

Now, we first define the $H$-equivariant $K$-group $K_{H}(\QG)$ of the semi-infinite flag manifold $\QG$ to be the $\BZ[P]$-module 
$\prod_{x \in W_{\af}^{\geq 0}} \BZ[P][\CO_{\QG(x)}]$ (direct product), which 
consists of all (formal) infinite linear combinations of 
the semi-infinite Schubert classes $[\CO_{\QG(x)}]$, $x \in W_{\af}^{\geq 0}$, 
with coefficients in $\BZ[P]$. 
Note that by definition, the semi-infinite Schubert classes $[\CO_{\QG(x)}]$, $x \in W_{\af}^{\geq 0}$, form a topological basis of $K_{H}(\QG)$; that is, an arbitrary element of $K_{H}(\QG)$ can be written uniquely as an infinite linear combination of the $[\CO_{\QG(x)}]$, $x \in W_{\af}^{\geq 0}$, with coefficients in $\BZ[P]$. Then, for each $\nu \in P$, a $\BZ[P]$-linear operator $\bullet \otimes [\CO_{\QG}(\nu)]$ on $K_{H}(\QG)$ is induced (without ambiguity) from the $\BZ[q, q^{-1}][P]$-linear operator $\bullet \otimes [\CO_{\QG}(\nu)]$ on $K_{H \times \BC^{\ast}}(\QG)$ by the specialization at $q = 1$ (of coefficients). 
Finally, we set $[\CO_{\QG}(\nu)] := [\CO_{\QG}] \otimes [\CO_{\QG}(\nu)]$ for $\nu \in P$; that is, the element $[\CO_{\QG}(\nu)] \in K_{H}(\QG)$ is defined to be the image of $[\CO_{\QG}]$ by the $\BZ[P]$-linear operator $\bullet \otimes [\CO_{\QG}(\nu)]$. 
In addition, for $\xi \in Q^{\vee,+}$, a $\BZ[q, q^{-1}][P]$-linear operator 
$\st_{\xi}$ on $K_{H \times \BC^{\ast}}(\QG)$ can be defined by 
$\st_{\xi}[\CO_{\QG(x)}] := 
[\CO_{\QG(x t_{\xi})}]$ for $x \in W_{\af}^{\geq 0}$ (cf. \cite[Proposition~2.4]{Orr}); note that in our notation, we have in $K_{H \times \BC^{\ast}}(\QG)$, 
\begin{equation*}
(\st_{\xi}[\CO_{\QG(x)}]) \otimes [\CO_{\QG}(\nu)] = 
q^{-\langle \nu, -w_{\circ} \xi \rangle} \st_{\xi}([\CO_{\QG(x)}] \otimes [\CO_{\QG}(\nu)]) 
\end{equation*}
for $x \in W_{\af}^{\geq 0}$ and $\nu \in P$; this equation also follows directly from \cite[Proposition D.1]{KNS}. 
Hence, for each $\xi \in Q^{\vee,+}$, the $\BZ[P]$-linear operator $\st_{\xi}$, given by $\st_{\xi}[\CO_{\QG(x)}] = [\CO_{\QG(x t_{\xi})}]$ for $x \in W_{\af}^{\geq 0}$, on $K_{H}(\QG)$ is induced by the specialization $q = 1$, and we have 
\begin{equation}\label{eq:shift}
(\st_{\xi}[\CO_{\QG(x)}]) \otimes [\CO_{\QG}(\nu)] = 
\st_{\xi}([\CO_{\QG(x)}] \otimes [\CO_{\QG}(\nu)])
\end{equation}
for $x \in W_{\af}^{\geq 0}$ and $\nu \in P$. 
Since the semi-infinite Schubert classes $[\CO_{\QG(x)}]$, $x \in W_{\af}^{\geq 0}$, 
form a topological basis of $K_{H}(\QG)$ over $\BZ[P]$ in the sense above, it follows that
\begin{equation}
(\st_{\xi} \, \bullet) \otimes [\CO_{\QG}(\nu)] = 
\st_{\xi}(\bullet \otimes [\CO_{\QG}(\nu)])
\end{equation}
for an arbitrary element $\bullet \in K_{H}(\QG)$ and $\xi \in Q^{\vee,+}$, $\nu \in P$.

%
%
\subsection{Formulas for the class of a line bundle in $K_{H}(\QG)$.}
\label{subsec:fomulalb}

Let $\K$ be an arbitrary subset of $I$, and set 
%
%
\begin{equation} \label{eq:mu}
\mu:=\sum_{i \in \K} \vpi_{i} \in P^{+};
\end{equation}
note that $\J=\J_{\mu}=
\bigl\{ i \in I \mid \pair{\mu}{\alpha_{i}^{\vee}}=0 \bigr\}$ 
is identical to $I \setminus \K$. 

In this subsection, we prove a formula which expresses 
the line bundle class $[\CO_{\QG}(\lng \mu)]$ associated to 
the weight $\lng \mu$ as an explicit finite sum of semi-infinite 
Schubert classes in $K_{H}(\QG)$;
this formula is a generalization of the formula 
$[\CO_{\QG}(\lng \varpi_{i})] = \be^{-\varpi_{i}}([\CO_{\QG(e)}] - [\CO_{\QG(s_{i})}])$, 
obtained in \cite{NOS}. 
Namely, we prove the following.
%
%
\begin{prop} \label{prop:e2}
Let $\mu \in P^{+}$ be of the form \eqref{eq:mu}. 
Then, in $K_{H \times \BC^{*}}(\QG)$ {\rm(}and hence in $K_{H}(\QG)${\rm)}, 
the following equality holds\,{\rm:}
\begin{equation} \label{eq:e2b}
[\CO_{\QG}(\lng \mu)] = 
\sum_{v \in W_{\K}} 
 (-1)^{\ell(v)} \be^{-\mu} [\CO_{\QG(v)}]. 
\end{equation}
\end{prop}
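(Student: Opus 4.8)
The plan is to prove \eqref{eq:e2b} by induction on $|\K|$, using the known base case $|\K| = 1$ (the formula $[\CO_{\QG}(\lng\vpi_i)] = \be^{-\vpi_i}([\CO_{\QG(e)}] - [\CO_{\QG(s_i)}])$ from \cite{NOS}) together with a multiplicativity property of line bundle classes and the structure of the semi-infinite Schubert classes under tensoring with $[\CO_{\QG}(\lng\vpi_i)]$. First I would recall that $[\CO_{\QG}(\mu + \nu)] = [\CO_{\QG}(\mu)] \otimes [\CO_{\QG}(\nu)]$ for $\mu, \nu \in P$; since $\lng$ is a group homomorphism on $P$, writing $\mu = \sum_{i \in \K} \vpi_i$ gives $[\CO_{\QG}(\lng\mu)] = \bigotimes_{i \in \K}[\CO_{\QG}(\lng\vpi_i)]$. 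So it suffices to compute this iterated tensor product and match it with the claimed sum over $W_{\K} = \prod_{i \in \K} W_{\{i\}}$ (using that the $\alpha_i$, $i \in \K$, need not be connected in the Dynkin diagram, but $W_{\K}$ is still the product of the $\langle s_i\rangle$ component-wise in the relevant combinatorial sense — more precisely $W_{\K}$ has $2^{|\K|}$ elements parametrized by subsets of $\K$, with $(-1)^{\ell(v)} = (-1)^{|S|}$ for the element $v = \prod_{i\in S}s_i$).

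The key computational step is to understand $[\CO_{\QG(v)}] \otimes [\CO_{\QG}(\lng\vpi_j)]$ for $v \in W_{\K}$ and $j \in \K \setminus \Supp(v)$ (say $j$ is not among the indices already used). Here I would invoke the inverse Chevalley formula of \cite{KNOS}, or more directly the specific case already isolated in \cite{NOS}, which describes tensoring a semi-infinite Schubert class with an anti-dominant fundamental line bundle class. Since $\lng\vpi_j$ is anti-dominant and $v$ lies in a parabolic subgroup $W_{\K}$ with $j \in \K$, the relevant terms in the inverse Chevalley formula should collapse: because $v$ and $vs_j$ both lie in $W_{\K}$ and differ by a single simple reflection, the formula should yield exactly $\be^{-\vpi_j}([\CO_{\QG(v)}] - [\CO_{\QG(vs_j)}])$, with no quantum (degree-shift) corrections because the relevant roots stay inside $\Delta_{\K}$ and the length changes by one. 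I would verify this by checking that the quantum Bruhat graph edges contributing to the inverse Chevalley formula for $\lng\vpi_j$ that are incident to $v$ are precisely the Bruhat edge $v \to vs_j$ (or its reverse), using that $\pair{\vpi_j}{\alpha^\vee} \le 1$ for all positive roots $\alpha$ in type $A$-like minuscule situations, or more carefully that $\pair{\vpi_j}{\theta^\vee_{\K}}$ behaves well; in general one reduces to the rank-one sub-situation governed by $s_j$.

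Granting this, the induction closes cleanly: assuming $[\CO_{\QG}(\lng\mu')] = \sum_{v \in W_{\K'}}(-1)^{\ell(v)}\be^{-\mu'}[\CO_{\QG(v)}]$ for $\K' = \K \setminus \{j\}$, tensoring both sides with $[\CO_{\QG}(\lng\vpi_j)]$ and applying the rank-one formula termwise gives
\[
[\CO_{\QG}(\lng\mu)] = \sum_{v \in W_{\K'}} (-1)^{\ell(v)} \be^{-\mu'-\vpi_j}\bigl([\CO_{\QG(v)}] - [\CO_{\QG(vs_j)}]\bigr),
\]
and since every element of $W_{\K}$ is uniquely of the form $v$ or $vs_j$ with $v \in W_{\K'}$ (using that $s_j$ commutes appropriately with $W_{\K'}$ when $j$ is disconnected from $\K'$, or that $W_{\{j\}}$ sits as a direct factor when it is), with the sign matching $(-1)^{\ell(v)}$ versus $-(-1)^{\ell(v)} = (-1)^{\ell(vs_j)}$, this is exactly $\sum_{w \in W_{\K}}(-1)^{\ell(w)}\be^{-\mu}[\CO_{\QG(w)}]$. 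The main obstacle I anticipate is the rank-one collapse step: one must be careful that $vs_j$ still lies in $W_{\af}^{\ge 0}$ (it does, since $v, vs_j \in W \subset W_{\af}^{\ge 0}$) and, more substantively, that tensoring by the anti-dominant line bundle $[\CO_{\QG}(\lng\vpi_j)]$ really produces no lower-order or quantum tail beyond the two terms $[\CO_{\QG(v)}]$ and $[\CO_{\QG(vs_j)}]$ — this requires either a direct appeal to a clean form of the inverse Chevalley formula for minuscule-type weights restricted to the parabolic, or an induction within the inverse Chevalley machinery itself. If the unrestricted inverse Chevalley formula is messy, the fallback is to prove \eqref{eq:e2b} by comparing both sides after applying the $\BZ[P]$-module isomorphism to ordinary equivariant $K$-theory combined with degree considerations (both sides are genuinely finite sums, i.e. supported on $W \subset W_{\af}^{\ge 0}$), reducing the identity to the classical fact that $[\CO_{G/B}(\lng\mu)] = \sum_{v \in W_{\K}}(-1)^{\ell(v)}\be^{-\mu}[\CO_{G/B(v)}]$ in $K_H(G/B)$, which is the Demazure-character / Koszul-resolution identity for the vector bundle $\bigwedge$ of a parabolic quotient.
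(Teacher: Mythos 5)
Your induction rests on two claims that are false for general $\K$, and they are exactly the points where the content of the proposition lies. First, $W_{\K}$ is the full parabolic subgroup generated by $\{s_{i}\}_{i \in \K}$; it has $2^{|\K|}$ elements only when the nodes of $\K$ are pairwise non-adjacent in the Dynkin diagram. For adjacent nodes (e.g.\ $\K=I$, which is needed in the applications) one has $W_{\K} \neq W_{\K\setminus\{j\}} \cup W_{\K\setminus\{j\}}s_{j}$, so your inductive step can never produce more than $2^{|\K|}$ Schubert terms, whereas the right-hand side of \eqref{eq:e2b} has $|W_{\K}|$ of them. Second, and correspondingly, the rank-one collapse $[\CO_{\QG(v)}]\otimes[\CO_{\QG}(\lng\vpi_{j})]=\be^{-\vpi_{j}}([\CO_{\QG(v)}]-[\CO_{\QG(vs_{j})}])$ is false once $j$ is adjacent to the support of $v$: take $\Fg=\Fsl_{3}$ and $\mu=\vpi_{1}+\vpi_{2}$; your scheme yields the four terms indexed by $e,s_{1},s_{2},s_{1}s_{2}$, while \eqref{eq:e2b} (a sum over all of $W=S_{3}$) also contains $[\CO_{\QG(s_{2}s_{1})}]$ and $[\CO_{\QG(s_{1}s_{2}s_{1})}]$. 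The two-term formula from \cite{NOS} is special to $v=e$; for general $v$ the expansion of $[\CO_{\QG(v)}]\otimes[\CO_{\QG}(\lng\vpi_{j})]$ has more Schubert terms and, a priori, translated classes $[\CO_{\QG(ut_{\xi})}]$ with $\xi\neq 0$, which is precisely what has to be controlled.

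Your fallback is circular: the substantive assertion in \eqref{eq:e2b} is that the expansion of $[\CO_{\QG}(\lng\mu)]$ is finite, supported on $W\subset W_{\af}^{\geq 0}$, and free of $q$; you cannot assume this and then ``reduce to the classical identity in $K_{H}(G/B)$,'' because no comparison map is available at this stage that matches Schubert-class expansions of line bundles — in the paper the correspondence between $[\CO_{G/B}(-\mu)]$ and $[\CO_{\QG}(\lng\mu)]$ (Proposition~\ref{prop:corresp}) is \emph{deduced} from \eqref{eq:e2b} together with \eqref{eq:LS0}, not used to prove it. The paper's route is global rather than rank-one: it applies the anti-dominant Chevalley formula \cite[Theorem~9.1]{NOS} to $[\CO_{\QG}(\lng\mu)]\otimes[\CO_{\QG(e)}]$, shows via Lemma~\ref{lem:e} that the only contributing quantum LS path is $\eta=(e\,;\,0,1)$ (which kills both the $q$-powers and the translation parts), and then identifies the surviving index set $\{v\in W \mid \tbmax{e}{\J}{v}=e\}$ with $W_{\K}$ by a label-increasing-path argument for a reflection order adapted to $\DJs$ and $\Delta_{\K}^{+}$. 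Some input of this kind is unavoidable; the termwise tensoring argument does not close.
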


\begin{rem}
Equation \eqref{eq:e2b} holds in $K_{H \times \BC^{*}}(\QG)$ without the specialization at $q = 1$; that is, even in $K_{H \times \BC^{*}}(\QG)$, the equivariant parameter $q$ does not appear on the right-hand side of the equation. 
This is because the equivariant parameter $q$ does not appear on the right-hand side of equation \eqref{eq:e1} below (see also the proof of Lemma~\ref{lem:e1} below).
\end{rem}

In order to prove this proposition, we need some lemmas. 
%
%
\begin{lem} \label{lem:e}
Let $\mu \in P^{+}$ be of the form \eqref{eq:mu}.
If $\eta \in \QLS(\mu)$ satisfies $\kappa(\eta) = e$, 
then $\eta = (e \,;\,0,1)$. 
\end{lem}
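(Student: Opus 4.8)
\textbf{Proof plan for Lemma~\ref{lem:e}.}
The plan is to argue that the condition $\kappa(\eta) = e$ forces the terminal direction of $\eta$ to be the identity coset, and then to propagate this constraint backward through the defining chain of the QLS path to conclude that $\eta$ has a single step. Recall that $\mu = \sum_{i \in \K} \vpi_i$, so that $\J = \J_\mu = I \setminus \K$, and $\WJ$ is the set of minimal-length coset representatives for $W/\WJs$. Write $\eta = (w_1, \dots, w_s \,;\, a_0, a_1, \dots, a_s)$ with $w_l \in \WJ$, $w_l \ne w_{l+1}$, and $\kappa(\eta) = w_s = e$. Suppose for contradiction that $s \ge 2$. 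Then by the definition of QLS path there is a directed path in $\QBG_{a_{s-1}\mu}(\WJ)$ from $w_s = e$ to $w_{s-1}$; in particular there is a directed path in $\QBG(\WJ)$ from $e$ to $w_{s-1}$ with $w_{s-1} \ne e$.

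First I would recall that $e$ is the minimal element of $\WJ$ in the Bruhat order, and analyze what edges $e \edge{\alpha} y$ can occur in $\QBG(\WJ)$ for $\alpha \in \DJp$. A Bruhat edge $e \Be{\alpha} y$ would force $\ell(y) = \ell(e) + 1 = 1$ but $y = \mcr{s_\alpha}^{\J}$, which is legitimate only in special cases; more to the point, the key is that any edge out of $e$ in $\QBG(\WJ)$ starting a directed path must eventually be consistent with the level constraint $a_{s-1}\pair{\mu}{\alpha^\vee} \in \BZ$. The cleanest route is: out of $e$, a quantum edge $e \Qe{\alpha} y$ has $\ell(y) = 1 - 2\pair{\rho - \rho_\J}{\alpha^\vee} < 0$, which is impossible; hence every edge leaving $e$ in $\QBG(\WJ)$ is a Bruhat edge. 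Therefore a directed path from $e$ to $w_{s-1}$ lies entirely in $\BG(\WJ)$, i.e., $w_{s-1} > e$ in the Bruhat order on $\WJ$ — but this alone is not yet a contradiction, so I need the level condition.

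The main obstacle, and the crux of the argument, is to exploit the level-restricted subgraph $\QBG_{a_{s-1}\mu}(\WJ)$ together with $\kappa(\eta) = e$ to rule out $s \ge 2$. Here I would use that $\mu = \sum_{i \in \K}\vpi_i$ has all fundamental-weight coefficients equal to $0$ or $1$, so for any root $\alpha \in \DJp$ we have $\pair{\mu}{\alpha^\vee} = |\Supp(\alpha) \cap \K| \ge 1$ — wait, this needs care since $\alpha^\vee$ need not be a sum of simple coroots in general, but in the case at hand (ultimately type $A$, and more generally whenever the relevant edge is a Bruhat edge out of $e$, where $\alpha = \alpha_i$ is simple because $\ell(y) = 1$) one has $\pair{\mu}{\alpha_i^\vee} \in \{0, 1\}$; since $\alpha_i \in \DJp$ means $i \in \K$, we get $\pair{\mu}{\alpha_i^\vee} = 1$. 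Then the level condition $a_{s-1}\pair{\mu}{\alpha_i^\vee} = a_{s-1} \in \BZ$ with $0 < a_{s-1} < 1$ is impossible. This contradiction shows $s = 1$, whence $\eta = (e\,;\, 0, 1)$.

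To make the step ``every edge leaving $e$ in a directed path from $e$ to $w_{s-1}$ is a Bruhat edge along a simple root at level $a_{s-1}$'' rigorous, I would take the first edge $e \edge{\gamma} z$ of the directed path in $\QBG_{a_{s-1}\mu}(\WJ)$ from $e$ to $w_{s-1}$; it satisfies $a_{s-1}\pair{\mu}{\gamma^\vee} \in \BZ$, and since it leaves $e$ it must be a Bruhat edge by the length computation above, forcing $z = \mcr{s_\gamma}^{\J}$ with $\ell(z) = 1$, hence $z = s_i$ for some $i \in I$ and (comparing) $\gamma = \alpha_i$; moreover $\gamma \in \DJp$ gives $i \in \K$ and $\pair{\mu}{\alpha_i^\vee} = 1$, so $a_{s-1} \in \BZ$, contradicting $0 < a_{s-1} < 1$. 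This completes the proof.
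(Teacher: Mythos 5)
Your proposal is correct and follows essentially the same route as the paper: assume $s\ge 2$, look at the first edge of the directed path in $\QBG_{a_{s-1}\mu}(\WJ)$ from $w_s=e$ to $w_{s-1}$, note it must be a Bruhat edge labeled by a simple root $\alpha_i$ with $i\in\K$, and derive the contradiction $a_{s-1}=a_{s-1}\pair{\mu}{\alpha_i^\vee}\in\BZ$. The only point stated loosely (``comparing'' gives $\gamma=\alpha_i$) is asserted without proof in the paper as well, and is easily justified, e.g.\ from $s_\gamma\in s_i\WJs$ one gets $s_\gamma\vpi_i=s_i\vpi_i$, hence $\pair{\vpi_i}{\gamma^\vee}\gamma=\alpha_i$ and so $\gamma=\alpha_i$.
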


\begin{proof}
Let $\eta = (w_{1},\,\dots,w_{s-1},w_{s}\,;\,\sigma_{0},\,\dots,\,\sigma_{s-1},\sigma_{s}) \in \QLS(\mu)$ 
be such that $\kappa(\eta) = w_{s} = e$. 
Suppose, for a contradiction, that $s > 1$; note that 
$0=\sigma_{0} < \sigma_{s-1} < \sigma_{s} = 1$. 
By the definition of quantum Lakshmibai-Seshadri paths 
of shape $\mu$, there exists a directed path 
\begin{equation*}
w_{s} = x_{0} \edge{\beta_{1}} x_{1} \edge{\beta_{2}} \cdots \edge{\beta_{u}} x_{u}=w_{s-1}
\end{equation*}
from $w_{s}$ to $w_{s-1}$ in the parabolic quantum Bruhat graph 
$\QBG(\WJ)$ satisfying the condition that 
$\sigma_{s-1}\pair{\mu}{\beta_{t}^{\vee}} \in \BZ_{> 0}$ 
for all $1 \le t \le u$. 
Since $w_{s} = e$ by the assumption, we see that 
the first edge $e = x_{0} \edge{\beta_{1}} x_{1}$ is a Bruhat edge, and 
$\beta_{1}=\alpha_{i}$ for some $i \in \K$. 
Since $\mu$ is of the form \eqref{eq:mu}, 
we have $\sigma_{s-1}\pair{\mu}{\beta_{1}^{\vee}} = 
\sigma_{s-1}\pair{\mu}{\alpha_{i}^{\vee}} = \sigma_{s-1} \notin \BZ$, 
which is a contradiction. Therefore, we deduce that $s=1$, and hence 
$\eta = (e \,;\,0,1)$, as desired. 
\end{proof}

We take and fix a reflection order $\lhd$ satisfying \eqref{eq:refa}, 
with $\J = \J_{\mu}=I \setminus \K$.
%
%
\begin{lem} \label{lem:e1}
Let $\mu \in P^{+}$ be of the form \eqref{eq:mu}. 
Then, in $K_{H \times \BC^{\ast}}(\QG)$ {\rm(}and hence in $K_{H}(\QG)${\rm)}, 
the following equality holds\,{\rm:}
\begin{equation} \label{eq:e1}
[\CO_{\QG}(\lng \mu)] = 
\sum_{\begin{subarray}{c} v \in W \\[1mm] \text{\sDP} \end{subarray} } 
 (-1)^{\ell(v)} \be^{-\mu} [\CO_{\QG(v)}], 
\end{equation}
where condition $\text{\sDP}$ is as follows\,{\rm:}
\begin{quote}
\sDP \ all the labels in the {\rm(}unique{\rm)} 
label-increasing directed path from $e$ to $v$ in the Bruhat graph $\BG(W)$ 
{\rm(}see Proposition~\ref{prop:BG-LI}{\rm)} are contained in $\DJp$. 
\end{quote}
\end{lem}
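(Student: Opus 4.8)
The plan is to compute the line bundle class $[\CO_{\QG}(\lng\mu)]$ by expanding it in the topological basis $\{[\CO_{\QG(x)}]\}_{x \in W_{\af}^{\geq 0}}$ and identifying the coefficients via the known combinatorial formula for line bundle classes on the semi-infinite flag manifold. Concretely, I would invoke the Chevalley-type / Pieri-type description of $[\CO_{\QG}(\nu)]$ in terms of quantum LS paths of shape $-w_\circ\mu$ (equivalently, of shape $\mu$ after applying $\lng$), which expresses the coefficient of each Schubert class as a signed sum over QLS paths with prescribed endpoint. The first step is therefore to recall (from \cite{NOS}, or the underlying results of \cite{KNS, Kat1}) the precise expansion: the coefficient of $[\CO_{\QG(x)}]$ in $[\CO_{\QG}(\lng\mu)]$ is given by a sum over $\eta \in \QLS(\mu)$ (or a closely related index set) with $\kap{\eta}{e}$ or $\io{\eta}{e}$ matching $x$, weighted by signs $(-1)^{\ell(\cdot)}$ and by equivariant/degree contributions recorded in $\wt(\eta)$, $\deg_v(\eta)$, $\xi(\eta,v)$.

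The second step is to cut this sum down using Lemma \ref{lem:e}: because $\mu = \sum_{i \in \K}\vpi_i$ has the special form \eqref{eq:mu}, the only QLS path of shape $\mu$ with $\kappa(\eta) = e$ is the trivial one $(e\,;\,0,1)$. I would leverage this (together with the analogous statement obtained by applying $\lng$, so that the relevant straight-line paths are of shape $w_\circ\mu$ and have the appropriate endpoint) to conclude that the QLS paths contributing to the expansion are exactly the ``straight-line'' paths $\eta_v := (v\,;\,0,1)$ for suitable $v \in W$, so that $\deg(\eta_v) = 0$ and no Novikov/loop-rotation parameter is produced — this also explains the Remark that $q$ does not appear. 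The third step is to determine precisely which $v$ occur and with which sign. Here I expect the condition \sDP\ to emerge: the straight-line path $(v\,;\,0,1)$ of shape $\mu$ lies in $\QLS(\mu) = \QLS(\mu)$ with $\kappa = \mcr{v}^{\J}$, but the relevant combinatorial data — via $\io{\cdot}{e}$ or $\kap{\cdot}{e}$ and the operators $\mathrm{up}$/$\mathrm{dn}$ of Section~\ref{subsec:tilted} — selects exactly those $v \in W$ for which the label-increasing path from $e$ to $v$ in $\BG(W)$ uses only labels outside $\Delta_{\J}^+$; this is where Lemma \ref{lem:Deo} and Propositions \ref{prop:tbmin}, \ref{prop:tbmax}, \ref{prop:BG-LI} are used, together with the choice of reflection order $\lhd$ satisfying \eqref{eq:refa}. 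The sign $(-1)^{\ell(v)}$ and the single equivariant shift $\be^{-\mu}$ (coming from $\wt$ of the trivial path, namely $v\mu$ composed with the $\lng$-twist, which collapses to $-\mu$ uniformly because $v$ fixes $\mu$ up to the $\Delta_{\J}^+$-labels) then fall out of $\wt(\eta_v)$.

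The main obstacle I anticipate is the bookkeeping in this third step: matching the abstract index set of the Chevalley/Pieri expansion (which is naturally phrased via $\kap{\eta}{v}$, $\io{\eta}{v}$, and the tilted Bruhat orders) with the clean condition \sDP\ phrased purely in terms of label-increasing paths in $\BG(W)$, and checking that the equivariant weight of every surviving term is exactly $\be^{-\mu}$ with no extra $\be^{\bullet}$ factors. I would handle this by carefully unwinding the recursions \eqref{eq:haw}--\eqref{eq:xiv} for the straight-line paths — where the recursion degenerates after one step — and using Lemma \ref{lem:Deo} to replace $\tbmax{\cdot}{\J}{\cdot}$ and $\tbmin{\cdot}{\J}{\cdot}$ by the Bruhat-order operators $\mathrm{dn}$ and $\mathrm{up}$, whose defining property directly encodes the constraint that the relevant path-labels lie in $\DJp$. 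Once the surviving $v$ are pinned down to be exactly those satisfying \sDP, equation \eqref{eq:e1} follows, and the claim that $q$ is absent is immediate since every contributing path has $\deg = 0$.
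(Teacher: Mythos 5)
Your overall strategy is the same as the paper's: apply the anti-dominant Chevalley formula of \cite[Theorem~9.1]{NOS} to $[\CO_{\QG}(\lng\mu)]\otimes[\CO_{\QG(e)}]$, use Lemma~\ref{lem:e} to kill all but the trivial quantum LS path, and then translate the resulting combinatorial condition on $v$ into condition \sDP. So the skeleton is right, and your observation that $\deg=0$ explains the absence of $q$ is exactly the paper's remark.

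However, the bookkeeping in your third step is wrong as stated, and if taken literally it would fail. The surviving configurations are \emph{not} ``straight-line paths $\eta_v=(v\,;\,0,1)$ for suitable $v$'': in the formula the inner sum is over $\eta\in\QLS(\mu)$ with $\kap{\eta}{v}=e$, and since $\kappa(\eta)=\mcr{\kap{\eta}{v}}^{\J}=e$, Lemma~\ref{lem:e} forces the single path $\eta=(e\,;\,0,1)$ for \emph{every} contributing $v$; the $v$-dependence sits entirely in the outer index and in the constraint $\kap{\eta}{v}=\tbmax{e}{\J}{v}=e$. Consequently the coefficient is $\be^{-\wt(\eta)}=\be^{-\mu}$ uniformly for trivial reasons, whereas your justification that the weights ``collapse to $-\mu$ because $v$ fixes $\mu$ up to the $\Delta_{\J}^{+}$-labels'' is false: elements $v\in W_{\K}$ (or $v\in\WJ$) do move $\mu$, so if paths $(v\,;\,0,1)$ really contributed you would get $\be^{-v\mu}$, not $\be^{-\mu}$. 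One must also check that the translation part vanishes, i.e.\ $\zeta(\eta,v)=\wt(e\Rightarrow v)=0$ (immediate from $e\le v$ via Lemma~\ref{lem:BG-QBG}), so that the class is $[\CO_{\QG(v)}]$. Finally, for identifying which $v$ survive, the paper characterizes $\tbmax{e}{\J}{v}=e$ by \cite[Lemma~2.15]{KoNS} as condition \sQDP\ and then uses Lemma~\ref{lem:BG-QBG} to replace \sQDP\ by \sDP; your proposed use of Lemma~\ref{lem:Deo} (via $\tbmax{e}{\J}{v}=\dn{v}{e}{\J}$) instead leads most naturally to the description $v\in W_{\K}$, which is really the content of Proposition~\ref{prop:e2}, and to recover the literal condition \sDP\ you would still need the reflection-order argument of that proposition or the QBG-to-BG comparison. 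These are repairable, but as written the step producing the uniform factor $\be^{-\mu}$ is not correct.
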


\begin{proof}
By \cite[Theorem~9.1]{NOS} (with $\lambda=-\lng \mu$ and $x=w=e$, $\xi=0$), we have
\begin{align}
[\CO_{\QG}(\lng \mu)] & = [\CO_{\QG}(\lng \mu)] \otimes [\CO_{\QG(e)}] \nonumber \\
& = \sum_{v \in W} \sum_{ 
    \begin{subarray}{c} \eta \in \QLS(\mu) \\ \kappa(\eta,v)=e \end{subarray} }
    (-1)^{\ell(v)-\ell(e)} q^{-\deg(\eta)} \be^{-\wt(\eta)}
    [ \CO_{\QG(vt_{\zeta(\eta,v)})} ] \label{eq:e1z}
\end{align}
in $K_{H \times \BC^{*}}(\QG)$. 
Let $\eta \in \QLS(\mu)$ be such that $\kappa(\eta,v)=e$ for some $v \in W$. 
We see by the definition \eqref{eq:haw} of $\kappa(\eta,v)$ that 
$\kappa(\eta) = \mcr{\kappa(\eta,v)}^J = e$. 
Hence it follows that $\eta= (e \,;\,0,1)$ by Lemma~\ref{lem:e}. 
For $\eta = (e \,;\,0,1)$, we have $\wt(\eta)=\mu$ by \eqref{eq:wt}, and 
$\deg (\eta) = 0$ by \eqref{eq:deg}; 
this is the reason why the equivariant parameter $q$ does not appear on the right-hand side of equation \eqref{eq:e1a} below, 
and hence on the right-hand side of the asserted equation of the lemma. 
Also, we have $\kap{\eta}{v}=\tbmax{e}{\J}{v}$ by \eqref{eq:haw}, and 
$\zeta(\eta,v) = \wt(e \Rightarrow v) = 0$ by \eqref{eq:zetav}. 
Substituting these equalities into \eqref{eq:e1z}, we obtain 
\begin{equation} \label{eq:e1a}
[\CO_{\QG}(\lng \mu)] = 
\sum_{ \begin{subarray}{c} v \in W \\[1mm] \tbmax{e}{\J}{v}=e \end{subarray}}
\underbrace{(-1)^{\ell(v)-\ell(e)}}_{=(-1)^{\ell(v)}} 
 \be^{-\mu} [\CO_{\QG(vt_{\wt (e \Rightarrow v)})}]. 
\end{equation}
We know (see, e.g., \cite[Lemma~2.15]{KoNS}) that 
$v \in W$ satisfies the condition that $\tbmax{e}{\J}{v}=e$ 
if and only if the following condition is satisfied: 
\begin{quote}
\sQDP \ all the labels in the (unique) label-increasing directed path 
from $e$ to $v$ in the quantum Bruhat graph $\QBG(W)$ 
(see Proposition~\ref{prop:QBG-LI}) are contained in $\DJp$. 
\end{quote}
By using Lemma~\ref{lem:BG-QBG} and the fact that 
a (label-increasing) directed path in $\BG(W)$ is a (label-increasing) directed path in $\QBG(W)$, 
we deduce that $v$ satisfies \sQDP \ if and only if $v$ satisfies \sDP. 
This proves the lemma.
\end{proof}

\begin{proof}[Proof of Proposition~\ref{prop:e2}]
By Lemma~\ref{lem:e1}, it suffices to show that 
\begin{equation} \label{eq:e2a}
\bigl\{v \in W \mid \text{\rm $v$ satisfies \sDP} \bigr\} = W_{\K}. 
\end{equation}
Let $w_{\K,\circ}$ denote the longest element in $W_{\K}$. 
Since $\J=I \setminus \K$, we see that 
$w_{\K,\circ}\beta \in \Delta^{+}$ for all $\beta \in \DJs$, 
which implies that $w_{\K,\circ} \in \WJ$, and hence 
$\mcr{\lng}^{\J} = uw_{\K,\circ}$ 
for some $u \in W$ such that $\ell(\mcr{\lng}^{\J})=\ell(uw_{\K,\circ}) = 
\ell(u)+\ell(w_{\K,\circ})$. 
Hence there exists a reflection order $\lhd$ such that 
\begin{equation} \label{eq:e2c}
\beta \lhd \gamma \lhd \zeta \quad 
\text{for all $\beta \in \DJs$, 
$\gamma \in (\DJp) \setminus \Delta_{\K}^{+}$, 
and $\zeta \in \Delta_{\K}^{+}$}. 
\end{equation}

First we show the inclusion $\supset$ in \eqref{eq:e2a}. Let $v \in W_{\K}$. 
Observe that the restriction of the reflection order $\lhd$ above to 
$\Delta_{\K}^{+}$ is a reflection order on $\Delta_{\K}^{+}$. 
Hence, by Proposition~\ref{prop:BG-LI}, 
there exist $\beta_{1},\,\dots,\,\beta_{k} \in \Delta_{\K}^{+}$, 
with $\beta_{1} \lhd \cdots \lhd \beta_{k}$, such that 
$e \edge{\beta_{1}} \cdots \edge{\beta_{k}} v$ 
in the Bruhat graph $\BG(W_{\K})$ for $W_{\K}$, 
and hence in the Bruhat graph $\BG(W)$. 
Thus we conclude that $v$ satisfies \sDP; 
recall that $\J=I \setminus \K$. 
Next we show the opposite inclusion $\subset$ in \eqref{eq:e2a}. 
Assume that $v \in W$ satisfies \sDP, and let 
\begin{equation*}
e \edge{\beta_{1}} \cdots \edge{\beta_{k}} v
\end{equation*}
be the label-increasing directed path from $e$ to $v$ 
in the Bruhat graph $\BG(W)$; note that $\beta_{1},\,\dots,\,\beta_{k} \in \DJp$. 
We deduce that the label $\beta_{1}$ of the first edge is $\alpha_{i}$ 
for some $i \in \K$; in particular, $\beta_{1} \in \Delta_{\K}$, 
which implies that $\beta_{l} \in \Delta_{\K}$ for all $1 \le l \le k$ 
by \eqref{eq:e2c}. Therefore, we obtain $v=s_{\beta_{1}} \cdots s_{\beta_{k}} \in W_{\K}$, 
as desired. This completes the proof of Proposition~\ref{prop:e2}. 
\end{proof}

Let $\mu \in P^{+}$ be of the form \eqref{eq:mu}, and let $m \in I$. 
In $K_{H \times \BC^{*}}(\QG)$, we have
\begin{align*}
& [\CO_{\QG}(\lng (\mu-\vpi_{m}))] = \sum_{v \in W_{\K}} 
 (-1)^{\ell(v)} \be^{-\mu} [\CO_{\QG(v)}(-\lng \vpi_{m})] \qquad \text{by \eqref{eq:e2b}} \\[3mm]
& = \sum_{ n \in \BZ_{\ge 0} } \sum_{v \in W_{\K}} (-1)^{\ell(v)} \be^{-\mu} 
   \sum_{ \eta \in \QLS(\vpi_{m}) } \be^{\wt(\eta)} q^{\deg_{v}(\eta) - n} 
   [ \CO_{\QG( \io{\eta}{v} t_{\xi(\eta,v)} + n \alpha_{m}^{\vee} )} ] \\
& \hspace*{100mm} \text{by \cite[Corollary~C.3]{NOS}}.
\end{align*} 
In the following, we assume that $m$ is an element of 
$\J=\J_{\mu}=I \setminus \K$ such that 
$\QLS(\vpi_{m}) = \LS(\vpi_{m})$ (see Lemma~\ref{lem:QLS=LS}).
Let $\eta = (w_{1},\,\dots,\,w_{s} \,;\, 
a_{0},\,a_{1},\,\dots,\,a_{s}) \in \QLS(\vpi_{m}) = \LS(\vpi_{m})$ and $v \in W_{\K}$. 
Since $m \in I \setminus \K$, we have $\K \subset I \setminus \{m\}$, and hence 
$\mcr{v}^{I \setminus \{m\}} = e$. 
Since $w_{s} \ge e = \mcr{v}^{I \setminus \{m\}}$, we see by Lemma~\ref{lem:Deo} that 
$\ti{w}_{s} = \up{v}{w_{s}}{I \setminus \{m\}} \ge v$ in the Bruhat order. 
Also, since $\eta \in \LS(\vpi_{m})$, it follows that $w_{s-1} > w_{s} = \mcr{\ti{w}_{s}}^{I \setminus \{m\}}$ in the Bruhat order. 
Therefore, by Lemma~\ref{lem:Deo}, we deduce that $\ti{w}_{s-1} = \up{\ti{w}_{s}}{w_{s-1}}{I \setminus \{m\}} \ge \ti{w}_{s-1}$ 
in the Bruhat order. Similarly, we deduce that 
$\ti{w}_{l} = \up{\ti{w}_{l+1}}{w_{l}}{I \setminus \{m\}} \ge \ti{w}_{l+1}$ for all $1 \le l \le s$; 
in particular, we have $\io{\eta}{v} = \ti{w}_{1} = \upp(v,\eta)$ in the notation of \cite[\S3.1]{LS}. 
Also, since $\ti{w}_{l} \ge \ti{w}_{l+1}$ in the Bruhat order for all $1 \le l \le s$, we have
$\deg_{v}(\eta) = 0$ and $\xi(\eta,v)=0$. Therefore, we conclude that
in $K_{H \times \BC^{*}}(\QG)$,
%
%
\begin{equation} \label{eq:a}
\begin{split}
& [\CO_{\QG}(\lng (\mu-\vpi_{m}))] = \\
& \hspace*{10mm}
  \sum_{n \in \BZ_{\ge 0}} 
  \sum_{v \in W_{\K}} (-1)^{\ell(v)} q^{-n} \be^{-\mu} 
   \sum_{\eta \in \LS(\vpi_{m})} \be^{\wt(\eta)} 
  [\CO_{\QG( \upp(v,\eta) t_{ n\alpha_{m}^{\vee} } ) }]. 
\end{split}
\end{equation}
From this equality, by specializing (coefficients) at $q = 1$, 
we obtain the following. 
%
%
\begin{prop} \label{prop:mixed} 
Let $\mu \in P^{+}$ be of the form \eqref{eq:mu}, and let $m \in I$ 
be an element of 
$\J=\J_{\mu}=I \setminus \K$ such that 
$\QLS(\vpi_{m}) = \LS(\vpi_{m})$. 
Then, the following equality holds in $K_{H}(\QG)$\,{\rm:}
\begin{equation} \label{eq:equivariant}
\begin{split}
& [\CO_{\QG}(\lng(\mu-\vpi_{m}))] = \\
& \hspace*{10mm}
  \sum_{n \in \BZ_{\ge 0}} 
  \sum_{v \in W_{\K}} (-1)^{\ell(v)} \be^{-\mu} 
  \sum_{\eta \in \LS(\vpi_{m})} \be^{\wt(\eta)} 
  [\CO_{\QG( \upp(v,\eta) t_{ n\alpha_{m}^{\vee} } )}]. 
\end{split}
\end{equation}
\end{prop}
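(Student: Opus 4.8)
The plan is to establish \eqref{eq:equivariant} as the $q = 1$ specialization of a finer identity in $K_{H \times \BC^{*}}(\QG)$, built from two already-available ingredients: the line-bundle formula of Proposition~\ref{prop:e2} and the (inverse) Chevalley-type expansion of a twisted semi-infinite Schubert class from \cite[Corollary~C.3]{NOS}. First I would use additivity of the weight under tensor products of line bundles to write $[\CO_{\QG}(\lng(\mu-\vpi_{m}))] = [\CO_{\QG}(\lng\mu)] \otimes [\CO_{\QG}(-\lng\vpi_{m})]$, apply Proposition~\ref{prop:e2} to $[\CO_{\QG}(\lng\mu)]$, and tensor the resulting finite sum through by $[\CO_{\QG}(-\lng\vpi_{m})]$. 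This reduces the problem to expanding each twisted class $[\CO_{\QG(v)}] \otimes [\CO_{\QG}(-\lng\vpi_{m})]$, $v \in W_{\K}$, by \cite[Corollary~C.3]{NOS}, which produces a sum over $n \in \BZ_{\geq 0}$ and $\eta \in \QLS(\vpi_{m})$ of terms recording the data $\io{\eta}{v}$, $\xi(\eta,v)$, $\deg_{v}(\eta)$, the weight $\wt(\eta)$, and a shift by $n\alpha_{m}^{\vee}$, with a prefactor $q^{\deg_{v}(\eta)-n}$.

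The core of the argument is simplifying these terms under the hypotheses $m \in \J = I \setminus \K$ and $\QLS(\vpi_{m}) = \LS(\vpi_{m})$. Since $\K \subset I \setminus \{m\}$, every $v \in W_{\K}$ satisfies $\mcr{v}^{I \setminus \{m\}} = e$, so the recursion \eqref{eq:tiw} defining $\io{\eta}{v}$ (here with shape $\vpi_{m}$, so the index set is $I \setminus \{m\}$) stays at or above the base point. Concretely, I would check inductively that at each stage the Bruhat comparison $w_{l} > \mcr{\ti{w}_{l+1}}^{I \setminus \{m\}}$ holds (this is where one uses the hypothesis $\eta \in \LS(\vpi_{m})$ rather than merely $\QLS(\vpi_{m})$, which forces $w_{1} > w_{2} > \cdots > w_{s}$ along \emph{Bruhat} edges), so that Lemma~\ref{lem:Deo}(1) identifies $\tbmin{w_{l}}{I \setminus \{m\}}{\ti{w}_{l+1}}$ with $\up{\ti{w}_{l+1}}{w_{l}}{I \setminus \{m\}}$, which moreover dominates $\ti{w}_{l+1}$ in the Bruhat order. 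Chaining these identifications gives $\io{\eta}{v} = \upp(v,\eta)$, and since every edge used in the shortest paths $\ti{w}_{l+1} \Rightarrow \ti{w}_{l}$ is then a Bruhat edge (Lemma~\ref{lem:BG-QBG}), their quantum weights vanish, whence $\xi(\eta,v) = 0$ and $\deg_{v}(\eta) = 0$; in particular this factor contributes no power of $q$. Assembling the terms yields \eqref{eq:a} in $K_{H \times \BC^{*}}(\QG)$, whose right-hand side involves only the powers $q^{-n}$; specializing coefficients at $q = 1$, and noting that the resulting infinite sum over $n$ is a well-defined element of the direct-product module $K_{H}(\QG)$, gives \eqref{eq:equivariant}.

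The step I expect to be the main obstacle is precisely this combinatorial bookkeeping: unwinding the recursion \eqref{eq:tiw} into a chain of ``$\upp$'' moves, verifying at each stage that the Bruhat-order hypothesis of Lemma~\ref{lem:Deo}(1) is met, and then reading off the vanishing of $\xi(\eta,v)$ and $\deg_{v}(\eta)$. By contrast, the reduction via Proposition~\ref{prop:e2}, the substitution of \cite[Corollary~C.3]{NOS}, and the final $q = 1$ specialization are routine.
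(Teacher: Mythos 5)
Your proposal follows essentially the same route as the paper: expand $[\CO_{\QG}(\lng\mu)]$ by Proposition~\ref{prop:e2}, tensor with $[\CO_{\QG}(-\lng\vpi_{m})]$ and apply \cite[Corollary~C.3]{NOS}, then use $\mcr{v}^{I\setminus\{m\}}=e$ and Lemma~\ref{lem:Deo}(1) to unwind the recursion \eqref{eq:tiw} into a Bruhat-increasing chain, giving $\io{\eta}{v}=\upp(v,\eta)$, $\xi(\eta,v)=0$, $\deg_{v}(\eta)=0$, and finally specialize at $q=1$. This matches the paper's argument; your explicit appeal to Lemma~\ref{lem:BG-QBG} for the vanishing of the quantum weights only makes precise a step the paper leaves implicit.
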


Finally, we will deduce a cancellation-free formula from \eqref{eq:a} 
in the case that $\vpi_{m}$ is minuscule;
in this case, $\QLS(\vpi_{m})=\LS(\vpi_{m})=\bigl\{(w\,;\,0,1) \mid 
w \in W^{I \setminus \{m\}} \bigr\}$. Hence, by \eqref{eq:a}, we have
%
%
\begin{equation} \label{eq:c}
\begin{split}
& [\CO_{\QG}(\lng(\mu-\vpi_{m}))] = \\
& \hspace*{10mm}
  \sum_{n \in \BZ_{\ge 0}} 
  \sum_{v \in W_{\K}} (-1)^{\ell(v)}q^{-n} \be^{-\mu} 
  \sum_{ w \in W^{I \setminus \{m\}} } \be^{w\vpi_{m}}
  [\CO_{\QG( \up{v}{w}{I \setminus \{m\}} t_{n\alpha_{m}^{\vee}} )}].
\end{split}
\end{equation}
%
%
\begin{lem} \label{lem:can1}
Let $w \in W^{I \setminus \{m\}}$. 
If there exists $k \in \K$ such that 
$s_{k}w < w$, then 
\begin{equation*}
\up{v}{w}{I \setminus \{m\}} = \up{s_{k}v}{w}{I \setminus \{m\}} \quad 
\text{\rm for all $v \in W_{\K}$}. 
\end{equation*}
\end{lem}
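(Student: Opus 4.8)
The plan is to analyze how the map $\up{v}{w}{I \setminus \{m\}}$ depends on $v$ when $v$ is replaced by $s_k v$ for some $k \in \K$ with $s_k w < w$. Recall that $\mcr{v}^{I \setminus \{m\}} = e$ since $\K \subset I \setminus \{m\}$ and $v \in W_{\K}$, and similarly $\mcr{s_k v}^{I \setminus \{m\}} = e$ since $s_k v \in W_{\K}$ as well. Thus $w \ge e = \mcr{v}^{I \setminus \{m\}} = \mcr{s_k v}^{I \setminus \{m\}}$, so both $\up{v}{w}{I \setminus \{m\}}$ and $\up{s_k v}{w}{I \setminus \{m\}}$ are well-defined: they are the minimal elements in the Bruhat order of $\{ y \in w W_{I \setminus \{m\}} \mid y \ge v\}$ and $\{ y \in w W_{I \setminus \{m\}} \mid y \ge s_k v\}$, respectively. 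By Lemma~\ref{lem:Deo}\,(1), these coincide with $\tbmin{w}{I \setminus \{m\}}{v}$ and $\tbmin{w}{I \setminus \{m\}}{s_k v}$.

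First I would reduce to a statement purely about the Bruhat order on the coset $w W_{I \setminus \{m\}}$. Since $\vpi_m$ is minuscule, the coset $w W_{I \setminus \{m\}}$ for $w \in W^{I \setminus \{m\}}$ is in bijection with the weights $W \vpi_m$; alternatively, I can work directly with the lifting property (Proposition~2.2.7 in the standard reference, but here I would cite the characterization of $\up{}{}{}$ as the minimal element above $v$ in a fixed coset). The key observation is: because $s_k w < w$ with $k \in \K$, we have $s_k \cdot (w W_{I \setminus \{m\}})$ contained in... no — rather, the point is that left multiplication by $s_k$ does not preserve the coset $w W_{I\setminus\{m\}}$ in general, so I must argue more carefully.

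The real mechanism I expect is the following. Let $y = \up{v}{w}{I \setminus \{m\}}$, so $y \ge v$, $y \in w W_{I \setminus \{m\}}$, and $y$ is minimal such. I claim $y \ge s_k v$ as well, and that $y$ is minimal with this property too. For the first claim: since $s_k w < w$ and $y \in w W_{I \setminus \{m\}}$ with $y \in W^{?}$... here I would use that $y$ being the minimal coset representative above $v$, together with $s_k w < w$, forces $s_k y < y$ (this uses the minuscule hypothesis, which guarantees each coset has a unique minimal and maximal element and that the combinatorics of $w W_{I\setminus\{m\}}$ is "straight"); then $s_k y < y$ and $y \ge v$ imply $y \ge s_k v$ by the standard lifting property of the Bruhat order. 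Conversely, suppose $y' \in w W_{I \setminus \{m\}}$ with $y' \ge s_k v$ and $y' < y$; I want a contradiction. Applying the lifting property again — since $s_k w < w$ and $y' \in w W_{I\setminus\{m\}}$ forces $s_k y' < y'$, and then $y' \ge s_k v$ gives $y' \ge v$ — contradicting minimality of $y$. Hence $y = \up{s_k v}{w}{I \setminus \{m\}}$.

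The main obstacle is justifying the implication "$s_k w < w$ and $z \in w W_{I \setminus \{m\}}$ $\Rightarrow$ $s_k z < z$" — this is precisely where the minuscule hypothesis enters, since in general a left descent of the minimal coset representative need not be a left descent of every element of the coset. For a minuscule $\vpi_m$, however, the coset $w W_{I \setminus \{m\}}$ is a single $W_{I\setminus\{m\}}$-orbit whose elements all have the form (minimal representative)$\cdot u$ with $u \in W_{I\setminus\{m\}}$, and left multiplication by $s_k$ with $k \in \K \subseteq I \setminus\{m\}^c$... wait, $k \notin I \setminus \{m\}$, so $s_k$ acts on these cosets nontrivially. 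I would handle this by passing to the geometric picture: identify $w W_{I\setminus\{m\}}$-cosets with the weight $w\vpi_m \in W\vpi_m$, note that $s_k(w\vpi_m)$ is again such a weight, and translate "$s_k z < z$ for all $z$ in the coset" into "$\pair{w\vpi_m}{\alpha_k^\vee} > 0$", which follows from $s_k w < w$ in $W^{I\setminus\{m\}}$ via the minuscule property $\pair{w\vpi_m}{\alpha_k^\vee} \in \{-1,0,1\}$. Once this weight-theoretic reformulation is in place, the lifting-property argument above goes through cleanly, and the rest is routine.
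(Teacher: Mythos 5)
Your proposal follows essentially the same route as the paper's proof: both hinge on the lifting property of the Bruhat order (\cite[Proposition~2.2.7]{BB}) applied to elements of the coset $wW_{I\setminus\{m\}}$, the pivot being that $s_{k}$ is a descent of every element of that coset; the paper phrases the conclusion as the equality of the sets $\{x\in wW_{I\setminus\{m\}}\mid x\ge v\}$ and $\{x\in wW_{I\setminus\{m\}}\mid x\ge s_{k}v\}$, while you compare the two minima directly, which amounts to the same thing. The one place you go astray is your assessment of the pivot: it is \emph{not} where a minuscule hypothesis enters, and indeed the lemma and this step hold for an arbitrary parabolic coset. Your claim that ``in general a left descent of the minimal coset representative need not be a left descent of every element of the coset'' is false: since $w\in W^{I\setminus\{m\}}$ and $s_{k}w<w$, the root $-w^{-1}\alpha_{k}$ is a positive root not in $\Delta^{+}_{I\setminus\{m\}}$, and $W_{I\setminus\{m\}}$ permutes $\Delta^{+}\setminus\Delta^{+}_{I\setminus\{m\}}$, so for $x=wu$ with $u\in W_{I\setminus\{m\}}$ one gets $x^{-1}\alpha_{k}=u^{-1}w^{-1}\alpha_{k}<0$, i.e.\ $s_{k}x<x$. (In your weight-theoretic reformulation the relevant condition is $\pair{w\vpi_{m}}{\alpha_{k}^{\vee}}<0$, not $>0$, and the minuscule bound $\pair{w\vpi_{m}}{\alpha_{k}^{\vee}}\in\{-1,0,1\}$ plays no role---only the sign matters.) Finally, a small logical point: since the Bruhat order is not total, ruling out $y'<y$ does not by itself give minimality; instead note that every $y'$ in the coset with $y'\ge s_{k}v$ satisfies $s_{k}y'<y'$ and hence $y'\ge v$, so $y'\ge \up{v}{w}{I\setminus\{m\}}$, which together with $\up{v}{w}{I\setminus\{m\}}\ge s_{k}v$ yields the desired equality---your ingredients already give exactly this.
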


\begin{proof}
Let us show that for $v \in W_{\K}$, 
\begin{equation*}
\bigl\{ x \in wW_{I \setminus \{m\}} \mid x \ge v \bigr\} = 
\bigl\{ x \in wW_{I \setminus \{m\}} \mid x \ge s_{k}v \bigr\}.
\end{equation*}
To show this assertion, let $v \in W_{\K}$; we may assume that $s_{k}v > v$. 
For the inclusion $\subset$, 
let $x \in \text{LHS}$; since $s_{k}w < w$, we see that 
$s_{k}x < x$. Hence, by \cite[Proposition~2.2.7]{BB}
and the assumption that $x \ge v$, we deduce that $x \ge s_{k}v$, 
which implies that $x \in \text{RHS}$. 

For the opposite inclusion $\supset$, let $x \in \text{RHS}$. 
Since $s_{k}x < x$, as seen above, and 
$s_{k}v > v = s_{k}(s_{k}v)$, 
we deduce that $s_{k}x \ge s_{k}(s_{k}v) = v$
by \cite[Proposition~2.2.7]{BB} 
together with the inequality $x \ge s_{k}v$. Since 
$x > s_{k}x$, we have $x > s_{k}x \ge v$, which implies that 
$x \in \text{LHS}$. This proves the lemma. 
\end{proof}
%
%
\begin{lem} \label{lem:can2}
Assume that $w \in W^{I \setminus \{m\}}$ has a reduced expression 
$w = s_{i_{r}}s_{i_{r-1}} \cdots s_{i_{2}}s_{i_{1}}$ such that 
$\bigl\{ 1 \le t \le r \mid i_{t} \in \K \bigr\} \ne \emptyset$. 
If we set $p:=\max \bigl\{ 1 \le t \le r \mid i_{t} \in \K \bigr\}$, then 
\begin{equation*}
\up{v}{w}{I \setminus \{m\}} = \up{s_{i_{p}}v}{w}{I \setminus \{m\}} \quad 
\text{\rm for all $v \in W_{\K}$}. 
\end{equation*}
\end{lem}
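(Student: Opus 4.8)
The plan is to prove the lemma by induction on $\ell(w)=r$, using Lemma~\ref{lem:can1} as the base step. If the leftmost letter $s_{i_{r}}$ of the given reduced expression already lies in $\K$, then necessarily $p=r$ and $i_{p}=i_{r}$, so $s_{i_{p}}w=s_{i_{r-1}}\cdots s_{i_{1}}<w$; hence Lemma~\ref{lem:can1}, applied with $k=i_{p}$, gives $\up{v}{w}{I \setminus \{m\}}=\up{s_{i_{p}}v}{w}{I \setminus \{m\}}$ for all $v\in W_{\K}$, which is exactly the assertion.

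Assume now that $i_{r}\notin\K$, so $p\le r-1$, and set $w_{1}:=s_{i_{r}}w=s_{i_{r-1}}\cdots s_{i_{1}}$. This is a reduced expression, of length $\ell(w)-1$, and I would first record two elementary facts. First, $w_{1}\in W^{I\setminus\{m\}}$: otherwise there would exist $j\in I\setminus\{m\}$ with $s_{i_{r}}ws_{j}<s_{i_{r}}w$, whence $\ell(s_{i_{r}}ws_{j})=\ell(w)-2$, while $\ell(ws_{j})=\ell(w)+1$ because $w\in W^{I\setminus\{m\}}$, contradicting $|\ell(s_{i_{r}}z)-\ell(z)|=1$ for $z=ws_{j}$. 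Second, the reduced expression $s_{i_{r-1}}\cdots s_{i_{1}}$ of $w_{1}$ again satisfies the hypothesis of the lemma, with the very same $p$ and $i_{p}$ (since $i_{p+1},\dots,i_{r}\notin\K$). Hence the induction hypothesis applies to $w_{1}$ and yields $\up{v}{w_{1}}{I \setminus \{m\}}=\up{s_{i_{p}}v}{w_{1}}{I \setminus \{m\}}$ for all $v\in W_{\K}$.

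It remains to transport this equality along $w=s_{i_{r}}w_{1}$. Consider the bijection $\phi\colon w_{1}W_{I\setminus\{m\}}\to wW_{I\setminus\{m\}}$, $\phi(y):=s_{i_{r}}y$ (with inverse $x\mapsto s_{i_{r}}x$). Since $w=s_{i_{r}}w_{1}\in W^{I\setminus\{m\}}$ with $\ell(w)=\ell(w_{1})+1$, every $y$ in the coset $w_{1}W_{I\setminus\{m\}}$ satisfies $\ell(s_{i_{r}}y)=\ell(y)+1$, so by the lifting property \cite[Proposition~2.2.7]{BB} the map $\phi$ is an isomorphism of posets for the Bruhat order. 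Moreover, for each $z\in W_{\K}$ one has $s_{i_{r}}z>z$ (every reduced expression of an element of $W_{\K}$ involves only the $s_{k}$ with $k\in\K$, and $i_{r}\notin\K$), and, arguing exactly as in the second half of the proof of Lemma~\ref{lem:can1}, the lifting property gives $\phi(y)\ge z\iff y\ge z$ for all $y\in w_{1}W_{I\setminus\{m\}}$; here one uses that $z\in W_{\K}$ while $\phi(y)\notin W_{\K}$ (because $i_{r}\in\Supp(w)\subseteq\Supp(\phi(y))$), so $z\le\phi(y)$ forces $z<\phi(y)$. Applying this with $z=v$ and with $z=s_{i_{p}}v$ (both in $W_{\K}$), $\phi$ restricts to order isomorphisms between $\{y\in w_{1}W_{I\setminus\{m\}}\mid y\ge v\}$ and $\{x\in wW_{I\setminus\{m\}}\mid x\ge v\}$, and likewise with $s_{i_{p}}v$; passing to Bruhat-minima gives $\up{v}{w}{I \setminus \{m\}}=s_{i_{r}}\up{v}{w_{1}}{I \setminus \{m\}}$ and $\up{s_{i_{p}}v}{w}{I \setminus \{m\}}=s_{i_{r}}\up{s_{i_{p}}v}{w_{1}}{I \setminus \{m\}}$. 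The induction hypothesis then closes the step.

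The part I expect to require the most care is the repeated use of the lifting property: showing both that $\phi$ is an order isomorphism of the two cosets and that it respects the truncations $\{\,\cdot\ge v\,\}$ for $v\in W_{\K}$. Everything else — the base case, the bookkeeping of the reduced expression, and the length computations — is routine. It is worth noting that this argument is purely about the combinatorics of $W^{I\setminus\{m\}}$, $W_{\K}$, and the operator $\mathrm{up}(\,\cdot\,,\,\cdot\,)$, and uses nothing about $m$ beyond $m\in I\setminus\K$.
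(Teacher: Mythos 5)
Your proof is correct and follows essentially the same route as the paper's: the base case (leftmost letter in $\K$) is handled by Lemma~\ref{lem:can1}, and the inductive step strips off $s_{i_{r}}$ with $i_{r}\notin\K$ and uses the lifting property \cite[Proposition~2.2.7]{BB} to show $\up{v}{w}{I\setminus\{m\}}=s_{i_{r}}\up{v}{w_{1}}{I\setminus\{m\}}$, exactly as in the paper's \eqref{eq:can2a}. Inducting on $\ell(w)$ rather than on $r-p$, and spelling out that $s_{i_{r}}w\in W^{I\setminus\{m\}}$, are only cosmetic differences.
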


\begin{proof}
We prove the assertion of the lemma by induction on $r-p \ge 0$. 
If $r-p = 0$, then the assertion follows from Lemma~\ref{lem:can1}. 
Assume that $r-p > 0$, and set $w'=s_{i_{r}}w=
s_{i_{r-1}} \cdots s_{i_{2}}s_{i_{1}} \in W^{I \setminus \{m\}}$; 
by our induction hypothesis, we have 
\begin{equation} \label{eq:canih}
\up{v}{w'}{I \setminus \{m\}} = \up{s_{i_{p}}v}{w'}{I \setminus \{m\}} \quad 
\text{\rm for all $v \in W_{\K}$}. 
\end{equation}
Now we claim that for all $v \in W_{\K}$, 
\begin{equation} \label{eq:can1a}
\bigl\{ x \in wW_{I \setminus \{m\}} \mid x \ge v \bigr\} = 
s_{i_{r}} \bigl\{ y \in w' W_{I \setminus \{m\}} \mid y \ge v \bigr\}.
\end{equation}
To show this claim, let $v \in W_{\K}$; 
note that $s_{i_{r}}v > v$ since $i_{r} \notin \K$.
For the inclusion $\subset$, let $x \in \text{LHS}$; 
note that $s_{i_{r}}x \in w' W_{I \setminus \{m\}}$. 
Since $s_{i_{r}}w < w$, we see that $s_{i_{r}}x < x$. 
Hence it follows from \cite[Proposition~2.2.7]{BB}
that $s_{i_{r}}x \ge v$, 
which implies that $x=s_{i_{r}}(s_{i_{r}}x) \in \text{RHS}$. 
For the opposite inclusion $\supset$, 
let $y \in w' W_{I \setminus \{m\}}$ be such that $y \ge v$; 
note that $s_{i_{r}}y \in wW_{I \setminus \{m\}}$. 
Since $w=s_{i_r}w' > w'$, we see that $s_{i_r}y > y$, and hence 
$s_{i_{r}}y > y \ge v$. Hence we conclude that $s_{i_{r}}y \in \text{LHS}$. 
Thus we have shown \eqref{eq:can1a}. Also, we deduce by 
\cite[Proposition~2.2.7]{BB} that 
for $x, x'$ in the set on the LHS of \eqref{eq:can1a}, 
$x \ge x'$ if and only if 
$s_{i_{r}}x = s_{i_{r}}x'$. Hence it follows that
\begin{equation} \label{eq:can2a}
\up{v}{w}{I \setminus \{m\}} = s_{i_{r}} \up{v}{w'}{I \setminus \{m\}} \quad 
\text{\rm for all $v \in W_{\K}$}.
\end{equation}
Here, for $v \in W_{\K}$, we see that
\begin{align*}
\up{v}{w}{I \setminus \{m\}} 
 & = s_{i_{r}} \up{v}{w'}{I \setminus \{m\}} \quad \text{by \eqref{eq:can2a}} \\
 & = s_{i_{r}} \up{s_{i_{p}}v}{w'}{I \setminus \{m\}} 
   \quad \text{by our induction hypothesis \eqref{eq:canih}} \\
 & = \up{s_{i_{p}}v}{w}{I \setminus \{m\}}  \quad \text{by \eqref{eq:can2a}}.
\end{align*}
This proves the lemma. 
\end{proof}

From Lemmas~\ref{lem:can1} and \ref{lem:can2}, we deduce that 
\begin{equation} \label{eq:y}
\begin{split}
& [\CO_{\QG}(\lng(\mu-\vpi_{m}))] = \\
& \hspace*{10mm}
  \sum_{n \in \BZ_{\ge 0}}
  \sum_{v \in W_{\K}} (-1)^{\ell(v)} q^{-n} \be^{-\mu} 
   \sum_{ w \in W^{I \setminus \{m\}} \cap W_{I \setminus \K} } \be^{w\vpi_{m}} 
   [\CO_{\QG ( \up{v}{w}{I \setminus \{m\}} t_{n\alpha_{m}^{\vee}} ) }]. 
\end{split}
\end{equation}
%
%
\begin{lem} \label{lem:can3}
For $w \in W^{I \setminus \{m\}} \cap W_{I \setminus \K}$ and $v \in W_{\K}$, 
we have $\up{v}{w}{I \setminus \{m\}} = wv$. 
\end{lem}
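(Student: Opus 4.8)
The plan is to prove Lemma~\ref{lem:can3} by analyzing the coset structure carefully, using the hypotheses $w \in W^{I \setminus \{m\}}$ and $w \in W_{I \setminus \K}$, together with $v \in W_{\K}$ and $\K \subset I \setminus \{m\}$.

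\medskip

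\noindent\textbf{Step 1: Reduce to showing $wv$ lies in the relevant coset and is minimal.} By definition, $\up{v}{w}{I \setminus \{m\}}$ is the unique minimal element in the Bruhat order of the set $\bigl\{ y \in wW_{I \setminus \{m\}} \mid y \ge v \bigr\}$, which makes sense because $w = \mcr{w}^{I \setminus \{m\}}$ (as $w \in W^{I \setminus \{m\}}$) and because $\mcr{v}^{I \setminus \{m\}} = e \le w$ (since $v \in W_{\K} \subset W_{I \setminus \{m\}}$, using $\K \subset I \setminus \{m\}$). So it suffices to show that (i) $wv \in wW_{I \setminus \{m\}}$, (ii) $wv \ge v$, and (iii) $wv$ is $\le$-minimal among elements of $wW_{I \setminus \{m\}}$ that are $\ge v$. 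Claim (i) is immediate since $v \in W_{\K} \subset W_{I \setminus \{m\}}$.

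\medskip

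\noindent\textbf{Step 2: Establish the length additivity $\ell(wv) = \ell(w) + \ell(v)$.} This is the technical heart. Since $w \in W_{I \setminus \K}$ and $v \in W_{\K}$, the supports of $w$ and $v$ are ``separated'' in the sense that $w$ uses only simple reflections $s_i$ with $i \in I \setminus \K$ while $v$ uses only $s_k$ with $k \in \K$. I would argue that for such $w$ the product $wv$ is length-additive: concretely, $\ell(wv) = \ell(w) + \ell(v)$ holds iff $w\alpha > 0$ for all $\alpha \in \Delta_{\K}^+ \cap N(v^{-1})$ — but in fact it is cleaner to use that $w \in W^{I \setminus \{m\}}$, hence by \eqref{eq:mcr} we have $w\alpha \in \Delta^+$ for all $\alpha \in \Delta_{I \setminus \{m\}}^+$; since $\K \subset I \setminus \{m\}$, this gives $w\alpha \in \Delta^+$ for all $\alpha \in \Delta_{\K}^+$. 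Standard theory (e.g.\ the characterization of coset representatives, \cite[Sect.~2.4]{BB}) then yields $\ell(wv) = \ell(w) + \ell(v)$ for all $v \in W_{\K}$, and moreover $wv \in W^{I \setminus \{m\}}$ since $w$ being a minimal representative and $v$ fixing $\Delta_{\K}^+$-positivity composes correctly — more precisely, $wv\,\alpha = w(v\alpha)$, and for $\alpha \in \Delta_{I\setminus\{m\}}^+$ one checks $v\alpha$ stays inside $\Delta_{I\setminus\{m\}}$ (as $v \in W_{\K} \subset W_{I\setminus\{m\}}$) and then $w$ maps it to $\Delta^+$. This also gives claim (ii): $\ell(wv) = \ell(w) + \ell(v) \ge \ell(v)$ and one gets $wv \ge v$ because $v = e \cdot v$ and left-multiplication by a length-additive factor preserves Bruhat order in the appropriate sense (or directly: a reduced word for $wv$ obtained by concatenating reduced words for $w$ and $v$ contains a reduced word for $v$ as a subword).

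\medskip

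\noindent\textbf{Step 3: Prove minimality (iii).} Let $y \in wW_{I \setminus \{m\}}$ with $y \ge v$; I must show $y \ge wv$. Write $y = wu$ with $u \in W_{I \setminus \{m\}}$, and note $\ell(y) = \ell(w) + \ell(u)$ since $w \in W^{I\setminus\{m\}}$. The condition $y \ge v$ together with $v \in W_{\K}$ should force $u \ge v$ in $W_{I\setminus\{m\}}$ (equivalently in $W$): intuitively, projecting the inequality $wu \ge v$ appropriately, or using the lifting property — since $w\beta > 0$ for $\beta \in \Delta_{I\setminus\{m\}}^+$, the map $u \mapsto wu$ from $W_{I\setminus\{m\}}$ into $W$ is an order embedding onto its image, so $wu \ge v = w \cdot (w^{-1}v)$ — but $w^{-1}v$ need not lie in $W_{I\setminus\{m\}}$. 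Instead I would argue directly via Deodhar's characterization (\cite[Proposition~3.1]{LS}, already invoked in the excerpt): the set $\{y \in wW_{I\setminus\{m\}} \mid y \ge v\}$ has a unique minimum, and I check $wv$ belongs to it and is $\le$ every other element. For the latter, given $y = wu \ge v$, take a reduced word; since $\ell(wu) = \ell(w)+\ell(u)$ and $v$ is a subword of any reduced word for $y$ lying entirely in the $\K$-part (by support separation one can push all $\K$-letters to the right), one extracts $v \le u$, hence $wv \le wu = y$ using again length-additivity and the subword property. The main obstacle I anticipate is this last extraction — showing $v \le u$ in $W_{I\setminus\{m\}}$ from $wu \ge v$ — which requires care because $v$ need not be in $W_{I\setminus\{m\}}$-minimal position and the projection $W \to W/W_{I\setminus\{m\}}$ interacts subtly with Bruhat order; I would handle it by the order-embedding property of $u \mapsto wu$ (valid precisely because $w \in W^{I\setminus\{m\}}$, so $\ell(wu) = \ell(w)+\ell(u)$ for all $u \in W_{I\setminus\{m\}}$, which implies the embedding is Bruhat-order-preserving in both directions on the image) combined with the fact that $v \le wu$ and $v \le e \cdot v$, reducing to comparing within a single coset. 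If a slicker route exists, it is to invoke Lemma~\ref{lem:Deo}: since $w \ge e = \mcr{v}^{I\setminus\{m\}}$, we have $\up{v}{w}{I\setminus\{m\}} = \tbmin{w}{I\setminus\{m\}}{v}$, and one can identify the tilted-Bruhat minimum of $wW_{I\setminus\{m\}}$ with respect to $\le_v$ as $wv$ by checking $\ell(v \Rightarrow wv) = \ell(w) = \ell(v \Rightarrow w') + \ell(w' \Rightarrow wv)$ fails to be beaten, i.e.\ no shortest path from $v$ to any $w' \in wW_{I\setminus\{m\}}$ through the coset undercuts going $v \to wv$; here all relevant edges are Bruhat edges by the support-separation and $w \in W^{I\setminus\{m\}}$ hypotheses, so $\ell(\cdot \Rightarrow \cdot)$ reduces to ordinary Bruhat length differences, and the claim becomes the purely Coxeter-combinatorial statement that $wv$ is Bruhat-minimal in $\{wu : u \in W_{I\setminus\{m\}},\ wu \ge v\}$, which follows from Step 2.
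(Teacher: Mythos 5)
Your proposal is correct and follows essentially the same route as the paper: length-additivity $\ell(wv)=\ell(w)+\ell(v)$ from $w \in W^{I\setminus\{m\}}$, the Subword Property to get $wv \ge v$, and then the support-separation argument (every letter of a reduced word for $w$ lies in $I\setminus\K$ because $w \in W_{I\setminus\K}$, while every letter of $v$ lies in $\K$) to extract, from $wu \ge v$ with $u \in W_{I\setminus\{m\}}$, that a reduced word for $v$ embeds in the $u$-part, so $v \le u$ and hence $wv \le wu$; the paper runs exactly this extraction, just applied to the single element $z$ with $\up{v}{w}{I\setminus\{m\}}=wz$ rather than to an arbitrary $u$. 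Two small caveats: the side remark in Step~2 that $wv \in W^{I\setminus\{m\}}$ is false whenever $v \ne e$ (indeed $wv$ sends some roots of $\Delta^{+}_{\K}$ to negative roots), but it is never used; and the hedged alternative in Step~3 via the order-embedding $u \mapsto wu$ would not work directly, since $v$ lies in the coset $eW_{I\setminus\{m\}}$ rather than $wW_{I\setminus\{m\}}$ --- the subword extraction you describe first is the argument that actually closes the proof, and it is sound as stated.
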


\begin{proof}
Recall that 
$W_{\K} \subset W_{I \setminus \{m\}}$ (since $\K \subset I \setminus \{m\}$), 
and that $\up{v}{w}{I \setminus \{m\}}$ is the minimum element 
of $\bigl\{ y \in w W_{I \setminus \{m\}} \mid y \ge v \bigr\}$
in the Bruhat order. Since $v \in W_{\K} \subset W_{I \setminus \{m\}}$, 
we have $wv \in w W_{I \setminus \{m\}}$. Also, since 
$w \in W^{I \setminus \{m\}}$, we have $\ell(wv) = \ell(w) + \ell(v)$. 
Hence, if $w=s_{i_1} \cdots s_{i_a}$ and $v = s_{j_1} \cdots s_{j_b}$ 
are reduced expression for $w$ and $v$, respectively, then 
$wv= s_{i_1} \cdots s_{i_a}s_{j_1} \cdots s_{j_b}$ is a reduced 
expression for $wv$; in particular, $wv \ge v$ by the Subword Property 
(see \cite[Theorem~2.2.2]{BB}). Therefore, it follows that 
$wv \in \bigl\{ y \in w W_{I \setminus \{m\}} \mid y \ge v \bigr\}$, 
and hence $wv \ge \up{v}{w}{I \setminus \{m\}}$. 

Let us write $\up{v}{w}{I \setminus \{m\}}$ as $wz$, 
with $z \in W_{I \setminus \{m\}}$. By the same reasoning as above, 
if $z = s_{k_1} \cdots s_{k_c}$ is a reduced expression for $z$, then 
$\up{v}{w}{I \setminus \{m\}} = wz = s_{i_1} \cdots s_{i_a}s_{k_1} \cdots s_{k_c}$ 
is a reduced expression. Since $\up{v}{w}{I \setminus \{m\}} = wz \ge v$, 
it follows from the Subword Property that there exists a subsequence 
$l_{1},\dots,l_{d}$ of the sequence $i_1,\dots,i_{a},k_{1},\dots,k_{c}$ such that 
$v = s_{l_1} \cdots s_{l_d}$ is a reduced expression for $v$. 
Here note that 
$l_{1},\dots,l_{d} \in \K$ since $v \in W_{\K}$, 
and that $i_1,\dots,i_{a} \notin \K$ since $w \in W_{I \setminus \K}$. 
Hence $l_{1},\dots,l_{d}$ is a subsequence of $k_{1},\dots,k_{c}$, 
which implies that $\up{v}{w}{I \setminus \{m\}} = wz \ge wv$ by the Subword Property. 
Therefore, we conclude that $\up{v}{w}{I \setminus \{m\}} = wv$, as desired. 
\end{proof}

Substituting the equalities $\up{v}{w}{I \setminus \{m\}} = wv$ for 
$w \in W^{I \setminus \{m\}} \cap W_{I \setminus \K}$ and $v \in W_{\K}$ 
in Lemma~\ref{lem:can3} into equation \eqref{eq:y}, we see that 
%
%
\begin{equation} \label{eq:z}
\begin{split}
& [\CO_{\QG}(\lng(\mu-\vpi_{m}))] = \\
& \hspace*{10mm}
  \sum_{n \in \BZ_{\ge 0}} 
  \sum_{v \in W_{\K}} (-1)^{\ell(v)} q^{-n} \be^{-\mu} 
  \sum_{ w \in W^{I \setminus \{m\}} \cap W_{I \setminus \K} } \be^{w\vpi_{m}} 
  [\CO_{\QG (wv t_{n\alpha_{m}^{\vee}} ) }]. 
\end{split}
\end{equation}
%
From this equality, by specializing (coefficients) at $q = 1$, 
we obtain the following. 
\begin{prop}
Let $\mu \in P^{+}$ be of the form \eqref{eq:mu}, and let $m \in I$ 
be an element of 
$\J=\J_{\mu}=I \setminus \K$ such that 
$\QLS(\vpi_{m}) = \LS(\vpi_{m})$. 
Then, we have the following cancellation-free formula in $K_{H}(\QG)$\,{\rm:}
\begin{equation} \label{eq:z2}
\begin{split}
& [\CO_{\QG}(- \lng(\vpi_{m} - \mu))] = \\
& \hspace*{10mm}
  \sum_{n \in \BZ_{\ge 0}} 
  \sum_{ v \in W_{\K}} (-1)^{\ell(v)} \be^{-\mu} 
  \sum_{ w \in W^{I \setminus \{m\}} \cap W_{I \setminus \K} } \be^{w\vpi_{m}} 
  [\CO_{\QG( wv t_{ n\alpha_{m}^{\vee} } )}]. 
\end{split}
\end{equation}
\end{prop}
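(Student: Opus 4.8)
The plan is to obtain the asserted identity \eqref{eq:z2} simply as the $q = 1$ specialization of the identity \eqref{eq:z}, which has already been established in $K_{H \times \BC^{*}}(\QG)$; no further geometric input is needed, and what remains is only to check that this specialization is legitimate and lands inside $K_{H}(\QG)$.

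First I would recall, for context, where \eqref{eq:z} comes from: one starts from the finite expansion $[\CO_{\QG}(\lng\mu)] = \sum_{v \in W_{\K}}(-1)^{\ell(v)}\be^{-\mu}[\CO_{\QG(v)}]$ of Proposition~\ref{prop:e2}, tensors with the line bundle class $[\CO_{\QG}(-\lng\vpi_{m})]$ using the inverse Chevalley expansion \cite[Corollary~C.3]{NOS}, invokes the hypothesis $\QLS(\vpi_{m}) = \LS(\vpi_{m})$ to force $\ti{w}_{l} \ge \ti{w}_{l+1}$ along every path (so that $\deg_{v}(\eta) = 0$ and $\xi(\eta,v) = 0$, which is what removes all $q$-dependence except the monomials $q^{-n}$), and finally applies the cancellation Lemmas~\ref{lem:can1}, \ref{lem:can2} and the identification $\up{v}{w}{I \setminus \{m\}} = wv$ of Lemma~\ref{lem:can3}. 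I would also note that the left-hand side of \eqref{eq:z2} is literally that of \eqref{eq:z}, since $-\lng(\vpi_{m} - \mu) = \lng\mu - \lng\vpi_{m} = \lng(\mu - \vpi_{m})$.

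Then I would apply the coefficient-wise specialization at $q = 1$, i.e.\ the $\BZ[P]$-linear map $K_{H \times \BC^{*}}(\QG) \to K_{H}(\QG)$ of Section~\ref{subsec:equivgroup}, to both sides of \eqref{eq:z}. On the left this sends $[\CO_{\QG}(\lng(\mu-\vpi_{m}))]$ to the line bundle class of the same name in $K_{H}(\QG)$, by the very definition of that class. On the right, the scalars $(-1)^{\ell(v)}$, $\be^{-\mu}$ and $\be^{w\vpi_{m}}$ are $q$-independent, and for each fixed pair $(v,w)$ the classes $[\CO_{\QG(wv t_{n\alpha_{m}^{\vee}})}]$, $n \in \BZ_{\ge 0}$, are pairwise distinct members of the topological basis of $K_{H}(\QG)$, so the convergent series $\sum_{n \ge 0} q^{-n}[\CO_{\QG(wv t_{n\alpha_{m}^{\vee}})}]$ specializes to the well-defined element $\sum_{n \ge 0}[\CO_{\QG(wv t_{n\alpha_{m}^{\vee}})}]$ of the direct product $\prod_{x \in W_{\af}^{\ge 0}}\BZ[P][\CO_{\QG(x)}]$. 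Collecting everything gives exactly \eqref{eq:z2}.

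I do not expect any real obstacle here: the substantive work is entirely contained in the derivation of \eqref{eq:z}, and the only point deserving a line of care is that the $q = 1$ specialization of the right-hand side of \eqref{eq:z} stays inside $K_{H}(\QG)$ --- which is immediate, since the $q$-dependence there occurs only through the ``geometric'' sum $\sum_{n \ge 0} q^{-n}$ indexed by the infinitely many but mutually distinct translated Schubert classes $[\CO_{\QG(wv t_{n\alpha_{m}^{\vee}})}]$, so there is no convergence issue in the direct-product model of $K_{H}(\QG)$.
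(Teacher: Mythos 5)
Your $q=1$ specialization argument correctly reproduces the identity itself, and your account of how \eqref{eq:z} is obtained matches the paper's derivation. However, the proposition asserts more than the identity: it asserts that the formula is \emph{cancellation-free}, and that claim is precisely what the paper's proof is devoted to --- your proposal never addresses it. You only check that, for a \emph{fixed} pair $(v,w)$, the classes $[\CO_{\QG(wvt_{n\alpha_{m}^{\vee}})}]$, $n\in\BZ_{\ge 0}$, are pairwise distinct (which you use for well-definedness at $q=1$); what is needed is that the affine Weyl group elements $wvt_{n\alpha_{m}^{\vee}}$ are pairwise distinct as the whole triple $(n,v,w)$ varies, since the signs $(-1)^{\ell(v)}$ could otherwise produce cancellations between terms coming from different pairs $(v,w)$.

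The missing step is short but is the actual content of the paper's proof: if $wvt_{n\alpha_{m}^{\vee}}=w'v't_{n'\alpha_{m}^{\vee}}$ with $v,v'\in W_{\K}$, $w,w'\in W^{I\setminus\{m\}}\cap W_{I\setminus\K}$, and $n,n'\in\BZ_{\ge 0}$, then the semidirect product decomposition $W_{\af}=W\ltimes Q^{\vee}$ forces $n=n'$ and $wv=w'v'$; since $v,v'\in W_{\K}\subset W_{I\setminus\{m\}}$ and $w,w'\in W^{I\setminus\{m\}}$, passing to minimal coset representatives modulo $W_{I\setminus\{m\}}$ gives $w=\mcr{wv}^{I\setminus\{m\}}=\mcr{w'v'}^{I\setminus\{m\}}=w'$, and hence $v=v'$. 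Without this injectivity of $(n,v,w)\mapsto wvt_{n\alpha_{m}^{\vee}}$, your argument establishes the equality in \eqref{eq:z2} but not the asserted cancellation-freeness, so the proof is incomplete as written.
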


\begin{proof}
It remains to prove that there are no cancellations 
on the right-hand side of \eqref{eq:z2}. 
For this, it suffices to show that 
for $n,n' \in \BZ_{\ge 0}$, $v,v' \in W_{\K}$, and 
$w,w' \in W^{I \setminus \{m\}} \cap W_{I \setminus \K}$, 
\begin{equation*}
w v t_{ n \alpha_{m}^{\vee} } = 
w'v' t_{ n'\alpha_{m}^{\vee} } \quad \text{only if} \quad
(n,v,w) = (n',v',w'). 
\end{equation*}
Since $W=W \ltimes Q^{\vee}$, 
it is obvious that $n=n'$. Hence we have $w v = w' v'$. 
Since $w,w' \in W^{I \setminus \{m\}}$ and 
$v,v' \in W_{\K} \subset W_{I \setminus \{m\}}$, 
it follows that $w = \mcr{wv}^{I \setminus \{m\}} = 
\mcr{w'v'}^{I \setminus \{m\}} = w'$, and hence 
$v = v'$. This proves the proposition. 
\end{proof}
%
%
\section{An identity for $K$-theoretic quantum shifted elementary polynomials.}
\label{sec:identity}
%
%
\subsection{Inverse Chevalley formula.}
\label{subsec:inv}

In this subsection, we assume that $G$ is simply-laced, i.e., of type $A$, $D$, or $E$. 
Let $\vpi_{i}$ be a minuscule weight, and set $\J=\J_{\vpi_{i}} = I \setminus \{i\}$. 
We fix $x \in \WJ$, and set $\lambda=x\vpi_{i}$. Let $y \in W$ be a unique element of $W$ 
such that $\mcr{\lng}^{\J} = yx$. Let $x=s_{j_l} \cdots s_{j_1}$ and 
$y=s_{i_1} \cdots s_{i_m}$ be reduced expressions for $x$ and $y$, respectively.
We set
\begin{align*}
& \beta_{r} = s_{j_{l}} s_{j_{l-1}} \cdots s_{j_{r+1}} \alpha_{j_{r}} \quad 
  \text{for $1 \le r \le l$}, \\ 
& \gamma_{r} = s_{i_{m}} s_{i_{m-1}} \cdots s_{i_{r+1}} \alpha_{i_{r}} \quad 
  \text{for $1 \le r \le m$}, 
\end{align*}
and then set
\begin{equation*}
\vec{\eta} := (\eta_{1},\dots,\eta_{l+m}) = 
(\beta_{l},\dots,\beta_{1},\gamma_{1},\gamma_{2},\dots,\gamma_{m}). 
\end{equation*}

\begin{dfn}[{\cite[\S3.3.3]{KNOS}}] \label{dfn:QW}
Let $w \in W$. A sequence $\bw=(w_{0},w_{1},\dots,w_{l+m})$ is called a \emph{quantum walk}
starting from $w$ if $w_{0} = w$, and either of the following holds for each $1 \le r \le l+m$:
\begin{enu}
\item[(i)] $w_{r} = w_{r-1}$; 
\item[(ii)] $w_{r} = s_{\eta_{r}}w_{r-1}$, and there exists a directed edge 
from $w_{r-1}$ to $w_{r}$ in $\QBG(W)$. 
\end{enu}
Let $\walka{\lambda}{w}$ denote the set of all quantum walks starting from $w$. 
\end{dfn}

For $\bw=(w_{0},w_1,\dots,w_{l+m}) \in \walka{\lambda}{w}$, 
let $S^{-}(\bw)$ denote the set of steps $r$, for $1 \le r \le l$, such that 
$w_{r}=w_{r-1}$ and $w_{r-1}^{-1}\eta_{r}$ is a simple root. 
Similarly, let $S^{+}(\bw)$ denote the set of steps $r$, for $l < r \le l+m$, 
such that $w_{r} = w_{r-1}$ and $- w_{r-1}^{-1}\eta_{r}$ is a simple root. 
We set $S(\bw):=S^{-}(\bw)\cup S^{+}(\bw)$
%
%
\begin{dfn}[{\cite[\S3.4.1]{KNOS}}] \label{dfn:tQW}
A \emph{decorated quantum walk} is a pair $(\bw,\bb)$ 
of an element $\bw \in \walka{\lambda}{w}$ and a $\{0,1\}$-valued function $\bb$ on $S(\bw)$. 
Denote by $\twalka{\lambda}{w}$ the set of decorated quantum walks. 
\end{dfn}

For $(\bw,\bb) \in \twalka{\lambda}{w}$, we define
\begin{equation*} 
(-1)^{(\bw,\bb)} := 
 \prod_{ \substack{1 \le r \le l \\[1mm] w_{r} < w_{r-1}} } (-1)
 \prod_{ \substack{l < r \le l+m \\[1mm] w_{r} > w_{r-1}} } (-1)
 \prod_{ r \in S(\bw) }(-1)^{\bb(r)}. 
\end{equation*}
Also, we define the partial weight $\wt_{r}(\bw,\bb)$, $0 \le r \le l+m$, 
of $(\bw,\bb)$ by the following recursive formula: 
\begin{align*}
\wt_0(\bw,\bb) & := 0, \\
\wt_{r}(\bw,\bb) & := \wt_{r-1}(\bw,\bb)+
\begin{cases}
-\bb(r)\lng w_{r-1}^{-1}\eta_{r} & \text{if $r \in S^{-}(\bw)$}, \\
\lng w_{r-1}^{-1}\eta_{r} & \text{if $w_{r} < w_{r-1}$}, \\
0 & \text{otherwise}, 
\end{cases} \qquad \text{for $1\le r \le l$}, \\
\wt_{r}(\bw,\bb) & := \wt_{r-1}(\bw,\bb)+
\begin{cases}
\bb(r)\lng w_{r-1}^{-1}\eta_{r} & \text{if $r \in S^{+}(\bw)$}, \\
\lng w_{r-1}^{-1}\eta_{r} & \text{if $w_{r} < w_{r-1}$}, \\
0 & \text{otherwise}, 
\end{cases} \qquad \text{for $l < r \le l+m$}.
\end{align*}
Define the weight of $(\bw,\bb)$ to be $\wt(\bw,\bb)=\wt_{l+m}(\bw,\bb) \in Q^{+}$. 
Also, we define $\wt^{\vee}(\bw,\bb) \in Q^{\vee,+}$ by the same recursive formula 
as the one for $\wt(\bw,\bb)$ above, with $\eta_{r}$ replaced by its coroot $\eta_{r}^{\vee}$ 
for $1 \le r \le l+m$. 

By specializing at $q = 1$ in \cite[Theorem~3.14]{KNOS}, 
we obtain the following inverse Chevalley formula 
for $\be^{\lambda} [\CO_{\QG(w)}]$, with $\lambda = x \varpi_{i}$ and $w \in W$, 
for the $H$-equivariant $K$-group $K_{H}(\QG)$. 
%
%
\begin{thm}[inverse Chevalley formula for $K_{H}(\QG)$] \label{thm:invchev}
Keep the notation and setting above. 
The following equality holds in the $H$-equivariant $K$-group $K_{H}(\QG)$\,{\rm:} 
%
%
\begin{equation} \label{eq:invchev}
\begin{split}
& \be^{\lambda} [\CO_{\QG(w)}] = \\
& \sum_{(\bw=(w_{0},\dots,w_{l+m}),\,\bb) \in \twalka{\lambda}{w}}
(-1)^{(\bw,\bb)} 
[\CO_{\QG( w_{l+m} t_{-\lng(\wt^{\vee}(\bw,\bb))} )} (- \lng w_{l}^{-1}\lambda + \wt(\bw,\bb))].
\end{split}
\end{equation}
\end{thm}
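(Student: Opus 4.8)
**The plan is to derive Theorem~\ref{thm:invchev} by specializing the loop-rotation parameter $q$ to $1$ in the inverse Chevalley formula \cite[Theorem~3.14]{KNOS} for the bigger $K$-group $K_{H \times \BC^{*}}(\QG)$.** The statement is, essentially by construction, the image under the specialization map $K_{H \times \BC^{*}}(\QG) \to K_{H}(\QG)$ (the one described in Section~\ref{subsec:equivgroup}, setting $q=1$ on coefficients) of the formula in \cite{KNOS}; so the real content is a bookkeeping check that (i) the right-hand side of \cite[Theorem~3.14]{KNOS}, when $q=1$, is a well-defined element of $K_{H}(\QG)$ (i.e., the infinite sum over decorated quantum walks converges in the topological-basis sense of Section~\ref{subsec:equivgroup}), and (ii) the combinatorial data $(\bw,\bb) \in \twalka{\lambda}{w}$, the signs $(-1)^{(\bw,\bb)}$, the weights $\wt(\bw,\bb)$, and the coweights $\wt^{\vee}(\bw,\bb)$ as defined above agree with the corresponding ingredients in \cite{KNOS} once the $q$-powers are stripped off.

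First I would recall the precise form of \cite[Theorem~3.14]{KNOS}: it expresses $\be^{\lambda}[\CO_{\QG(w)}]$ in $K_{H \times \BC^{*}}(\QG)$ as a sum over the same set $\twalka{\lambda}{w}$, with each term carrying a factor $q^{?}$ recording a ``degree'' statistic attached to the decorated quantum walk, together with the twisted semi-infinite Schubert class $[\CO_{\QG(w_{l+m}t_{-\lng(\wt^{\vee}(\bw,\bb))})}(-\lng w_{l}^{-1}\lambda + \wt(\bw,\bb))]$. Since each such twisted class lies in $K_{H \times \BC^{*}}(\QG)$ (by \cite[Corollary~5.12]{KNS} and \cite[Theorem~5.16]{KLN}, as quoted in Section~\ref{subsec:equivgroup}), and since the coefficients are $\pm q^{d}$ with $d$ bounded above, the sum converges in $K_{H \times \BC^{*}}(\QG)$; applying the $\BZ[P]$-linear specialization $q \mapsto 1$ (which is continuous for the respective topologies, again by the setup of Section~\ref{subsec:equivgroup}) term by term then yields exactly \eqref{eq:invchev}, because the $q$-power in each term becomes $1$ and the twisted class $[\CO_{\QG(x)}(\nu)] \in K_{H \times \BC^{*}}(\QG)$ maps to the corresponding class in $K_{H}(\QG)$ under the $\bullet \otimes [\CO_{\QG}(\nu)]$ operator, which is defined precisely by this specialization.

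The one genuinely delicate point — and what I expect to be the main obstacle — is the \emph{convergence/well-definedness} after specialization: a priori, infinitely many decorated quantum walks could contribute terms supported on the same semi-infinite Schubert cell $\QG(x)$, and if their $q$-degrees are unbounded, the $q=1$ specialization of the coefficient need not make sense. So I would verify that for each fixed $x \in W_{\af}^{\geq 0}$ only finitely many $(\bw,\bb)$ produce a twisted class whose expansion involves $[\CO_{\QG(x)}]$, or, more robustly, that the coefficients $a_{x} \in \BZ[q,q^{-1}][P]$ of the right-hand side in $K_{H \times \BC^{*}}(\QG)$ already have bounded $q$-degree (equivalently, lie in $\BZ[q^{-1}][P]$ up to an overall shift), so that setting $q=1$ is legitimate. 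This follows from the structure of the expansion in \cite{KNS, KLN}: the convergence criterion there (the sum of absolute values lands in $\BZ_{\geq 0}[P]\pra{q^{-1}}$) is exactly what guarantees the $q=1$ specialization exists, and it applies to any element of $K_{H \times \BC^{*}}(\QG)$, in particular to $\be^{\lambda}[\CO_{\QG(w)}]$. Once this is in hand, the remainder is a routine identification of notation, and I would close by remarking that all the combinatorial statistics $(-1)^{(\bw,\bb)}$, $\wt(\bw,\bb)$, $\wt^{\vee}(\bw,\bb)$ are defined above \emph{verbatim} as the $q$-free parts of the corresponding quantities in \cite[\S3.3.3--3.4.1]{KNOS}, so no rematching is needed beyond transcription.
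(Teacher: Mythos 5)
Your proposal follows essentially the same route as the paper, which obtains Theorem~\ref{thm:invchev} precisely by specializing $q=1$ in \cite[Theorem~3.14]{KNOS}, with the statistics $(-1)^{(\bw,\bb)}$, $\wt(\bw,\bb)$, $\wt^{\vee}(\bw,\bb)$ taken over verbatim. The convergence worry you raise is moot here: for minuscule $\lambda$ the set $\twalka{\lambda}{w}$ is finite (a walk has $l+m$ steps, each with finitely many options, and decorations are $\{0,1\}$-valued on a finite set), so the right-hand side is a finite sum of twisted semi-infinite Schubert classes and the $q=1$ specialization is immediate.
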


In the rest of this subsection, we assume that $G$ is of type $A_{n}$, 
i.e., $G = SL_{n+1}(\BC)$, and hence $\Fg = \Fsl_{n+1}(\BC)$. 
Now, let us apply formula \eqref{eq:invchev} to 
the special case that $\lambda=\vpi_{1}$; 
in this case, we have $x=e$ (and hence $l=0$), and 
$y = \mcr{\lng}^{\J_{\vpi_1}} = s_{n}s_{n-1} \cdots s_{2}s_{1}$ 
(and hence $m=n=l+m$). Also, we see that 
\begin{equation} \label{eq:gamma}
\begin{cases}
\gamma_{1} = s_{1}s_{2} \cdots s_{n-1} \alpha_{n} = \alpha_{1} + \cdots + \alpha_{n} = \alpha_{1,n}, \\[1mm]
\gamma_{2} = s_{1}s_{2} \cdots s_{n-2} \alpha_{n-1} = \alpha_{1} + \cdots + \alpha_{n-1} = \alpha_{1,n-1}, \\[1mm]
\cdots \cdots \\[1mm]
\gamma_{n-1} = s_{1} \alpha_{2} = \alpha_{1}+\alpha_{2} = \alpha_{1,2}, \\[1mm]
\gamma_{n} = \alpha_{1}, 
\end{cases}
\end{equation}
and $\vec{\eta} = (\gamma_{1},\gamma_{2},\dots,\gamma_{n}) = (\alpha_{1,n},\alpha_{1,n-1},\dots,\alpha_{1})$.
Note that $S^{-}(\bw)=\emptyset$ for any $\bw \in \walkg{w}$ since $l=0$. 
Hence it follows that $S(\bw)=S^{-}(\bw) \sqcup S^{+}(\bw) = S^{+}(\bw)$. 
Also, for $(\bw,\bb) \in \twalkg{w}$, we have 
\begin{equation*}
(-1)^{(\bw,\bb)} = 
 \prod_{ \substack{1 \le r \le n \\ w_r > w_{r-1}} } (-1)
 \prod_{ r \in S(\bw) }(-1)^{\bb(r)}, 
\end{equation*}
and
\begin{align*}
\wt_0(\bw,\bb) & = 0, \\
\wt_r(\bw,\bb) & = \wt_{r-1}(\bw,\bb)+
\begin{cases}
\bb(u)\lng w_{r-1}^{-1}\eta_r & \text{if $r \in S^{+}(\bw)$}, \\
\lng w_{r-1}^{-1}\eta_r & \text{if $w_r < w_{r-1}$}, \\
0 & \text{otherwise}, 
\end{cases} \qquad \text{for $1 \le r \le n$}.
\end{align*}
By \eqref{eq:invchev}, the following equality holds 
in the $H$-equivariant $K$-group $K_{H}(\QG)$: 
%
%
\begin{equation} \label{eq:invchev2}
\begin{split}
& \be^{\varpi_{1}} [\CO_{\QG(w)}] = \\
& \sum_{(\bw = (w_{0},\dots,w_{n}),\,\bb) \in \twalkg{w}}
(-1)^{(\bw,\bb)} 
[\CO_{\QG( w_{n} t_{-\lng(\wt^{\vee}(\bw,\bb))} )} (- \lng \vpi_{1} + \wt(\bw,\bb))].
\end{split}
\end{equation}

Let us fix $0 \le k \le n$ arbitrarily. 
We rewrite equation \eqref{eq:invchev2} more explicitly in the special case 
that $w = s_{1}s_{2} \cdots s_{k}$. In order to do this, 
we need to determine the set $\walk$.
%
%
\begin{prop} \label{prop:QW1}
Keep the setting above. 
Each element $\bw = (w_{0},w_{1},\ldots,w_{n}) \in \walk$ is of one of 
the following forms\,{\rm:}
\begin{enu}
\item \label{enu1} $w = w_{0} = \cdots = w_{n} = w$\,{\rm;}
\item \label{enu2} $w=w_{0}=\cdots=w_{n-k-1} \Be{\alpha_{k+1}} w_{n-k} = \cdots = w_{n}=ws_{k+1}$\,{\rm;} 
\item \label{enu3} $w=w_{0}=\cdots=w_{n-k} \Qe{\alpha_{k}} \cdots \Qe{\alpha_{m}} 
w_{n-m+1} = \cdots = w_{n} = ws_{k} \cdots s_{m}$ for $1 \le m \le k$. 
\end{enu}
If $k=n$, then there does not exist an element $\bw$ of the form \eqref{enu2}.
If $k=0$, then there does not exist an element $\bw$ of the form \eqref{enu3}. 
\end{prop}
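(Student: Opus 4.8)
The plan is to classify quantum walks $\bw = (w_0, w_1, \dots, w_n) \in \walk$ starting from $w = s_1 s_2 \cdots s_k$ by tracking, step by step, which of the two options (i) $w_r = w_{r-1}$ or (ii) $w_r = s_{\eta_r} w_{r-1}$ is taken, where $\vec{\eta} = (\alpha_{1,n}, \alpha_{1,n-1}, \dots, \alpha_{1})$ by \eqref{eq:gamma}. The key observation is that a \emph{nontrivial} step (option (ii)) requires a directed edge $w_{r-1} \edge{\eta_r} w_r$ in $\QBG(W)$, and because $\eta_r = \alpha_{1, n+1-r}$ (so the supports $\{1, 2, \dots, n+1-r\}$ shrink as $r$ grows), I expect at most one nontrivial step in the first several $r$'s (those with large $\eta_r$), and then possibly a chain of consecutive nontrivial steps once $\eta_r$ becomes small enough to interact with the descent set of $w = s_1 \cdots s_k$.

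First I would recall, in the symmetric group $S_{n+1}$, that $w = s_1 s_2 \cdots s_k$ is the cycle sending $1 \mapsto 2 \mapsto \cdots \mapsto k+1 \mapsto 1$ (in the appropriate convention), so that $w$ has a single left descent, at position related to $k$, and compute $w^{-1} \alpha_{1, n+1-r}$ explicitly for each $r$. The edge condition in $\QBG(W)$ for the reflection $s_{\alpha_{1,j}} = (1, j+1)$ applied on the right to $w_{r-1}$ amounts to a length comparison: either $\ell(w_{r-1} s_{\alpha}) = \ell(w_{r-1}) + 1$ (Bruhat edge, case (B)) or $\ell(w_{r-1} s_{\alpha}) = \ell(w_{r-1}) + 1 - 2\langle \rho, \alpha^\vee\rangle$ (quantum edge, case (Q)). Using the standard formula $\langle \rho, \alpha_{1,j}^\vee \rangle = j$, a quantum edge along $\alpha_{1,j}$ drops length by $2j - 1$. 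I would then check, for $w_0 = w = s_1 \cdots s_k$, that right-multiplication by $s_{\alpha_{1, n+1-r}} = (1, n+2-r)$ gives a Bruhat edge precisely when $n+1-r = k+1$, i.e., $r = n-k$ (this is the edge $w \Be{\alpha_{k+1}} w s_{k+1}$ of case \eqref{enu2}, valid only when $k+1 \le n$, i.e., $k \le n-1$), and gives a quantum edge precisely when $n+1-r = k$, i.e., $r = n-k+1$ (the first quantum edge of case \eqref{enu3}, labeled $\alpha_k$, valid only when $k \ge 1$); for all other indices $r$, no edge exists from $w$ along $\eta_r$.

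Next I would analyze what happens \emph{after} a nontrivial step. In case \eqref{enu2}, once we reach $w s_{k+1}$ at step $r = n-k$, I need to verify that for all subsequent $r > n-k$ (where $\eta_r = \alpha_{1, n+1-r}$ with $n+1-r < k+1$), there is no edge from $w s_{k+1}$ along $\eta_r$, forcing $w_r = w_{r-1}$ for the rest; this is again a length computation using that $w s_{k+1}$ still has descents only in a restricted range. In case \eqref{enu3}, after the quantum edge $w \Qe{\alpha_k} w s_k$ at step $r = n-k+1$, I need to show that at step $r = n-k+2$ (label $\eta_r = \alpha_{k-1}$) the only options are to stay or to take another quantum edge $w s_k \Qe{\alpha_{k-1}} w s_k s_{k-1}$, and inductively that from $w s_k s_{k-1} \cdots s_{m+1}$ the step with label $\alpha_m$ either stays or proceeds by a quantum edge to $w s_k \cdots s_m$ — this produces the "telescoping" chain $w \Qe{\alpha_k} \cdots \Qe{\alpha_m} w s_k \cdots s_m$ and then all further steps are trivial. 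Crucially I must also rule out mixed behavior (e.g., a Bruhat edge interrupting the quantum chain, or a nontrivial step at an index before $n-k$), which follows from the support/length bookkeeping. Finally I would handle the degenerate cases: if $k = n$, then $n - k = 0$ is not a valid step index ($r \ge 1$), so case \eqref{enu2} is empty; if $k = 0$, then $w = e$ has no descents, so no quantum edge exists and case \eqref{enu3} is empty — and separately I confirm the all-trivial walk \eqref{enu1} is always present.

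\textbf{The main obstacle} I anticipate is the careful verification that no \emph{other} quantum walks exist — in particular, ruling out walks that take a nontrivial step at some "wrong" index $r$ (one not equal to $n-k$ or in the range $\{n-k+1, \dots, n\}$), and ruling out walks that, after entering the quantum chain of case \eqref{enu3}, exit it prematurely or re-enter it. Both reduce to showing that certain right multiplications $w_{r-1} s_{\alpha_{1,j}}$ fail to be edges in $\QBG(W)$, which is a finite but somewhat delicate case analysis of lengths and inversions of the permutations $w s_k s_{k-1} \cdots s_{m+1}$ in $S_{n+1}$; I would organize this by computing the one-line notation of each such permutation and reading off which transpositions $(1, j+1)$ increase length by exactly $1$ or decrease it by exactly $2j-1$, and showing these coincide exactly with the transitions listed in \eqref{enu1}–\eqref{enu3}.
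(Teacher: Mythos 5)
Your proposal follows essentially the same route as the paper's proof: both classify the walks by checking, step by step, which reflections $s_{\eta_r}$ yield an edge from the current vertex, via explicit length computations in $S_{n+1}$ and the quantum-edge criterion $\ell(y)=\ell(x)+1-2\pair{\rho}{\alpha^{\vee}}$, and both isolate the same critical indices ($r=n-k$ for the unique Bruhat edge with label $\alpha_{k+1}$, and $r=n-k+1,\dots$ for the telescoping quantum chain with labels $\alpha_{k},\alpha_{k-1},\dots$), then verify that all other steps are forced to be trivial. The plan is correct; only make sure the final write-up consistently works with the conjugated labels $\pm w_{r-1}^{-1}\eta_{r}$ (the walk moves by left multiplication $w_r=s_{\eta_r}w_{r-1}$, so the $\QBG$ edge is $w_{r-1}\to w_{r-1}s_{w_{r-1}^{-1}\eta_r}$, not $w_{r-1}s_{\alpha_{1,j}}$).
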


\begin{proof} 
Let $\bw = (w_{0},w_{1},\dots,w_{n}) \in \walk$, 
where $w_{0} = w = s_{1}s_{2} \cdots s_{k}$ by the definition. Since 
\begin{equation*}
s_{\alpha_{1,p}} = s_{1} s_{2} \cdots s_{p-1} s_{p} s_{p-1} \cdots s_{2}s_{1} 
 \qquad \text{for $1 \le p \le n$}, 
\end{equation*}
we see that 
\begin{equation*}
s_{\alpha_{1,p}} w = s_{1} s_{2} \cdots s_{p-1} s_{p} s_{p-1} \cdots s_{k+1} 
 \qquad \text{for $k+1 \le p \le n$}. 
\end{equation*}
It follows that $\ell(s_{\gamma_{t}} w) = \ell(s_{\alpha_{1,n-t+1}} w) > \ell(w) + 1$ 
for $1 \le t \le n-k-1$, and $\ell(s_{\gamma_{n-k}} w) = \ell(w)+1$ (for $t=n-k$). 
Therefore, we have 
\begin{equation*}
\begin{split}
& w=w_{0}=w_{1}=\cdots=w_{n-k-1}, \\
& w_{n-k} \in \bigl\{w=s_{1}s_{2} \cdots s_{k}, \, 
  \underbrace{s_{\gamma_{n-k}} w = s_{1}s_{2} \cdots s_{k}s_{k+1}}_{%
  \text{This is omitted if $k=n$.} } \bigr\}.
\end{split}
\end{equation*}
We divide the proof into two cases: 
in Case 1, we consider the case that $w_{n-k}=s_{\gamma_{n-k}} w$ 
(note that $k < n$ in this case), and prove that $\bw$ is of the form (2); 
in Case 2, we consider the case that $w_{n-k}=w$, and 
prove that $\bw$ is either of the form (1) or of the form (3). 
%
\paragraph{Case 1.}
%
Assume that ($k < n$, and)
$w_{n-k} = s_{\gamma_{n-k}} w = s_{1}s_{2} \cdots s_{k}s_{k+1}$. 
If $k=0$, then $\bw = (w_{0},w_{1},\dots,w_{n}) = (e,\dots,e,s_{1})$, 
which is of the form (2) (with $k=0$). 
Assume that $k \ge 1$. We see that for $n-k+1 \le t \le n$, 
\begin{align*}
s_{\gamma_{t}}w_{n-k} & = 
( s_{1} s_{2} \cdots s_{n-t} s_{n-t+1} s_{n-t} \cdots s_{2}s_{1})(s_{1}s_{2} \cdots s_{k}s_{k+1}) \\
& = s_{1} s_{2} \cdots s_{n-t} s_{n-t+2} \cdots s_{k} s_{k+1},
\end{align*}
which implies that $\ell(s_{\gamma_{t}}w_{n-k}) = \ell(w_{n-k})-1$. 
Also, we see that for $n-k+1 \le t \le n$,
\begin{align*}
w_{n-k}^{-1}\gamma_{t} & = 
s_{k+1}s_{k} \cdots s_{2}s_{1}(\alpha_{1} + \cdots + \alpha_{n-t+1}) \\
& = - (\alpha_{n-t+1} + \cdots + \alpha_{k} + \alpha_{k+1}), 
\end{align*}
which implies that 
$\pair{2\rho}{-w_{n-k}^{-1}\gamma_{t}^{\vee}}-1 \ge 3$. 
Therefore, there does not exist a (quantum) edge from $w_{n-k}$ to $s_{\gamma_{t}}w_{n-k}$ 
for any $n-k+1 \le t \le n$. Hence we obtain
$w_{n-k} = w_{n-k+1} = \cdots = w_{n}$.
Thus, $\bw = (w_{0},w_{1},\dots,w_{n})$ is of the form (2). 

\paragraph{Case 2.}
%
Assume that ($0 \le k \le n$, and) $w_{n-k} = w = s_{1}s_{2} \cdots s_{k}$. 
If $k=0$, then $\bw = (w_{0},w_{1},\dots,w_{n}) = (e,\dots,e)$, 
which is of the form (1) (with $w = e$). 
Assume that $k \ge 1$. We see that for $n-k+1 \le t \le n$, 
\begin{align*}
s_{\gamma_{t}}w_{n-k} & = 
( s_{1} s_{2} \cdots s_{n-t} s_{n-t+1} s_{n-t} \cdots s_{2}s_{1})(s_{1}s_{2} \cdots s_{k}) \\
& = s_{1} s_{2} \cdots s_{n-t} s_{n-t+2} \cdots s_{k},
\end{align*}
which implies that $\ell(s_{\gamma_{t}}w_{n-k}) = \ell(w_{n-k})-1$. 
Also, we see that for $n-k+1 \le t \le n$,
\begin{align*}
w_{n-k}^{-1}\gamma_{t} & = 
s_{k} \cdots s_{2}s_{1}(\alpha_{1} + \cdots + \alpha_{n-t+1}) \\
& = - (\alpha_{n-t+1} + \cdots + \alpha_{k}), 
\end{align*}
which implies that 
$\pair{2\rho}{-w_{n-k}^{-1}\gamma_{t}^{\vee}}-1 \ge 3$ 
for $n-k+2 \le t \le n$, and 
$\pair{2\rho}{-w_{n-k}^{-1}\gamma_{n-k+1}^{\vee}}-1 = 1$ 
(for $t = n-k+1$). 
Therefore, for $n-k+1 \le t \le n$, there exists an edge 
from $w_{n-k}$ to $s_{\gamma_{t}}w_{n-k}$ if and only 
$t = n-k+1$; in this case, the edge is a quantum one whose label is $\alpha_{k}$, 
and $s_{\gamma_{n-k+1}}w_{n-k} = s_{1}s_{2} \cdots s_{k-1}$. 
Thus, we deduce that 
\begin{itemize}
\item $w_{n-k+1}$ is either $w_{n-k}=w$ or 
$s_{\gamma_{n-k+1}}w_{n-k} = s_{1}s_{2} \cdots s_{k-1}$; 

\item if $w_{n-k+1}=w_{n-k}$, then 
$w_{t}=w_{n-k}=w$ for all $n-k+1 \le t \le n$; 
in this case, $\bw$ is of the form (1). 
\end{itemize}
Assume that $w_{n-k+1} = s_{\gamma_{n-k+1}}w_{n-k} = s_{1}s_{2} \cdots s_{k-1}$. 
By the same argument as above, we see that 
\begin{itemize}
\item $w_{n-k+2}$ is either $w_{n-k+1}$ or 
$s_{\gamma_{n-k+2}}w_{n-k+1} = s_{1}s_{2} \cdots s_{k-2}$; 

\item if $w_{n-k+2}=w_{n-k+1}$, then 
$w_{t}=w_{n-k+1}$ for all $n-k+2 \le t \le n$; 
in this case, $\bw$ is of the form (3) with $m=k$. 
\end{itemize}
In addition, assume that 
$w_{n-k+2} = s_{\gamma_{n-k+2}}w_{n-k+1} = s_{1}s_{2} \cdots s_{k-2}$. 
By the same argument as above, we see that 
\begin{itemize}
\item $w_{n-k+3}$ is either $w_{n-k+2}$ or 
$s_{\gamma_{n-k+3}}w_{n-k+2} = s_{1}s_{2} \cdots s_{k-3}$; 

\item if $w_{n-k+3}=w_{n-k+2}$, then 
$w_{t}=w_{n-k+2}$ for all $n-k+3 \le t \le n$; 
in this case, $\bw$ is of the form (3) with $m=k-1$. 
\end{itemize}
Continuing in this way, we conclude that 
$\bw$ is either of the form (1) or of the form (3). 
This proves the proposition. 
\end{proof}

Let $(\bw,\bb) \in \twalk$. 
We can easily observe the following. 
\begin{enu}
\item[(i)] 
Assume that $\bw$ is of the form (1) in Proposition~\ref{prop:QW1}.
If $k=0$, then $S(\bw)=S^{+}(\bw)=\emptyset$, and 
\begin{align*}
& (-1)^{(\bw,\bb)} = 1, \quad 
  \wt(\bw,\bb)=0. \quad 
\end{align*}
If $k \ge 1$, then 
$S(\bw)=S^{+}(\bw)=\{ n-k+1 \}$. If $\bb(n-k+1)=0$, then 
\begin{align*}
& (-1)^{(\bw,\bb)} = 1, \quad 
  \wt(\bw,\bb)=0. \quad 
\end{align*}
If $\bb(n-k+1)=1$, then 
\begin{align*}
& (-1)^{(\bw,\bb)} = -1, \quad 
  \wt(\bw,\bb)=-\lng \alpha_{k}. \quad 
\end{align*}

\item[(ii)] 
If $\bw$ is of the form (2) in Proposition~\ref{prop:QW1}, then 
$S(\bw)=S^{+}(\bw)= \emptyset$, and
\begin{align*}
& (-1)^{(\bw,\bb)} = - 1, \quad 
  \wt(\bw,\bb)=0. \quad 
\end{align*}

\item[(iii)] If $\bw$ is of the form (3) 
in Proposition~\ref{prop:QW1}, with $2 \le m \le k$ (resp., $m=1$), then 
$S(\bw)=S^{+}(\bw)=\{ n-m+2 \}$ (resp., $=\emptyset$). 
If $2 \le m \le k$ and $\bb(n-m+2)=0$, or if $m=1$, then 
\begin{align*}
& (-1)^{(\bw,\bb)} = 1, \quad 
  \wt(\bw,\bb)=-\lng \alpha_{m,k}. \quad 
\end{align*}
If $2 \le m \le k$ and $\bb(n-m+2)=1$, then 
\begin{align*}
& (-1)^{(\bw,\bb)} = - 1, \quad 
  \wt(\bw,\bb)=-\lng \alpha_{m-1,k}. \quad 
\end{align*}
\end{enu}
By using Proposition~\ref{prop:QW1} and 
the observations (i), (ii), (iii) above, 
we deduce the following proposition from equation \eqref{eq:invchev2}.
%
%
\begin{prop} \label{prop:inv1}
For $0 \le k \le n$, the following equality holds in $K_{H}(\QG)$\,{\rm:}
\begin{align*}
& \be^{\vpi_{1}} [ \CO_{\QG(s_{1} \cdots s_{k})} ] \\
& = [ \CO_{\QG(s_{1} \cdots s_{k})}] \otimes [\CO_{\QG}(-\lng s_{k} \cdots s_{1} \vpi_{1}) ] \\
& -  [ \CO_{\QG(s_{1} \cdots s_{k}t_{\alpha_{k}})}] \otimes [\CO_{\QG}(-\lng s_{k} \cdots s_{1} \vpi_{1}-\lng \alpha_{k}) ] \\
& - \underbrace{%
  [ \CO_{\QG(s_{1} \cdots s_{k}s_{k+1})}] \otimes [\CO_{\QG}(-\lng s_{k} \cdots s_{1} \vpi_{1}) ]}_{%
  \text{\rm This term is understood to be $0$ if $k=n$.} } \\
& + \sum_{1 \le m \le k}  [ \CO_{\QG(s_{1} \cdots s_{m-1} t_{\alpha_{m,k}^{\vee}})}] \otimes 
    [\CO_{\QG}(-\lng s_{k} \cdots s_{1} \vpi_{1}-\lng \alpha_{m,k}) ] \\
& - \sum_{2 \le m \le k}  [ \CO_{\QG(s_{1} \cdots s_{m-1} t_{\alpha_{m-1,k}^{\vee}})}] \otimes 
    [\CO_{\QG}(-\lng s_{k} \cdots s_{1} \vpi_{1}-\lng \alpha_{m-1,k}) ]. 
\end{align*}
\end{prop}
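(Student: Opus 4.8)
The plan is to feed the explicit description of $\walk$ furnished by Proposition~\ref{prop:QW1}, together with the values of $(-1)^{(\bw,\bb)}$ and $\wt(\bw,\bb)$ recorded in observations (i), (ii), (iii) above, into equation \eqref{eq:invchev2} and then collect terms. First I would enumerate the decorated quantum walks $(\bw,\bb) \in \twalk$: by Proposition~\ref{prop:QW1} each underlying $\bw$ has one of the forms (1), (2), (3), and for each of these the set $S(\bw)$ on which $\bb$ is defined is pinned down in (i)--(iii) (it is empty except for form (1) with $k \ge 1$ and for form (3) with $2 \le m \le k$, where it is a single step). Thus $\twalk$ breaks into finitely many families, indexed by the shape of $\bw$ and, when $S(\bw) \ne \emptyset$, by the value $\bb \in \{0,1\}$ on that step.

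Next, for each family I would read off the three data appearing on the right-hand side of \eqref{eq:invchev2}. The endpoint $w_n$ equals $w = s_1 \cdots s_k$, $w s_{k+1} = s_1 \cdots s_{k+1}$, or $w s_k \cdots s_m = s_1 \cdots s_{m-1}$ according to whether $\bw$ has form (1), (2), (3), again by Proposition~\ref{prop:QW1}. The sign $(-1)^{(\bw,\bb)}$ and the weight $\wt(\bw,\bb)$ are exactly as listed in (i)--(iii). The one piece not yet recorded is the coweight $\wt^{\vee}(\bw,\bb)$, which governs the translation $t_{-\lng(\wt^{\vee}(\bw,\bb))}$; since $\wt^{\vee}$ obeys the same recursion as $\wt$ with each $\eta_r$ replaced by $\eta_r^{\vee}$ and $\lng$ commutes with taking coroots, the computations in (i)--(iii) give immediately $\wt^{\vee}(\bw,\bb) = 0$, $-\lng\alpha_k^{\vee}$, $0$, $-\lng\alpha_{m,k}^{\vee}$, $-\lng\alpha_{m-1,k}^{\vee}$ in the five relevant cases, and, as $\lng^2 = e$, the corresponding translation elements are $t_0$, $t_{\alpha_k^{\vee}}$, $t_0$, $t_{\alpha_{m,k}^{\vee}}$, $t_{\alpha_{m-1,k}^{\vee}}$. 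Finally, because $w = s_1 \cdots s_k$ satisfies $w^{-1}\vpi_1 = s_k \cdots s_1 \vpi_1$, the twisting line-bundle weight appearing in \eqref{eq:invchev2} equals $-\lng s_k \cdots s_1 \vpi_1 + \wt(\bw,\bb)$ in every term.

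Assembling these contributions term by term then yields the asserted identity after rewriting each twisted class $[\CO_{\QG(z)}(\nu)]$ as $[\CO_{\QG(z)}] \otimes [\CO_{\QG}(\nu)]$: form (1) with $\bb = 0$ (and the degenerate case $k = 0$) produces the first summand; form (1) with $\bb = 1$ the second; form (2) the third (which is vacuous exactly when $k = n$); form (3) with $m = 1$ together with form (3), $2 \le m \le k$, $\bb = 0$, combine into the sum over $1 \le m \le k$; and form (3), $2 \le m \le k$, $\bb = 1$, gives the sum over $2 \le m \le k$. The substitution itself is mechanical once Proposition~\ref{prop:QW1} and observations (i)--(iii) are in hand; I expect the only real care to be needed in the bookkeeping of the boundary cases $k = 0$, $k = n$, and $m = 1$, and in checking that each degenerate walk is counted once and with the correct sign — this is where a careless slip would most easily enter.
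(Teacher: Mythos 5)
Your proposal is correct and follows essentially the same route as the paper: the paper obtains Proposition~\ref{prop:inv1} precisely by substituting the classification of walks in Proposition~\ref{prop:QW1} and the sign/weight data of observations (i)--(iii) into the inverse Chevalley formula \eqref{eq:invchev2}, with the translation parts coming from $\wt^{\vee}$ exactly as you compute them (and with the twisting weight read as $-\lng w_{0}^{-1}\vpi_{1}=-\lng s_{k}\cdots s_{1}\vpi_{1}$, as in the statement of the proposition). Your case-by-case matching of the five families of decorated walks with the five summands, including the degenerate cases $k=0$, $k=n$, and $m=1$, agrees with the paper's intended deduction.
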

Multiplying both sides of the equality in this proposition 
by $[\CO_{\QG}(\lng s_{k} \cdots s_{1} \vpi_{1}) ]$, 
we obtain the following.
%
%
\begin{cor} \label{cor:inv1}
For $0 \le k \le n$, the following equality holds in $K_{H}(\QG)$\,{\rm:}
\begin{align*}
& \overbrace{[ \CO_{\QG( s_{1} \cdots s_{k}s_{k+1} )} ]}^{%
  \begin{subarray}{c}
  \text{\rm This term is understood} \\[1mm]
  \text{\rm to be $0$ if $k=n$.} \end{subarray}}  \\
& = - \be^{\vpi_{1}} [ \CO_{\QG(s_{1} \cdots s_{k})}] \otimes [ \CO_{\QG}(\lng \eps_{k+1}) ] \\
& + [ \CO_{\QG(s_{1} \cdots s_{k})} ] \\
& + \sum_{1 \le m \le k}  [ \CO_{\QG(s_{1} \cdots s_{m-1} t_{\alpha_{m,k}^{\vee}})} ] 
    \otimes [\CO_{\QG}(\lng \eps_{k+1} - \lng \eps_{m}) ] \\
& - \sum_{1 \le m \le k}  [ \CO_{\QG(s_{1} \cdots s_{m} t_{\alpha_{m,k}^{\vee}})}
    \otimes [\CO_{\QG}(\lng \eps_{k+1} - \lng \eps_{m}) ]. 
\end{align*}
\end{cor}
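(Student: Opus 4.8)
The plan is to obtain Corollary~\ref{cor:inv1} directly from Proposition~\ref{prop:inv1} by applying to both sides of its identity the $\BZ[P]$-linear operator $\bullet \otimes [\CO_{\QG}(\lng s_{k} \cdots s_{1}\vpi_{1})]$ on $K_{H}(\QG)$. The structural facts I use, all recorded in Section~\ref{subsec:equivgroup}, are that the line-bundle twisting operators $\bullet \otimes [\CO_{\QG}(\nu)]$, $\nu \in P$, are well defined on $K_{H}(\QG)$, that they compose additively in the weight --- so that twisting by $[\CO_{\QG}(\nu)]$ turns a term $[\CO_{\QG(z)}] \otimes [\CO_{\QG}(\nu')]$ of Proposition~\ref{prop:inv1} into $[\CO_{\QG(z)}] \otimes [\CO_{\QG}(\nu + \nu')]$ --- and that $\bullet \otimes [\CO_{\QG}(0)]$ is the identity operator. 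I also use the elementary computation $s_{k} \cdots s_{2}s_{1}\vpi_{1} = \eps_{k+1}$, which follows from $\vpi_{1} = \eps_{1}$ together with the fact that $s_{j}$ interchanges $\eps_{j}$ and $\eps_{j+1}$; hence the operator applied is $\bullet \otimes [\CO_{\QG}(\lng\eps_{k+1})]$, and twisting cancels the common factor $-\lng s_{k} \cdots s_{1}\vpi_{1} = -\lng\eps_{k+1}$ occurring inside every line-bundle class on the right-hand side of Proposition~\ref{prop:inv1}.

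After twisting, the left-hand side of Proposition~\ref{prop:inv1} becomes $\be^{\vpi_{1}}[\CO_{\QG(s_{1} \cdots s_{k})}] \otimes [\CO_{\QG}(\lng\eps_{k+1})]$, matching the first term on the right-hand side of Corollary~\ref{cor:inv1} up to sign. On the right-hand side of Proposition~\ref{prop:inv1}, the first term loses its twist entirely and becomes $[\CO_{\QG(s_{1} \cdots s_{k})}]$, while the third term (the one carrying $[\CO_{\QG}(-\lng s_{k} \cdots s_{1}\vpi_{1})]$) becomes $-[\CO_{\QG(s_{1} \cdots s_{k}s_{k+1})}]$. Solving the resulting identity for $[\CO_{\QG(s_{1} \cdots s_{k}s_{k+1})}]$ produces the first two displayed terms of Corollary~\ref{cor:inv1} together with the twisted images of the second, fourth, and fifth terms of Proposition~\ref{prop:inv1}.

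It then remains to match those three twisted images with the two sums in Corollary~\ref{cor:inv1}. Here I use $\alpha_{m,k} = \eps_{m} - \eps_{k+1}$, hence $-\lng\alpha_{m,k} = \lng\eps_{k+1} - \lng\eps_{m}$; thus the twisted fourth term has line-bundle class $[\CO_{\QG}(\lng\eps_{k+1} - \lng\eps_{m})]$ and coincides with the first sum of Corollary~\ref{cor:inv1}. For the remaining two, I use $\alpha_{k} = \alpha_{k,k}$ (and the identification $\alpha_{k}^{\vee} = \alpha_{k}$ in type $A$) to recognize the twisted second term of Proposition~\ref{prop:inv1} as the $m = k$ summand of $- \sum_{1 \le m \le k}[\CO_{\QG(s_{1} \cdots s_{m}t_{\alpha_{m,k}^{\vee}})}] \otimes [\CO_{\QG}(\lng\eps_{k+1} - \lng\eps_{m})]$, while the substitution $m \mapsto m+1$ identifies the twisted fifth sum of Proposition~\ref{prop:inv1} with the summands $m = 1, \dots, k-1$ of the same expression; together these give the second sum of Corollary~\ref{cor:inv1}, which finishes the proof.

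Every step is mechanical, so the only point demanding care is the weight bookkeeping: checking $s_{k} \cdots s_{1}\vpi_{1} = \eps_{k+1}$, the identity $-\lng\alpha_{m,k} = \lng\eps_{k+1} - \lng\eps_{m}$, and the index shift that merges the isolated term on the second line of Proposition~\ref{prop:inv1} into the fifth sum. I also expect to check, essentially by inspection, that the degenerate cases behave correctly: when $k = n$ the left-hand side is understood to be $0$ (as already in Proposition~\ref{prop:inv1}), and when $k = 0$ both sums on the right-hand side are empty.
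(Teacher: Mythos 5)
Your proposal is correct and is exactly the paper's argument: the paper derives Corollary~\ref{cor:inv1} from Proposition~\ref{prop:inv1} by tensoring both sides with $[\CO_{\QG}(\lng s_{k}\cdots s_{1}\vpi_{1})] = [\CO_{\QG}(\lng\eps_{k+1})]$ and rearranging, which is precisely what you do. Your weight bookkeeping ($s_{k}\cdots s_{1}\vpi_{1}=\eps_{k+1}$, $-\lng\alpha_{m,k}=\lng\eps_{k+1}-\lng\eps_{m}$, and the index shift merging the isolated $t_{\alpha_{k}^{\vee}}$-term with the reindexed fifth sum) checks out.
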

%
%
\subsection{An identity for $K$-theoretic quantum shifted elementary polynomials.}
\label{subsec:identity}

In this subsection, we assume that $G$ is of type $A_{n}$, i.e., $G = SL_{n+1}(\BC)$. 
For a nonnegative integer $m \in \BZ_{\ge 0}$, we set $[m]:=\{1,2,\dots,m\}$, 
and for a subset $J$ of $[n+1]=\bigl\{1,2,\dots,n+1\bigr\}$, 
we set $\eps_{J}:=\sum_{j \in J} \eps_{j}$; note that $\eps_{\emptyset}=0$. 
Recall that for $\xi \in Q^{\vee,+}$, the $\BZ[P]$-linear operator $\st_{\xi}$ on $K_{H}(\QG)$ is defined by $\st_{\xi}[O_{Q_{G}(x)}] := [O_{Q_{G}(x t_{\xi})}]$ for $x \in W_{\af}^{\geq 0}$; 
for $j \in I$, we set $\st_{j}:=\st_{\alpha_{j}^{\vee}}$. 
Also, recall that 
\begin{equation} \label{eq:txi}
(\st_{\xi}[\CO_{\QG(x)}]) \otimes [\CO_{\QG}(\nu)] = 
\st_{\xi}([\CO_{\QG(x)}] \otimes [\CO_{\QG}(\nu)])
\end{equation}
for $x \in W_{\af}^{\geq 0}$, $\xi \in Q^{\vee,+}$, and $\nu \in P$.
Now, for $0 \le l \le k \le n+1$, we set 
\begin{equation}
\FF^{k}_{l}:=
 \sum_{
   \begin{subarray}{c}
   J \subset [k] \\[1mm]
   |J|=l
   \end{subarray}}
 \left( \prod_{ j \notin J,\, j+1 \in J }
 (1-\st_{j}) \right)[\CO_{\QG}(\lng \eps_{J})]; 
\end{equation}
note that $\FF^{k}_{0}=1$ for all $0 \le k \le n+1$. 
%
%
\begin{prop} \label{prop:ckk}
Let $0 \le k \le n+1$. 
The following equality holds in $K_{H}(\QG)$\,{\rm:} 
\begin{equation} \label{eq:k}
\underbrace{[ \CO_{\QG(s_{1}s_{2} \cdots s_{k-1}s_{k})} ]}_{
\begin{subarray}{c}
\text{\rm This term is understood  } \\[1mm]
\text{\rm to be $0$ if $k=n+1$.} \end{subarray} } 
= \sum_{0 \le l \le k} (-1)^{l} \be^{l\vpi_{1}} \FF^{k}_{l}. 
\end{equation}
\end{prop}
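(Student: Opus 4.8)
The plan is to induct on $k$, using the inverse Chevalley formula of Corollary~\ref{cor:inv1} as the recursion. Write $C_k:=[\CO_{\QG(s_1 s_2\cdots s_k)}]$ for $0\le k\le n$ and $C_{n+1}:=0$; the assertion is $C_k=\sum_{l=0}^{k}(-1)^l\be^{l\vpi_1}\FF^k_l$. The base case $k=0$ reads $[\CO_{\QG}]=\FF^0_0=1$ and is immediate. For the inductive step I would assume the assertion for all indices $\le k$ (with $0\le k\le n$) and prove it for $k+1$. First, in Corollary~\ref{cor:inv1} each class $[\CO_{\QG(s_1\cdots s_{m-1}t_{\alpha_{m,k}^\vee})}]$ equals $\st_{\alpha_{m,k}^\vee}C_{m-1}$, and likewise with $s_m$ in place of $s_{m-1}$, while $\lng\eps_{k+1}-\lng\eps_m=\lng(\eps_{k+1}-\eps_m)$; hence
\[
C_{k+1}=-\be^{\vpi_1}C_k\otimes[\CO_{\QG}(\lng\eps_{k+1})]+C_k+\sum_{m=1}^{k}\st_{\alpha_{m,k}^\vee}\bigl(C_{m-1}-C_m\bigr)\otimes[\CO_{\QG}(\lng(\eps_{k+1}-\eps_m))].
\]

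Next I would substitute the inductive hypothesis for $C_k$, $C_{m-1}$, $C_m$, using that each $\st_\xi$ is $\BZ[P]$-linear and commutes with $\bullet\otimes[\CO_{\QG}(\nu)]$ by~\eqref{eq:txi}, and collect powers of $\be^{\vpi_1}$. A short reindexing then shows that it is enough to verify, for each $0\le l\le k+1$ (with the conventions $\FF^a_b:=0$ for $b\notin\{0,\dots,a\}$ and empty sums zero), the operator-valued identity
\[
\FF^{k+1}_l=\FF^k_{l-1}\otimes[\CO_{\QG}(\lng\eps_{k+1})]+\FF^k_l+\sum_{m=l}^{k}\st_{\alpha_{m,k}^\vee}\Bigl((\FF^{m-1}_l-\FF^m_l)\otimes[\CO_{\QG}(\lng(\eps_{k+1}-\eps_m))]\Bigr).
\]

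The key input is a recursion for the $\FF^k_l$ themselves: for $1\le l\le m\le n+1$,
\[
\FF^m_l-\FF^{m-1}_l=\bigl(\FF^{m-1}_{l-1}-\st_{m-1}\FF^{m-2}_{l-1}\bigr)\otimes[\CO_{\QG}(\lng\eps_m)],
\]
with the harmless convention $\st_0:=0$. This I would prove directly from the definition of $\FF^m_l$ by separating the subsets $J\subset[m]$ with $|J|=l$ according to whether $m\in J$: for $J=J'\sqcup\{m\}$ with $J'\subset[m-1]$ one has $\eps_J=\eps_{J'}+\eps_m$, while $\prod_{j\notin J,\,j+1\in J}(1-\st_j)$ equals $\bigl(\prod_{j\notin J',\,j+1\in J'}(1-\st_j)\bigr)$ times an extra factor $(1-\st_{m-1})$ that is present exactly when $m-1\notin J'$; expanding that factor and regrouping yields $\bigl(\FF^{m-1}_{l-1}-\st_{m-1}\FF^{m-2}_{l-1}\bigr)\otimes[\CO_{\QG}(\lng\eps_m)]$. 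Writing $G^j_p:=\FF^j_p-\st_j\FF^{j-1}_p$, this recursion reads $\FF^m_l-\FF^{m-1}_l=G^{m-1}_{l-1}\otimes[\CO_{\QG}(\lng\eps_m)]$, and in particular $G^k_{l-1}=\FF^k_{l-1}-\st_k\FF^{k-1}_{l-1}$.

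Granting the recursion, I would finish by simplification and telescoping. Since $\bullet\otimes[\CO_{\QG}(\lng\eps_{k+1})]$ is invertible (inverse $\bullet\otimes[\CO_{\QG}(-\lng\eps_{k+1})]$) and $\FF^{m-1}_l-\FF^m_l=-G^{m-1}_{l-1}\otimes[\CO_{\QG}(\lng\eps_m)]$, the identity to be verified, after applying $\FF^{k+1}_l-\FF^k_l=G^k_{l-1}\otimes[\CO_{\QG}(\lng\eps_{k+1})]$ and cancelling the invertible operator, is equivalent to $\sum_{m=l}^{k}\st_{\alpha_{m,k}^\vee}G^{m-1}_{l-1}=\st_k\FF^{k-1}_{l-1}$. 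Substituting $G^{m-1}_{l-1}=\FF^{m-1}_{l-1}-\st_{m-1}\FF^{m-2}_{l-1}$ and using $\st_{\alpha_{m,k}^\vee}\st_{m-1}=\st_{\alpha_{m-1,k}^\vee}$ (translations add), the sum telescopes to $\st_{\alpha_{k,k}^\vee}\FF^{k-1}_{l-1}-\st_{\alpha_{l-1,k}^\vee}\FF^{l-2}_{l-1}=\st_k\FF^{k-1}_{l-1}$, because $\FF^{l-2}_{l-1}=0$. I expect the main obstacle to be organizational rather than conceptual: establishing the $\FF$-recursion cleanly, and keeping the boundary cases straight ($l=0$, $l=k+1$, the term $m=l$, and the formally undefined symbols $\st_0$ and $\alpha_{0,k}$), all of which are neutralized by the conventions $\st_0:=0$ and $\FF^a_b=0$ for $b\notin\{0,\dots,a\}$ so that each edge term is multiplied by $0$.
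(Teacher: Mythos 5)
Your proposal is correct and follows essentially the same route as the paper: induction on $k$, feeding Corollary~\ref{cor:inv1} together with \eqref{eq:txi} into the inductive hypothesis, and then collapsing the $m$-sum by a telescoping identity among the translation operators $\st_{\alpha_{m,k}^{\vee}}$ (your $\sum_{m}\st_{\alpha_{m,k}^{\vee}}(\FF^{m-1}_{l-1}-\st_{m-1}\FF^{m-2}_{l-1})=\st_{k}\FF^{k-1}_{l-1}$ is the summed form of the identity the paper applies coefficientwise to each class $[\CO_{\QG}(\lng\eps_{J\cup\{k+1\}})]$). The only difference is bookkeeping --- you isolate the Pascal-type recursion $\FF^{m}_{l}=\FF^{m-1}_{l}+(\FF^{m-1}_{l-1}-\st_{m-1}\FF^{m-2}_{l-1})\otimes[\CO_{\QG}(\lng\eps_{m})]$ first, which is correct --- and the lower limit of your $m$-sum should be read as $\max(l,1)$ so that no spurious $m=0$ term appears when $l=0$ (that case being trivial anyway).
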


\begin{proof}
We prove \eqref{eq:k} by induction on $k$. 
If $k = 0$, then the assertion is obvious. Assume that $0 \le k < n+1$. 
By Corollary~\ref{cor:inv1} and the induction hypothesis, together with \eqref{eq:txi}, 
we deduce that
\begin{align*}
[ \CO_{\QG(s_{1}s_{2} \cdots s_{k}s_{k+1})} ]
& = - \sum_{0 \le l \le k} (-1)^{l} \be^{(l+1)\vpi_{1}}
 \sum_{
   \begin{subarray}{c}
   J \subset [k] \\[1mm]
   |J|=l
   \end{subarray}}
 \left( \prod_{ j \notin J,\, j+1 \in J }
 (1-\st_{j})\right)  [\CO_{\QG}(\lng \eps_{J \cup \{ k+1 \}})] \\[3mm]
& + \sum_{0 \le l \le k} (-1)^{l} \be^{l\vpi_{1}}
 \sum_{
   \begin{subarray}{c}
   J \subset [k] \\[1mm]
   |J|=l
   \end{subarray}}
 \left( \prod_{ j \notin J,\, j+1 \in J }
 (1-\st_{j})\right)  [\CO_{\QG}(\lng \eps_{J})] \\[3mm]
& + \sum_{1 \le m \le k}  
 \sum_{0 \le l \le m-1} (-1)^{l} \be^{l\vpi_{1}} \times \\
& \hspace*{15mm}
 \sum_{
   \begin{subarray}{c}
   J \subset [m-1] \\[1mm]
   |J|=l
   \end{subarray}}
 \st_{\alpha_{m,k}^{\vee}}
 \left( \prod_{ j \notin J,\, j+1 \in J }
 (1-\st_{j})\right) [\CO_{\QG}(\lng \eps_{J} + \lng \eps_{k+1} - \lng \eps_{m}) ] \\[3mm]
& + \sum_{1 \le m \le k}  
 \sum_{0 \le l \le m} (-1)^{l} \be^{l\vpi_{1}} \times \\
& \hspace*{15mm}
 \sum_{
   \begin{subarray}{c}
   J \subset [m] \\[1mm]
   |J|=l
   \end{subarray}}
 \st_{\alpha_{m,k}^{\vee}}
 \left( \prod_{ j \notin J,\, j+1 \in J }
 (1-\st_{j})\right) [\CO_{\QG}(\lng \eps_{J} + \lng \eps_{k+1} - \lng \eps_{m}) ].
\end{align*}
Here, observe that for each $1 \le m \le k$, 
\begin{align*}
& 
\sum_{0 \le l \le m-1} (-1)^{l} \be^{l\vpi_{1}}
 \sum_{
   \begin{subarray}{c}
   J \subset [m-1] \\[1mm]
   |J|=l
   \end{subarray}} \st_{\alpha_{m,k}^{\vee}}
 \left( \prod_{ j \notin J,\, j+1 \in J }
(1-\st_{j})\right) [\CO_{\QG}(\lng \eps_{J}+ \lng \eps_{k+1} - \lng \eps_{m} ) ] \\[3mm]
& - \sum_{0 \le l \le m} (-1)^{l} \be^{l\vpi_{1}}
 \sum_{
   \begin{subarray}{c}
   J \subset [m] \\[1mm]
   |J|=l
   \end{subarray}} \st_{\alpha_{m,k}^{\vee}}
 \left( \prod_{ j \notin J,\, j+1 \in J }
(1-\st_{j})\right) [\CO_{\QG}(\lng \eps_{J}+\lng \eps_{k+1} - \lng \eps_{m}) ] \\[5mm]
& = - \sum_{0 \le l \le m} (-1)^{l} \be^{l\vpi_{1}}
 \sum_{
   \begin{subarray}{c}
   J \subset [m] \\[1mm]
   |J|=l, \, \max J = m
   \end{subarray}} \st_{\alpha_{m,k}^{\vee}}
 \left( \prod_{ j \notin J,\, j+1 \in J }
(1-\st_{j})\right) [\CO_{\QG}(\lng \eps_{J} + \lng \eps_{k+1} - \lng \eps_{m}) ]. 
\end{align*}
Therefore, we find that 
\begin{align}
& [ \CO_{\QG}(s_{1}s_{2} \cdots s_{k}s_{k+1}) ] \nonumber \\[3mm]
& = 
- \sum_{0 \le l \le k} (-1)^{l} \be^{(l+1)\vpi_{1}}
 \sum_{
   \begin{subarray}{c}
   J \subset [k] \\[1mm]
   |J|=l
   \end{subarray}}
 \left( \prod_{ j \notin J,\, j+1 \in J }
(1-\st_{j})\right)  [\CO_{\QG}(\lng \eps_{J \cup \{ k+1 \}})] \nonumber \\[3mm]
& + \sum_{0 \le l \le k} (-1)^{l} \be^{l\vpi_{1}}
 \sum_{
   \begin{subarray}{c}
   J \subset [k] \\[1mm]
   |J|=l
   \end{subarray}}
 \left( \prod_{ j \notin J,\, j+1 \in J }
(1-\st_{j})\right)  [\CO_{\QG}(\lng \eps_{J})] \nonumber \\[3mm]
& - \sum_{1 \le m \le k} 
\sum_{0 \le l \le m} (-1)^{l} \be^{l\vpi_{1}}
 \sum_{
   \begin{subarray}{c}
   J \subset [m] \\[1mm]
   |J|=l, \, \max J = m
   \end{subarray}} \st_{\alpha_{m,k}^{\vee}}
 \left( \prod_{ j \notin J,\, j+1 \in J } 
(1-\st_{j})\right) [\CO_{\QG}(\lng \eps_{(J \setminus \{m\}) \cup \{k+1\}}) ]. \label{eq1}
\end{align}

Let us fix $1 \le p \le k$, and let $1 \le j_{1} < \cdots < j_{p} \le k$, 
with $j_{p} \ne k$; 
note that $p < k$. We set $J:=\bigl\{j_{1},\dots,j_{p}\bigr\}$. 
We see that the $\BZ[P]$-linear operator applied to
$[\lng \eps_{J \cup \{k+1\}}]$ on the right-hand side 
of \eqref{eq1} is identical to
\begin{align*}
& - (-1)^{p} \be^{(p+1)\vpi_{1}}
\left( \prod_{ j \notin J,\,j+1 \in J } 
(1-\st_{j}) \right) \nonumber \\
& -  \sum_{j_{p} < m \le k} 
(-1)^{p+1} \be^{(p+1)\vpi_{1}}
\st_{\alpha_{m,k}^{\vee}} \left( \prod_{ j \notin J, \, j+1 \in J } 
(1-\st_{j}) \right) \\[3mm]
& = (-1)^{p+1}\be^{(p+1)\vpi_{1}}
\left( \prod_{ j \notin J,\,j+1 \in J } 
(1-\st_{j}) \right) \\
& \times
\left( 1 - \left( \st_{ \alpha_{j_{p}+1,k}^{\vee} } + 
\sum_{j_{p}+1 < m \le k} (1-\st_{ \alpha_{m-1}^{\vee} }) \st_{ \alpha_{m,k}^{\vee} } \right) \right) \\[3mm]
& = (-1)^{p+1} \be^{(p+1)\vpi_{1}}
\left( \prod_{ j \notin J,\,j+1 \in J } 
(1-\st_{j}) \right)(1-\st_{k}). 
\end{align*}
Substituting this equality into \eqref{eq1}, 
we obtain \eqref{eq:k}, with $k$ replaced by $k+1$.
This completes the proof of the proposition. 
\end{proof}
%
%
\begin{prop} \label{prop:ctk}
For $0 \le t \le n$, 
the following equality holds in $K_{H}(\QG)$\,{\rm:}
\begin{equation} \label{eq:k=n+1-t}
\begin{split}
0 = & \sum_{0 \le l \le n+1-t} (-1)^{l} \times \\[2mm]
& \qquad
  \sum_{p_{1}=t-1}^{n-l} \sum_{p_{2}=t-2}^{p_{1}-1} \cdots \sum_{p_{t}=0}^{p_{t-1}-1}
  \be^{(l-n)\vpi_{1}+\vpi_{2}+\cdots+\vpi_{t}+p_{1}\alpha_{1}+\cdots+p_{t}\alpha_{t}} \FF^{n+1}_{l}.
\end{split}
\end{equation}
\end{prop}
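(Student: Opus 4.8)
The plan is to prove \eqref{eq:k=n+1-t} by induction on $t$. The base case $t=0$ is nothing but Proposition~\ref{prop:ckk} in the case $k=n+1$: there the left-hand side is understood to be $0$, and multiplying the resulting identity $0=\sum_{0\le l\le n+1}(-1)^{l}\be^{l\vpi_{1}}\FF^{n+1}_{l}$ by the invertible element $\be^{-n\vpi_{1}}$ gives exactly \eqref{eq:k=n+1-t} for $t=0$, the $p$-sums and the partial weight $\vpi_{2}+\cdots+\vpi_{t}$ being empty.

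For the inductive step I would first reinterpret the nested sum $\sum_{p_{1}=t-1}^{n-l}\sum_{p_{2}=t-2}^{p_{1}-1}\cdots\sum_{p_{t}=0}^{p_{t-1}-1}$ as a single sum over $t$-element subsets $P=\{p_{1}>p_{2}>\cdots>p_{t}\}$ of $\{0,1,\dots,n-l\}$ (the lower bounds $p_{j}\ge t-j$ being automatic once $P$ has $t$ elements), so that \eqref{eq:k=n+1-t} reads $0=\sum_{l=0}^{n+1-t}(-1)^{l}\be^{(l-n)\vpi_{1}+\vpi_{2}+\cdots+\vpi_{t}}\bigl(\sum_{P}\be^{\sum_{i}p_{i}\alpha_{i}}\bigr)\FF^{n+1}_{l}$. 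Assuming this for $t$, I would pass to $t+1$ by splitting each $(t+1)$-subset $P$ of $\{0,\dots,n-l\}$ off its smallest element $q:=p_{t+1}$ and applying the substitution $p_{i}\mapsto p_{i}-(q+1)$ to the remaining entries: this turns them into a $t$-subset of $\{0,\dots,n-l-q-1\}$ and factors $\be^{\sum_{i}p_{i}\alpha_{i}}$ as $\be^{q\alpha_{t+1}+(q+1)\alpha_{1,t}}$ times the monomial attached to the shifted subset. Re-indexing by $m:=l+q+1$ and using $(m-l-1)\alpha_{t+1}+(m-l)\alpha_{1,t}=(m-l)\alpha_{1,t+1}-\alpha_{t+1}$, the $(t+1)$-case sum becomes $\be^{-\alpha_{t+1}}$ times a double sum over $0\le l<m\le n+1-t$; interchanging the two summations and recognising the inner sum as a (partial) instance of the $t$-case identity is the goal.

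The hard part will be this last matching step. The inductive hypothesis attaches the subset-sums to the $\FF^{n+1}_{l}$ individually, whereas the reorganisation above attaches them to partial sums $\sum_{l<m}(\cdots)\FF^{n+1}_{l}$; converting between the two is a summation-by-parts rearrangement that leaves boundary contributions. I expect that making those boundary terms vanish requires feeding in, besides the inductive hypothesis, a second input --- namely Proposition~\ref{prop:ckk} for intermediate values of $k$ (equivalently, re-running the inverse Chevalley computation of Section~\ref{subsec:inv}, i.e.\ Corollary~\ref{cor:inv1}, in the relevant degree). Carrying out this bookkeeping carefully, keeping track of the shifts by $\alpha_{1,t}$, $\alpha_{t+1}$ and $\vpi_{t+1}$ and verifying that every term not accounted for telescopes away, is the technical core of the proof.
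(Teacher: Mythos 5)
Your base case is fine, but the inductive step has a genuine gap, and in fact the route you sketch cannot work as stated. For a fixed $t$ the inductive hypothesis is a \emph{single} $\BZ[P]$-linear relation among the elements $\FF^{n+1}_{0},\dots,\FF^{n+1}_{n+1}$ of $K_{H}(\QG)$, so the only identities you can extract from it by the kind of re-indexing, summation-by-parts and subset bookkeeping you describe are $\BZ[P]$-multiples of that one relation. The $(t+1)$-case identity is not such a multiple: already for $n=1$, the $t=0$ relation is $0=\be^{-\vpi_{1}}\FF^{2}_{0}-\FF^{2}_{1}+\be^{\vpi_{1}}\FF^{2}_{2}$ while the $t=1$ relation is $0=\be^{-\vpi_{1}}(1+\be^{\alpha_{1}})\FF^{2}_{0}-\FF^{2}_{1}$, and no element of $\BZ[P]$ times the first coefficient vector gives the second (the $\FF^{2}_{2}$-coefficient would have to survive). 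So the ``matching step'' you defer is not mere bookkeeping; it requires a new input that acts nontrivially on the equivariant prefactors, and your suggested second input (Proposition~\ref{prop:ckk} for intermediate $k$, which expresses $[\CO_{\QG(s_{1}\cdots s_{k})}]$ in terms of the $\FF^{k}_{l}$, not the $\FF^{n+1}_{l}$) does not obviously supply it.

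The paper's proof gets from $t$ to $t+1$ by a different mechanism: multiply the $t$-case identity by $\be^{\vpi_{t+1}}$ and apply the Demazure operator $\sD_{t+1}$ coming from the nil-DAHA action (Appendix~\ref{sec:Demazure}). Since each $\FF^{n+1}_{l}$ is an integral combination of classes $\st_{\xi}[\CO_{\QG}(\lng\eps_{J})]=[\CO_{\QG(t_{\xi})}(\lng\eps_{J})]$ supported on translation elements, Lemma~\ref{lem:leib2} (together with \eqref{eq:H3}) shows that $\sD_{t+1}$ fixes these classes and acts on the exponential prefactor by the classical Demazure operator $D_{t+1}$; the explicit formula for $D_{t+1}\bigl(\be^{(l-n)\vpi_{1}+\vpi_{2}+\cdots+\vpi_{t+1}+p_{1}\alpha_{1}+\cdots+p_{t}\alpha_{t}}\bigr)$ then produces exactly the new inner sum $\sum_{p_{t+1}=0}^{p_{t}-1}$ (and kills the terms with $p_{t}=0$, which is what shrinks the range of $l$ to $n-t$). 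This operator step, which is not $\BZ[P]$-linear and genuinely uses the structure of the classes inside $\FF^{n+1}_{l}$, is the idea missing from your proposal.
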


\begin{proof}
We prove \eqref{eq:k=n+1-t} by induction on $t$. If $t=0$, 
then \eqref{eq:k=n+1-t} is just \eqref{eq:k} with $k=n+1$. 
Assume that $0 \le t < n$. 
Multiplying both sides of \eqref{eq:k=n+1-t} by $\be^{\vpi_{t+1}}$, and then 
applying the Demazure operator $\sD_{t+1}$ (see Appendix~\ref{sec:Demazure}), 
we see that 
\begin{equation} \label{eq:k=n+1-t-1}
\begin{split}
& 0 = \sum_{0 \le l \le n-t} (-1)^{l} \times \\[2mm]
& \sum_{p_{1}=t}^{n-l} \sum_{p_{2}=t-1}^{p_{1}-1} \cdots \sum_{p_{t}=1}^{p_{t-1}-1}\sum_{p_{t+1}=0}^{p_{t}-1}
  \be^{(l-n)\vpi_{1}+\vpi_{2}+\cdots+\vpi_{t}+\vpi_{t+1}+p_{1}\alpha_{1}+\cdots+p_{t}\alpha_{t}+p_{t+1}\alpha_{t+1}} \FF^{n+1}_{l}; 
\end{split}
\end{equation}
here we have used formula \eqref{eq:leib2} and the fact that 
\begin{equation*}
\begin{split}
& D_{t+1}(\be^{(l-n)\vpi_{1}+\vpi_{2}+\cdots+\vpi_{t}+\vpi_{t+1}+p_{1}\alpha_{1}+\cdots+p_{t}\alpha_{t}}) \\[3mm]
& = 
\begin{cases}
\sum_{p_{t+1}=0}^{p_{t}-1}
\be^{(l-n)\vpi_{1}+\vpi_{2}+\cdots+\vpi_{t}+\vpi_{t+1}+p_{1}\alpha_{1}+\cdots+p_{t}\alpha_{t}+p_{t+1}\alpha_{t+1}}
& \text{if $p_{t} > 0$}, \\[2mm]
0 & \text{if $p_{t}=0$}. 
\end{cases}
\end{split}
\end{equation*}
This proves the proposition. 
\end{proof}

For $0 \le l \le n+1$, we set 
\begin{equation} \label{eq:FE}
\FE^{n+1}_{l}:= \sum_{
   \begin{subarray}{c}
   J \subset [n+1] \\[1mm]
   |J|=l
   \end{subarray}} \be^{-\eps_{J}}, 
\end{equation}
which is the fundamental symmetric polynomial of degree $l$ 
in the variables $\be^{-\eps_{i}}$ for $1 \le i \le n+1$. 
Also, for $1 \le m \le n+1$ and $k \ge 0$, we set

\begin{equation} \label{eq:FH}
\FH^{m}_{k}:= \sum_{
  \begin{subarray}{c}
  r_{1},r_{2},\dots,r_{m} \ge 0 \\[1mm]
  r_{1}+r_{2}+\cdots+r_{m}=k
  \end{subarray} }
\be^{-(r_{1}\eps_{1}+r_{2}\eps_{2}+\cdots+r_{m}\eps_{m})}, 
\end{equation}
which is the complete symmetric polynomial of degree $k$ 
in the variables $\be^{-\eps_{i}}$ for $1 \le i \le m$. 
%
%
\begin{thm} \label{thm:FFn+1}
For $0 \le l \le n+1$, the equality $\FF^{n+1}_{l} = \FE^{n+1}_{l}$ holds in $K_{H}(\QG)$. 
\end{thm}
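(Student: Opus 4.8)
The statement is a formal consequence of Propositions~\ref{prop:ckk} and \ref{prop:ctk}. The plan is to read the relations \eqref{eq:k=n+1-t}, for $0\le t\le n$, as a triangular linear system over $R(H)=\BZ[P]$ in the unknowns $\FF^{n+1}_{0}=1,\FF^{n+1}_{1},\dots,\FF^{n+1}_{n+1}\in K_{H}(\QG)$, to deduce that this system determines $\FF^{n+1}_{1},\dots,\FF^{n+1}_{n+1}$ uniquely once $\FF^{n+1}_{0}=1$ is fixed, and then to check that the tuple of elementary symmetric polynomials $\FE^{n+1}_{0},\dots,\FE^{n+1}_{n+1}$ of \eqref{eq:FE} satisfies the same system; as $\FE^{n+1}_{0}=1$, this forces $\FF^{n+1}_{l}=\FE^{n+1}_{l}$ for all $l$.

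First I would make the triangularity precise. Fix $0\le t\le n$ and examine the coefficient of each $\FF^{n+1}_{l}$ in \eqref{eq:k=n+1-t}. When $l=n+1-t$ we have $n-l=t-1$, so the nested sums $\sum_{p_{1}=t-1}^{n-l}\sum_{p_{2}=t-2}^{p_{1}-1}\cdots\sum_{p_{t}=0}^{p_{t-1}-1}$ collapse (forcing $p_{j}=t-j$ for all $j$), and the coefficient of $\FF^{n+1}_{n+1-t}$ is the single monomial $(-1)^{n+1-t}\be^{\mu_{t}}$ with $\mu_{t}=(1-t)\vpi_{1}+\vpi_{2}+\cdots+\vpi_{t}+\sum_{j=1}^{t}(t-j)\alpha_{j}\in P$, a unit of $\BZ[P]$; for $l<n+1-t$ the coefficient of $\FF^{n+1}_{l}$ lies in $\BZ[P]$, and no $\FF^{n+1}_{l}$ with $l>n+1-t$ occurs. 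Hence \eqref{eq:k=n+1-t} expresses $\FF^{n+1}_{n+1-t}$ as an explicit $\BZ[P]$-linear combination of $\FF^{n+1}_{0},\dots,\FF^{n+1}_{n-t}$. Running this for $t=n,n-1,\dots,0$, i.e.\ inducting on $l=n+1-t$ from $1$ to $n+1$ and starting from $\FF^{n+1}_{0}=1$, we conclude that $\FF^{n+1}_{1},\dots,\FF^{n+1}_{n+1}$ are uniquely determined; in particular each lies in $\BZ[P]$. It therefore suffices to show that \eqref{eq:k=n+1-t} remains valid for every $t$ after $\FF^{n+1}_{l}$ is replaced by $\FE^{n+1}_{l}$, which---the only operations applied to $\FE^{n+1}_{l}$ being multiplication by elements of $\BZ[P]$---is a family of identities in $\BZ[P]$.

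I would verify these by induction on $t$, following the proof of Proposition~\ref{prop:ctk}. For $t=0$, setting $u_{i}:=\be^{-\eps_{i}}$ so that $\FE^{n+1}_{l}=e_{l}(u_{1},\dots,u_{n+1})$ and $\prod_{i=1}^{n+1}u_{i}=\be^{-(\eps_{1}+\cdots+\eps_{n+1})}=1$, the identity to check is $\sum_{l=0}^{n+1}(-1)^{l}\be^{(l-n)\vpi_{1}}\FE^{n+1}_{l}=0$, i.e.\ $\be^{-n\vpi_{1}}\prod_{i=1}^{n+1}(1-\be^{\vpi_{1}-\eps_{i}})=0$, which holds since $\vpi_{1}=\eps_{1}$ makes the $i=1$ factor vanish. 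For the inductive step I would apply to the $t$-th identity the operation used in that proof, namely multiplication by $\be^{\vpi_{t+1}}$ followed by the Demazure operator $\sD_{t+1}$ (see Appendix~\ref{sec:Demazure}): since each $\FE^{n+1}_{l}$ is $W$-invariant, in particular $s_{t+1}$-invariant, the operator $\sD_{t+1}$ leaves it untouched and acts only on the monomial prefactors $\be^{(l-n)\vpi_{1}+\vpi_{2}+\cdots+\vpi_{t}+p_{1}\alpha_{1}+\cdots+p_{t}\alpha_{t}}$, on which its effect is the $D_{t+1}$-computation already performed in the proof of Proposition~\ref{prop:ctk}; this turns the $t$-th identity into the one for $t+1$, completing the induction and hence the proof.

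The content of the theorem is really carried by Propositions~\ref{prop:ckk} and \ref{prop:ctk}; the only point requiring care is the bookkeeping in the triangularity step---reading off from \eqref{eq:k=n+1-t} which $\FF^{n+1}_{l}$ occur with which coefficients, and confirming that the leading coefficient of $\FF^{n+1}_{n+1-t}$ collapses to a single (hence invertible) monomial, so that the downward recursion is well-posed. The rest---the product formula for $t=0$ and the transfer of the Demazure step once the now genuinely scalar, $W$-invariant $\FE^{n+1}_{l}$ are pulled out---is routine.
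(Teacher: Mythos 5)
Your overall architecture is the same as the paper's: read the relations of Proposition~\ref{prop:ctk} as a triangular system over $\BZ[P]$ which, given $\FF^{n+1}_{0}=1$, determines $\FF^{n+1}_{1},\dots,\FF^{n+1}_{n+1}$ uniquely, and then check that the elementary symmetric functions $\FE^{n+1}_{l}$ solve the same system. Where you differ is in the second step. The paper first recognizes the coefficient of $\FF^{n+1}_{l}$ in the $t$-th relation as a unit times the complete homogeneous symmetric polynomial $\FH^{t+1}_{n+1-t-l}$ in $\be^{-\eps_{1}},\dots,\be^{-\eps_{t+1}}$ (this is \eqref{eq:k=n+1-t2}, whose leading coefficient $\FH^{m+1}_{0}=1$ also yields your triangularity), and then verifies the relations for the $\FE$'s by the classical $e$--$h$ generating-function identity: the coefficient of $x^{n+1-m}$ in $\bigl(\sum_{k}(-1)^{k}\FH^{m+1}_{k}x^{k}\bigr)\bigl(\sum_{l}\FE^{n+1}_{l}x^{l}\bigr)=\prod_{i=m+2}^{n+1}(1+\be^{-\eps_{i}}x)$ vanishes. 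That check is purely scalar, uses no Demazure operators, and is insensitive to how each relation is normalized. You instead keep the raw relations \eqref{eq:k=n+1-t} and re-run the Demazure induction on the scalars, using $s_{t+1}$-invariance of $\FE^{n+1}_{l}$; your base case $t=0$ (the factorization $\be^{-n\vpi_{1}}\prod_{i}(1-\be^{\vpi_{1}-\eps_{i}})=0$) is correct, and for $t\ge 1$ the transfer of the $D_{t+1}$-computation is indeed legitimate because $D_{t+1}$ is $\BZ[P]^{s_{t+1}}$-linear.

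The gap is in the first inductive step, which you quote rather than check: with the normalization of \eqref{eq:k=n+1-t} as displayed, multiplying the $t=0$ relation by $\be^{\vpi_{1}}$ and applying $D_{1}$ does \emph{not} return the $t=1$ relation. Already for $n=1$: the $t=0$ relation is $\be^{-\vpi_{1}}\FE^{2}_{0}-\FE^{2}_{1}+\be^{\vpi_{1}}\FE^{2}_{2}=0$; after multiplying by $\be^{\vpi_{1}}$ and applying $D_{1}$ (which passes through the $s_{1}$-invariant $\FE$'s) one gets $D_{1}(1)\FE^{2}_{0}-D_{1}(\be^{\vpi_{1}})\FE^{2}_{1}+D_{1}(\be^{2\vpi_{1}})\FE^{2}_{2}=\FE^{2}_{0}-\FE^{2}_{2}=0$, a trivial identity, not the needed $t=1$ relation $(\be^{-\eps_{1}}+\be^{-\eps_{2}})\FE^{2}_{0}-\FE^{2}_{1}=0$. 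The step does work if the $t=0$ relation is first renormalized by the unit $\be^{-\vpi_{1}}$, i.e., started from $\sum_{l}(-1)^{l}\be^{(l-n-1)\vpi_{1}}\FE^{n+1}_{l}=0$: then $D_{1}(\be^{(l-n)\vpi_{1}})$ produces exactly the displayed $t=1$ coefficients ($p_{1}$ running from $0$ to $n-l$) and kills the $l=n+1$ term. Without this adjustment (or a direct verification of the $t=1$ relation as a second base case) your chain never starts, so none of the $t\ge 1$ relations get established. Since multiplying a relation by a unit is harmless for your triangularity/uniqueness argument, the rest of your proof stands once this is repaired; alternatively, verifying the relations for the $\FE$'s directly via the generating-function identity, as the paper does, avoids the issue altogether.
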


\begin{proof}
We deduce from \eqref{eq:k=n+1-t} that 
\begin{equation} \label{eq:k=n+1-t2}
\begin{cases}
\displaystyle{\sum_{0 \le l \le n+1-m}} (-1)^{l - (n+1-m)} \FH^{m+1}_{n+1-m-l} \FF^{n+1}_{l}=0 
& \text{for $0 \le m \le n$}, \\[8mm]
\FF^{n+1}_{n+1}= \be^{-(\eps_{1}+\eps_{2}+\cdots+\eps_{n+1})} = \be^{0} = 1. 
\end{cases}
\end{equation}
By \eqref{eq:k=n+1-t2}, 
we can show by induction on $l$ that $\FF^{n+1}_{l} \in \BZ[P]$ 
for all $0 \le l \le n+1$; note that $\FF^{n+1}_{0} = \be^{0} = 1$. 
Also, we see that $\FF^{n+1}_{l}$ for $0 \le l \le n+1$ are the unique elements 
in $\BZ[P]$ satisfying \eqref{eq:k=n+1-t2}. Hence, in order to prove 
Theorem~\ref{thm:FFn+1}, it suffices to show that 
\begin{equation} \label{eq:k=n+1-t2a}
\sum_{0 \le l \le n+1-m} (-1)^{l - (n+1-m)} \FH^{m+1}_{n+1-m-l} \FE^{n+1}_{l}=0 \qquad 
\text{for $0 \le m \le n$},
\end{equation}
\begin{equation} \label{eq:k=n+1-t2b}
\FE^{n+1}_{n+1}= \be^{-(\eps_{1}+\eps_{2}+\cdots+\eps_{n+1})}. 
\end{equation}
Equation~\eqref{eq:k=n+1-t2b} is obvious by definition \eqref{eq:FE}. 
Let us show equation \eqref{eq:k=n+1-t2a}. 
Let $0 \le m \le n$, and let $x$ be a formal variable. 
Observe that
\begin{equation*}
\sum_{l=0}^{n+1} \FE^{n+1}_{l} x^{l} = 
\prod_{i=1}^{n+1}(1+\be^{-\eps_{i}}x), \qquad 
\sum_{k=0}^{\infty} \FH^{m+1}_{k} x^{k} = 
\prod_{i=1}^{m+1}\frac{1}{1-\be^{-\eps_{i}}x}. 
\end{equation*}
Therefore, we have 
\begin{equation} \label{eq:k=n+1-t2c}
\left(\sum_{k=0}^{\infty} (-1)^{k}\FH^{m+1}_{k} x^{k}\right) 
\left(\sum_{l=0}^{n+1} \FE^{n+1}_{l} x^{l}\right) = 
\prod_{i=m+2}^{n+1}(1+\be^{-\eps_{i}}x).
\end{equation}
Let us compare the coefficients of $x^{n+1-m}$ on both sides. 
We see that the coefficient of $x^{n+1-m}$ on the left-hand side is 
equal to that on the left-hand side of \eqref{eq:k=n+1-t2a}. 
Since the right-hand side of \eqref{eq:k=n+1-t2c} 
is a polynomial in $x$ of degree $(n+1)-(m+2)+1= n-m$, 
it follows that the coefficient of $x^{n+1-m}$ on the right-hand side is 
equal to $0$. From these, we obtain equation \eqref{eq:k=n+1-t2a}, as desired. 
\end{proof}


\section{Relation between $K_{H}(\QG)$ and $QK_{H}(G/B)$.} 
\label{sec:relation}

Let $G$ be a connected, simply-connected simple algebraic group over $\BC$, 
with Borel subgroup $B \subset G$ and maximal torus $H \subset B$; 
$G$ is not necessarily assumed to be of type $A_{n}$, unless stated explicitly. Let $QK_{H}(G/B) := K_{H}(G/B) \otimes_{R(H)} R(H)\bra{Q^{\vee,+}}$ 
denote the $H$-equivariant quantum $K$-theory ring of the ordinary flag manifold $G/B$, 
defined by Givental~\cite{Giv} and Lee~\cite{Lee}, where $R(H)\bra{Q^{\vee,+}}$ is 
the ring of formal power series in the Novikov variables $Q_i = Q^{\alpha_i^{\vee}}$, $i \in I$, with coefficients in the representation ring $R(H)$ of $H$; 
for $\xi = \sum_{i \in I} k_i \alpha_i^{\vee} \in 
Q^{\vee,+} = \sum_{i \in I} \BZ_{\geq 0} \alpha_i^{\vee}$, we set 
$Q^{\xi} := \prod_{i \in I} Q_i^{k_i} \in R(H)\bra{Q^{\vee,+}}$.
The quantum $K$-theory ring $QK_{H}(G/B)$ is a free module over $R(H)\bra{Q^{\vee,+}}$ 
with the (opposite) Schubert classes $[\CO^{w}]$, $w \in W$, as a basis; 
also, the quantum multiplication $\star$ in $QK_{H}(G/B)$ is a deformation of 
the classical tensor product in $K_{H}(G/B)$, and is defined in terms of the $2$-point and $3$-point 
(genus zero, equivariant) $K$-theoretic Gromov-Witten invariants; 
see \cite{Giv} and \cite{Lee} for details.

In \cite{Kat1,Kat3}, based on \cite{BF,IMT} (see also \cite{ACT}), 
Kato established an $R(H)$-module isomorphism $\Phi$ from $QK_{H}(G/B)$ onto 
the $H$-equivariant $K$-group $K_{H}(\QG) = \prod_{x \in W_{\af}^{\geq 0}} \BZ[P][\CO_{\QG(x)}]$ (direct product) of the semi-infinite flag manifold $\QG$, 
in which tensor product operation with an arbitrary line bundle class is 
induced from that in $K_{H\times\BC^*}(\QG)$ by the specialization $q = 1$; 
in our notation, 
the map $\Phi$ sends 
the (opposite) Schubert class $\be^{\mu}[\CO^{w}] Q^{\xi}$ in $QK_{H}(G/B)$ 
to the corresponding semi-infinite Schubert class $\be^{-\mu}[\CO_{\QG(wt_{\xi})}]$ 
in $K_{H}(\QG)$ for $\mu \in P$, $w \in W$, and $\xi \in Q^{\vee,+}$. 
The isomorphism $\Phi$ also respects, in a sense, quantum multiplication $\star$ 
in $QK_{H}(G/B)$ and tensor product in $K_{H}(\QG)$.
More precisely, one has the commutative diagram:
%
%
\begin{equation} \label{eq:qdiagram}
\begin{CD}
QK_{H}(G/B) @>{\sim}>> K_{H}(\QG) \\
@V{\bullet \,\, \star [\CO_{G/B}(- \varpi_i)]}VV  
@VV{\bullet \,\, \otimes [\CO_{\QG}(\lng \varpi_i)]}V \\
QK_{H}(G/B) @>>{\sim}> K_{H}(\QG). 
\end{CD}
\end{equation}
Here note that the line bundle $\CO_{G/B}(- \nu)$ over $G/B$ for $\nu \in P$ denotes the $G$-equivariant line bundle constructed as the quotient space $G \times^{B} \BC_{\nu}$ of the product space $G \times \BC_{\nu}$ by the usual (free) left action of $B$, given by $b. (g, v) := (g b^{-1}, b v)$ for $b \in B$ and $(g, v) \in G \times \BC_{\nu}$, where 
$\BC_{\nu}$ is the one-dimensional $B$-module of weight $\nu \in P$. 
(We warn the reader that the conventions of \cite{Kat1} 
differs from those of \cite{KNS} and this paper,
by the twist coming from the involution $-\lng$.)
Also, we know from \cite{Kat1} that for $w \in W$ and $\xi \in Q^{\vee,+}$, 
$\Phi([\CO^{w}] Q^{\xi}) = \st_{\xi} \, \Phi([\CO^{w}])$ holds, 
and hence that for an arbitrary element $\bullet$ of 
$QK_{H}(G/B)$ and $\xi \in Q^{\vee,+}$, 
\begin{equation}\label{eq:push}
\Phi(\bullet \, Q^{\xi}) = \st_{\xi} \, \Phi(\bullet)
\end{equation}
holds. 
%

We know from \cite[Corollary~5.14]{BCMP} that 
the quantum multiplicative structure over $R(H)\bra{Q^{\vee,+}}$ 
of $QK_{H}(G/B)$ 
is completely determined by the operators of quantum multiplication 
by $[\CO_{G/B}(-\vpi_{i})]$ for $i \in I$; 
in fact, by using Nakayama's Lemma (i.e., \cite[Corollary~4.8 b.]{E}) together with \cite[Exercise~7.3]{E}, 
we can show that $QK_{H}(G/B)$ is generated as an algebra (with quantum multiplication $\star$) 
over $R(H)\bra{Q^{\vee,+}}$ by $[\CO_{G/B}(-\varpi_{i})]$, $i \in I$, 
since $K_{H}(G/B)$ is known to be generated by the same line bundle classes 
as an algebra (with tensor product $\otimes$) over $R(H)$ (\cite{Mi}). 
Here we should mention that by using Nakayama's lemma together with \cite[Chapter 3, Exercise 2]{AM}, we can also show that the submodule $QK_{H}(G/B)_{\mathrm{loc}} := K_{H}(G/B) \otimes_{R(H)} R(H)[Q^{\vee,+}]_{\mathrm{loc}}$ of $QK_{H}(G/B) = K_{H}(G/B) \otimes_{R(H)} R(H)\bra{Q^{\vee,+}}$ is generated as an algebra over $R(H)[Q^{\vee,+}]_{\mathrm{loc}}$ by the line bundle classes $[\CO_{G/B}(-\varpi_{i})]$, $i \in I$, 
where $R(H)[Q^{\vee,+}]$ is the polynomial ring in the variables $Q_i$, $i \in I$, with coefficients in $R(H)$, and $R(H)[Q^{\vee,+}]_{\mathrm{loc}}$ is its localization with respect to the multiplicative set $1 + (Q)$, with $(Q)$ the ideal in $R(H)[Q^{\vee,+}]$ generated by the $Q_i$, $i \in I$. 
Therefore, from \cite[Proposition~9]{ACT}, we see that the submodule $QK_{H}(G/B)_{\mathrm{loc}}$ of $QK_{H}(G/B)$ is, in fact, a subalgebra under the quantum multiplication $\star$. 
%

Let $\K$ be a subset of $I$, and set $\mu = \sum_{i \in \K} \vpi_{i} \in P^{+}$ 
as in \eqref{eq:mu}; 
note that $\J=\J_{\mu}=
\bigl\{ i \in I \mid \pair{\mu}{\alpha_{i}^{\vee}}=0 \bigr\} 
= I \setminus \K$.
%
%
Let us rewrite (in our notation) \cite[(3.6)]{LS} 
in the case that $\lambda = \mu$ and $z = e$. For $v \in W$, 
if $\eta \in \LS(\mu)$ satisfies $\dnn(v,\eta)=z=e$ (in the notation of 
\cite[\S3.1]{LS}), then we have $\kappa(\eta)=e$. 
Hence, noting that $\LS(\mu) \subset \QLS(\mu)$, 
we see that $\eta=(e\,;\,0,1)$ by Lemma~\ref{lem:e}. 
Therefore, in this case, \cite[(3.6)]{LS} 
can be rewritten as follows (cf. \eqref{eq:e1a}): 
\begin{equation*}
[\CO_{G/B}(-\mu)] = 
\sum_{ \begin{subarray}{c} v \in W \\[1mm] \dn{v}{e}{\J}=e \end{subarray} }
\underbrace{(-1)^{\ell(v)-\ell(e)}}_{=(-1)^{\ell(v)}} 
 \be^{\mu} [\CO^{v}]; 
\end{equation*}
here we warn the reader that the sign convention 
for the $R(H)$-module structure of $K_{H}(G/B)$ in \cite{LS} is 
opposite to that in this paper. 
Hence, using Lemma~\ref{lem:Deo}\,(2), 
we obtain 
\begin{equation} \label{eq:LS0}
[\CO_{G/B}(-\mu)] = 
\sum_{ v \in W_{\K} } (-1)^{\ell(v)} \be^{\mu} [\CO^{v}] 
\end{equation}
in $K_{H}(G/B)$ by the same argument as for Proposition~\ref{prop:e2}. 

By comparing the RHS of \eqref{eq:e2b} and 
that of \eqref{eq:LS0}, we see that under the $R(H)$-module isomorphism $\Phi$ 
from $QK_{H}(G/B)$ onto 
$K_{H}(\QG)$, the class $[\CO_{G/B}(-\mu)]$ 
in $QK_{H}(G/B)$ corresponds to the class $[\CO_{\QG}(\lng \mu)]$ in $K_{H}(\QG)$,
where $\mu \in P^{+}$ is as in \eqref{eq:mu}. 
Here we recall that $QK_{H}(G/B)$ is generated as an algebra 
(with quantum multiplication $\star$) 
over $R(H)\bra{Q^{\vee,+}}$ by the line bundle classes $[\CO_{G/B}(-\varpi_{i})]$, $i \in I$. 
Therefore, by virtue of the commutative diagram \eqref{eq:qdiagram} 
together with \eqref{eq:shift} and \eqref{eq:push}, we deduce the following. 

\begin{prop} \label{prop:corresp}
Keep the notation above. 
Let $\mu \in P^{+}$ be of the form \eqref{eq:mu}. 
Then, under the $R(H)$-module isomorphism $\Phi$ 
from $QK_{H}(G/B)$ onto $K_{H}(\QG)$, we have 
\begin{equation}
\Phi(\bullet \star [\CO_{G/B}(-\mu)]) = \Phi(\bullet) \otimes [\CO_{\QG}(\lng \mu)] 
\end{equation}
for an arbitrary element $\bullet \in QK_{H}(G/B)$. 
\end{prop}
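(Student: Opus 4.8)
The plan is to reduce the assertion, which concerns the arbitrary dominant weight $\mu = \sum_{i \in \K} \vpi_{i}$, to the case of the individual fundamental weights handled by the commutative diagram \eqref{eq:qdiagram}, exploiting the fact recalled just above the statement that $QK_{H}(G/B)$ is generated, as an algebra under $\star$ over $R(H)\bra{Q^{\vee,+}}$, by the line bundle classes $[\CO_{G/B}(-\vpi_{i})]$, $i \in I$. First I would record, as already observed in the paragraph preceding the statement, that $\Phi([\CO_{G/B}(-\mu)]) = [\CO_{\QG}(\lng\mu)]$; this follows by comparing the right-hand sides of \eqref{eq:e2b} and \eqref{eq:LS0} and using that $\Phi$ sends $\be^{\mu}[\CO^{v}]$ to $\be^{-\mu}[\CO_{\QG(v)}]$ for $v \in W$.

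Next, write $\K = \{i_{1}, \ldots, i_{r}\}$, so that $\mu = \vpi_{i_{1}} + \cdots + \vpi_{i_{r}}$, and iterate the commutative diagram \eqref{eq:qdiagram}, which holds as an identity of maps $QK_{H}(G/B) \to K_{H}(\QG)$. Applying it successively for $i = i_{1}, \ldots, i_{r}$, and using associativity of $\star$ on the source side, associativity of $\otimes$ on the target side, and the additivity $[\CO_{\QG}(\nu)] \otimes [\CO_{\QG}(\nu')] = [\CO_{\QG}(\nu + \nu')]$ of line bundle classes on $\QG$, one obtains, for every $\bullet \in QK_{H}(G/B)$,
\begin{equation*}
\Phi\bigl(\bullet \star [\CO_{G/B}(-\vpi_{i_{1}})] \star \cdots \star [\CO_{G/B}(-\vpi_{i_{r}})]\bigr) = \Phi(\bullet) \otimes [\CO_{\QG}(\lng\mu)].
\end{equation*}
Specializing to $\bullet = 1 = [\CO^{e}]$ and using $\Phi(1) = [\CO_{\QG}]$ together with $[\CO_{\QG}] \otimes [\CO_{\QG}(\nu)] = [\CO_{\QG}(\nu)]$, the right-hand side becomes $[\CO_{\QG}(\lng\mu)] = \Phi([\CO_{G/B}(-\mu)])$; since $\Phi$ is injective, this yields the key identity
\begin{equation*}
[\CO_{G/B}(-\vpi_{i_{1}})] \star \cdots \star [\CO_{G/B}(-\vpi_{i_{r}})] = [\CO_{G/B}(-\mu)]
\end{equation*}
in $QK_{H}(G/B)$. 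Substituting this back into the displayed identity above gives $\Phi(\bullet \star [\CO_{G/B}(-\mu)]) = \Phi(\bullet) \otimes [\CO_{\QG}(\lng\mu)]$ for all $\bullet$, which is the assertion. (If one prefers to organize the proof through the generation statement by first treating $\star$-monomials in the $[\CO_{G/B}(-\vpi_{i})]$ with $R(H)$-coefficients, the passage to general Novikov-variable coefficients is supplied by \eqref{eq:push} and \eqref{eq:shift}, under which both sides of the asserted equality transform in the same way.)

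The step I expect to be the main obstacle is this key identity $[\CO_{G/B}(-\vpi_{i_{1}})] \star \cdots \star [\CO_{G/B}(-\vpi_{i_{r}})] = [\CO_{G/B}(-\mu)]$: a priori the quantum $\star$-product of the line bundle classes associated to $-\vpi_{i_{1}}, \ldots, -\vpi_{i_{r}}$ differs from the classical line bundle class $[\CO_{G/B}(-\mu)]$ by quantum (Novikov-variable) corrections, and the fact that these corrections all cancel is exactly what is delivered --- through the isomorphism $\Phi$ --- by the explicit forms of formulas \eqref{eq:e2b} and \eqref{eq:LS0}. Everything else in the argument (iterating a commutative diagram, associativity of the two products, and the elementary facts $\Phi(1) = [\CO_{\QG}]$, $[\CO_{\QG}] \otimes [\CO_{\QG}(\nu)] = [\CO_{\QG}(\nu)]$, and additivity of line bundle classes on $\QG$) is routine bookkeeping.
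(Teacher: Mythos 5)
Your proof is correct, and it rests on the same two pillars as the paper's: the identification $\Phi([\CO_{G/B}(-\mu)])=[\CO_{\QG}(\lng\mu)]$ obtained by comparing \eqref{eq:e2b} with \eqref{eq:LS0}, and the commutative diagram \eqref{eq:qdiagram}. Where you diverge is in how an arbitrary $\bullet$ is handled: the paper invokes the fact that $QK_{H}(G/B)$ is generated as an algebra over $R(H)\bra{Q^{\vee,+}}$ by the classes $[\CO_{G/B}(-\vpi_{i})]$, $i \in I$ (a nontrivial input, obtained via Nakayama's lemma), together with \eqref{eq:push} and \eqref{eq:shift}, to reduce to $\star$-monomials in these generators; you bypass the generation statement entirely, observing that \eqref{eq:qdiagram} already holds for every $\bullet$, so that iterating it and specializing to $\bullet=1$ yields, by injectivity of $\Phi$, the factorization $[\CO_{G/B}(-\vpi_{i_{1}})]\star\cdots\star[\CO_{G/B}(-\vpi_{i_{r}})]=[\CO_{G/B}(-\mu)]$, which you then substitute back. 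That factorization is precisely the identity the paper records only \emph{after} Proposition~\ref{prop:corresp} (as the generalization of a lemma of Anderson--Chen--Tseng), and your derivation of it is not circular, since it uses only \eqref{eq:qdiagram}, the additivity of the line-bundle tensor operators on $K_{H}(\QG)$, the relation $[\CO_{\QG}]\otimes[\CO_{\QG}(\nu)]=[\CO_{\QG}(\nu)]$, and the identification $\Phi([\CO_{G/B}(-\mu)])=[\CO_{\QG}(\lng\mu)]$, all of which are established before the proposition. What your route buys is economy: no appeal to the generation of $QK_{H}(G/B)$ by line bundle classes, nor to \eqref{eq:push} and \eqref{eq:shift}. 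What the paper's route buys is a uniform mechanism (generation plus the diagram, with \eqref{eq:push}/\eqref{eq:shift} handling Novikov coefficients) that is reused verbatim for Proposition~\ref{prop:mixedcorresp}, where the relevant class $\frac{1}{1-Q_{m}}[\CO_{G/B}(\vpi_{m}-\mu)]$ is not itself a $\star$-product of the generators, so your shortcut would not apply there without modification.
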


In particular, for distinct $i_{1}, \ldots, i_{l} \in I$, we conclude that
\begin{equation*}
[\CO_{G/B}(-\varpi_{i_1})] \star \cdots \star [\CO_{G/B}(-\varpi_{i_l})]
= [\CO_{G/B}(-(\varpi_{i_1} + \cdots + \varpi_{i_l}))]
\end{equation*}
in $QK_{H}(G/B)$, since we have
\begin{equation*}
[\CO_{\QG}(\lng \varpi_{i_1})] \otimes \cdots \otimes [\CO_{\QG}(\lng \varpi_{i_l})]
= [\CO_{\QG}(\lng(\varpi_{i_1} + \cdots + \varpi_{i_l}))]
\end{equation*}
in $K_{H}(\QG)$.
This result can be thought of as a generalization of 
\cite[Lemma~6+]{ACT} in simply-laced types to arbitrary types.


Let $\mu \in P^{+}$ be as in \eqref{eq:mu}, 
and let $m \in I \setminus \K$ be such that 
$\QLS(\vpi_{m}) = \LS(\vpi_{m})$. 
Let $\eta = (w_{1},\,\dots,\,w_{s} \,;\, 
a_{0},\,a_{1},\,\dots,\,a_{s}) \in \QLS(\vpi_{m}) = \LS(\vpi_{m})$ and $v \in W_{\K}$. 
Since $m \in I \setminus \K$, we have $\K \subset I \setminus \{m\}$, and hence 
$\mcr{v}^{I \setminus \{m\}} = e$. 

We see from \cite[(3.5)]{LS} that for $v \in W_{\K}$, 
\begin{equation} \label{eq:LSa1}
[\CO_{G/B}(\vpi_{m})][\CO^{v}] = 
\sum_{ \begin{subarray}{c} \eta \in \LS(\vpi_{m}) \\[1mm] 
       \kappa(\eta) \ge \mcr{v}^{I \setminus \{m\}} \end{subarray} }
 \be^{-\wt(\eta)} [\CO^{\upp(v,\eta)}] = 
\sum_{ \eta \in \LS(\vpi_{m}) }
 \be^{-\wt(\eta)} [\CO^{\upp(v,\eta)}] 
\end{equation}
in $K_{H}(G/B)$, where the second equality follows from the fact that 
$\mcr{v}^{I \setminus \{m\}} = e$, as seen above; 
here, again we warn the reader that the sign convention for 
the $R(H)$-module structure of $K_{H}(G/B)$ in \cite{LS} is 
opposite to that in this paper.
Combining \eqref{eq:LS0} and \eqref{eq:LSa1}, we obtain
\begin{equation}
\begin{split} \label{eq:ordinary}
[\CO_{G/B}(\vpi_{m}-\mu)] & = 
[\CO_{G/B}(\vpi_{m})][\CO_{G/B}(-\mu)] = 
\sum_{ v \in W_{K} } (-1)^{\ell(v)} \be^{\mu} [\CO_{G/B}(\vpi_{m})][\CO^{v}] \\
& = \sum_{ v \in W_{K} } (-1)^{\ell(v)} \be^{\mu} \sum_{ \eta \in \LS(\vpi_{m}) }
 \be^{-\wt(\eta)} [\CO^{\upp(v,\eta)}] 
\end{split}
\end{equation}
in $K_{H}(G/B)$.

By comparing the RHS of \eqref{eq:equivariant} and the rightmost-hand side of \eqref{eq:ordinary},
we see easily that under the $R(H)$-module isomorphism $\Phi$ 
from $QK_{H}(G/B)$ 
onto $K_{H}(\QG)$, 
the class $\frac{1}{1 - Q_{m}}[\CO_{G/B}(\varpi_{m} - \mu)]$ 
corresponds to the line bundle class $[\CO_{\QG}(-\lng(\varpi_{m} - \mu))]$,
where $\mu \in P^{+}$ is as in \eqref{eq:mu}. 
Therefore, by the same reasoning as for Proposition~\ref{prop:corresp}, we deduce the following.

\begin{prop} \label{prop:mixedcorresp} Keep the notation above. 
Let $\mu \in P^{+}$ be of the form \eqref{eq:mu}, and let $m \in I$ 
be an element of 
$\J=\J_{\mu}=I \setminus \K$ such that 
$\QLS(\vpi_{m}) = \LS(\vpi_{m})$. 
Then, under the $R(H)$-module isomorphism $\Phi$ from $QK_{H}(G/B)$ onto $K_{H}(\QG)$, we have 
\begin{equation}
\Phi \left(\bullet \star \frac{1}{1 - Q_{m}}[\CO_{G/B}(\vpi_{m} - \mu)]\right) = \Phi(\bullet) \otimes [\CO_{\QG}(-\lng(\vpi_{m} - \mu))]
\end{equation}
for an arbitrary element $\bullet \in QK_{H}(G/B)$. 
\end{prop}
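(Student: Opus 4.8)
The plan is to first establish, at the level of elements, the correspondence $\Phi\bigl(\tfrac{1}{1-Q_{m}}[\CO_{G/B}(\vpi_{m}-\mu)]\bigr) = [\CO_{\QG}(-\lng(\vpi_{m}-\mu))]$, and then to upgrade it to the asserted operator identity by the same bootstrapping used for Proposition~\ref{prop:corresp}, exploiting the $\star$-invertibility of $[\CO_{G/B}(-\vpi_{m})]$. For the element-level step I would take the rightmost-hand side of \eqref{eq:ordinary}, multiply by $\tfrac{1}{1-Q_{m}} = \sum_{n\ge 0} Q_{m}^{\,n} = \sum_{n\ge 0} Q^{n\alpha_{m}^{\vee}} \in R(H)\bra{Q^{\vee,+}}$, and apply $\Phi$ term by term, using that $\Phi$ is $R(H)$-linear and sends $\be^{\nu}[\CO^{w}]Q^{\xi}$ to $\be^{-\nu}[\CO_{\QG(wt_{\xi})}]$ (equivalently, $\Phi(Q^{\xi}\bullet) = \st_{\xi}\Phi(\bullet)$ by \eqref{eq:push}); the result is precisely the right-hand side of \eqref{eq:equivariant}, whose left-hand side is $[\CO_{\QG}(\lng(\mu-\vpi_{m}))] = [\CO_{\QG}(-\lng(\vpi_{m}-\mu))]$.

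Taking the special case $\mu = 0$ of this (legitimate since $\mu = 0$ is of the form \eqref{eq:mu}, and $\QLS(\vpi_{m}) = \LS(\vpi_{m})$ is the standing hypothesis) yields $\Phi\bigl(\tfrac{1}{1-Q_{m}}[\CO_{G/B}(\vpi_{m})]\bigr) = [\CO_{\QG}(-\lng\vpi_{m})]$. Combining this with the commutative diagram \eqref{eq:qdiagram} (which intertwines $\bullet \star [\CO_{G/B}(-\vpi_{m})]$ with $\bullet' \otimes [\CO_{\QG}(\lng\vpi_{m})]$) and the injectivity of $\Phi$, I would deduce that $[\CO_{G/B}(-\vpi_{m})] \star \tfrac{1}{1-Q_{m}}[\CO_{G/B}(\vpi_{m})] = 1$ in $QK_{H}(G/B)$, since $\Phi$ sends the left-hand side to $[\CO_{\QG}(-\lng\vpi_{m})] \otimes [\CO_{\QG}(\lng\vpi_{m})] = [\CO_{\QG}] = \Phi(1)$. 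Thus $\tfrac{1}{1-Q_{m}}[\CO_{G/B}(\vpi_{m})]$ is the $\star$-inverse of $[\CO_{G/B}(-\vpi_{m})]$; then for arbitrary $\bullet \in QK_{H}(G/B)$, putting $\bullet' := \bullet \star \tfrac{1}{1-Q_{m}}[\CO_{G/B}(\vpi_{m})]$ one has $\bullet' \star [\CO_{G/B}(-\vpi_{m})] = \bullet$ by associativity of $\star$, so \eqref{eq:qdiagram} gives $\Phi(\bullet) = \Phi(\bullet') \otimes [\CO_{\QG}(\lng\vpi_{m})]$ and hence $\Phi(\bullet') = \Phi(\bullet) \otimes [\CO_{\QG}(-\lng\vpi_{m})]$, which is the $\mu = 0$ case of the proposition.

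For general $\mu$ I would first record the identity $\tfrac{1}{1-Q_{m}}[\CO_{G/B}(\vpi_{m}-\mu)] = \tfrac{1}{1-Q_{m}}[\CO_{G/B}(\vpi_{m})] \star [\CO_{G/B}(-\mu)]$ in $QK_{H}(G/B)$: applying $\Phi$, the left-hand side becomes $[\CO_{\QG}(-\lng(\vpi_{m}-\mu))]$ by the element-level step, while the right-hand side becomes, by Proposition~\ref{prop:corresp} and the $\mu=0$ case above, $[\CO_{\QG}(-\lng\vpi_{m})] \otimes [\CO_{\QG}(\lng\mu)] = [\CO_{\QG}(-\lng(\vpi_{m}-\mu))]$, so the identity follows from injectivity of $\Phi$. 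Consequently, for arbitrary $\bullet$, associativity gives $\bullet \star \tfrac{1}{1-Q_{m}}[\CO_{G/B}(\vpi_{m}-\mu)] = \bigl(\bullet \star \tfrac{1}{1-Q_{m}}[\CO_{G/B}(\vpi_{m})]\bigr) \star [\CO_{G/B}(-\mu)]$, and applying $\Phi$, then Proposition~\ref{prop:corresp} to the factor $\star[\CO_{G/B}(-\mu)]$, then the $\mu=0$ case, yields $\Phi(\bullet)\otimes[\CO_{\QG}(-\lng\vpi_{m})]\otimes[\CO_{\QG}(\lng\mu)] = \Phi(\bullet)\otimes[\CO_{\QG}(-\lng(\vpi_{m}-\mu))]$, which is the assertion.

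The step I expect to be the main obstacle is the first, element-level one: I must verify that applying $\Phi$ term by term to the $\tfrac{1}{1-Q_{m}}$-series is legitimate --- i.e., that $\Phi$ intertwines multiplication by the power series $\tfrac{1}{1-Q_{m}} = \sum_{n\ge 0}Q^{n\alpha_{m}^{\vee}}$ on the $QK_{H}(G/B)$-side with the convergent (in $\prod_{x}\BZ[P][\CO_{\QG(x)}]$) operator $\sum_{n\ge 0}\st_{n\alpha_{m}^{\vee}}$ on the $K_{H}(\QG)$-side, which is immediate from the definition of $\Phi$ on $QK_{H}(G/B)$ together with \eqref{eq:push} --- and that after applying $\Phi$ the weights and signs appearing in \eqref{eq:ordinary} exactly match those in \eqref{eq:equivariant}. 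Everything after that is formal manipulation with the commutative diagram \eqref{eq:qdiagram} and the injectivity of $\Phi$.
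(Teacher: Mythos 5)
Your argument is correct, and its first half is exactly the paper's: you identify $\Phi\bigl(\tfrac{1}{1-Q_{m}}[\CO_{G/B}(\vpi_{m}-\mu)]\bigr)$ with $[\CO_{\QG}(-\lng(\vpi_{m}-\mu))]$ by multiplying \eqref{eq:ordinary} by $\tfrac{1}{1-Q_{m}}$ and matching it termwise, via \eqref{eq:push}, against \eqref{eq:equivariant} — this is precisely the paper's ``comparison of the RHS of \eqref{eq:equivariant} with the rightmost-hand side of \eqref{eq:ordinary}''. Where you diverge is in upgrading this element-level identity to the operator identity: the paper simply invokes ``the same reasoning as for Proposition~\ref{prop:corresp}'', i.e.\ the fact that $QK_{H}(G/B)$ is (topologically) generated as an $R(H)\bra{Q^{\vee,+}}$-algebra by the classes $[\CO_{G/B}(-\vpi_{i})]$, combined with the commutative diagram \eqref{eq:qdiagram} and \eqref{eq:shift}, \eqref{eq:push}, so that two operators agreeing on $1$ and commuting with the generators agree everywhere; you instead specialize to $\mu=0$ (legitimate, since $\K=\emptyset$ is allowed in \eqref{eq:mu}) to show via injectivity of $\Phi$ that $\tfrac{1}{1-Q_{m}}[\CO_{G/B}(\vpi_{m})]$ is the $\star$-inverse of $[\CO_{G/B}(-\vpi_{m})]$, and then reduce the general case to the already-proved Proposition~\ref{prop:corresp} through the factorization $\tfrac{1}{1-Q_{m}}[\CO_{G/B}(\vpi_{m}-\mu)]=\tfrac{1}{1-Q_{m}}[\CO_{G/B}(\vpi_{m})]\star[\CO_{G/B}(-\mu)]$. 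Your route avoids re-running the generation argument and yields as a by-product the explicit quantum inverse of $[\CO_{G/B}(-\vpi_{m})]$, at the cost of using the ring structure of $QK_{H}(G/B)$ and the fact that the operators $\bullet\otimes[\CO_{\QG}(\nu)]$ on $K_{H}(\QG)$ compose additively in $\nu$ (so that $\otimes[\CO_{\QG}(\lng\vpi_{m})]$ and $\otimes[\CO_{\QG}(-\lng\vpi_{m})]$ are mutually inverse); the paper uses this additivity implicitly as well (e.g.\ in Corollary~\ref{cor:inv1} and in the line-bundle product identity following Proposition~\ref{prop:corresp}), so it is not a gap, but it is worth stating explicitly if you write this up.
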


In particular, if $G$ is of type $A_{n}$, i.e., $G = SL_{n+1}(\BC)$, then for each $1 \leq k \leq n+1$, 
the quantum multiplication with the class $\frac{1}{1 - Q_{k}}[\CO_{Fl_{n+1}}(\eps_{k})]$ 
(resp., $\frac{1}{1 - Q_{k-1}}[\CO_{Fl_{n+1}}(- \eps_{k})]$) in $QK_{H}(Fl_{n+1})$ 
corresponds to the tensor product with the line bundle class 
$[\CO_{\QG}(-\lng \eps_{k})]$ (resp., $[\CO_{\QG}(\lng \eps_{k})]$) 
in $K_{H}(\QG)$ since $\eps_{k} = \varpi_{k} - \varpi_{k-1}$, 
where $\varpi_{0} := 0, \varpi_{n+1} := 0$, 
and $Q_{0} := 0, Q_{n+1} := 0$ by convention.
Also, if $G$ is of type $C_{n}$, then for each $1 \leq k \leq n$, 
the quantum multiplication with the class $\frac{1}{1-Q_{k}}[\CO_{G/B}(\eps_{k})]$ 
(resp., $\frac{1}{1-Q_{k-1}}[\CO_{G/B}(- \eps_{k})]$) in $QK_{H}(G/B)$ 
corresponds to the tensor product with the line bundle class $[\CO_{\QG}(\eps_{k})]$ 
(resp., $[\CO_{\QG}(- \eps_{k})]$) since $\eps_{k} = \varpi_{k} - \varpi_{k-1}$, 
where $\varpi_{0} := 0$ and $Q_{0} := 0$ by convention.

In the rest of this section, we assume that $G$ is of type $A_{n}$, i.e., $G = SL_{n+1}(\BC)$. 
Now, for $0 \le p \le k \le n+1$, we set
\begin{equation}
\CF^{k}_{p}:=
 \sum_{
   \begin{subarray}{c}
   J \subset [k] \\[1mm]
   |J|=p
   \end{subarray}} \, 
 \prod_{ \begin{subarray}{c} 1 \le j \le k \\[1mm] j,\,j+1 \in J \end{subarray} }
 \frac{1}{1-Q_{j}} 
 \sprod_{j \in J} [\CO_{Fl_{n+1}}(-\eps_{j})] \in QK_{H}(Fl_{n+1}), 
\end{equation}
where $\prod^{\star}$ denotes the product with respect to the quantum multiplication $\star$; 
note that $\CF^{k}_{0} = 1$ for $0 \le k \le n+1$.
Then, from what we have obtained in the previous paragraph, we see easily that 
$\Phi(\CF^{k}_{p}) = \FF^{k}_{p} \in K_{H}(\QG)$ for $0 \le p \le k \le n+1$; 
recall that $\FF^{k}_{p}$ is defined by
\begin{equation}
\FF^{k}_{p} =
 \sum_{
   \begin{subarray}{c}
   J \subset [k] \\[1mm]
   |J|=p
   \end{subarray}}
 \left( \prod_{ j \notin J,\,j+1 \in J }
 (1-\st_{j}) \right)[\CO_{\QG}(\lng \eps_{J})]. 
\end{equation}
As a consequence, we obtain the following from Theorem~\ref{thm:FFn+1}.
\begin{thm} \label{thm:rel}
For $0 \leq l \leq n+1$, the following equality holds in $QK_{H}(Fl_{n+1})$\,{\rm:}
\begin{equation*}
\CF^{n+1}_{l}= 
\sum_{
   \begin{subarray}{c}
   J \subset [n+1] \\[1mm]
   |J|=l
   \end{subarray}}
\be^{\eps_{J}}. 
\end{equation*}
\end{thm}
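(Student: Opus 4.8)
The plan is to deduce this identity by pulling the equality of Theorem~\ref{thm:FFn+1} back through the $R(H)$-module isomorphism $\Phi \colon QK_{H}(Fl_{n+1}) \xrightarrow{\sim} K_{H}(\QG)$. Both ingredients needed are already in place. On the one hand, it was observed in the paragraph immediately preceding the statement --- using the correspondence between $\star$-multiplication by $\frac{1}{1-Q_{j-1}}[\CO_{Fl_{n+1}}(-\eps_{j})]$ on $QK_{H}(Fl_{n+1})$ and tensor product by $[\CO_{\QG}(\lng\eps_{j})]$ on $K_{H}(\QG)$, together with the fact that the shift operators $\st_{j}$ commute past tensoring by line bundle classes (see \eqref{eq:shift}) --- that $\Phi(\CF^{k}_{p}) = \FF^{k}_{p}$ in $K_{H}(\QG)$ for all $0 \le p \le k \le n+1$. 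On the other hand, Theorem~\ref{thm:FFn+1} gives $\FF^{n+1}_{l} = \FE^{n+1}_{l} = \sum_{|J|=l}\be^{-\eps_{J}}$ in $K_{H}(\QG)$.

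It then remains to identify the $\Phi$-preimage of the right-hand side of the asserted equality. Since the unit $1$ of $QK_{H}(Fl_{n+1})$ is the Schubert class $[\CO^{e}]$ and $\Phi$ sends $\be^{\mu}[\CO^{w}]Q^{\xi}$ to $\be^{-\mu}[\CO_{\QG(wt_{\xi})}]$, taking $w = e$ and $\xi = 0$ gives $\Phi(\be^{\eps_{J}}) = \be^{-\eps_{J}}[\CO_{\QG(e)}] = \be^{-\eps_{J}}$, where $[\CO_{\QG(e)}] = [\CO_{\QG}]$ is the identity for $\otimes$ and realizes the embedding of $\BZ[P]$ into $K_{H}(\QG)$ that is implicit in \eqref{eq:FE}. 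Summing over the subsets $J \subset [n+1]$ with $|J| = l$ then yields $\Phi\bigl(\sum_{|J|=l}\be^{\eps_{J}}\bigr) = \sum_{|J|=l}\be^{-\eps_{J}} = \FE^{n+1}_{l}$.

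Combining the three equalities, $\Phi(\CF^{n+1}_{l}) = \FF^{n+1}_{l} = \FE^{n+1}_{l} = \Phi\bigl(\sum_{|J|=l}\be^{\eps_{J}}\bigr)$, and applying $\Phi^{-1}$ gives $\CF^{n+1}_{l} = \sum_{|J|=l}\be^{\eps_{J}}$ in $QK_{H}(Fl_{n+1})$, as claimed. I do not expect any genuine obstacle at this stage: the entire substance has already been shifted into Theorem~\ref{thm:FFn+1} and into the compatibility $\Phi(\CF^{k}_{p}) = \FF^{k}_{p}$, both of which may be assumed. The only point demanding care is the $-\lng$-twist built into the normalization of $\Phi$: it is precisely this twist that converts the equivariant parameters $\be^{-\eps_{J}}$ appearing in $\FE^{n+1}_{l}$ on the semi-infinite side into $\be^{+\eps_{J}}$ on the quantum $K$-theory side, and keeping that sign bookkeeping straight is the sole subtlety.
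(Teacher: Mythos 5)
Your proposal is correct and is essentially the paper's own argument: the paper likewise obtains Theorem~\ref{thm:rel} by combining $\Phi(\CF^{n+1}_{l}) = \FF^{n+1}_{l}$ with Theorem~\ref{thm:FFn+1} and the fact that $\Phi$ sends $\be^{\mu}[\CO^{e}]$ to $\be^{-\mu}[\CO_{\QG(e)}]$, which accounts for the sign flip from $\be^{-\eps_{J}}$ to $\be^{\eps_{J}}$. Your explicit handling of that sign bookkeeping is exactly the only point of care, and it is done correctly.
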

%
%
\section{Presentation of $QK_{H}(Fl_{n+1})$.} 
\label{sec:presentation}

In this section, we assume that $G$ is of type $A_{n}$, i.e., $G = SL_{n+1}(\BC)$. 
Let $\CI$ denote the ideal of $R(H)[x_{1},\dots,x_{n},x_{n+1}]$ 
generated by 
\begin{equation*}
\sum_{
   \begin{subarray}{c}
   J \subset [n+1] \\[1mm]
   |J|=l
   \end{subarray}} \, 
\prod_{j \in J} (1-x_{j}) - 
\sum_{
   \begin{subarray}{c}
   J \subset [n+1] \\[1mm]
   |J|=l
   \end{subarray}}\be^{\eps_{J}}
\quad
\text{for $1 \le l \le n+1$}. 
\end{equation*}
It is well-known (see \cite[Introduction]{FL}; cf. \cite{PR99}) that there exists an $R(H)$-algebra isomorphism 
$\Psi$ from the quotient $R(H)[x_{1},\dots,x_{n},x_{n+1}]/\CI$ onto 
$K_{H}(Fl_{n+1})$ which maps the residue class of $1 - x_{j}$ modulo $\CI$ to 
$[\CO_{Fl_{n+1}}(-\eps_{j})]$ for $1 \le j \le n+1$; 
for $1 \leq j \leq n+1$, the line bundle $\CO_{Fl_{n+1}}(- \epsilon_{j})$ is just the quotient bundle $\mathcal{U}_{j}/\mathcal{U}_{j-1}$ over $Fl_{n+1}$, where $0 = \mathcal{U}_{0} \subset \mathcal{U}_{1} \subset \cdots \subset \mathcal{U}_{n} \subset \mathcal{U}_{n+1} = Fl_{n+1} \times \BC^{n+1}$ denotes a universal, or tautological, flag of subvector bundles of the trivial bundle $Fl_{n+1} \times \BC^{n+1}$. 

Now, let $R(H)\bra{Q} = R(H)\bra{Q_1, \ldots, Q_{n}}$ denote the ring of formal power series in the Novikov variables $Q_{i} = Q^{\alpha_i^{\vee}}$ corresponding to the simple coroots $\alpha_i^{\vee}$, $1 \leq i \leq n$, 
with coefficients in the representation ring $R(H)$ of $H$. 
Also, let $\CI^{Q}$ denote the ideal of 
$R(H)\bra{Q}[x_{1},\dots,x_{n},x_{n+1}]$ 
generated by 
\begin{equation*}
\sum_{
   \begin{subarray}{c}
   J \subset [n+1] \\[1mm]
   |J|=l
   \end{subarray}} \, 
\prod_{\begin{subarray}{c} 
  1 \le j \le n+1 \\[1mm]
  j \in J,\,j+1 \notin J
  \end{subarray}} (1-Q_{j})
\prod_{j \in J}(1-x_{j}) - 
\sum_{
   \begin{subarray}{c}
   J \subset [n+1] \\[1mm]
   |J|=l
   \end{subarray}}\be^{\eps_{J}}
\quad
\text{for $1 \le l \le n+1$},
\end{equation*}
where we understand that $1 - Q_{n+1} = 1$. 
The following is the main result of this paper.
%
%
\begin{thm} \label{thm:main}
There exists an $R(H)\bra{Q}$-algebra isomorphism 
\begin{equation} \label{eq:PsiQ}
\Psi^{Q}: R(H)\bra{Q}[x_{1},\dots,x_{n},x_{n+1}]/\CI^{Q} 
\stackrel{\sim}{\rightarrow} 
QK_{H}(Fl_{n+1})
\end{equation}
which maps the residue class of $(1-Q_{j})(1 -x_{j})$ modulo $\CI^{Q}$ to 
$[\CO_{Fl_{n+1}}(-\eps_{j})]$ for $1 \le j \le n$, and
the residue class of $1 -x_{n+1}$ modulo $\CI^{Q}$ to 
$[\CO_{Fl_{n+1}}(-\eps_{n+1})]$. 
\end{thm}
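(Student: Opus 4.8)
The plan is to deduce Theorem~\ref{thm:main} from Theorem~\ref{thm:rel} by the Nakayama-type argument in the spirit of \cite{GMSZ}, using the known presentation $\Psi : R(H)[x_1,\dots,x_{n+1}]/\CI \xrightarrow{\sim} K_H(Fl_{n+1})$ as the $Q=0$ input. First I would construct the candidate map. Define $\widetilde{\Psi}^Q : R(H)\bra{Q}[x_1,\dots,x_{n+1}] \to QK_H(Fl_{n+1})$ to be the $R(H)\bra{Q}$-algebra homomorphism sending $x_j \mapsto 1 - (1-Q_j)^{-1}[\CO_{Fl_{n+1}}(-\eps_j)]$ for $1\le j\le n$ (note $1-Q_j$ is invertible in $R(H)\bra{Q}$) and $x_{n+1} \mapsto 1 - [\CO_{Fl_{n+1}}(-\eps_{n+1})]$; here all products on the target are taken with respect to the quantum multiplication $\star$, which is legitimate since $QK_H(Fl_{n+1})$ is a commutative $R(H)\bra{Q}$-algebra under $\star$. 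Then for a subset $J\subset[n+1]$ one has, by the very definition of $\CF^k_p$ and the correspondence in Section~\ref{sec:relation}, $\widetilde{\Psi}^Q\big(\prod_{j\in J, j+1\notin J}(1-Q_j)\prod_{j\in J}(1-x_j)\big) = \prod^{\star}_{j\in J}\frac{1}{1-Q_j}\cdots$ arranged exactly so that summing over $|J|=l$ reproduces $\CF^{n+1}_l$; hence by Theorem~\ref{thm:rel} the $l$-th generator of $\CI^Q$ maps to $\CF^{n+1}_l - \sum_{|J|=l}\be^{\eps_J} = 0$. Therefore $\widetilde{\Psi}^Q$ factors through a well-defined $R(H)\bra{Q}$-algebra homomorphism $\Psi^Q : R(H)\bra{Q}[x_1,\dots,x_{n+1}]/\CI^Q \to QK_H(Fl_{n+1})$ with the stated effect on residue classes of $(1-Q_j)(1-x_j)$ and of $1-x_{n+1}$.

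Next I would prove surjectivity. Since $[\CO^w]$, $w\in W$, form an $R(H)\bra{Q}$-basis of $QK_H(Fl_{n+1})$ and the quantum multiplicative structure over $R(H)\bra{Q}$ is determined by quantum multiplication by the $[\CO_{Fl_{n+1}}(-\vpi_i)]$ (recalled in Section~\ref{sec:relation}, via \cite[Corollary~5.14]{BCMP} and the Nakayama argument given there), it suffices to note that $[\CO_{Fl_{n+1}}(-\vpi_i)] = [\CO_{Fl_{n+1}}(-\eps_1)]\star\cdots\star[\CO_{Fl_{n+1}}(-\eps_i)]$ lies in the image of $\Psi^Q$; hence $\Psi^Q$ is surjective. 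Alternatively, and more robustly for the Nakayama step, one argues as follows: the images of the residue classes of monomials in the $x_j$ generate the target as an $R(H)\bra{Q}$-module because this holds modulo $(Q)$ — there it is the statement that $\Psi$ is surjective — and then one invokes the finite generation of $R(H)\bra{Q}[x_1,\dots,x_{n+1}]/\CI^Q$ over $R(H)\bra{Q}$ (Corollary~\ref{cor:fg}, proved in Appendix~\ref{sec:B}) together with the $(Q)$-adic completeness of $R(H)\bra{Q}$ to lift a generating set; this is the standard graded/complete Nakayama lemma.

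The heart of the argument is injectivity, and this is the step I expect to be the main obstacle. The strategy is: both source and target are finitely generated $R(H)\bra{Q}$-modules (for the source this is Corollary~\ref{cor:fg}; for the target it is free of rank $|W|=(n+1)!$), and $\Psi^Q$ is a surjection of $R(H)\bra{Q}$-modules. Reducing modulo the ideal $(Q) = (Q_1,\dots,Q_n)$ and using that $\Psi^Q \bmod (Q)$ is identified with the classical isomorphism $\Psi$ — here one checks that setting $Q_i=0$ turns $\CI^Q$ into $\CI$ and turns the defining images into $[\CO_{Fl_{n+1}}(-\eps_j)]$, and that $\star$ degenerates to $\otimes$ — we get that $\Psi^Q \bmod (Q)$ is an isomorphism, in particular injective with target a free, hence flat, $R(H)$-module. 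One then deduces that $\Psi^Q$ itself is injective by a Nakayama/Artin--Rees type argument over the Noetherian(?) local-like base $R(H)\bra{Q}$: if $K = \ker\Psi^Q$, then $K/(Q)K \hookrightarrow K\otimes_{\mathrm{Tor}}$-vanishing forces $K = (Q)K$, and $(Q)$-adic separatedness of the finitely generated module $K$ (which holds since $R(H)\bra{Q}[x]/\CI^Q$ is $(Q)$-adically separated, as $(Q)$ is contained in its Jacobson radical or by a degree/completeness argument) gives $K=0$. The delicate points to get right are (i) that $Q=0$ reduction of $\CF^{n+1}_l$ really is $e_l(1-x_1,\dots,1-x_{n+1})$ with no surviving Novikov contributions, which follows because in each product $\prod^{\star}$ over $j\in J$ the quantum corrections to $[\CO(-\eps_j)]\star[\CO(-\eps_{j'})]$ are divisible by some $Q_i$; (ii) the comparison $\ker\Psi^Q \bmod (Q) \hookrightarrow \ker\Psi = 0$, which uses flatness of $QK_H(Fl_{n+1})$ over $R(H)\bra{Q}$ (it is free) to identify $(\ker\Psi^Q)\bmod(Q)$ with a submodule of $\ker(\Psi^Q\bmod(Q))$; and (iii) invoking Corollary~\ref{cor:fg} to ensure the source is a finitely generated module so that Nakayama applies. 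I would write these out carefully following the template of \cite{GMSZ}, citing Corollary~\ref{cor:fg} for finite generation and \cite[Corollary~4.8~b.]{E} (graded Nakayama) for the completeness step.
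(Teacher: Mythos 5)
Your proposal is correct and follows essentially the same route as the paper: construct $\ha{\Psi}^{Q}$, use Theorem~\ref{thm:rel} to see that the generators of $\CI^{Q}$ die so the map descends, invoke Corollary~\ref{cor:fg} for finite generation of the source, check that the $Q=0$ specialization recovers the classical isomorphism $\Psi$, and conclude by the Nakayama-type argument. The only difference is that you unpack the surjectivity and injectivity steps by hand, whereas the paper delegates exactly this bookkeeping to \cite[Proposition~A.3]{GMSZ}.
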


\begin{proof}
First, we define an $R(H)\bra{Q}$-algebra homomorphism 
\begin{equation*}
\ha{\Psi}^{Q}: R(H)\bra{Q}[x_{1},\dots,x_{n},x_{n+1}] \rightarrow QK_{H}(Fl_{n+1})
\end{equation*}
by $\ha{\Psi}^{Q}((1-Q_{j})(1 -x_{j})) := [\CO_{Fl_{n+1}}(-\eps_{j})]$ for $1 \leq j \leq n+1$, where 
we understand that $1 - Q_{n+1} = 1$. 
Then, it follows from Theorem~\ref{thm:rel} that $\ha{\Psi}^{Q}$ induces an $R(H)\bra{Q}$-algebra 
homomorphism 
\begin{equation*}
\Psi^{Q} : R(H)\bra{Q}[x_{1},\dots,x_{n},x_{n+1}]/\CI^{Q} \rightarrow QK_{H}(Fl_{n+1})
\end{equation*}
which maps the residue class of $(1-Q_{j})(1 -x_{j})$ modulo $\CI^{Q}$ to 
$[\CO_{Fl_{n+1}}(-\eps_{j})]$ for $1 \leq j \leq n+1$ (we understand that $1 - Q_{n+1} = 1$); 
note that at this point, we do not know whether $\Psi^{Q}$ is surjective or not. 
Here we can show that the quotient algebra $R(H)\bra{Q}[x_{1},\dots,x_{n},x_{n+1}]/\CI^{Q}$ is 
a finitely generated module over the Noetherian integral domain 
$R(H)\bra{Q} = R(H)\bra{Q_1, \ldots, Q_{n}}$, 
which is complete in the $(Q_{1}, \ldots, Q_{n})$-adic topology; 
see Corollary~\ref{cor:fg} of Appendix~\ref{sec:B}. 
Also, observe that under the specialization $Q_{1} = \cdots = Q_{n} = 0$, $\Psi^{Q}$ 
becomes the $R(H)$-algebra isomorphism $\Psi$ mentioned above 
from the quotient $R(H)[x_{1},\dots,x_{n},x_{n+1}]/\CI$ onto $K_{H}(Fl_{n+1})$.
Therefore, we can use Nakayama-type arguments, that is, we can apply \cite[Proposition~A.3]{GMSZ} to $\Psi^{Q}$ to conclude that 
it is an $R(H)\bra{Q}$-algebra isomorphism from 
$R(H)\bra{Q}[x_{1},\dots,x_{n},x_{n+1}]/\CI^{Q}$ onto $QK_{H}(Fl_{n+1})$.
This proves the theorem.
\end{proof}

\begin{rem}
Let $R(H)[Q]_{\mathrm{loc}}$ denote the localization of the polynomial ring $R(H)[Q] := R(H)[Q_1, \ldots, Q_n]$ with respect to the multiplicative set $1 + (Q_1, \ldots, Q_n)$. 
Also, let $\CI^{Q}_{\mathrm{loc}}$ denote the ideal in $R(H)[Q]_{\mathrm{loc}}[x_1, \ldots, x_n, x_{n+1}]$ generated by the same elements as for $\CI^{Q}$. 
By Remark~\ref{rem:locfg} of Appendix~\ref{sec:B}, we see that the quotient ring $R(H)[Q]_{\mathrm{loc}}[x_{1},\ldots,x_{n},x_{n+1}]/\CI^{Q}_{\mathrm{loc}}$ is also a finitely generated module over the Noetherian integral domain $R(H)[Q]_{\mathrm{loc}}$; it follows from \cite[p.~110]{AM} that $R(H)[Q]_{\mathrm{loc}}$ is a subring of $R(H)\bra{Q}$, and that the quotient ring $R(H)[Q]_{\mathrm{loc}}[x_{1},\ldots,x_{n},x_{n+1}]/\CI^{Q}_{\mathrm{loc}}$ is a subring of the ring $R(H)\bra{Q}[x_{1},\ldots,x_{n},x_{n+1}]/\CI^{Q}$. 
Therefore, by using \cite[Remark~A.6]{GMSZ} instead of \cite[Proposition~A.3]{GMSZ}, we can prove that the quotient ring $R(H)[Q]_{\mathrm{loc}}[x_{1},\ldots,x_{n},x_{n+1}]/\CI^{Q}_{\mathrm{loc}}$ is isomorphic to the subring $QK_{H}(Fl_{n+1})_{\mathrm{loc}} := K_{H}(Fl_{n+1}) \otimes_{R(H)} R(H)[Q]_{\mathrm{loc}}$ of $QK_{H}(Fl_{n+1}) = K_{H}(Fl_{n+1}) \otimes_{R(H)} R(H)\bra{Q}$.
\end{rem}

\appendix

\section*{Appendices.}

\section{Demazure operators.}
\label{sec:Demazure}

For $i \in I$, the Demazure operator $D_{i}$ on $\BZ[P]$ is given by: 
\begin{equation*}
D_{i} \be^{\nu} := 
\frac{ \be^{\nu - \rho} - \be^{s_{i}(\nu - \rho)} }{ 1-\be^{\alpha_{i}} } \be^{\rho} = 
\frac{ \be^{\nu}-\be^{\alpha_{i}}\be^{s_{i}\nu} }{ 1-\be^{\alpha_{i}} } \qquad 
\text{for $\nu \in P$}; 
\end{equation*}
note that $D_{i}^{2}=D_{i}$, and
\begin{equation*}
D_{i} \be^{\nu} = 
\begin{cases}
\be^{\nu}(1+\be^{\alpha_{i}}+\be^{2\alpha_{i}}+ \cdots +\be^{-\pair{\nu}{\alpha_{i}^{\vee}}\alpha_{i}}) 
  & \text{\rm if $\pair{\nu}{\alpha_{i}^{\vee}} \le 0$}, \\[1.5mm]
0 & \text{\rm if $\pair{\nu}{\alpha_{i}^{\vee}}=1$}, \\[1.5mm]
-\be^{\nu}(\be^{- \alpha_{i}}+ \be^{- 2\alpha_{i}}+\cdots +\be^{(-\pair{\nu}{\alpha_{i}^{\vee}}+1)\alpha_{i}})
  & \text{\rm if $\pair{\nu}{\alpha_{i}^{\vee}} \ge 2$}.
\end{cases}
\end{equation*}
%
%
\begin{lem}[{\cite[Lemma~6.1]{NOS}}] \label{lem:leib1}
For $\nu,\,\mu \in P$, we have
%
%
\begin{equation} \label{eq:leib1}
D_{i}(\be^{\nu}\be^{\mu}) = 
\frac{\be^{\nu}-\be^{s_{i}\nu}}{1-\be^{\alpha_{i}}}\be^{\mu} + 
\be^{s_{i}\nu} D_{i}(\be^{\mu}).
\end{equation}
\end{lem}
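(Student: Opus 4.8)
The plan is to prove the identity by a direct computation starting from the second expression for the Demazure operator, namely $D_{i}\be^{\lambda} = (\be^{\lambda} - \be^{\alpha_{i}}\be^{s_{i}\lambda})/(1 - \be^{\alpha_{i}})$, together with the two elementary facts that $\be^{\nu}\be^{\mu} = \be^{\nu+\mu}$ and that $s_{i}$ acts linearly on $P$, so that $s_{i}(\nu+\mu) = s_{i}\nu + s_{i}\mu$.

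First I would rewrite the left-hand side as
\[
D_{i}(\be^{\nu}\be^{\mu}) = D_{i}(\be^{\nu+\mu}) = \frac{\be^{\nu+\mu} - \be^{\alpha_{i}}\be^{s_{i}\nu}\be^{s_{i}\mu}}{1 - \be^{\alpha_{i}}}.
\]
Next I would expand the two summands on the right-hand side of \eqref{eq:leib1} over the common denominator $1 - \be^{\alpha_{i}}$:
\[
\frac{\be^{\nu} - \be^{s_{i}\nu}}{1 - \be^{\alpha_{i}}}\be^{\mu} = \frac{\be^{\nu+\mu} - \be^{s_{i}\nu+\mu}}{1 - \be^{\alpha_{i}}}, \qquad \be^{s_{i}\nu}D_{i}(\be^{\mu}) = \frac{\be^{s_{i}\nu+\mu} - \be^{\alpha_{i}}\be^{s_{i}\nu}\be^{s_{i}\mu}}{1 - \be^{\alpha_{i}}}.
\]
Adding these two, the terms $\pm\be^{s_{i}\nu+\mu}$ cancel, and what remains is exactly the expression obtained above for the left-hand side. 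This establishes \eqref{eq:leib1}.

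The computation carries no genuine obstacle; the only point deserving a word of care is that each $D_{i}\be^{\lambda}$ a priori lies in the localization of $\BZ[P]$ obtained by inverting $1 - \be^{\alpha_{i}}$, so one should observe that after clearing this common denominator all the manipulations above take place inside $\BZ[P]$ itself (equivalently, one checks equality of numerators after multiplying through by $1 - \be^{\alpha_{i}}$), while the resulting element does land in $\BZ[P]$ by the explicit case-by-case formula for $D_{i}\be^{\nu}$ recorded just above the lemma. One could alternatively run the same computation using the first displayed formula for $D_{i}$, inserting and cancelling the shift by $\rho$, but the second form keeps the bookkeeping minimal.
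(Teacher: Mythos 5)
Your computation is correct and complete: rewriting $D_{i}(\be^{\nu}\be^{\mu})=D_{i}(\be^{\nu+\mu})$ via the second displayed formula for $D_i$, putting both terms of the right-hand side over the common denominator $1-\be^{\alpha_{i}}$, and cancelling the terms $\pm\be^{s_{i}\nu+\mu}$ is exactly the standard verification of this twisted Leibniz rule, and your remark that $(\be^{\nu}-\be^{s_{i}\nu})/(1-\be^{\alpha_{i}})$ lies in $\BZ[P]$ settles the only point needing care. Note that the paper itself gives no proof of this lemma (it simply cites \cite[Lemma~6.1]{NOS}), so your direct argument is precisely the expected one.
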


Let $\BH_{0}$ denote the nil-DAHA, which is the $\BZ[\q^{\pm 1}]$-algebra 
generated by $\sT_{i}$, $i \in I_{\af}$, and $\sX^{\nu}$, $\nu \in P$, 
with the defining relations that 
\begin{equation}
  \underbrace{\sT_{i}\sT_{j}\sT_{i} \cdots \cdots}_{\text{$m_{ij}$ factors}} = 
  \underbrace{\sT_{j}\sT_{i}\sT_{j} \cdots \cdots}_{\text{$m_{ij}$ factors}} 
  \qquad \text{for $i,j \in I_{\af}$ with $i \ne j$}, \label{eq:R1}
\end{equation}
where $m_{ij}$ is the order of $s_{i}s_{j}$ (if it is infinite, then 
this relation is omitted), and 
\begin{align}
& \sT_{i}(\sT_{i}+1)=0 \qquad \text{for $i \in I_{\af}$}, \label{eq:R2} \\
& \sX^{0}=1, \quad \sX^{\delta}=\q, \label{eq:R3} \\
& \sX^{\mu}\sX^{\nu}=\sX^{\mu+\nu} \qquad \text{for $\mu,\nu \in P$}, \label{eq:R4} \\
& \sT_{i}\sX^{\nu} = \sX^{s_{i}\nu}\sT_{i} - \frac{\sX^{\nu}-\sX^{s_{i}\nu}}{1-\sX^{\alpha_{i}}}
  \qquad \text{for $\nu \in P$ and $i \in I_{\af}$}. \label{eq:R5}
\end{align}
We set $\sD_{i}:=1+\sT_{i}$ for $i \in I_{\af}$, and also call it the Demazure operator. 
We know from \cite[Theorem 6.5]{KNS} (see also \cite[Proposition~3.4]{KNOS}) that 
there exists a left $\BH_{0}$-action on $\Kr$ 
uniquely determined by: 
\begin{align}
& \q \cdot [\CO_{\QG(t_{\xi})}(\lambda)] = q^{-1} [\CO_{\QG(t_{\xi})}(\lambda)], \label{eq:H1} \\
& \sX^{\nu} \cdot [\CO_{\QG(t_{\xi})}(\lambda)] = \be^{-\nu} [\CO_{\QG(t_{\xi})}(\lambda)], \label{eq:H2} \\
& \sD_{i} \cdot [\CO_{\QG(t_{\xi})}(\lambda)] = [\CO_{\QG(t_{\xi})}(\lambda)], \label{eq:H3} \\
& \sD_{0} \cdot [\CO_{\QG(t_{\xi})}(\lambda)] = [\CO_{\QG(s_{0}t_{\xi})}(\lambda)] \label{eq:H4}
\end{align}
for $i \in I$, $\xi \in Q^{\vee}$, and $\nu,\lambda \in P$. 
%
%
\begin{lem} \label{lem:leib2}
Let $i \in I$, and $\xi \in Q^{\vee}$, $\nu,\lambda \in P$. 
Then, in $\Kr$, we have 
%
%
\begin{equation} \label{eq:leib2}
\begin{split}
\sD_{i} \cdot (\be^{\nu} [\CO_{\QG(t_{\xi})}(\lambda)])
& = \frac{\be^{\nu}-\be^{s_{i}\nu}}{1-\be^{\alpha_{i}}}[\CO_{\QG(t_{\xi})}(\lambda)] + 
\be^{s_{i}\nu} [\CO_{\QG(t_{\xi})}(\lambda)] \\
& = D_{i}(\be^{\nu})[\CO_{\QG(t_{\xi})}(\lambda)].
\end{split}
\end{equation}
\end{lem}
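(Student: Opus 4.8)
The plan is to prove \eqref{eq:leib2} by a direct formal computation inside the nil-DAHA $\BH_{0}$ acting on $\Kr$, using only the commutation relation \eqref{eq:R5}, the fact $\sD_{i}=1+\sT_{i}$, and the action formulas \eqref{eq:H2} and \eqref{eq:H3}. First I would rewrite $\be^{\nu}[\CO_{\QG(t_{\xi})}(\lambda)]$ as $\sX^{-\nu}\cdot[\CO_{\QG(t_{\xi})}(\lambda)]$ by \eqref{eq:H2}; since $\nu$ is a fixed weight, $\sX^{-\nu}$ is a genuine element of $\BH_{0}$, so no completion is involved.

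Next I would move $\sD_{i}$ past $\sX^{-\nu}$ in $\BH_{0}$. Writing $\sD_{i}=1+\sT_{i}$ and applying \eqref{eq:R5} with $\nu$ replaced by $-\nu$ gives
\begin{align*}
\sD_{i}\sX^{-\nu}
&=\sX^{-\nu}+\sT_{i}\sX^{-\nu}
 =\sX^{-\nu}+\sX^{-s_{i}\nu}\sT_{i}-\frac{\sX^{-\nu}-\sX^{-s_{i}\nu}}{1-\sX^{\alpha_{i}}} \\
&=\sX^{-\nu}-\sX^{-s_{i}\nu}-\frac{\sX^{-\nu}-\sX^{-s_{i}\nu}}{1-\sX^{\alpha_{i}}}+\sX^{-s_{i}\nu}\sD_{i},
\end{align*}
where in the last line I have substituted $\sT_{i}=\sD_{i}-1$ so as to push the Demazure operator to the right. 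Here $\tfrac{\sX^{-\nu}-\sX^{-s_{i}\nu}}{1-\sX^{\alpha_{i}}}$ is understood as an honest element of the commutative subalgebra $\BZ[\q^{\pm1}][P]\subset\BH_{0}$: since $-\nu-(-s_{i}\nu)=-\pair{\nu}{\alpha_{i}^{\vee}}\alpha_{i}$, the numerator is divisible in $\BZ[\q^{\pm1}][P]$ by $1-\sX^{\alpha_{i}}$, so this is a Laurent polynomial in $\sX^{\alpha_{i}}$, acting diagonally via \eqref{eq:H2}.

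Then I would apply the displayed operator identity to $[\CO_{\QG(t_{\xi})}(\lambda)]$: by \eqref{eq:H3} the trailing $\sD_{i}$ acts as the identity, so the last summand contributes $\sX^{-s_{i}\nu}\cdot[\CO_{\QG(t_{\xi})}(\lambda)]=\be^{s_{i}\nu}[\CO_{\QG(t_{\xi})}(\lambda)]$, which cancels $-\sX^{-s_{i}\nu}\cdot[\CO_{\QG(t_{\xi})}(\lambda)]$; the remaining two terms are evaluated by \eqref{eq:H2}, yielding
\begin{equation*}
\sD_{i}\cdot(\be^{\nu}[\CO_{\QG(t_{\xi})}(\lambda)])
=\be^{\nu}[\CO_{\QG(t_{\xi})}(\lambda)]-\frac{\be^{\nu}-\be^{s_{i}\nu}}{1-\be^{-\alpha_{i}}}[\CO_{\QG(t_{\xi})}(\lambda)].
\end{equation*}
It then remains to verify the elementary identity $\be^{\nu}-\tfrac{\be^{\nu}-\be^{s_{i}\nu}}{1-\be^{-\alpha_{i}}}=\tfrac{\be^{\nu}-\be^{s_{i}\nu}}{1-\be^{\alpha_{i}}}+\be^{s_{i}\nu}$ in $\BZ[P]$ (multiply both sides by $-\be^{\alpha_{i}}$ and use $s_{i}\nu=\nu-\pair{\nu}{\alpha_{i}^{\vee}}\alpha_{i}$), which gives the first equality of \eqref{eq:leib2}; the second equality is immediate from the definition of $D_{i}$, equivalently it is \eqref{eq:leib1} specialized at $\mu=0$. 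The argument is entirely formal; the only point requiring care is the bookkeeping of the sign convention $\sX^{\mu}\cdot[\CO_{\QG(t_{\xi})}(\lambda)]=\be^{-\mu}[\CO_{\QG(t_{\xi})}(\lambda)]$, so I do not anticipate a genuine obstacle.
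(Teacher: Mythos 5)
Your proposal is correct and follows essentially the same route as the paper's proof: rewrite $\be^{\nu}[\CO_{\QG(t_{\xi})}(\lambda)]$ as $\sX^{-\nu}\cdot[\CO_{\QG(t_{\xi})}(\lambda)]$ via \eqref{eq:H2}, commute $\sT_{i}$ past $\sX^{-\nu}$ using \eqref{eq:R5}, kill the $\sT_{i}$-term by \eqref{eq:H3}, and finish with elementary algebra in the group algebra. The only cosmetic difference is that you apply the substitution $\sX^{\mu}\mapsto\be^{-\mu}$ before simplifying the rational expression, whereas the paper simplifies $1-\tfrac{1}{1-\sX^{\alpha_{i}}}=\tfrac{1}{1-\sX^{-\alpha_{i}}}$ first; the computations are equivalent.
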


\begin{proof}
In this proof, we omit $\cdot$ to denote the action of $\BH_{0}$. 
We compute as: 
\begin{align*}
\sD_{i} (\be^{\nu} [\CO_{\QG(t_{\xi})}(\lambda)])
 & = \sD_{i} \sX^{-\nu}  [\CO_{\QG(t_{\xi})}(\lambda)] \qquad \text{by \eqref{eq:H2}} \\
 & = (1+\sT_{i})  \sX^{-\nu}  [\CO_{\QG(t_{\xi})}(\lambda)] \\
 & = \sX^{-\nu}  [\CO_{\QG(t_{\xi})}(\lambda)] + \sT_{i}  \sX^{-\nu}  [\CO_{\QG(t_{\xi})}(\lambda)]. 
\end{align*}
Here, by \eqref{eq:R5}, 
\begin{align*}
& \sT_{i}\sX^{-\nu} [\CO_{\QG(t_{\xi})}(\lambda)] \\
& = \left(\sX^{-s_{i}\nu} \sT_{i} - \frac{\sX^{-\nu}-\sX^{-s_{i}\nu}}{1-\sX^{\alpha_{i}}}\right)
    [\CO_{\QG(t_{\xi})}(\lambda)] \\
& = \sX^{-s_{i}\nu} \sT_{i} [\CO_{\QG(t_{\xi})}(\lambda)] - 
    \frac{\sX^{-\nu}-\sX^{-s_{i}\nu}}{1-\sX^{\alpha_{i}}} [\CO_{\QG(t_{\xi})}(\lambda)]. 
\end{align*}
Also, it follows that 
\begin{align*}
\sT_{i} [\CO_{\QG(t_{\xi})}(\lambda)] 
 & = (\sD_{i}-1)[\CO_{\QG(t_{\xi})}(\lambda)] 
   = \sD_{i}[\CO_{\QG(t_{\xi})}(\lambda)] - [\CO_{\QG(t_{\xi})}(\lambda)] \\
 & = [\CO_{\QG(t_{\xi})}(\lambda)] - [\CO_{\QG(t_{\xi})}(\lambda)] \qquad \text{by \eqref{eq:H3}} \\
 & = 0.
\end{align*}
Therefore, we see that 
\begin{align*}
\sT_{i}\sX^{-\nu} [\CO_{\QG(t_{\xi})}(\lambda)] & = 
- \frac{\sX^{-\nu}-\sX^{-s_{i}\nu}}{1-\sX^{\alpha_{i}}} [\CO_{\QG(t_{\xi})}(\lambda)], 
\end{align*}
and hence 
\begin{align*}
& \sD_{i}(\be^{\nu} [\CO_{\QG(t_{\xi})}(\lambda)])
  = \sX^{-\nu} [\CO_{\QG(t_{\xi})}(\lambda)] - 
    \frac{\sX^{-\nu}-\sX^{-s_{i}\nu}}{1-\sX^{\alpha_{i}}} [\CO_{\QG(t_{\xi})}(\lambda)] \\
& = \left(\sX^{-\nu}-\sX^{-s_{i}\nu} - 
    \frac{\sX^{-\nu}-\sX^{-s_{i}\nu}}{1-\sX^{\alpha_{i}}} \right) [\CO_{\QG(t_{\xi})}(\lambda)] 
    + \sX^{-s_{i}\nu}[\CO_{\QG(t_{\xi})}(\lambda)] \\
& = (\sX^{-\nu}-\sX^{-s_{i}\nu}) \left(1-\frac{1}{1-\sX^{\alpha_{i}}}\right)
    [\CO_{\QG(t_{\xi})}(\lambda)] + \sX^{-s_{i}\nu}[\CO_{\QG(t_{\xi})}(\lambda)] \\
& = (\sX^{-\nu}-\sX^{-s_{i}\nu}) \left( \frac{1}{1-\sX^{-\alpha_{i}}}\right)
    [\CO_{\QG(t_{\xi})}(\lambda)] + \sX^{-s_{i}\nu}[\CO_{\QG(t_{\xi})}(\lambda)] \\
& = \frac{\be^{\nu}-\be^{s_{i}\nu}}{1-\be^{\alpha_{i}}}[\CO_{\QG(t_{\xi})}(\lambda)] + 
    \be^{s_{i}\nu} [\CO_{\QG(t_{\xi})}(\lambda)] \qquad \text{by \eqref{eq:H2}}. 
\end{align*}
Thus we have verified the first equality. 
The second equality follows immediately from the definition of 
the Demazure operator $D_{i}$. This proves the lemma. 
\end{proof}
%
%
\begin{prop}[{see, e.g., \cite[(3.19) and (3.20)]{KNOS}}] \label{prop:demop}
Let $x \in W_{\af}$, and write it as $x = wt_{\xi}$, with 
$w \in W$ and $\xi \in Q^{\vee}$. For $i \in I$, we have 
\begin{equation} \label{eq:dem-op1}
\sD_{i} \cdot [\CO_{\QG(x)}] = 
\begin{cases}
[\CO_{\QG(s_{i}x)}] & \text{\rm if $s_{i}w < w$}, \\
[\CO_{\QG(x)}] & \text{\rm if $s_{i}w > w$}.
\end{cases}
\end{equation}
\end{prop}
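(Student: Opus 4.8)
I would deduce \eqref{eq:dem-op1} from the geometric realization of the nil-DAHA action on $\Kr$ established in \cite[Theorem~6.5]{KNS}; indeed the formula is recorded in exactly this shape as \cite[(3.19) and (3.20)]{KNOS}, so one legitimate option is simply to cite it. To give the argument: for $i \in I_{\af}$ the Demazure operator $\sD_{i} = 1 + \sT_{i}$ is realized as the push--pull operator $p_{i}^{\ast} \circ (p_{i})_{\ast}$ attached to the $\mathbb{P}^{1}$-fibration $p_{i}$ from $\QGr$ onto the corresponding partial semi-infinite flag ind-scheme (the $i$-th affine minimal parabolic quotient). The first step is the general dichotomy for $x \in W_{\af}$: either the Iwahori-orbit closure $\QG(x)$ is already saturated along the fibre direction of $p_{i}$, in which case $p_{i}^{\ast}(p_{i})_{\ast}[\CO_{\QG(x)}] = [\CO_{\QG(x)}]$; or it meets a general fibre of $p_{i}$ in a point, in which case $p_{i}^{-1}(p_{i}(\QG(x))) = \QG(s_{i}x)$ and the standard $K$-theoretic push--pull computation for structure sheaves along a $\mathbb{P}^{1}$-bundle (the semi-infinite analogue of the finite-dimensional Demazure computation, using vanishing of the relevant higher direct images so that no correction terms appear) gives $p_{i}^{\ast}(p_{i})_{\ast}[\CO_{\QG(x)}] = [\CO_{\QG(s_{i}x)}]$. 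These two cases are separated by the semi-infinite Bruhat order on $W_{\af}$: the first occurs when $s_{i}x$ is larger than $x$ (equivalently $\ell^{\infty/2}(s_{i}x) > \ell^{\infty/2}(x)$ for the semi-infinite length $\ell^{\infty/2}$) and the second when it is smaller.

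\textbf{Specialization to $i \in I$.} The second step is to read off what this says for $i \neq 0$. Writing $x = wt_{\xi}$ with $w \in W$ and $\xi \in Q^{\vee}$, one has $s_{i}x = (s_{i}w)t_{\xi}$, so the translation parts agree and the semi-infinite length of $x$ and of $s_{i}x$ differ only through their finite parts; hence the comparison of $x$ and $s_{i}x$ in the semi-infinite Bruhat order is governed entirely by the comparison of $w$ and $s_{i}w$ in the ordinary Bruhat order on $W$. Therefore the first case of the dichotomy happens precisely when $s_{i}w > w$ and the second precisely when $s_{i}w < w$, which is exactly \eqref{eq:dem-op1}. Two consistency checks are worth recording: when $w = e$ one has $s_{i} > e$ for all $i \in I$, so \eqref{eq:dem-op1} reduces to $\sD_{i}[\CO_{\QG(t_{\xi})}] = [\CO_{\QG(t_{\xi})}]$, which is exactly \eqref{eq:H3} with $\lambda = 0$; and the idempotency $\sD_{i}^{2} = \sD_{i}$ forced by \eqref{eq:R2} (since $(1+\sT_{i})^{2} = 1 + 2\sT_{i} + \sT_{i}^{2} = 1 + \sT_{i}$) is compatible with \eqref{eq:dem-op1}, because applying $\sD_{i}$ once more to $[\CO_{\QG(s_{i}x)}]$ in the case $s_{i}w < w$ lands in the first case (now $s_{i}(s_{i}w) = w > s_{i}w$) and returns $[\CO_{\QG(s_{i}x)}]$.

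\textbf{The main obstacle.} The substantive content is entirely in the first step: checking, for the ind-scheme $\QGr$ and the Iwahori-orbit closures $\QG(x)$, that the $\mathbb{P}^{1}$-bundle push--pull really sends $[\CO_{\QG(x)}]$ to $[\CO_{\QG(s_{i}x)}]$ (respectively to itself) with no lower-order corrections — in particular the vanishing of higher direct images and the precise identification of $p_{i}(\QG(x))$ and of the fibrewise intersections. This is a geometric statement about semi-infinite Schubert varieties, and it is precisely what is carried out in \cite{KNS}; in the present paper it is therefore simply invoked. If one instead insisted on arguing purely from the algebraic characterization \eqref{eq:H1}--\eqref{eq:H4}, the obstacle would be that those relations only pin down $\sD_{i}$ on the twisted translation classes $[\CO_{\QG(t_{\xi})}(\lambda)]$; extending the computation to an arbitrary class $[\CO_{\QG(wt_{\xi})}]$ requires knowing that such a class is obtained from translation classes under the $\BH_{0}$-action (through $\sD_{0}$ feeding finite parts into the translation part), which is again information of geometric origin.
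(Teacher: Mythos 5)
Your proposal matches the paper's treatment: the paper gives no independent proof of Proposition~\ref{prop:demop} and simply invokes \cite[(3.19) and (3.20)]{KNOS}, which is exactly your primary route, and your sketch of the push--pull mechanism behind that formula (together with the reduction of the $i \in I$ case to the comparison of $w$ and $s_{i}w$, and the consistency checks with \eqref{eq:H3} and $\sD_{i}^{2}=\sD_{i}$) is accurate. Nothing further is needed.
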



\section{Some properties of the defining ideals.}
\label{sec:B}

In this appendix, we work in the algebra
$\BZ\bra{ Q }_n[x_i,(1-y_i)^{\pm 1} \mid i=1,\ldots,n ]$, 
which is identified with 
$\BZ\bra{ Q }_n[x_i \mid i = 1,\ldots,n] \otimes R(H)$, 
with $\BZ\bra{ Q }_n := \BZ\bra{ Q_{1}, \ldots, Q_n }$, 
where the algebra  $\BZ[(1-y_i)^{\pm 1} \mid i=1,\ldots,n ]$ is 
identified with the representation ring $R(H)$ of 
the maximal torus $H$ of $SL_{n}(\BC)$ (in type $A_{n-1}$) 
consisting of the diagonal matrices, via $y_i=1-\be^{-\eps_i}$. 
Let $e_p^{(k)}(x) := e_p(x_1, \ldots, x_k)$ be the $p$-th elementary symmetric polynomial in the variables $x_1, \ldots, x_k$ for $0 \leq p \leq k \leq n$, 
and set $e_{p_1 \cdots p_{n-1}}(x) := e^{(1)}_{p_1}(x) \cdots e^{(n-1)}_{p_{n-1}}(x)$ for $0 \leq p_i \leq i$, $1 \leq i \leq n-1$. 
Also, we define
\begin{equation*}
F_p^{(k)}(x) := \sum_{\substack{I \subset [k] \\ |I|=p}} \prod_{i \in I}(1-x_i)
\prod_{\substack{i \in I \\ i+1 \notin I}}(1-Q_i),
\end{equation*}
\begin{equation*}
\ha{E}_p^{(k)}(x) := \sum_{i=0}^p (-1)^i \binom{k-i}{p-i} F^{(k)}_i(x)
\end{equation*}
for $0 \leq p \leq k \leq n$, and set $\ha{E}_{p_1 \cdots p_{n-1}}(x) := \ha{E}^{(1)}_{p_1}(x) \cdots \ha{E}^{(n-1)}_{p_{n-1}}(x)$ for $0 \leq p_i \leq i$, $1 \leq i \leq n-1$. 
Then we define the quantization map (with respect to the $x$-variables) 
\begin{equation*}
\Qq : \BZ\bra{ Q }_n[x_i \mid i=1,\ldots,n ] \otimes R(H) 
 \rightarrow 
\BZ\bra{ Q }_n[x_i \mid i=1,2,\ldots ] \otimes R(H)
\end{equation*}
as a $\BZ\bra{ Q }_n \otimes R(H)$-module homomorphism 
satisfying the condition: 
\begin{equation*}
\Qq(e_{p_1 \cdots p_{n-1}}(x)) =
\ha{E}_{p_1 \cdots p_{n-1}}(x),  \quad  0 \leq p_i \leq i, 1 \leq i \leq n-1, 
\end{equation*} 
as in \cite[Section~3]{LM}, where $\BZ\bra{ Q }_n \otimes R(H)$ is 
identified with $\BZ\bra{ Q }_n[(1-y_i)^{\pm 1} \mid i=1,\ldots,n ]$; 
in this appendix, 
we mainly follow the notation of \cite{LM}.
Here note that the quantization map in \cite[Section~3]{LM} 
is defined 
on the module $L_n^Q := L_n \otimes \BZ\bra{ Q }_{n-1}$, 
where $L_n := \langle x_1^{i_1} \cdots x_{n-1}^{i_{n-1}} \mid 0\leq i_j \leq n-j \rangle_{\BZ}$. 
By taking the inductive limit $n \rightarrow \infty$, 
the module $L_n^Q$ goes to 
$P_{\infty}^Q:=\BZ\bra{ Q_1,Q_2,Q_3, \ldots }[x_1,x_2,x_3,\ldots ]$, and the quantization map extends to a 
$\BZ\bra{ Q_1,Q_2,Q_3, \ldots }\otimes R(H)$-module homomorphism $\Qq_{\infty}$ 
defined on the algebra 
$P_{\infty}^Q \otimes R(H)$. 
The quantization map $\Qq$ we use here is given 
by the restriction of the map $\Qq_{\infty}$. 

In the following, we use the abbreviations such as: 
\begin{equation*}
\BZ[x] := \BZ[x_i \mid i = 1, \ldots ,n],
\end{equation*}
\begin{equation*}
\BZ[x,(1-y)^{\pm 1}]:= \BZ[x_i,(1-y_i)^{\pm 1} \mid i=1,\ldots ,n],
\end{equation*}
\begin{equation*}
\BZ\bra{ Q }_{m}[x] := \BZ\bra{Q_1, \ldots, Q_{m}}[x_i \mid i = 1, \ldots ,n],
\end{equation*}
\begin{equation*}
\BZ\bra{ Q }_m[(1-y)^{\pm 1}]:= \BZ\bra{Q_1, \ldots, Q_m}[(1-y_i)^{\pm 1} \mid i=1,\ldots ,n],
\end{equation*}
\begin{equation*}
\BZ\bra{ Q }_m[x,(1-y)^{\pm 1}]:= \BZ\bra{Q_1, \ldots, Q_m}[x_i, (1-y_i)^{\pm 1} \mid i=1,\ldots ,n],
\end{equation*}
and 
\begin{equation*}
e_i(1-x):=e_i(1-x_1,\ldots,1-x_n), \quad
e_i((1-y)^{-1}):=e_i((1-y_1)^{-1},\ldots,(1-y_n)^{-1}).
\end{equation*}
Define an ideal $I_n$ of 
$\BZ\bra{ Q }_n[x,(1-y)^{\pm 1}] = \BZ\bra{ Q }_n[x] \otimes R(H)$ 
as the one generated by the polynomials $e_i(1-x)-e_i((1-y)^{-1})$, 
$i=1,\ldots,n$. 
Also, define an ideal $\tilde{I}_n^Q$ of 
$P_{\infty}^Q\otimes R(H)$ as the one generated by the polynomials 
$F^{(n)}_i(x)-e_i((1-y)^{-1})$, $i=1,\ldots,n$. 

We introduce the polynomials $\ff^{(k)}_i$ 
for positive integers $i,k$ with $i\leq k$ by 
\begin{equation*}
\ff_i^{(k)}:=
\begin{cases}
e_i(1-x_1,\ldots,1-x_k)-e_i((1-y_1)^{-1},\ldots,(1-y_k)^{-1}) & \text{if $k \leq n$}, \\ 
e_i(1-x_1,\ldots,1-x_k)-e_i((1-y_1)^{-1},\ldots,(1-y_n)^{-1}) & \text{if $i \leq n < k$}, \\ 
e_i(1-x_1,\ldots,1-x_k) & \text{if $i > n$},
\end{cases}
\end{equation*}
and define $\FFF^{(k)}_i:=\Qq_{\infty}(\ff^{(k)}_i)$. 
Note that the ideal $I_n$ (resp., $\tilde{I}_n^Q$) is generated by $\ff^{(n)}_1,\ldots,\ff^{(n)}_n$ 
(resp., $\FFF^{(n)}_1,\ldots,\FFF^{(n)}_n$). 
Also, we define the polynomials $\ff_{i_1\cdots i_k}$ and $\FFF_{i_1\cdots i_k}$ by 
\begin{equation*}
\ff_{i_1\cdots i_k}:=\ff_{i_1}^{(1)}\ff_{i_2}^{(2)}\cdots \ff_{i_k}^{(k)}, \qquad 
\FFF_{i_1\cdots i_k}:=\FFF_{i_1}^{(1)}\FFF_{i_2}^{(2)}\cdots \FFF_{i_k}^{(k)}
\end{equation*}
for $k=1,2,\ldots$ and $0\leq i_j \leq j$, $1 \leq j \leq k$. 
\begin{lem} \label{lemB1}
The sets $\{ \ff_{i_1\cdots i_k} \mid 0\leq i_j \leq j, \; 1\leq j \leq k \}$ and 
$\{ \FFF_{i_1\cdots i_k} \mid 0\leq i_j \leq j, \; 1\leq j \leq k \}$ are both 
$\BZ\bra{ Q }_{k} \otimes R(H)$-linear bases of $L^Q_{k+1} \otimes R(H)$ for any positive integer $k$. 
\end{lem}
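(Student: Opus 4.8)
## Proof plan for Lemma~\ref{lemB1}

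The plan is to prove the two assertions in parallel, reducing each to a statement about leading terms with respect to a suitable monomial filtration. First I would observe that $L^Q_{k+1} \otimes R(H)$ is a free $\BZ\bra{Q}_k \otimes R(H)$-module with the monomial basis $\{ x_1^{i_1} \cdots x_k^{i_k} \mid 0 \le i_j \le j,\ 1 \le j \le k \}$, which has exactly $(k+1)!$ elements; since the two sets $\{ \ff_{i_1 \cdots i_k} \}$ and $\{ \FFF_{i_1 \cdots i_k} \}$ are also indexed by the same set and hence have the same cardinality, it suffices to show in each case that the family spans (equivalently, that the transition matrix to the monomial basis is invertible over $\BZ\bra{Q}_k \otimes R(H)$). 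I would do this by induction on $k$, using the multiplicative structure $\ff_{i_1 \cdots i_k} = \ff_{i_1 \cdots i_{k-1}} \cdot \ff^{(k)}_{i_k}$ and the analogous one for $\FFF$.

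The key computation is to understand $\ff^{(k)}_i$ and $\FFF^{(k)}_i$ modulo lower-degree-in-$x_k$ terms. For $1 \le i \le k$ (and recalling $k \le n$ in the range relevant here, or using the third clause of the definition when $i>n$), one has $e_i(1-x_1,\ldots,1-x_k) = e_i(1-x_1,\ldots,1-x_{k-1}) + (1-x_k) e_{i-1}(1-x_1,\ldots,1-x_{k-1})$, so as a polynomial in $x_k$ with coefficients in $\BZ[x_1,\ldots,x_{k-1}]$, the class $\ff^{(k)}_i$ has $x_k$-degree exactly $1$ with leading coefficient $-e_{i-1}(1-x_1,\ldots,1-x_{k-1})$ when $i \ge 1$, and $x_k$-degree $0$ when $i = 0$ (where $\ff^{(k)}_0 = \ff^{(k-1)}_0$ is just a constant, namely $0$ in the relevant normalization — more precisely $\ff^{(k)}_0$ should be read as the empty product/$e_0$ difference, which we set to be $1$; I would fix the convention $\ff^{(k)}_0 = \FFF^{(k)}_0 = 1$ at the start). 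Thus the product $\ff^{(1)}_{i_1} \cdots \ff^{(k)}_{i_k}$ has, as its top term in the lexicographic order with $x_k \gg x_{k-1} \gg \cdots \gg x_1$, the monomial $x_k^{\delta_{i_k}} x_{k-1}^{\delta_{i_{k-1}}} \cdots$ where I record which factors contribute a variable; iterating, I would verify that the leading monomials of the $(k+1)!$ products $\ff_{i_1 \cdots i_k}$ are pairwise distinct and exhaust the monomial basis, with leading coefficients equal to $\pm$ products of elementary symmetric polynomials that are invertible after the leading term is stripped. For the quantized family, $\Qq_\infty$ sends $e_{p_1 \cdots p_{k-1}}(x)$ to $\ha{E}_{p_1 \cdots p_{k-1}}(x)$ and is $\BZ\bra{Q}\otimes R(H)$-linear, and the crucial point is that $\Qq_\infty$ preserves the leading monomial up to lower-order (in the $Q$-adic and degree filtration) corrections — concretely $\FFF^{(k)}_i \equiv \ff^{(k)}_i$ modulo the ideal $(Q_1,\ldots,Q_{k-1})$ together with lower-degree terms — so the same leading-term argument applies verbatim.

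The main obstacle I anticipate is bookkeeping the interaction between the quantization map and the degree filtration: $\Qq_\infty$ is defined via its action on the specific basis $\{e_{p_1 \cdots p_{n-1}}(x)\}$ of $L_n$, so expressing a general product $\ff^{(1)}_{i_1} \cdots \ff^{(k)}_{i_k}$ in terms of that basis before applying $\Qq_\infty$, and then checking that the leading term is unaffected, requires care — in particular one must check that $\Qq_\infty$ does not mix degrees in a way that destroys the triangularity. I would handle this by working modulo the augmentation ideal $(Q_1, Q_2, \ldots)$, where $\Qq_\infty$ reduces to the identity by \cite[Section~3]{LM}, establishing the basis property there, and then lifting to the full ring by a Nakayama-type / $Q$-adic completeness argument: a square matrix over $\BZ\bra{Q}_k \otimes R(H)$ whose reduction mod $(Q_1,\ldots,Q_k)$ is invertible is itself invertible, since $\BZ\bra{Q}_k \otimes R(H)$ is complete in the $(Q_1,\ldots,Q_k)$-adic topology. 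This reduces the quantized statement to the unquantized one, which is the clean leading-monomial computation described above.
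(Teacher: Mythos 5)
Your handling of the classical family is essentially the paper's argument: you expand each $\ff_{i_1\cdots i_k}$ as $f_{i_1\cdots i_k}(x)$ plus lower-order corrections with coefficients in $R(H)$ and invoke (or re-derive via leading monomials) the standard fact that the products $f_{i_1\cdots i_k}(x)=e_{i_1}(1-x_1)\cdots e_{i_k}(1-x_1,\ldots,1-x_k)$ form a basis of $L_{k+1}$; together with the equal-cardinality observation this half is fine. The problem is in your treatment of the quantized family. The base ring $\BZ\bra{Q}_k\otimes R(H)$, i.e.\ $\BZ\bra{Q}_k[(1-y_i)^{\pm1}]$, is \emph{not} complete in the $(Q_1,\ldots,Q_k)$-adic topology: its completion is the strictly larger ring $R(H)\bra{Q_1,\ldots,Q_k}$, because an element of $\BZ\bra{Q}_k\otimes R(H)$ must have all its coefficients in a fixed finitely generated $\BZ$-submodule of $R(H)$. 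Concretely, $1+(1-y_1)Q_1$ is congruent to a unit modulo $(Q)$, yet its inverse $\sum_{j\ge 0}(-1)^j(1-y_1)^jQ_1^j$ involves unbounded powers of $(1-y_1)$ and so does not lie in $\BZ\bra{Q}_k\otimes R(H)$. Hence your final step --- ``a matrix invertible mod $(Q)$ over a complete ring is invertible'' --- does not apply; it only shows that $\{\FFF_{i_1\cdots i_k}\}$ becomes a basis after extending scalars to $R(H)\bra{Q}$, which is strictly weaker than the lemma and, moreover, would break down entirely in the setting of Remark~\ref{rem:locfg}, where the same statement is needed over the (even less complete) localization $(\BZ[Q]_{n-1})_{\mathrm{loc}}$.

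The gap is repairable, and the repair is exactly what the paper does. Since $\FFF^{(j)}_i=F^{(j)}_i(x)-e_i((1-y_1)^{-1},\ldots,(1-y_j)^{-1})$, one has
\begin{equation*}
\FFF_{i_1\cdots i_k}=F_{i_1\cdots i_k}(x)+\text{an $R(H)$-linear combination of $F_{j_1\cdots j_k}(x)$ with }\textstyle\sum_a j_a<\sum_a i_a,
\end{equation*}
so the transition matrix from $\{\FFF_{i_1\cdots i_k}\}$ to $\{F_{i_1\cdots i_k}(x)\}$ is unitriangular with entries in $R(H)$ alone (no $Q$-dependence), hence invertible over $R(H)$; combining this with the fact that $\{F_{i_1\cdots i_k}(x)\}$ is a basis of $L^Q_{k+1}$ over $\BZ\bra{Q}_k$ (\cite[Proposition~3.26]{LM}) exhibits the determinant of your matrix $M$ as the product of a unit of $R(H)$ with a unit of $\BZ\bra{Q}_k$, which is a unit of the tensor product. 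You should replace the completeness appeal by this factorization (or by any argument showing the $Q$-dependent corrections enter only through the $F$-to-monomial step, where $(Q)$-adic completeness of $\BZ\bra{Q}_k$ itself is legitimately available).
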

\begin{proof} 
The polynomial $\ff_{i_1\cdots i_k}$ is expanded as:
\begin{equation*}
\ff_{i_1\cdots i_k}= f_{i_1\cdots i_k}(x) + \textrm{an $R(H)$-linear combination of 
$f_{j_1\cdots j_k}(x)$'s with $\sum_{a=1}^k j_a < \sum_{a=1}^k i_a$},
\end{equation*}
where $f_{i_1\cdots i_k}(x):=f_{i_1}^{(1)}(x)\cdots f_{i_k}^{(k)}(x)$, 
$f_i^{(j)}:=e_i(1-x_1,\ldots,1-x_j)$. Since 
the set  $\{ f_{i_1\cdots i_k}(x) \mid 0\leq i_j \leq j, \; 1\leq j \leq k \}$ forms a linear basis of $L_{k+1}$, 
the set  $\{ \ff_{i_1\cdots i_k} \mid 0\leq i_j \leq j, \; 1\leq j \leq k \}$ is a linear basis of 
$L^Q_{k+1} \otimes R(H)$. 
Similarly, we have 
\begin{equation*}
\FFF_{i_1\cdots i_k}= F_{i_1\cdots i_k}(x) + \textrm{an $R(H)$-linear combination of 
$F_{j_1\cdots j_k}(x)$'s with $\sum_{a=1}^k j_a < \sum_{a=1}^k i_a$},
\end{equation*}
where $F_{i_1\cdots i_k}(x):=F_{i_1}^{(1)}(x)\cdots F_{i_k}^{(k)}(x)$. 
Hence the set $\{ \FFF_{i_1\cdots i_k} \mid 0\leq i_j \leq j, \; 1\leq j \leq k \}$ is also 
a linear basis of 
$L^Q_{k+1} \otimes R(H)$ by \cite[Proposition 3.26]{LM}. 
This proves the lemma. 
\end{proof}

The lemma above implies that the sets 
$\bigcup_{k=1}^{\infty} \{ \ff_{i_1\cdots i_k} \mid 0\leq i_j \leq j, \; 1\leq j \leq k \}$ and 
$\bigcup_{k=1}^{\infty} \{ \FFF_{i_1\cdots i_k} \mid 0\leq i_j \leq j, \; 1\leq j \leq k \}$ are both 
$\BZ\bra{ Q_1, Q_2,\ldots  }\otimes R(H)$-linear bases of $P^Q_{\infty}\otimes R(H)$. 
\begin{prop} \label{propB2} 
Let $J_n$ denote the ideal of $P^Q_{\infty}\otimes R(H)$ generated by 
the set
\begin{equation*}
\{ 1-x_{n+1},1-x_{n+2},\ldots,Q_{n},Q_{n+1},\ldots\}.
\end{equation*}
Then, the ideal $I_n$ is mapped to $\tilde{I}_n^Q+J_n$ by the quantization map $\Qq$. 
\end{prop}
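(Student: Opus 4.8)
The plan is to carry out the whole argument inside the ring $P_\infty^Q\otimes R(H)$, exploiting the two module bases of Lemma~\ref{lemB1} — the $\ff$-basis $\{\ff_{i_1\cdots i_k}\}$ and the $\FFF$-basis $\{\FFF_{i_1\cdots i_k}\}$ — together with the observation that $\Qq_\infty$ carries the first onto the second, i.e. $\Qq_\infty(\ff_{i_1\cdots i_k})=\FFF_{i_1\cdots i_k}$ for every admissible multi-index. First I would establish this identity: writing $f^{(j)}_i(x):=e_i(1-x_1,\ldots,1-x_j)=\sum_p(-1)^p\binom{j-p}{i-p}e^{(j)}_p(x)$ as a combination of staircase elementary basis elements and using the binomial relation between $\ha{E}^{(j)}_p(x)$ and $F^{(j)}_p(x)$, one gets $\Qq_\infty\bigl(f^{(1)}_{i_1}(x)\cdots f^{(k)}_{i_k}(x)\bigr)=F^{(1)}_{i_1}(x)\cdots F^{(k)}_{i_k}(x)$; then expanding $\ff_{i_1\cdots i_k}=\prod_j\bigl(f^{(j)}_{i_j}(x)-c^{(j)}_{i_j}\bigr)$ with scalars $c^{(j)}_{i_j}\in R(H)$ and using $\BZ\bra{Q_1,Q_2,\ldots}\otimes R(H)$-linearity of $\Qq_\infty$ yields $\Qq_\infty(\ff_{i_1\cdots i_k})=\prod_j\bigl(F^{(j)}_{i_j}(x)-c^{(j)}_{i_j}\bigr)=\FFF_{i_1\cdots i_k}$. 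In particular $\Qq_\infty$ is a $\BZ\bra{Q_1,Q_2,\ldots}\otimes R(H)$-module automorphism of $P_\infty^Q\otimes R(H)$.

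Next I would describe both ideals in terms of these bases. Let $\ha{I}_n$ denote the ideal of $P_\infty^Q\otimes R(H)$ generated by $\ff^{(n)}_1,\ldots,\ff^{(n)}_n$ together with the generators of $J_n$, so that $I_n\subset\ha{I}_n$ and $\ha{I}_n$ is generated by $I_n$ and $J_n$. Reducing modulo $J_n$ — that is, specializing $x_{n+1}=x_{n+2}=\cdots=1$ and $Q_n=Q_{n+1}=\cdots=0$ — one checks directly that, for $j\ge n$, each $\ff^{(j)}_i$ maps either to $\ff^{(n)}_i$ (if $1\le i\le n$) or to $0$ (if $i>n$), and likewise each $\FFF^{(j)}_i$ maps to $\FFF^{(n)}_i$ or to $0$; hence $\ff^{(j)}_i\in\ha{I}_n$ and $\FFF^{(j)}_i\in\tilde{I}_n^Q+J_n$ for all $j\ge n$, $i\ge 1$. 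Since both are ideals, it follows that every ``high'' basis vector — those $\ff_{i_1\cdots i_k}$ (resp.\ $\FFF_{i_1\cdots i_k}$) with $i_j\ne 0$ for some $j\ge n$ — lies in $\ha{I}_n$ (resp.\ in $\tilde{I}_n^Q+J_n$). On the other hand, by Lemma~\ref{lemB1} (using the classical basis of $L_n$ and the basis of $L^Q_n$ from \cite[Proposition~3.26]{LM}), the remaining ``low'' basis vectors — with $i_j=0$ for all $j\ge n$ — reduce to $\BZ\bra{Q_1,\ldots,Q_{n-1}}\otimes R(H)$-module bases of $P_\infty^Q\otimes R(H)/\ha{I}_n$ and of $P_\infty^Q\otimes R(H)/(\tilde{I}_n^Q+J_n)$ respectively.

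Combining these gives a uniform membership criterion: an element $\sum_{\vec i}c_{\vec i}\,\ff_{\vec i}$ (resp.\ $\sum_{\vec i}c_{\vec i}\,\FFF_{\vec i}$) lies in $\ha{I}_n$ (resp.\ in $\tilde{I}_n^Q+J_n$) if and only if each ``low'' coefficient $c_{\vec i}$ lies in $(Q_n,Q_{n+1},\ldots)\cdot\bigl(\BZ\bra{Q_1,Q_2,\ldots}\otimes R(H)\bigr)$. Since $\Qq_\infty(\sum c_{\vec i}\ff_{\vec i})=\sum c_{\vec i}\FFF_{\vec i}$, the two criteria coincide, so $\Qq_\infty$ restricts to a bijection $\ha{I}_n\xrightarrow{\ \sim\ }\tilde{I}_n^Q+J_n$; in particular $\Qq(I_n)\subset\Qq_\infty(\ha{I}_n)=\tilde{I}_n^Q+J_n$. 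Finally, as $\FFF^{(n)}_l=\Qq(\ff^{(n)}_l)\in\Qq(I_n)$, the ideal generated by $\Qq(I_n)$ contains $\tilde{I}_n^Q$; to obtain all of $\tilde{I}_n^Q+J_n$ one shows $J_n$ is contained in this ideal by computing $\Qq$ on suitable elements of $I_n$ — using the explicit evaluation $\Qq(x_j)=1-(1-x_j)(1-Q_j)$, so that $\Qq$ applied to high powers of the $x$'s times $\ff^{(n)}_i$ produces, modulo $\tilde{I}_n^Q$, the elements $(1-x_{n+j})(1-Q_{n+j})$ and $Q_{n+j}$ — together with the fact that each $1-Q_{n+j}$ is a unit in $\BZ\bra{Q_1,Q_2,\ldots}$.

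The step I expect to be the main obstacle is precisely this last one: because $\Qq$ is only a module homomorphism, $\Qq(g\,\ff^{(n)}_i)\ne g\,\FFF^{(n)}_i$, so one must control how multiplication by high powers of the $x$'s inside $I_n$, followed by $\Qq$, ``leaks'' into the higher variables $x_{n+1},x_{n+2},\ldots$ and the Novikov variables $Q_n,Q_{n+1},\ldots$; organizing this bookkeeping through the $\ff$/$\FFF$-basis expansions and the formula $\Qq(x_j)=1-(1-x_j)(1-Q_j)$ is the crux. A secondary point needing care is the identity $\Qq_\infty(\ff_{\vec i})=\FFF_{\vec i}$ itself, which relies on the multiplicativity of the quantization map on level-factored elementary symmetric polynomials established in \cite{LM}, and on matching the conventions ($1-Q_{n+1}=1$, etc.).
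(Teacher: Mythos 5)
Your first three paragraphs are correct and are essentially the paper's own proof: everything rests on Lemma~\ref{lemB1}, on the identity $\Qq_\infty(\ff_{i_1\cdots i_k})=\FFF_{i_1\cdots i_k}$, on the specializations of $\ff^{(j)}_i$ and $\FFF^{(j)}_i$ at $x_{n+1}=x_{n+2}=\cdots=1$, $Q_n=Q_{n+1}=\cdots=0$, and on the freeness of the quotient modulo $\tilde{I}_n+J_n$ as in \eqref{eq:B2-1}. If anything, your bookkeeping is sharper than the paper's: you actually verify $\Qq_\infty(\ff_{i_1\cdots i_k})=\FFF_{i_1\cdots i_k}$ (the paper uses it without comment), you check the specialization of $\FFF^{(j)}_i$ directly from the definition of $F^{(k)}_p$ instead of quoting the recurrence of \cite{LM}, and your criterion ``the coefficients of the low basis vectors lie in $(Q_n,Q_{n+1},\ldots)$'' is the precise form of the paper's looser assertion that every element of $I_n$ is a combination of $\ff_{i_1\cdots i_N}$ with $(i_{n+1},\ldots,i_N)\neq(0,\ldots,0)$. (For the direction you need, note that an element of $I_n$ involves only $Q_1,\ldots,Q_n$, so ``killed by the specialization'' for its low coefficients means exactly ``divisible by $Q_n$'', and $Q_n\in J_n$; together with the membership of the high $\FFF$-basis vectors in $\tilde{I}_n^Q+J_n$ this already gives $\Qq(I_n)\subseteq\tilde{I}_n^Q+J_n$.)

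The genuine problem is your final step. The proposition's phrase ``is mapped to'' means only the containment $\Qq(I_n)\subseteq\tilde{I}_n^Q+J_n$; that is all the paper proves and all that is needed later (it gives well-definedness of the induced map in Corollary~\ref{cor:fg}, whose surjectivity is obtained separately from the fact that the classes of the $\FFF_{i_1\cdots i_{n-1}}$ span the target). The step you single out as the crux --- showing that the ideal generated by $\Qq(I_n)$ contains $J_n$ --- is therefore unnecessary, and in fact it is false. Since each $\ha{E}^{(k)}_p$ reduces to $e^{(k)}_p$ when all $Q_i$ are set to $0$, the quantization map is the identity modulo the ideal generated by the $Q$-variables; hence setting $Q_1=Q_2=\cdots=0$ sends the ideal generated by $\Qq(I_n)$ into the ideal of $\BZ[x_1,x_2,\ldots]\otimes R(H)$ generated by $\ff^{(n)}_1,\ldots,\ff^{(n)}_n$. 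The quotient by the latter ideal is the polynomial ring in $x_{n+1},x_{n+2},\ldots$ over the nonzero ring $\BZ[x_1,\ldots,x_n,(1-y)^{\pm1}]/(\ff^{(n)}_1,\ldots,\ff^{(n)}_n)$, in which $1-x_{n+1}$ is nonzero; so $1-x_{n+1}\in J_n$ can never lie in the ideal generated by $\Qq(I_n)$. Simply delete that last paragraph: your argument up to the membership criterion already proves the proposition in the sense intended and used by the paper.
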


\begin{proof} 
Let $\tilde{I}_n$ be the ideal of $P^Q_{\infty}\otimes R(H)$ generated by 
$\ff^{(n)}_i,$ $i=1,\ldots,n$. It suffices to prove that 
the ideal $\tilde{I}_n+J_n$ is mapped to $\tilde{I}_n^Q+J_n$ by the quantization map 
$\Qq_{\infty}$. If $k>n,$ then we have
\begin{equation*}
\ff^{(k)}_i |_{x_{n+1}=x_{n+2}=\cdots =x_{k}=1} = 
\begin{cases}
\ff_i^{(n)} & \text{if $i \leq n$}, \\ 
0 & \text{if $i > n$}, 
\end{cases}
\end{equation*}
and hence 
\begin{equation} \label{eq:B2-0}
\ff_{i_1\cdots i_k} |_{x_{n+1}=x_{n+2}=\cdots =x_{k}=1} \in I_n 
\quad \text{ for } \quad (i_{n+1},i_{n+2},\ldots,i_k)\not=(0,0,\ldots,0).
\end{equation}
Here we note that
\begin{equation} \label{eq:B2-1}
(P^Q_{\infty}\otimes R(H))/(\tilde{I}_n+ J_n) \cong 
\BZ\bra{ Q }_{n-1}[x_i,(1-y_i)^{\pm 1} \mid i=1,\ldots,n ]/I_n
\end{equation}
is isomorphic to 
$L_n\otimes \BZ\bra{ Q }_{n-1} \otimes R(H)$ as a $\BZ\bra{ Q }_{n-1} \otimes R(H)$-module. 
Also, Lemma~\ref{lemB1} implies that each element $\alpha$ in $P^Q_{\infty}\otimes R(H)$ 
can be written as: 
\begin{equation*}
\alpha = \sum_{i_1,\ldots, i_N} c_{i_1\cdots i_N}\ff_{i_1\cdots i_N}, \qquad  
c_{i_1\cdots i_N} \in \BZ\bra{ Q_1,Q_2,\ldots  }\otimes R(H),
\end{equation*}
for sufficiently large $N$. 
Since $I_n \subset \tilde{I}_n+J_n$, it follows from \eqref{eq:B2-0} that 
\begin{equation*}
\alpha \equiv \sum_{\substack{i_1,\ldots, i_N \\ i_{n+1}=i_{n+2}=\cdots=i_N=0}}
c_{i_1\cdots i_N}\ff_{i_1\cdots i_N} \mod \tilde{I}_n+J_n.
\end{equation*}
Therefore, we deduce by using Lemma~\ref{lemB1} that 
each element $\alpha$ in $I_n \subset \tilde{I}_n+J_n$ can be written as a linear combination of 
the polynomials $\ff_{i_1\cdots i_N}$ with $(i_{n+1},i_{n+2},\ldots,i_N) \not= (0,0,\ldots,0)$. 
Hence it suffices to show that 
$\Qq( \ff_{i_1\cdots i_k})=\FFF_{i_1\cdots i_k} \in \tilde{I}_n^Q+J_n$ for $k>n$ and 
$(i_{n+1},i_{n+2},\ldots,i_k)\not=(0,0,\ldots,0)$. 
The recurrence relation \cite[(3.7)]{LM} implies that
\begin{equation} \label{eq:B2-2}
\FFF_i^{(j)} |_{ \begin{subarray}{l} x_{n+1}=\cdots =x_k=1 \\ Q_n=Q_{n+1}=\cdots =Q_{k}=0 \end{subarray} } = 
\begin{cases}
\FFF_i^{(n)}|_{Q_n=0} & \text{if $i \leq n$}, \\ 
0 & \text{if $i > n$}
\end{cases}
\end{equation}
for $k\geq j >n$. 
This shows that $\FFF_{i_1\cdots i_k} \in \tilde{I}_n^Q+J_n$ for 
$(i_{n+1},i_{n+2},\ldots,i_k)\not=(0,0,\ldots,0)$, as desired. 
This proves the proposition. 
\end{proof} 

Let $I_n^Q$ denote the ideal of $\BZ\bra{ Q }_{n-1}[x,(1-y)^{\pm 1}]$ generated by the polynomials 
\[ \FFF^{(n)}_1|_{Q_n=0},\ldots,\FFF^{(n)}_n|_{Q_n=0}. \] 
Equation \eqref{eq:B2-2} implies that the image of 
the set $\bigcup_{k=1}^{\infty} \{ \FFF_{i_1\cdots i_k} \mid 0\leq i_j \leq j, \; 1\leq j \leq k \}$ 
in the quotient $(P^Q_{\infty}\otimes R(H))/(\tilde{I}_n^Q+J_n) 
\cong \BZ\bra{ Q }_{n-1}[x,(1-y)^{\pm 1}]/I_n^Q$ coincides with 
that of $\{ \FFF_{i_1\cdots i_{n-1}} \mid 0\leq i_j \leq j, \; 1\leq j \leq n-1 \}$. 
Since the set $\bigcup_{k=1}^{\infty} \{ \FFF_{i_1\cdots i_k} \mid 0\leq i_j \leq j, \; 1\leq j \leq k \}$ 
forms a $\BZ\bra{ Q }_{n-1} \otimes R(H)$-linear basis of $P^Q_{\infty} \otimes R(H)$, 
the image of the set $\{ \FFF_{i_1\cdots i_{n-1}} \mid 0\leq i_j \leq j, \; 1\leq j \leq n-1 \}$ 
generates $\BZ\bra{ Q }_{n-1}[x,(1-y)^{\pm 1}]/I_n^Q$ as a 
$\BZ\bra{ Q }_{n-1}[(1-y)^{\pm 1}]$-module. 
Thus, we obtain the following. 

\begin{cor} \label{cor:fg}
\begin{enu}
\item The quantization map $\Qq$ induces a surjective 
$\BZ\bra{ Q }_{n-1}[(1-y)^{\pm 1}]$-linear map 
from $\BZ\bra{ Q }_{n-1}[x,(1-y)^{\pm 1}]/I_n$ to 
$\BZ\bra{ Q }_{n-1}[x,(1-y)^{\pm 1}]/I_n^Q$.

\item The quotient algebra $\BZ\bra{ Q }_{n-1}[x,(1-y)^{\pm 1}]/I_n^Q$ is 
finitely generated as a module over  $\BZ\bra{ Q }_{n-1}[(1-y)^{\pm 1}]$. 
\end{enu}
\end{cor}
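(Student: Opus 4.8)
The plan is to realize the map of part~(1) as the composite of the quantization map with the canonical projection onto the quotient $(P^Q_{\infty}\otimes R(H))/(\tilde{I}_n^Q+J_n)$, and then to invoke Proposition~\ref{propB2} for the fact that this composite annihilates $I_n$. Concretely, I would restrict $\Qq$ (equivalently, the $\BZ\bra{Q_1,Q_2,\ldots}\otimes R(H)$-linear map $\Qq_{\infty}$, hence in particular a $\BZ\bra{Q}_{n-1}[(1-y)^{\pm1}]$-linear map) to the submodule $\BZ\bra{Q}_{n-1}[x,(1-y)^{\pm1}]=\BZ\bra{Q}_{n-1}[x_1,\ldots,x_n]\otimes R(H)$ of $P^Q_{\infty}\otimes R(H)$, and follow it with the quotient map $\pi\colon P^Q_{\infty}\otimes R(H)\twoheadrightarrow(P^Q_{\infty}\otimes R(H))/(\tilde{I}_n^Q+J_n)$. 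Under the identification $(P^Q_{\infty}\otimes R(H))/(\tilde{I}_n^Q+J_n)\cong\BZ\bra{Q}_{n-1}[x,(1-y)^{\pm1}]/I_n^Q$ recorded just before the statement (which itself rests on \eqref{eq:B2-2}), the composite $\pi\circ\Qq$ becomes a $\BZ\bra{Q}_{n-1}[(1-y)^{\pm1}]$-linear map with values in $\BZ\bra{Q}_{n-1}[x,(1-y)^{\pm1}]/I_n^Q$.

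For part~(1), well-definedness is exactly the content of Proposition~\ref{propB2}: since $\Qq(I_n)\subseteq\tilde{I}_n^Q+J_n=\ker\pi$, the composite $\pi\circ\Qq$ kills $I_n$ and therefore factors through $\BZ\bra{Q}_{n-1}[x,(1-y)^{\pm1}]/I_n$, yielding the desired $\BZ\bra{Q}_{n-1}[(1-y)^{\pm1}]$-linear map $\overline{\Qq}$. Surjectivity then follows from the paragraph preceding the statement: the finitely many classes of $\FFF_{i_1\cdots i_{n-1}}$ with $0\le i_j\le j$, $1\le j\le n-1$, generate $\BZ\bra{Q}_{n-1}[x,(1-y)^{\pm1}]/I_n^Q$ as a module over $\BZ\bra{Q}_{n-1}[(1-y)^{\pm1}]$ (via \eqref{eq:B2-2} together with Lemma~\ref{lemB1}), and each of them equals $\Qq(\ff_{i_1\cdots i_{n-1}})$ with $\ff_{i_1\cdots i_{n-1}}$ lying in the source (it involves only $x_1,\ldots,x_{n-1}$ and no Novikov variables), so these generators lie in the image of $\overline{\Qq}$; $\BZ\bra{Q}_{n-1}[(1-y)^{\pm1}]$-linearity then forces $\overline{\Qq}$ to be onto.

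For part~(2), I would deduce finite generation from part~(1): by \eqref{eq:B2-1} (equivalently, by Lemma~\ref{lemB1} with $k=n-1$) the source $\BZ\bra{Q}_{n-1}[x,(1-y)^{\pm1}]/I_n$ is a free $\BZ\bra{Q}_{n-1}[(1-y)^{\pm1}]$-module of rank $\prod_{j=1}^{n-1}(j+1)=n!$, hence finitely generated; since $\overline{\Qq}$ is a surjective homomorphism of $\BZ\bra{Q}_{n-1}[(1-y)^{\pm1}]$-modules, the target $\BZ\bra{Q}_{n-1}[x,(1-y)^{\pm1}]/I_n^Q$ is finitely generated as well, in fact generated by the $n!$ classes of the $\FFF_{i_1\cdots i_{n-1}}$. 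The only genuinely substantive input is Proposition~\ref{propB2}, which in turn leans on the recurrence \cite[(3.7)]{LM} (used through \eqref{eq:B2-2}) and on the basis statement Lemma~\ref{lemB1} (which invokes \cite[Proposition~3.26]{LM}); granting those, the corollary is essentially a bookkeeping exercise, and the only point that needs care is keeping the type-$A_{n-1}$ index conventions straight ($n$ variables $x_i$ against $n-1$ Novikov variables $Q_i$, and the precise identifications of the quotient rings $(P^Q_{\infty}\otimes R(H))/(\tilde{I}_n+J_n)$ and $(P^Q_{\infty}\otimes R(H))/(\tilde{I}_n^Q+J_n)$).
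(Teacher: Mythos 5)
Your argument is correct and coincides with the paper's own proof: well-definedness comes from Proposition~\ref{propB2} together with the identification $(P^Q_{\infty}\otimes R(H))/(\tilde{I}_n^Q+J_n)\cong \BZ\bra{ Q }_{n-1}[x,(1-y)^{\pm 1}]/I_n^Q$, and surjectivity (hence finite generation) from the fact, established in the paragraph preceding the corollary via \eqref{eq:B2-2} and Lemma~\ref{lemB1}, that the classes of the finitely many $\FFF_{i_1\cdots i_{n-1}}=\Qq(\ff_{i_1\cdots i_{n-1}})$ generate the target as a $\BZ\bra{ Q }_{n-1}[(1-y)^{\pm 1}]$-module. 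The only cosmetic difference is that you deduce part~(2) from part~(1) together with the finite generation (indeed freeness, by \eqref{eq:B2-1}) of the source, whereas the paper reads off part~(2) directly from the same generation statement and then obtains the surjectivity in part~(1) from it.
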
  

\begin{rem} \label{rem:locfg} \mbox{}
\begin{enu}
\item
Since the image of the set $\{ \FFF_{i_1 \cdots i_{n-1}} \mid 0 \leq i_j \leq j, \; 1 \leq j \leq n-1 \}$
in $\BZ\bra{ Q }_{n-1}[x,(1-y)^{\pm 1}]/I_n^Q$ is linearly independent over $\BZ\bra{ Q }_{n-1}[(1-y)^{\pm 1}]$ by Lemma~\ref{lemB1}, 
the above map $\BZ\bra{ Q }_{n-1}[x,(1-y)^{\pm 1}]/I_n \rightarrow
\BZ\bra{ Q }_{n-1}[x,(1-y)^{\pm 1}]/I_n^Q$ induced by $\Qq$ is injective, and hence bijective. 

\item
For $m \in \BZ_{\geq 1}$,  let $\BZ[Q]_m := \BZ[Q_{1}, \ldots, Q_m]$ be the ring of polynomials in the variables $Q_1, \ldots, Q_m$ with coefficients in $\BZ$, and $( Q )_m := ( Q_1, \ldots, Q_m )$ the ideal in $\BZ[Q]_m$ generated by $Q_1, \ldots, Q_m$. 
Also, let $(\BZ[Q]_m)_{\mathrm{loc}}$ denote the localization of the ring $\BZ[Q]_m$ with respect to the multiplicative set $1 + ( Q )_m$. 
Then, the same arguments as above show that the assertions of Corollary~\ref{cor:fg} still hold, with $\BZ\bra{ Q }_{n-1}$ replaced by $(\BZ[Q]_{n-1})_{\mathrm{loc}}$.\end{enu}
\end{rem}

The results below are not used in this paper. 
However, they are need for explicit computations of the quantum product of (opposite) Schubert classes in $QK_{H}(Fl_{n})$, since quantum double Grothendieck polynomials represent the corresponding (opposite) Schubert classes in $QK_{H}(Fl_{n})$ (see \cite{MaNS}).

For $w \in S_n$, we consider the double Grothendieck polynomial $\FG_w(x,y)$ defined by 
\begin{equation*}
\FG_w(x,y) := \pi^{(x)}_{w^{-1}w_{\circ}}\FG_{w_{\circ}}(x,y),  \quad  
\FG_{w_{\circ}}(x,y) := \prod_{i+j\leq n}(x_i+y_j-x_iy_j),
\end{equation*} 
and its quantization $\FG^Q_w(x,y):=\Qq(\FG_w(x,y))$, 
called the quantum double Grothendieck polynomial. 
Here $\pi^{(x)}_{w^{-1}\lng}$ denotes the Demazure 
operator acting on the $x$-variables.
\begin{rem}
In \cite[Section~8]{LM}, the quantum double Grothendieck polynomial 
$\FG^q_w(x, y)$ is defined by applying 
the quantization map with respect to the $y$-variables 
to the classical one. Our convention for the polynomial 
$\FG^Q_w(x,y)$ above uses the quantization 
with respect to the $x$-variables unlike that in \cite{LM}.
More precisely, the polynomial $\FG^Q_w(x, y)$ defined above 
coincides with $\FG^q_{w^{-1}}(y, x)$ in the notation of \cite{LM}.
\end{rem} 

In the following, we use the symbols $\eta^0(x):=1$ and 
\begin{equation*}
\eta^j(x):=(x+y_1-xy_1)(x+y_2-xy_2) \cdots (x+y_j-xy_j),
  \quad 1 \leq j \leq n-1.
\end{equation*} 

For a permutation $w \in S_{n}$, 
we set $c_i(w) := \vert \{n \geq j > i \mid w(j) < w(i) \} \vert$ for $1 \leq i \leq n$, 
and call the sequence $(c_1(w), c_2(w), \ldots, c_n(w))$ the code of $w$; 
$w$ is said to be dominant if $c_1(w) \geq c_2(w) \geq \cdots \geq c_n(w)$. 

\begin{lem} \label{dom-gro}
If $w\in S_n$ is a dominant permutation, 
with its code $(c_1(w),\ldots,c_n(w))$, then 
\begin{equation*}
\FG_w(x,y)= \eta^{c_1(w)}(x_1) \cdots \eta^{c_{n-1}(w)}(x_{n-1}).
\end{equation*} 
\end{lem}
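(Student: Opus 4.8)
The plan is to prove Lemma~\ref{dom-gro} by induction on the length $\ell(w_{\circ}w^{-1})$, i.e., on the number of left divided-difference operators $\pi^{(x)}_{i}$ needed to pass from $w_{\circ}$ to $w$ via the defining recursion $\FG_{w}(x,y)=\pi^{(x)}_{w^{-1}w_{\circ}}\FG_{w_{\circ}}(x,y)$. First I would record the base case: when $w=w_{\circ}$, the code is $(n-1,n-2,\ldots,1,0)$, which is strictly decreasing (hence dominant), and the asserted product $\eta^{n-1}(x_{1})\eta^{n-2}(x_{2})\cdots\eta^{1}(x_{n-1})=\prod_{i+j\le n}(x_{i}+y_{j}-x_{i}y_{j})$ is exactly $\FG_{w_{\circ}}(x,y)$ by definition. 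This checks out since $\eta^{c}(x_{i})=\prod_{j=1}^{c}(x_{i}+y_{j}-x_{i}y_{j})$, and the condition $i+j\le n$ is precisely $j\le n-i=c_{i}(w_{\circ})$.

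For the inductive step, suppose $w\ne w_{\circ}$ is dominant. Then there is a simple reflection $s_{i}$ with $\ell(ws_{i})=\ell(w)+1$, equivalently $w(i)<w(i+1)$, chosen so that $ws_{i}$ is still "closer" to $w_{\circ}$ in the sense that $\FG_{w}=\pi^{(x)}_{i}\FG_{ws_{i}}$ (one takes $s_{i}$ to be the first letter in a reduced word for $w^{-1}w_{\circ}$). The key combinatorial point is to choose $i$ so that \emph{$ws_{i}$ is again dominant}: since $w$ is dominant, its code is weakly decreasing, and one shows that the ascent $i$ can be taken at a place where $c_{i}(w)=c_{i+1}(w)$; then the code of $ws_{i}$ is obtained from that of $w$ by adding $1$ in position $i$ (so it reads $\ldots,c_{i}+1,c_{i},\ldots$ with $c_{i-1}\ge c_{i}+1$ forced by dominance of $w$ plus the equality $c_{i}=c_{i+1}$), and this is still weakly decreasing. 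By the induction hypothesis, $\FG_{ws_{i}}(x,y)=\eta^{c_{1}}(x_{1})\cdots\eta^{c_{i}+1}(x_{i})\,\eta^{c_{i}}(x_{i+1})\cdots\eta^{c_{n-1}}(x_{n-1})$. Now I would apply $\pi^{(x)}_{i}$, which acts only on the $x_{i},x_{i+1}$ variables and fixes the other factors; the content of the step is the one-variable-pair identity
\begin{equation*}
\pi^{(x)}_{i}\bigl(\eta^{c+1}(x_{i})\,\eta^{c}(x_{i+1})\bigr)=\eta^{c}(x_{i})\,\eta^{c}(x_{i+1}),
\end{equation*}
which is proved by writing $\eta^{c+1}(x_{i})=\eta^{c}(x_{i})\cdot(x_{i}+y_{c+1}-x_{i}y_{c+1})$, observing that $\eta^{c}(x_{i})\eta^{c}(x_{i+1})$ is symmetric in $x_{i},x_{i+1}$ hence fixed by $\pi^{(x)}_{i}$, and then invoking the Leibniz-type rule for the Demazure operator (as in Lemma~\ref{lem:leib1}, in its $x$-variable, $K$-theoretic incarnation) together with the normalization $\pi^{(x)}_{i}(x_{i}+y_{c+1}-x_{i}y_{c+1})=1$. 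This yields $\FG_{w}(x,y)=\eta^{c_{1}}(x_{1})\cdots\eta^{c_{n-1}}(x_{n-1})$, completing the induction.

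The main obstacle I anticipate is \emph{not} the operator identity — that is a short direct computation — but the combinatorial bookkeeping in the inductive step: one must argue that for a dominant $w\ne w_{\circ}$ there always exists an ascent $i$ with $c_{i}(w)=c_{i+1}(w)$ (so that both the reduced-word condition $\FG_{w}=\pi^{(x)}_{i}\FG_{ws_{i}}$ and the preservation of dominance hold simultaneously), and that left multiplication by this $s_{i}$ affects the code exactly by $c_{i}\mapsto c_{i}+1$ with all other entries unchanged. This requires relating the code of $w$ to that of $ws_{i}$ precisely, and handling the possibility that the "obvious" ascent sits at a strict descent of the code, in which case one must instead move to a different ascent or argue via the (Lehmer) code of $w^{-1}w_{\circ}$ directly; an alternative, cleaner route is to induct on dominant permutations ordered by the partition $(c_{1}\ge c_{2}\ge\cdots)$ and remove a single box from the last nonempty row, which automatically corresponds to a well-defined $s_{i}$. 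Either way, once the right $s_{i}$ is in hand, the rest follows mechanically from the operator identity above.
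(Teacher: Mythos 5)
Your proposal is essentially the paper's proof: downward induction from $w_{\circ}$, using the recursion $\FG_{w}=\pi^{(x)}_{i}\FG_{ws_{i}}$ at an index where two consecutive code entries agree, the fact that the code of $ws_{i}$ is that of $w$ with $1$ added in position $i$, and the identity $\pi^{(x)}_{i}\bigl(\eta^{c+1}(x_{i})\,\eta^{c}(x_{i+1})\bigr)=\eta^{c}(x_{i})\,\eta^{c}(x_{i+1})$, which the paper uses implicitly and you make explicit.

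The only loose end is precisely the point you flag at the end, and your parenthetical justification for it is not correct as stated: dominance of $w$ together with $c_{i}(w)=c_{i+1}(w)$ does \emph{not} force $c_{i-1}(w)\ge c_{i}(w)+1$. For example, for the dominant $w=34512\in S_{5}$ with code $(2,2,2,0,0)$, the choice $i=2$ satisfies $c_{2}=c_{3}$, but $ws_{2}$ has code $(2,3,2,0,0)$, which is not weakly decreasing, so the induction hypothesis does not apply to it. The paper's (one-line) fix is to take $i_{0}$ \emph{minimal} with $c_{i_{0}}(w)=c_{i_{0}+1}(w)$: then either $i_{0}=1$ or $c_{i_{0}-1}(w)>c_{i_{0}}(w)$, so $v=ws_{i_{0}}$ is again dominant with $\ell(v)=\ell(w)+1$ and code equal to that of $w$ with $1$ added in position $i_{0}$; the existence of such an index for dominant $w\ne w_{\circ}$ follows from $c_{i}(w)\le n-i$ and $c_{n}(w)=0$ (strict decrease everywhere would force the staircase code of $w_{\circ}$). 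Note also that $c_{i}(w)=c_{i+1}(w)$ automatically makes $i$ an ascent of $w$, so the reduced-word condition you need for $\FG_{w}=\pi^{(x)}_{i}\FG_{ws_{i}}$ comes for free. With the minimal-index choice your argument closes and coincides with the paper's.
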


\begin{proof}
The assertion holds for $w=\lng$. Let $w\in S_n$ be a dominant permutation 
with $\ell(w)<\ell( \lng)$. Then there exist indices $1 \leq i \leq n-1$ 
such that $c_i(w)=c_{i+1}(w)$. 
Let $i_0:=\min \{ 1 \leq i \leq n-1  \mid c_i(w)=c_{i+1}(w)\}$, and set 
$v:=ws_{i_0}$. Then we have 
\begin{equation*}
c_i(v)= 
\begin{cases}
c_i(w) & \text{\rm if $i \ne i_0$}, \\ 
c_{i_0}(w)+1 & \text{\rm if $i = i_0$}.
\end{cases}
\end{equation*}
Hence, $v$ is also dominant and $\ell(v)= \ell(w)+1$. 
By our induction hypothesis, we get 
\begin{equation*}
\FG_v (x,y)= \eta^{c_1(w)}(x_1) \cdots \eta ^{c_{i_0}(w)+1}(x_{i_0}) 
\eta^{c_{i_0+1}(w)}(x_{i_0+1}) \cdots \eta^{c_{n-1}(w)}(x_{n-1}),
\end{equation*} 
and this implies that 
\begin{equation*}
\FG_w(x,y)= \pi_{i_0}^{(x)} \FG_v(x,y)= 
\eta^{c_1(w)}(x_1) \cdots \eta ^{c_{i_0}(w)}(x_{i_0}) \eta^{c_{i_0+1}(w)}(x_{i_0+1})
\cdots \eta^{c_{n-1}(w)}(x_{n-1}),
\end{equation*} 
as desired. 
\end{proof}

\begin{prop} \label{propB6} 
If $w\in S_{n+1}$ satisfies $w(n+1)\not= n+1$, then $\FG_w(x,y)$ is contained in $I_n$. 
\end{prop}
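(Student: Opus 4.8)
The plan is to reduce everything to the structure of the double Grothendieck polynomial $\FG_{w}(x,y)$ in the variables $x_{1},\dots,x_{n}$ (i.e., omitting $x_{n+1}$), and to exploit the fact that $w(n+1)\ne n+1$ forces a certain ``stability'' of $\FG_w$ under the divided-difference recursion. The key observation is that $I_n$ is the ideal generated by $\ff^{(n)}_1,\dots,\ff^{(n)}_n$, and in the quotient $\BZ\bra{Q}_{n-1}[x,(1-y)^{\pm 1}]/I_n$ the elementary symmetric polynomials $e_i(1-x_1,\dots,1-x_n)$ become the equivariant constants $e_i((1-y)^{-1})$. Equivalently, after the substitution $x_i \mapsto x_i$ for $i\le n$, any polynomial which is divisible (as a polynomial identity) by a ``complete'' symmetric expression in $1-x_1,\dots,1-x_n$ of the appropriate shape lies in $I_n$. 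So the first step is to recall, from the theory of double Grothendieck polynomials, that $\FG_w(x,y)$ for $w\in S_{n+1}$ actually only involves $x_1,\dots,x_n$ (the variable $x_{n+1}$ never appears, since $\FG_{w_\circ}=\prod_{i+j\le n+1}(x_i+y_j-x_iy_j)$ has no $x_{n+1}$, and the operators $\pi^{(x)}_i$ for $i\le n$ preserve this).

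Next I would run an induction on $\ell(w)$ downward from the longest element, or rather use a factorization argument based on Lemma~\ref{dom-gro} and Lemma~\ref{lemB1}. The cleanest route: since $w(n+1)\ne n+1$, there is some $j<n+1$ with $w(j)>w(n+1)$, and in fact one can find a reduced word for $w$ starting (on the right) with $s_n$; equivalently $w = w' s_n$ with $\ell(w')=\ell(w)-1$ when $w(n)<w(n+1)$, but in our case $w(n+1)$ is ``small'' so $w s_n < w$ is the relevant inequality, giving $\FG_w(x,y)=\pi^{(x)}_n \FG_{ws_n}(x,y)$ is \emph{not} quite what we want — instead I expect the right statement is that $w(n+1)\ne n+1$ means $w$ is \emph{not} of the form $u\times 1$ with $u\in S_n$, and then one shows by descending induction that $\FG_w(x,y)$ is an $R(H)$-linear combination of the basis elements $f_{i_1\cdots i_n}(x) = e^{(1)}_{i_1}(1-x)\cdots e^{(n)}_{i_n}(1-x)$ with $i_n\ge 1$; since $f^{(n)}_{i_n} = e_{i_n}(1-x_1,\dots,1-x_n)$ and $\ff^{(n)}_{i_n}=f^{(n)}_{i_n}-e_{i_n}((1-y)^{-1})\in I_n$, and the constant $e_{i_n}((1-y)^{-1})$ can be absorbed, each such product lies in $I_n$. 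To make this precise I would expand $\FG_w$ in the basis $\{\ff_{i_1\cdots i_n}\}$ of $L^Q_{n+1}\otimes R(H)$ guaranteed by Lemma~\ref{lemB1} (with $k=n$, $Q$-variables absent here) and argue that the coefficient of $\ff_{i_1\cdots i_{n-1}0}$ — the only basis elements \emph{not} manifestly in $I_n$ — must vanish because these correspond precisely to $\FG_{w}$ for $w$ with $w(n+1)=n+1$, i.e.\ to polynomials pulled back from $S_n$.

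The mechanism for that last point is the classical stability of Grothendieck polynomials: for $w\in S_{n+1}$, $\FG_w(x,y)$ depends only on $x_1,\dots,x_n$, and modulo $I_n$ the ring $\BZ[x,(1-y)^{\pm1}]/I_n\cong K_H(Fl_{n+1})$ has the Schubert classes $[\CO^w]$, $w\in S_{n+1}$, as an $R(H)$-basis; the classes with $w(n+1)=n+1$ are exactly those supported on the sub-flag-variety $Fl_n\hookrightarrow Fl_{n+1}$, and these span a complementary summand. So I would phrase the argument as: the images of $\{\FG_w \bmod I_n \mid w(n+1)=n+1\}$ are $R(H)$-linearly independent (they represent distinct Schubert classes under $\Psi$, by the $n$-variable analogue of the known correspondence), hence form a basis of $\BZ[x_1,\dots,x_n,(1-y)^{\pm1}]/\tilde I$ where $\tilde I$ is the sub-ideal generated by $\ff^{(1)}_\bullet,\dots,\ff^{(n-1)}_\bullet$ — wait, more carefully: the quotient by $I_n$ is spanned by the images of $f_{i_1\cdots i_{n-1}}(x)$ with $0\le i_j\le j$ (no $i_n$, since $\ff^{(n)}$-relations kill the top variable's elementary symmetric functions), and these images are the $\FG_w$ with $w(n+1)=n+1$. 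Since $\FG_w$ for $w(n+1)\ne n+1$ is congruent mod $I_n$ to an $R(H)$-combination of the $\FG_v$ with $v(n+1)=n+1$ corresponding to \emph{different} permutations, and distinct Schubert classes are linearly independent, the only possibility consistent with $\FG_w$ being itself a Schubert-class representative for $w\ne v$ is that $\FG_w\equiv 0 \bmod I_n$.

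\medskip
\textbf{Main obstacle.} The delicate step is the second one: cleanly identifying \emph{which} basis vectors $\ff_{i_1\cdots i_n}$ appear in $\FG_w$ when $w(n+1)\ne n+1$, and in particular proving that no term with $i_n=0$ survives. The descending induction via $\pi^{(x)}_i$ is workable but requires care, because $\pi^{(x)}_i$ does not act simply on the product basis $\{f_{i_1\cdots i_n}(x)\}$; one cannot just track degrees. I expect the right tool is the \emph{Cauchy-type expansion} or the \emph{geometric} fact that $\FG_w$ for $w$ not fixing $n+1$ restricts to zero on the locus cut out by $I_n$ lifted appropriately — or, most efficiently, the observation (essentially Lemma~\ref{dom-gro} plus stability) that $\FG_w(x_1,\dots,x_n,y)$ for $w(n+1)\ne n+1$ is divisible by $\eta^{c_n(w)}(x_n)\cdots$-type factors forcing dependence on $x_n$ in a way incompatible with lying in the span of the $f_{i_1\cdots i_{n-1}0}$. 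Making this divisibility/stability argument fully rigorous — rather than hand-waving via ``Schubert classes are independent'' — is where the real work lies, and I would likely lean on the explicit recursion \cite[(3.7)]{LM} together with Lemma~\ref{lemB1} to pin it down.
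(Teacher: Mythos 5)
Your plan — expand $\FG_w(x,y)$ in the basis $\{\ff_{i_1\cdots i_n}\}$ of Lemma~\ref{lemB1} and note that every term with $i_n\ge 1$ lies in $I_n$ because $\ff^{(n)}_{i_n}$ is a generator — is fine as far as it goes, but the entire content of the proposition is then the claim that no term with $i_n=0$ occurs, and this you never prove. The argument you sketch for it does not work: linear independence of the images of $\{\FG_v \mid v(n+1)=n+1\}$ modulo $I_n$ says nothing about whether some \emph{other} polynomial is congruent to $0$; to conclude vanishing you would have to compute the coordinates of $\FG_w \bmod I_n$ in that family, which is precisely the statement to be proved. Worse, the appeal to ``distinct Schubert classes'' conflates two different rings: $\FG_w$ for $w\in S_{n+1}$ represents a Schubert class in $K_H(Fl_{n+1})$, whose presentation uses $n+1$ variables $x_1,\dots,x_{n+1}$ and the torus of $SL_{n+1}$, whereas $I_n$ lives in $n$ $x$-variables with $R(H)$ the representation ring of the torus of $SL_n$; there is no identification under which $\FG_w$ with $w(n+1)\ne n+1$ ``is itself a Schubert-class representative'' in $\BZ[x,(1-y)^{\pm 1}]/I_n$, so the contradiction you aim for is not available. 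You acknowledge in your last paragraph that making this step rigorous is ``where the real work lies''; as written, the key vanishing claim is assumed, not established, so the proposal has a genuine gap.

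For comparison, the paper's proof bypasses the basis expansion entirely. Take the unique $v\in S_n$ with $wv(1)>\cdots>wv(n)$; since $v$ fixes $n+1$ and $w(n+1)\ne n+1$, the dominant permutation $wv$ satisfies $wv(1)=n+1$, so $c_1(wv)=n$ and Lemma~\ref{dom-gro} exhibits the factor $\eta^{n}(x_1)$ in $\FG_{wv}(x,y)$. A direct computation modulo $I_n$ gives $e_n((1-y)^{-1})\,\eta^{n}(x_1)\equiv e_n(1-x)\prod_{j=1}^{n}\bigl(1-\tfrac{1-x_1}{1-x_j}\bigr)=0$, and since $e_n((1-y)^{-1})$ is a unit in $R(H)$, it follows that $\eta^{n}(x_1)\in I_n$, hence $\FG_{wv}(x,y)\in I_n$. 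Finally $\ell(wv)=\ell(w)+\ell(v)$ yields $\FG_w(x,y)=\pi^{(x)}_v\FG_{wv}(x,y)$, and $\pi^{(x)}_v$ preserves $I_n$ because the generators $e_i(1-x)-e_i((1-y)^{-1})$ are symmetric in the $x$-variables. If you wish to rescue your route, you would need an argument of comparable substance (for instance this divisibility by $\eta^{n}(x_1)$ after making $w$ dominant) to show the $i_n=0$ coefficients vanish; the basis/independence language alone cannot deliver it.
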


\begin{proof}
For any $w\in S_{n+1}$, there exists a unique permutation $v\in S_n$ 
such that $wv(1) > wv(2) > \cdots > wv(n)$. 
Here, $wv$ is a dominant permutation in $S_{n+1}$ and 
$wv(1)=n+1$ by the assumption that $w(n+1) \ne n+1$. 
Hence, by Lemma~\ref{dom-gro}, we have 
\begin{equation*}
\FG_{wv}(x,y) = \eta^n(x_1)\eta^{c_2(wv)}(x_2) \cdots \eta^{c_n(wv)}(x_n).
\end{equation*} 
Also, we have the following equalities modulo $I_n$: 
\begin{align*}
e_n((1-y)^{-1})\cdot \eta^n(x_1) 
& = e_n((1-y)^{-1}) \cdot \Big( 1+\sum_{a=1}^n (x_1-1)^ae_a(1-y) \Big) \\ 
& = e_n((1-y)^{-1})+ \sum_{a=1}^n (x_1-1)^ae_{n-a}((1-y)^{-1}) \\ 
& = e_n(1-x)+ \sum_{a=1}^n (x_1-1)^ae_{n-a}(1-x) \\ 
& = e_n(1-x) \cdot \Big( 1+\sum_{a=1}^n (x_1-1)^ae_a((1-x)^{-1}) \Big) \\ 
& = e_n(1-x) \cdot \prod_{j=1}^{n}\Big( 1-\frac{1-x_1}{1-x_j} \Big)=0. 
\end{align*}
Since $e_n((1-y)^{-1})=(1-y_1)^{-1} \cdots (1-y_n)^{-1}$ 
is invertible, $\eta^n(x_1)$ is an element of $I_n$. 
Therefore, $\FG_{wv}(x,y)$ is contained in $I_n$. 
The choice of $v$ implies that  $\ell(wv) = \ell(w)+\ell(v)$, 
and hence we have $\FG_w(x,y) = \pi_v^{(x)} \FG_{wv}(x,y)$. 
Since the action of the Demazure operator $\pi_v^{(x)}$ preserves the
ideal $I_n$, we deduce that $\FG_w(x,y)$ is also an element of $I_n$. 
This proves the proposition. 
\end{proof} 

Here observe that for $w \in S_{n+1}$, we have $\FG^Q_w(x,y) \in \BZ\bra{ Q }_{n}[x, (1-y)^{\pm 1}]$. 
Hence, by applying the quantization map $\Qq$, we obtain the following 
from Propositions~\ref{propB2} and \ref{propB6}; 
recall the isomorphism \eqref{eq:B2-1}: 
$(P^Q_{\infty}\otimes R(H))/(\tilde{I}_n^Q+J_n) \cong 
\BZ\bra{ Q }_{n-1}[x,(1-y)^{\pm 1}]/I_n^Q$. 
\begin{cor}
If $w\in S_{n+1}$ satisfies $w(n+1)\not= n+1$, then $\FG^Q_w(x,y)$ is contained in 
$I_n^Q \otimes \BZ\bra{ Q_n }$. 
\end{cor}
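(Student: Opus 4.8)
The plan is to deduce the corollary from Propositions~\ref{propB6} and \ref{propB2} by applying the quantization map $\Qq$ and then descending through the isomorphism \eqref{eq:B2-1}, exactly along the route indicated by the sentence preceding the statement. Concretely: since $w(n+1)\neq n+1$, Proposition~\ref{propB6} gives $\FG_w(x,y)\in I_n$; since $\FG^Q_w(x,y)=\Qq(\FG_w(x,y))$ and $\Qq$ is the restriction of $\Qq_\infty$, Proposition~\ref{propB2} then yields $\FG^Q_w(x,y)\in\tilde{I}_n^Q+J_n$ inside $P^Q_\infty\otimes R(H)$; and, as already observed, $\FG^Q_w(x,y)$ in fact lies in the subalgebra $\BZ\bra{Q}_n[x,(1-y)^{\pm1}]=\BZ\bra{Q}_n[x_1,\dots,x_n]\otimes R(H)$ of $P^Q_\infty\otimes R(H)$ (it involves no $x_j$ with $j>n$ and no $Q_k$ with $k>n$). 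The remaining task is to combine these two facts, i.e. to show that an element of $\BZ\bra{Q}_n[x,(1-y)^{\pm1}]$ which lies in $\tilde{I}_n^Q+J_n$ already lies in $I_n^Q\otimes\BZ\bra{Q_n}$.

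For this descent I would first pin down $(\tilde{I}_n^Q+J_n)\cap\BZ\bra{Q}_n[x,(1-y)^{\pm1}]$ by the same bookkeeping as in the proof of Proposition~\ref{propB2}: apply the ring endomorphism of $P^Q_\infty\otimes R(H)$ sending $x_{n+1},x_{n+2},\dots$ to $1$ and $Q_{n+1},Q_{n+2},\dots$ to $0$, which fixes $\BZ\bra{Q}_n[x,(1-y)^{\pm1}]$ pointwise, carries each generator $\FFF^{(j)}_i$ of $\tilde{I}_n^Q$ with $j>n$ to $\FFF^{(n)}_i|_{Q_n=0}$ for $i\le n$ and to $0$ for $i>n$ by \eqref{eq:B2-2}, and carries $J_n$ onto the ideal generated by $Q_n$. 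Then the composite
\[
\BZ\bra{Q}_n[x,(1-y)^{\pm1}]\hookrightarrow P^Q_\infty\otimes R(H)\twoheadrightarrow \bigl(P^Q_\infty\otimes R(H)\bigr)/\bigl(\tilde{I}_n^Q+J_n\bigr)\cong \BZ\bra{Q}_{n-1}[x,(1-y)^{\pm1}]/I_n^Q,
\]
whose last arrow is the isomorphism \eqref{eq:B2-1}, sends $\FG^Q_w(x,y)$ to $0$, which is the information one wants to extract.

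The point I expect to require the most care is upgrading this to the full assertion $\FG^Q_w(x,y)\in I_n^Q\otimes\BZ\bra{Q_n}$, i.e. controlling $\FG^Q_w(x,y)$ itself and not merely its class modulo $Q_n$. Here I would use the finer input that $\FG_w(x,y)$, being a double Grothendieck polynomial of a $w\in S_{n+1}$ with $w(n+1)\neq n+1$, lies in $I_n\cap L_{n+1}$, and that by Lemma~\ref{lemB1} (together with the freeness of the classical ring $K_{H}(Fl_n)$) this intersection is spanned over $R(H)$ by those basis vectors $\ff_{i_1\cdots i_n}$ with $i_n\ge1$. Applying $\Qq$, which is $\BZ\bra{Q}_n\otimes R(H)$-linear and sends $\ff_{i_1\cdots i_n}$ to $\FFF_{i_1\cdots i_n}=\FFF^{(1)}_{i_1}\cdots\FFF^{(n)}_{i_n}$, then exhibits $\FG^Q_w(x,y)$ as an $R(H)$-linear combination of the $\FFF_{i_1\cdots i_n}$ with $i_n\ge1$, each of which is a multiple of some $\FFF^{(n)}_{i_n}$ with $i_n\ge1$; a direct comparison — via the recurrence \cite[(3.7)]{LM} and \cite[Proposition~3.26]{LM} — of these with the generators $\FFF^{(n)}_j|_{Q_n=0}$ of $I_n^Q$ then finishes the argument. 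Everything else (Propositions~\ref{propB6} and \ref{propB2}, the observation that $\FG^Q_w(x,y)\in\BZ\bra{Q}_n[x,(1-y)^{\pm1}]$, and equations \eqref{eq:B2-1}, \eqref{eq:B2-2}) is already in place in the excerpt and enters essentially verbatim.
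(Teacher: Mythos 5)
Your first two paragraphs reproduce exactly the paper's own derivation: Proposition~\ref{propB6} gives $\FG_w(x,y)\in I_n$, Proposition~\ref{propB2} gives $\FG^Q_w(x,y)=\Qq(\FG_w(x,y))\in\tilde{I}_n^Q+J_n$, and since $\FG^Q_w(x,y)$ involves only $x_1,\ldots,x_n$ and $Q_1,\ldots,Q_n$, the specialization $x_j\mapsto 1$ ($j>n$), $Q_k\mapsto 0$ ($k>n$) together with the isomorphism \eqref{eq:B2-1} shows that the image of $\FG^Q_w(x,y)$ in $\BZ\bra{Q}_{n-1}[x,(1-y)^{\pm 1}]/I_n^Q$ vanishes. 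Up to that point you agree with the paper, and this part is correct.

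The problem is your attempted ``upgrade''. Already the ``remaining task'' you set yourself at the end of the first paragraph is impossible: $Q_n$ belongs to $(\tilde{I}_n^Q+J_n)\cap\BZ\bra{Q}_n[x,(1-y)^{\pm1}]$ (it is a generator of $J_n$) but not to the ideal of $\BZ\bra{Q}_n[x,(1-y)^{\pm1}]$ generated by the $\FFF^{(n)}_i|_{Q_n=0}$. Your steps via Lemma~\ref{lemB1} are fine as far as they go: expanding $\FG_w(x,y)\in I_n\cap(L_{n+1}\otimes R(H))$ in the basis elements $\ff_{i_1\cdots i_n}$ with $i_n\ge 1$ and quantizing does show $\FG^Q_w(x,y)$ lies in the ideal generated by $\FFF^{(n)}_1,\ldots,\FFF^{(n)}_n$. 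But the final ``direct comparison'' of these with the generators $\FFF^{(n)}_j|_{Q_n=0}$ of $I_n^Q$ cannot be carried out: the difference $\FFF^{(n)}_i-\FFF^{(n)}_i|_{Q_n=0}$ is a multiple of $Q_n$ which in general does not lie in $I_n^Q\otimes\BZ\bra{Q_n}$. Concretely, take $n=2$ and $w=s_2\in S_3$. Then $\FG_{s_2}(x,y)=1-(1-x_1)(1-x_2)(1-y_1)(1-y_2)$, hence $\FG^Q_{s_2}(x,y)=1-(1-x_1)(1-x_2)(1-y_1)(1-y_2)(1-Q_2)$, and reducing modulo the generator $\FFF^{(2)}_2|_{Q_2=0}$ (i.e.\ using $(1-x_1)(1-x_2)\equiv(1-y_1)^{-1}(1-y_2)^{-1}$) gives $\FG^Q_{s_2}(x,y)\equiv Q_2$ modulo the ideal generated by $\FFF^{(2)}_1|_{Q_2=0}$ and $\FFF^{(2)}_2|_{Q_2=0}$; and $Q_2$ is nonzero in the corresponding quotient, which by Theorem~\ref{thm:main} (for $Fl_2$, with $Q_2$ adjoined as a formal variable) is a free module over $R(H)\bra{Q_1,Q_2}$. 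So the strengthened membership you aim at in the last paragraph, with $Q_n$ confined to the coefficients, is not just unproved but unattainable; the membership that actually follows, and that the paper's one-sentence derivation (equivalently, your first two paragraphs) establishes, is membership in the ideal generated by $I_n^Q$ together with $Q_n$ --- equivalently, $\FG^Q_w(x,y)|_{Q_n=0}\in I_n^Q$, i.e.\ the vanishing of the image of $\FG^Q_w(x,y)$ in $\BZ\bra{Q}_{n-1}[x,(1-y)^{\pm1}]/I_n^Q$ --- and the statement has to be read in that sense. Your third paragraph should therefore be dropped rather than completed.
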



{\small

}

\end{document}